\documentclass [11pt,reqno]{amsart}
\usepackage {amsmath,amssymb,verbatim,geometry,color}
\usepackage[pdftex,hyperindex]{hyperref}
\usepackage[pdftex]{graphicx}
\geometry{centering,vcentering,marginratio=1:1,vscale=0.77,hscale=0.71}

%%%%%%%%%%%%%%%%%%%%%%%%%%%%%%%%%%%%%%%%%%%%%%%%%%%%%%%%%%%%%%%%%%

\newcommand{\A}{\mathbf{A}}

\newcommand{\C}{\mathbf{C}}
\newcommand{\G}{\mathbf{G}}
\newcommand{\N}{\mathbf{N}}
\renewcommand{\P}{\mathbf{P}}
\newcommand{\Q}{\mathbf{Q}}
\newcommand{\R}{\mathbf{R}}
\newcommand{\Z}{\mathbf{Z}}

\newcommand{\fm}{\mathfrak{m}}

\newcommand{\bbA}{\mathbb{A}}
\newcommand{\bbG}{\mathbb{G}}
\newcommand{\bbP}{\mathbb{P}}

\newcommand{\cD}{\mathcal{D}}
\newcommand{\cE}{\mathcal{E}}
\newcommand{\cF}{\mathcal{F}}

\newcommand{\cH}{\mathcal{H}}

\newcommand{\cL}{\mathcal{L}}
\newcommand{\cM}{\mathcal{M}}
\newcommand{\cN}{\mathcal{N}}
\newcommand{\cO}{\mathcal{O}}

\newcommand{\cU}{\mathcal{U}}
\newcommand{\cV}{\mathcal{V}}

\newcommand{\cX}{\mathcal{X}}
\newcommand{\cY}{\mathcal{Y}}

\newcommand{\tcL}{\widetilde{\mathcal{L}}}
\newcommand{\tcX}{\widetilde{\mathcal{X}}}

\newcommand{\Lsch}{L^{\mathrm{sch}}}

\newcommand{\Xan}{X^{\mathrm{an}}}

\newcommand{\Xdiv}{X^{\mathrm{div}}}

\newcommand{\Xqm}{X^{\mathrm{qm}}}

\newcommand{\Xsch}{X^{\mathrm{sch}}}

\newcommand{\Xval}{X^{\mathrm{val}}}

\newcommand{\Yval}{Y^{\mathrm{val}}}

\newcommand{\cLan}{{\mathcal{L}^{\mathrm{an}}}}
\newcommand{\cUan}{{\mathcal{U}^{\mathrm{an}}}}
\newcommand{\cXan}{{\mathcal{X}^{\mathrm{an}}}}

\newcommand{\Dad}{D^{\mathrm{ad}}}
\newcommand{\Mad}{M^{\mathrm{ad}}}

\renewcommand{\a}{\alpha}

\renewcommand{\d}{\delta}
\newcommand{\e}{\varepsilon}
\newcommand{\f}{\varphi}
\newcommand{\unipar}{\varpi}

\newcommand{\la}{\lambda}

\newcommand{\p}{\psi}

\newcommand{\eg}{{\rm e.g.\ }} 
\newcommand{\ie}{{\rm i.e.\ }} 
\newcommand{\loccit}{\textit{loc.\,cit.}}

\DeclareMathOperator{\DF}{DF}

\DeclareMathOperator{\Ent}{Ent}

\DeclareMathOperator{\FS}{FS}
\DeclareMathOperator{\Fut}{Fut}

\DeclareMathOperator{\lct}{lct}

\DeclareMathOperator{\LM}{LM}
\DeclareMathOperator{\MA}{MA}

\DeclareMathOperator{\RLM}{RLM}

\newcommand{\phitriv}{\phi_{\triv}}

\DeclareMathOperator{\Spec}{Spec}

\DeclareMathOperator{\vol}{vol}

\DeclareMathOperator{\lsc}{lsc} 
\DeclareMathOperator{\ord}{ord}

\DeclareMathOperator{\PSH}{PSH}

\DeclareMathOperator{\redu}{red}

\DeclareMathOperator{\SNC}{SNC}
\DeclareMathOperator{\triv}{triv}
 
\DeclareMathOperator{\trdeg}{tr.deg} 
\DeclareMathOperator{\usc}{usc}

\newcommand{\retr}{p}

\newcommand{\D}{\Delta}

\newcommand{\simto}{\overset\sim\to}
%\newcommand{\unipar}{{t}}

%%%%%

%%%%%
\numberwithin{equation}{section}       % Number formulas within sections

\newtheorem{prop} {Proposition} [section]
\newtheorem{thm}[prop] {Theorem} 
\newtheorem{defi}[prop] {Definition}
\newtheorem{lem}[prop] {Lemma}
\newtheorem{cor}[prop]{Corollary}
\newtheorem{prop-def}[prop]{Proposition-Definition}

\newtheorem*{thmA}{Theorem A} 
 
\newtheorem*{thmB}{Theorem B} 
\newtheorem*{thmC}{Theorem C} 
\newtheorem*{thmD}{Theorem D} 
 
\newtheorem*{thmE}{Theorem E} 
\newtheorem*{thmEp}{Theorem E'} 
\newtheorem*{thmF}{Theorem F}

\newtheorem{exam}[prop]{Example}
\newtheorem{rmk}[prop]{Remark}

\newtheorem{conj}[prop]{Conjecture}
\theoremstyle{remark}
\newtheorem*{ackn}{Acknowledgment}

\title{A non-Archimedean approach to K-stability}
\date{\today}

\author{S{\'e}bastien Boucksom \and Mattias Jonsson}

\address{CNRS--CMLS\\
  \'Ecole Polytechnique\\
  F-91128 Palaiseau Cedex\\
  France}
\email{boucksom@math.polytechnique.fr}

\address{Dept of Mathematics\\
  University of Michigan\\
  Ann Arbor, MI 48109-1043\\
  USA}
\email{mattiasj@umich.edu}

%\subjclass[2000]{Primary: 32H50, Secondary: 14C21}

\begin{document}

\begin{abstract}
  We study K-stability properties of a smooth Fano variety $X$ 
  using non-Archi\-medean geometry, specifically the Berkovich analytification of
  $X$ with respect to the trivial absolute value on the ground field.
  More precisely, we view K-semistability and uniform K-stability as  
  conditions on the space of plurisubharmonic (psh) metrics on the anticanonical
  bundle of $X$. Using the non-Archimedean Calabi--Yau theorem and the 
  Legendre transform, this
  allows us to give a new proof that K-stability is equivalent to Ding stability. 
  By choosing suitable psh metrics, we also recover the valuative criterion of K-stability 
  by Fujita and Li. Finally, we study the asymptotic Fubini--Study operator, which associates
  a psh metric to any graded filtration (or norm) on the anticanonical ring. 
  Our results hold for arbitrary smooth polarized varieties, and 
  suitable adjoint/twisted notions of K-stability and Ding stability. They do
  not rely on the Minimal Model Program.
\end{abstract}

\maketitle

\setcounter{tocdepth}{1}
\tableofcontents
%
%
%%%%%%%%%%%%%%%%%%%%%%%%%%%%%%%%%%%%%%%%%%%%%%%%%%%%%%%%%%%%%%%%%%%
%
%
%\newpage
\section*{Introduction}\label{S101} 
%
% 
%%%%%%%%%%%%%%%%%%%%%%%%%%%%%%%%%%%%%%%%%%%%%%%%%%%%%%%%%%%%%%%%%%%
%
%
Consider a polarized, smooth projective variety $(X,L)$ defined over a field $k$
of characteristic zero. For the purposes of this introduction, we identify $(X,L)$
with its analytification---in the sense of Berkovich---with respect to the \emph{trivial} 
absolute value on $k$. As a set, $X$ is thus the disjoint union of the spaces of real-valued valuations on the function field of each subvariety of $X$. Our goal in this paper is to use plurisubharmonic (psh) metrics on $L$
to study problems involving K-stability.

\smallskip
We start by recalling some notions and results from~\cite{trivval}. There is a canonical
map $L\to X$, with fibers being Berkovich affine lines. A metric on $L$ is a function on 
$L$ satisfying a certain homogeneity property on each fiber. 
Since $k$ is trivially valued, $L$ admits a canonical \emph{trivial metric}, which we
use to identify metrics on $L$ with \emph{functions} on $X$. 
For such metrics/functions, there are several natural positivity notions.

First we have the class $\cH(L)$ of \emph{positive metrics}~\cite{BHJ1}.
They are defined via ample \emph{test configurations}~\cite{Don02} for $(X,L)$.
In~\cite{trivval} they are called \emph{Fubini--Study metrics}, as 
each positive metric is built from a finite set of global sections of some multiple of $L$. 

By taking uniform limits of positive metrics we obtain the class of 
\emph{continuous psh metrics}, a notion going back to Zhang~\cite{Zha95}
and Gubler~\cite{GublerLocal}, and explored much more generally by 
Chambert-Loir and Ducros~\cite{CD12}, see also~\cite{GK15,GM16}.
By instead taking \emph{decreasing} 
limits of positive metrics, we obtain the larger class $\PSH(L)$ of psh metrics on $L$. 
This class has many good properties, as studied in~\cite{trivval}, see
also~\cite{siminag,trivval} for the discretely valued case.
As in the usual complex case, psh metrics are not necessarily continuous, and may even be \emph{singular}
in the sense of taking the value $-\infty$ at certain points in $X$.

The class $\PSH(L)$ contains several natural subclasses. First, we have the continuous psh
metrics. A larger class is formed by the \emph{bounded} psh metrics, \ie psh metrics
that are bounded as functions on $X$. This class is, in turn, contained in the class
$\cE^1(L)$ of psh metrics $\f$ of \emph{finite energy}, $E(\f)>-\infty$. Here $E$ is the 
\emph{Monge--Amp\`ere energy} functional. For positive metrics,
it can be defined in terms of intersection numbers, 
is monotonous, and hence has an extension to all of $\PSH(L)$, with 
values in $\R\cup\{-\infty\}$. 
Our thesis is that the class $\cE^1(L)$ is quite natural for
problems in K-stability, to be discussed in detail below.
The functional defined by $J(\f)=\sup_X\f-E(\f)$ acts
as an exhaustion function on $\cE^1(L)/\R$. In particular, 
$J(\f)\ge0$, with equality iff $\f$ is constant.

The \emph{Monge--Amp\`ere operator} assigns a Radon probability 
measure $\MA(\f)$ on $X$ to any metric $\f\in\cE^1(L)$. It is continuous under monotone
limits and extends the operator introduced by Chambert-Loir~\cite{CL06} and
Gubler~\cite{GublerTrop}. The \emph{Calabi--Yau} theorem~\cite{nama,trivval} 
asserts that we have a bijection $\MA\colon\cE^1(L)/\R\to\cM^1(X)$, where $\cM^1(X)$
is the space of Radon probability measures $\mu$ of \emph{finite energy}, \ie such that
$$
E^*(\mu):=\sup_{\f\in\cE^1(L)}\left(E(\f)-\int\f\,d\mu\right)<\infty.
$$
%
% 
%%%%%%%%%%%%%%%%%%%%%%%%%%%%%%%%%%%%%%%%%%%%%%%%%%%%%%%%%%%%%%%%%%%
%
%
\subsection*{K-stability and Ding stability}
The notion of K-stability was introduced by Yau, Tian and Donaldson,
as a conjectural criterion for the existence of special metrics in K\"ahler geometry;
this conjecture was proved in the Fano case, see~\cite{CDS15,Tian15}
and also~\cite{DaSz16,BBJ15,CSW15}.
K-stability involves studying the sign of the \emph{Donaldson--Futaki functional}
on the space of ample test configurations for $(X,L)$, 
\ie on the space $\cH(L)$. 
In~\cite{BHJ1} we showed that K-stability can
be expressed in terms of a modified functional, the (non-Archimedean) \emph{Mabuchi functional}, that has better properties with respect to base change, and which 
naturally extends to a functional $M:\cE^1(L)\to\R\cup\{+\infty\}$. 

In the Fano case, when $L=-K_X$, the Mabuchi functional factors through the
Monge--Amp\`ere operator as 
$$
M(\f)=\Ent\left(\MA(\f)\right)-E^*\left(\MA(\f)\right),
$$
where $E^*(\mu)$ is the energy of $\mu\in\cM^1(X)$, whereas
$\Ent(\mu):=\int_XA\,d\mu$ is the \emph{entropy} of $\mu$, defined as the integral
of the \emph{log discrepancy} function $A\colon X\to\R_+\cup\{+\infty\}$.

Another functional, the (non-Archimedean) \emph{Ding functional} is also useful. 
First defined in~\cite{Berm16}, it can be written as 
$D=L-E$, where $E$ is the Monge--Amp\`ere energy, and $L$ is the Legendre 
transform of entropy: $L(\f)=\inf_X(\f+A)$, with $A$ the log discrepancy.

In this language, a Fano manifold $X$ is K-semistable (resp.\ uniformly K-stable) if
$M\ge 0$ (resp.\ there exists $\e>0$ such that $M\ge\e J$) on $\cH(L)$.
Similarly, $X$ is Ding-semistable (resp.\ uniformly Ding-stable) if the corresponding
inequalities hold with the Mabuchi functional $M$ replaced by the Ding functional $D$.
It was proved in~\cite{BBJ15} (see also~\cite{FujitaValcrit}), 
using techniques from the Minimal Model Program along the same lines as \cite{LX14}, 
that Ding semistability is equivalent to K-semistability, and similarly for the uniform
versions. 

Here we give a new proof of (a version of) 
the equivalence of Ding-stability and K-stability. 
Let $L$ be any ample line bundle on $X$. The definitions of the functionals $M$ and $D$
still make sense; we call them the \emph{adjoint Mabuchi functional} 
and \emph{adjoint Ding functional}, respectively.
%, and denote them by $\Mad$ and $\Dad$, respectively. 
We say that $(X,L)$ is \emph{K-semistable in the adjoint sense} if the adjoint 
Mabuchi functional is nonnegative on $\cE^1(L)$.
Similarly, we define adjoint versions of Ding semistability, and 
uniform Ding and K-stability. 

Outside the Fano case, the adjoint Mabuchi functional
differs from the Mabuchi functional discussed in the beginning of this section. Up to a conjectural approximation result (Conjecture~\ref{Conj302}), adjoint K-stability is equivalent to Dervan's notion of \emph{twisted K-stability} \cite{Der16} in the `twisted Fano case', \ie when the (not necessarily semipositive) twisting class $T$ is defined so that $L=-(K_X+T)$, see \S\ref{sec:twisted}. 
\begin{thmA}
  For any ample line bundle $L$ on a smooth projective variety $X$, we have:
  \begin{itemize}
  \item[(i)]
    $L$ is K-semistable in the adjoint sense iff it is Ding-semistable in the adjoint sense;
  \item[(ii)]
    $L$ is uniformly K-stable in the adjoint sense iff it is uniformly Ding-stable 
    in the adjoint sense.
  \end{itemize}
\end{thmA}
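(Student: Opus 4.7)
The plan is to reduce both equivalences to Legendre duality statements on the space $\cM^1(X)$ of finite-energy measures, linked to the metric side by the non-Archimedean Calabi--Yau theorem. The Ding $\Rightarrow$ K direction follows from a direct pointwise inequality $M \geq D$ on $\cE^1(L)$; the converse then uses Legendre biduality.

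\textbf{Step 1: $M \geq D$.} For $\varphi \in \cE^1(L)$ and any probability measure $\mu$, the very definition of $L$ gives the trivial bound
\[
L(\varphi) = \inf_X(\varphi + A) \leq \int \varphi \, d\mu + \Ent(\mu).
\]
Applying this at $\mu = \MA(\varphi)$ and using the Calabi--Yau identity $E^*(\MA(\varphi)) + \int \varphi\, d\MA(\varphi) = E(\varphi)$---which expresses that the supremum in the Legendre transform defining $E^*$ is attained at $\varphi$---one obtains
\[
M(\varphi) - D(\varphi) = \int(\varphi + A)\, d\MA(\varphi) - L(\varphi) \geq 0.
\]
Hence $D \geq 0$ implies $M \geq 0$, and $D \geq \varepsilon J$ implies $M \geq \varepsilon J$, settling the Ding $\Rightarrow$ K direction for both (i) and (ii).

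\textbf{Step 2: Converse for semistability.} Through the Calabi--Yau bijection $\MA \colon \cE^1(L)/\R \to \cM^1(X)$, the hypothesis $M \geq 0$ on $\cE^1(L)$ becomes $\Ent \geq E^*$ on $\cM^1(X)$. Since $E$ is concave and upper semi-continuous, Legendre biduality yields
\begin{align*}
E(\varphi)
&= \inf_{\mu \in \cM^1(X)}\left[\int \varphi\, d\mu + E^*(\mu)\right] \\
&\leq \inf_\mu\left[\int \varphi\, d\mu + \Ent(\mu)\right] = \inf_X(\varphi + A) = L(\varphi),
\end{align*}
the last equality being a density statement (approximation of a minimizer of $\varphi + A$ by finite-energy probability measures). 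Therefore $D = L - E \geq 0$.

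\textbf{Step 3: Converse for uniform stability and main obstacle.} The same scheme should apply with the perturbed energy $E' := (1-\varepsilon)E + \varepsilon \sup_X$, which remains concave, upper semi-continuous and translation-equivariant. A direct computation shows $D \geq \varepsilon J$ on $\cE^1(L)$ is equivalent to $E' \leq L$, and by Fenchel duality this reduces to $\Ent \geq (E')^*$ on $\cM^1(X)$. The K-stability hypothesis $M \geq \varepsilon J$, translated via Calabi--Yau, reads $\Ent(\mu) \geq E^*(\mu) + \varepsilon J(\mu)$, where $J(\mu) := J(\varphi_\mu)$ for the normalized Calabi--Yau potential. The whole argument then hinges on the Legendre identification $(E')^*(\mu) = E^*(\mu) + \varepsilon J(\mu)$: testing at $\varphi_\mu$ yields only the one-sided bound $(E')^*(\mu) \geq E^*(\mu) + \varepsilon J(\mu)$, and the reverse inequality is not immediate, since the variational problem for $(E')^*(\mu)$ carries a modified Euler--Lagrange equation owing to the non-smooth term $\varepsilon \sup_X \varphi$. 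Establishing this sharp identification---plausibly via a Fubini--Study approximation from $\cH(L)$ combined with the regularity of $J$ along such deformations---is the principal technical hurdle; once secured, Legendre biduality closes (ii) in parallel with (i), and with the same constant $\varepsilon$.
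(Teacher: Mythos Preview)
Your Steps~1--2 are essentially correct and match the paper's treatment of the semistable case. The paper's argument for (i) is in fact slightly more elementary than your Legendre biduality: rather than invoking $E(\varphi)=\inf_\mu[\int\varphi\,d\mu+E^*(\mu)]$, it only tests the hypothesis $\Ent\ge E^*$ on Dirac masses, obtaining $A(x)\ge E^*(\delta_x)\ge E(\varphi)-\varphi(x)$ for every $\varphi$, and then takes the infimum over $x\in X^{\mathrm{div}}$ to get $L(\varphi)\ge E(\varphi)$ directly. Your route via Proposition~\ref{P405} is equivalent but dresses this up as abstract duality.

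Step~3, however, has a genuine gap, which you yourself flag: the identity $(E')^*(\mu)=E^*(\mu)+\varepsilon J(\varphi_\mu)$ is not established, and it is not clear it holds in general. Testing at $\varphi_\mu$ gives only one inequality; the reverse requires that $\varphi_\mu$ maximize $E(\varphi)+\varepsilon J(\varphi)-\int\varphi\,d\mu$, but the first-variation of this functional at $\varphi_\mu$ picks up an extra term $\varepsilon(\psi(x_g)-\int\psi\,d\mu)$, which vanishes only when $\mu=\delta_{x_g}$. For Dirac masses $\mu=\delta_x$ the identity does hold (both $E(\varphi)-\varphi(x)\le S(x)$ and $\sup_X\varphi-\varphi(x)\le T(x)$ are saturated simultaneously at $\varphi_x$), but for general $\mu\in\cM^1(X)$ there is no reason the maximizer for $E'$ should coincide with the Calabi--Yau potential. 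So the parallel with (i) does not close as written.

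The paper's proof of (ii) avoids this obstacle altogether. It passes through the stability threshold $\delta(L)=\inf_\mu\Ent(\mu)/E^*(\mu)$ and, as in the semistable case, only uses Dirac masses: from $A(x)\ge\delta E^*(\delta_x)$ one plugs in the \emph{rescaled} metric $\delta^{-1}\varphi$ (which lies in $\cE^1(L)$ since $\delta>1$) to obtain $A(x)+\varphi(x)\ge\delta E(\delta^{-1}\varphi)$, hence $D^{\mathrm{ad}}(\varphi)\ge\delta E(\delta^{-1}\varphi)-E(\varphi)$. Normalizing $\sup_X\varphi=0$ turns this into a comparison of $J(\delta^{-1}\varphi)$ with $J(\varphi)$, and the key input is the scaling estimate $J(\delta^{-1}\varphi)\le\delta^{-(1+1/n)}J(\varphi)$ from~\cite{trivval}. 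This yields $D^{\mathrm{ad}}\ge\varepsilon' J$ with $\varepsilon'=1-\delta^{-1/n}$. Note in particular that the constant is \emph{not} preserved; your hope of recovering the same $\varepsilon$ would require the sharp Legendre identity you could not prove, and the paper makes no such claim.
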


The proof is an adaptation to the non-Archimedean case of ideas of
Berman~\cite{BermThermo}, who was inspired by thermodynamics. It is based 
on the non-Archimedean Calabi--Yau theorem together with the fact that 
the functionals $L$ and $E$ on $\cE^1(L)$ are the Legendre transforms 
of the functionals $\Ent$ and $E^*$, respectively, on $\cM^1(X)$.

The adjoint stability notions above were defined in terms of metrics of finite
energy. This is because the Calabi--Yau theorem is an assertion about 
metrics and measures of finite energy. It is natural to ask whether the adjoint stability
notions remain unchanged if we replace $\cE^1(L)$ by the space $\cH(L)$
(\ie ample test configurations). The answer is `yes' for adjoint Ding stability, since the Ding
functional is continuous under decreasing limits. In the Fano case, the answer
is also `yes' by~\cite{BBJ15,FujitaValcrit}. We expect the answer to be `yes' in general,
even though the Mabuchi functional is not continuous under decreasing limits.
%
% 
%%%%%%%%%%%%%%%%%%%%%%%%%%%%%%%%%%%%%%%%%%%%%%%%%%%%%%%%%%%%%%%%%%%
%
%
\subsection*{Filtrations, norms, and metrics}
Donaldson, Sz\'ekelyhidi and others have suggested to
strengthen the notion of K-stability by allowing more general objects than
test configurations. One generalization is the class $\cE^1(L)$ above.
Another one, explored by Sz\'ekelyhidi~\cite{Sze15}, is given by
(graded) \emph{filtrations} of the section ring $R=R(X,L)$. 
Such filtrations are (see~\S\ref{s:filtnorm}) 
in 1--1 correspondence with \emph{graded norms} $\|\cdot\|_\bullet$ on $R$,
\ie the data of a non-Archimedean $k$-vector space norm $\|\cdot\|_m$ on $R_m:=H^0(X,mL)$ 
for each $m\ge1$ such that $\|s\otimes s'\|_{m+m'}\le\|s\|_m\|s'\|_{m'}$ 
for $s\in R_m$, $s'\in R_{m'}$. 
A graded norm is \emph{bounded} if there exists $C\ge1$ such that
$C^{-m}\le\|\cdot\|_m\le C^m$ on $R_m\setminus\{0\}$ for all $m$.
To such a graded norm we associate a bounded psh metric on $L$,
the \emph{asymptotic Fubini--Study metric} $\FS(\|\cdot\|_\bullet)$. 

Our next result characterizes the range of the asymptotic Fubini--Study operator.
A bounded metric $\f\in\PSH(L)$ is \emph{regularizable from below}
if it is the limit of an increasing net of positive metrics. For example,
any continuous psh metric is regularizable from below.
The analogous notion on domains in $\C^n$ was studied by Bedford~\cite{Bed80}.
\begin{thmB}
  A psh metric on $L$ lies in the image of the asymptotic Fubini--Study operator iff
  it is regularizable from below.
\end{thmB}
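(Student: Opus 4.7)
The plan is to characterize the image of $\FS$ using its partial inverse, the \emph{sup-norm map} $\f\mapsto\|\cdot\|_{\f,\bullet}$ that sends a bounded psh metric $\f$ on $L$ to the graded norm $\|s\|_{\f,m}:=\sup_X|s|\,e^{-m\f}$ for $s\in R_m$. Submultiplicativity and boundedness of this norm are immediate from boundedness of $\f$, so it defines a bounded graded norm in the sense of \S\ref{s:filtnorm}.

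\textbf{Forward direction (``only if'').} Fix a bounded graded norm $\|\cdot\|_\bullet$. For each sufficiently divisible $m$, the single norm $\|\cdot\|_m$ on $R_m$ produces a positive metric $\psi_m:=\tfrac{1}{m}\FS(\|\cdot\|_m)\in\cH(L)$. Submultiplicativity $\|s\otimes s'\|_{m+m'}\le\|s\|_m\|s'\|_{m'}$ translates, via the maximum-of-logarithms-of-sections formula for $\psi_m$, into the superadditivity inequality
\[
(m+m')\,\psi_{m+m'}\ge m\,\psi_m+m'\,\psi_{m'}.
\]
In particular the sequence $(\psi_{2^n})_{n\ge 0}$ is pointwise increasing, and its limit (after usc regularization) coincides with $\FS(\|\cdot\|_\bullet)$ by the definition of the asymptotic Fubini--Study operator. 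This exhibits $\FS(\|\cdot\|_\bullet)$ as the limit of an increasing net of positive metrics.

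\textbf{Backward direction (``if'').} The argument rests on two facts about the sup-norm map. First, for any bounded $\f\in\PSH(L)$, the tautological bound $|s(x)|\,e^{-m\f(x)}\le\|s\|_{\f,m}$ yields
\[
\FS(\|\cdot\|_{\f,\bullet})\le\f.
\]
Second, for every $\psi\in\cH(L)$ one has $\FS(\|\cdot\|_{\psi,\bullet})=\psi$: writing $\psi=\tfrac1m\max_i(\log|s_i|+c_i)$ with $s_i\in R_m$, one checks $\|s_i\|_{\psi,m}=e^{-c_i}$, so that the Fubini--Study metric extracted from the sections $s_i$ already recovers $\psi$ pointwise. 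Now assume $\f=\lim_j\uparrow\f_j$ with $\f_j\in\cH(L)$. Since $\f_j\le\f$ we have $\|\cdot\|_{\f_j,m}\ge\|\cdot\|_{\f,m}$, and antimonotonicity of $\FS$ in the graded norm combined with the second fact gives $\FS(\|\cdot\|_{\f,\bullet})\ge\FS(\|\cdot\|_{\f_j,\bullet})=\f_j$ for every $j$. Passing to the supremum over $j$ yields $\FS(\|\cdot\|_{\f,\bullet})\ge\f$, and together with the first fact we conclude $\FS(\|\cdot\|_{\f,\bullet})=\f$, so $\f$ lies in the image.

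\textbf{Main obstacle.} The crux is the identity $\FS(\|\cdot\|_{\psi,\bullet})=\psi$ for positive metrics. The inequality $\le$ is automatic; the reverse inequality requires that for the canonical defining sections $s_i$ of $\psi$, the supremum defining $\|s_i\|_{\psi,m}$ is actually attained with value $e^{-c_i}$, which uses the piecewise-affine structure of positive metrics on the trivially-valued Berkovich space $X$ and the fact that each linear piece is controlled by one of the $s_i$. A secondary technical point is ensuring that the usc regularization implicit in the definition of $\FS(\|\cdot\|_\bullet)$ interacts compatibly with suprema of increasing nets of positive metrics, which is handled by the general pluripotential framework of \cite{trivval}.
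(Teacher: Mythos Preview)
Your argument is correct and follows the same strategy as the paper: the supremum graded norm $\|\cdot\|_{\f,\bullet}$ serves as a one-sided inverse to $\FS$. The paper organizes the computation slightly differently, introducing the envelope $Q(\f):=\usc\sup\{\psi\in\cH(L):\psi\le\f\}$ and proving $Q(\f)=\FS(\|\cdot\|_{\f,\bullet})$ for \emph{every} bounded metric $\f$ (Proposition~\ref{P403}); Theorem~B then follows since $\f=Q(\f)$ characterizes $\PSH^\uparrow(L)$. Your route through the special case $\FS(\|\cdot\|_{\psi,\bullet})=\psi$ for $\psi\in\cH(L)$, combined with monotonicity of $\FS$ in the norm, is the same computation unpacked.

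One correction: the equality $\|s_i\|_{\psi,m}=e^{-c_i}$ you assert can fail---if $s_i$ is redundant in the max defining $\psi$ (e.g.\ another term $\log|s_j|_0+c_j$ dominates it everywhere), then $\|s_i\|_{\psi,m}<e^{-c_i}$. Fortunately only the inequality $\|s_i\|_{\psi,m}\le e^{-c_i}$ is needed, and this is immediate from $m\psi\ge\log|s_i|_0+c_i$: it already gives $\tfrac1m\FS(\|\cdot\|_{\psi,m})\ge\psi$, and equality follows from the tautological bound. So the ``main obstacle'' you flag---locating a point where each piece is witnessed---never arises, and the argument is more elementary than your last paragraph suggests. (A minor side remark: for a general norm $\|\cdot\|_m$ the metric $\tfrac1m\FS(\|\cdot\|_m)$ lies in $\cH(L)_\R$ rather than $\cH(L)$, but this is harmless since regularizability from below by continuous psh metrics is equivalent to regularizability by positive metrics.)
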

It follows from the construction of the asymptotic Fubini--Study operator that any metric in 
its image is regularizable from below.
To prove Theorem~B we construct a one-sided inverse,
the \emph{supremum graded norm} $\|\cdot\|_{\f,\bullet}$ of any bounded $\f\in\PSH(L)$. 
The formula $\f=\FS(\|\cdot\|_{\f,\bullet})$ can be viewed as a non-Archimedean 
$L^\infty$-version of Bergman kernel asymptotics.

\smallskip
The asymptotic Fubini--Study operator is not injective in general, and we study
the lack of injectivity.
To each bounded graded norm is associated a \emph{limit measure}~\cite{BC11},
a probability measure on $\R$ that describes the asymptotic distribution of 
norms of vectors in $R_m$ as $m\to\infty$.
More generally, Chen and Maclean~\cite{Chen-Maclean} showed that 
any two bounded graded norms on $R$ 
induce a \emph{relative limit measure}. The second moment of 
this measure (\ie of the 
corresponding random variable) defines a semidistance $d_2$ 
on the space of bounded graded norms. More generally, there is a 
semidistance $d_p$ for any $p\in[1,\infty)$. One can show that 
two bounded graded norms are at $d_p$-distance zero for some $p$
iff they are so for all $p$; in this case the graded norms are called
\emph{equivalent}. For $p=2$, the semidistance $d_2$ on bounded graded norms coincides with the limit pseudo-metric introduced in \cite{Cod18}. 
\begin{thmC}
  Two bounded graded norms induce the same associated Fubini--Study metric
  iff they are equivalent.
\end{thmC}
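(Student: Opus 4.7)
The natural strategy is to route both implications through the \emph{supremum graded norm} $\|\cdot\|_{\varphi,\bullet}$ introduced in the proof of Theorem~B, treating it as a canonical representative. The identity $\varphi=\FS(\|\cdot\|_{\varphi,\bullet})$ combined with the observation that $\|\cdot\|_{\varphi,\bullet}$ is \emph{the smallest} bounded graded norm with $\FS$-image $\varphi$ (this is essentially the defining inequality $|s(x)|\le\|s\|_m\,e^{m\varphi(x)}$ unpacked when $\varphi=\FS(\|\cdot\|_\bullet)$) shows that if $\FS(\|\cdot\|_\bullet)=\FS(\|\cdot\|'_\bullet)=\varphi$, then $\|\cdot\|_{\varphi,\bullet}\le\|\cdot\|_\bullet$ and $\|\cdot\|_{\varphi,\bullet}\le\|\cdot\|'_\bullet$. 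The forward direction thus reduces to the single assertion that every bounded graded norm $\|\cdot\|_\bullet$ is equivalent to $\|\cdot\|_{\FS(\|\cdot\|_\bullet),\bullet}$, after which the triangle inequality for $d_p$ yields $d_p(\|\cdot\|_\bullet,\|\cdot\|'_\bullet)=0$.

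The core step is therefore to prove $d_p(\|\cdot\|_\bullet,\|\cdot\|_{\varphi,\bullet})=0$ when $\varphi=\FS(\|\cdot\|_\bullet)$. Since $\|\cdot\|_{\varphi,\bullet}\le\|\cdot\|_\bullet$, the relative limit measure $\nu$ on $\R$ is supported in $(-\infty,0]$, and it suffices to show $\int|t|^p\,d\nu=0$. Equivalently, for every $\e>0$ the proportion of jumping numbers of $\|\cdot\|_m$ that exceed the corresponding jumping numbers of $\|\cdot\|_{\varphi,m}$ by more than $\e m$ must tend to zero. I would derive this from a volume comparison: both normalized counting measures $\tfrac{1}{N_m}\sum_i\delta_{m^{-1}\log\|s_i^{(m)}\|_m}$ and the analogous one for $\|\cdot\|_{\varphi,\bullet}$ converge weakly to the same limit measure, namely the Duistermaat--Heckman-type distribution attached to $\varphi$ through $\MA(\varphi)$. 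One half of the convergence is immediate from $\|\cdot\|_{\varphi,\bullet}\le\|\cdot\|_\bullet$; the other half is the nontrivial input and amounts to a non-Archimedean Bergman-kernel estimate relating $\dim\{s\in R_m:\|s\|_m\le e^{-mt}\}$ to a volume read off from $\MA(\varphi)$.

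For the converse, assume $\|\cdot\|_\bullet$ and $\|\cdot\|'_\bullet$ are equivalent and set $\varphi=\FS(\|\cdot\|_\bullet)$, $\varphi'=\FS(\|\cdot\|'_\bullet)$. Feeding both norms through the previous paragraph, each is equivalent to its own supremum representative, so $\|\cdot\|_{\varphi,\bullet}$ and $\|\cdot\|_{\varphi',\bullet}$ are mutually equivalent; by the extremal characterization recalled in the first paragraph this yields $\|\cdot\|_{\varphi,\bullet}\le\|\cdot\|_{\varphi',\bullet}$ up to $e^{o(m)}$ and vice versa, so both supremum norms dominate each other after renormalizing sections by $e^{\pm\e m}$ on all but a vanishing proportion of a joint orthogonal basis. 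Passing through the pointwise definition of $\FS$ as the upper envelope of $\tfrac{1}{m}\log|s|^2$ over unit-norm $s\in R_m$ then forces $\varphi\le\varphi'+\e$ and $\varphi'\le\varphi+\e$ for all $\e>0$, hence $\varphi=\varphi'$.

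The main obstacle I foresee is the Bergman-kernel type asymptotic of the second paragraph: it is the only step that truly exploits the algebro-geometric content of $R(X,L)$ rather than the formal properties of norms and the $\FS$ operator, and controlling the sublevel-set dimensions \emph{uniformly in} $t$ (not just for fixed $t$) is what feeds into $d_p$ rather than just $d_\infty$.
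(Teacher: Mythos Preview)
Your forward direction (same FS metric $\Rightarrow$ equivalent) is correct and is essentially the paper's route via Lemma~\ref{L413}: every bounded graded norm is equivalent to its supremum norm $\|\cdot\|_{\varphi,\bullet}$, and the ``Bergman-kernel'' input you isolate is precisely Theorem~\ref{T407}, which in the ordered case gives $d_1(\|\cdot\|_\bullet,\|\cdot\|_{\varphi,\bullet})=\vol(\|\cdot\|_\bullet,\|\cdot\|_{\varphi,\bullet})=E(\varphi,\varphi)=0$.

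The converse has a genuine gap. From $d_1(\|\cdot\|_{\varphi,\bullet},\|\cdot\|_{\varphi',\bullet})=0$ you cannot deduce that the norms dominate each other up to $e^{o(m)}$ on \emph{all} sections: that is the condition $m^{-1}d_\infty\to 0$, which is strictly stronger than equivalence (see the explicit counterexample in the remark following Corollary~\ref{C402}). You only control all but a vanishing proportion of a joint orthogonal basis, and since $\FS$ is a pointwise \emph{supremum} over all sections, the exceptional sections can still determine the value of the metric at any given point; so ``$\varphi\le\varphi'+\e$'' does not follow. The appeal to the ``extremal characterization'' is circular: minimality of $\|\cdot\|_{\varphi,\bullet}$ is only among norms with FS-image equal to $\varphi$, which is exactly what you are trying to establish for $\|\cdot\|_{\varphi',\bullet}$.

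What is missing is a mechanism that converts the integral condition $d_1=0$ into a pointwise equality of metrics. The paper supplies this via pluripotential theory: pass to the ordered situation through $\|\cdot\|''_\bullet:=\|\cdot\|_\bullet\vee\|\cdot\|'_\bullet$ (so $d_1(\|\cdot\|_\bullet,\|\cdot\|''_\bullet)=0$ by~\eqref{e415}), apply the same volume--energy identity of Theorem~\ref{T407} to get $E(\varphi,\psi)=0$ where $\psi:=\FS(\|\cdot\|''_\bullet)\le\varphi$, and then invoke Lemma~\ref{L405}: if two finite-energy metrics satisfy $\varphi\ge\psi$ and $E(\varphi,\psi)=0$, then $\varphi=\psi$, since $E(\varphi,\psi)\ge\tfrac{1}{n+1}I(\varphi,\psi)$ and $I$ is nondegenerate. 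This yields $\varphi=\psi=\varphi'$ without any section-by-section comparison. The paper packages both directions at once in the identity $d_1(\|\cdot\|_\bullet,\|\cdot\|'_\bullet)=E(\varphi,Q(\varphi\wedge\varphi'))+E(\varphi',Q(\varphi\wedge\varphi'))$ of Corollary~\ref{C405}.
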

This theorem is proved by exhibiting a formula for the $d_1$-semidistance:
\begin{equation}\label{e433}
  d_1(\|\cdot\|_\bullet,\|\cdot\|'_\bullet)
  =E(\f,Q(\f\wedge\f'))+E(\f',Q(\f\wedge\f')),
\end{equation}
where $\f:=\FS(\|\cdot\|_\bullet)$, $\f':=\FS(\|\cdot\|'_\bullet)$, and
$Q(\f\wedge\f')$ is the largest psh metric that is regularizable from below
and dominated by $\f\wedge\f':=\min\{\f,\f'\}$.
In fact, the right-hand side of~\eqref{e433} defines a distance on
the space $\PSH^\uparrow(L)$ of psh metrics regularizable from below.
This is analogous to the \emph{Darvas distance} in the 
complex analytic case~\cite{Dar15}. The asymptotic Fubini--Study operator
now becomes an isometric bijection between the space of equivalence 
classes of bounded graded norms and the space $\PSH^\uparrow(L)$.

To prove~\eqref{e433}, the main step is to establish the equality
\begin{equation}\label{e434}
  E(\f,\f')
  =\vol(\|\cdot\|_\bullet,\|\cdot\|'_\bullet),
\end{equation}
where the \emph{relative volume} $\vol(\|\cdot\|_\bullet,\|\cdot\|'_\bullet)$
is the barycenter of the relative limit measure.
By taking the two graded norms as supremum norms of continuous metrics on $L$,
we recover a version of the main results of~\cite{BE18,BGJKM16} in the 
trivially valued case.

\smallskip
The $L^2$-norm of a (graded) filtration introduced by Sz\'ekelyhidi
is equal to the variance of the relative limit measure with respect to the trivial
graded filtration.
It follows that \emph{a filtration has $L^2$-norm zero
iff its associated Fubini--Study metric is constant}.
Sz\'ekelyhidi also defined a notion of (Donaldson--)Futaki invariant of a
(graded) filtration $\cF$. We show that if $X$ is uniformly K-stable,
then the Donaldson--Futaki invariant is strictly positive for every filtration of 
positive norm.

% 
% 
%%%%%%%%%%%%%%%%%%%%%%%%%%%%%%%%%%%%%%%%%%%%%%%%%%%%%%%%%%%%%%%%%%%
%
%
\subsection*{A valuative criterion for adjoint K-stability}
Next we study the valuative criterion for K-stability of Fujita~\cite{FujitaValcrit} 
and Li~\cite{LiEquivariant} using psh metrics.
There is a subset 
$\Xval\subset X$ consisting of \emph{valuations} of the function field of $X$ (the latter being in fact the disjoint union of $\Yval$, as $Y$ ranges over irreducible 
subvarieties of $X$). To each point $x\in\Xval$, we can associate several invariants.
First we have the log discrepancy $A(x)\in[0,+\infty]$. 
Second, given an ample line bundle $L$ on $X$, the valuation
$x$ defines a graded norm on the section ring $R=R(X,L)$, see~\cite{BKMS}. 
When bounded, this norm induces a limit measure on $\R$, whose 
barycenter is denoted by $S(x)\in[0,+\infty)$, and 
can be viewed as the \emph{expected vanishing order} of elements of $R$ along $x$. 
When the filtration is unbounded, we set $S(x)=+\infty$.
\begin{thmD}
  For any point $x\in\Xval$, we have $S(x)=E^*(\d_x)$, 
  the energy of the Dirac mass at $x$. We have $S(x)=\infty$ iff
  the point $x$ is pluripolar, \ie there exists $\f\in\PSH(L)$ with
  $\f(x)=-\infty$. If $S(x)<\infty$,
  then the unique solution to the Monge--Amp\`ere equation 
  \begin{equation*}
    \MA(\f_x)=\d_x
  \end{equation*}
  normalized by $\f_x(x)=0$ is continuous.
\end{thmD}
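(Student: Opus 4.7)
The plan is to attach to $x$ a canonical psh metric $\varphi_x$ via the asymptotic Fubini--Study construction, to identify $\MA(\varphi_x) = \delta_x$, and then to extract $S(x) = E^*(\delta_x)$ from the relative volume formula~\eqref{e434} together with the variational principle underlying the non-Archimedean Calabi--Yau theorem.

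Each $x \in \Xval$ carries a graded norm $\|\cdot\|_{x,\bullet}$ on $R(X,L)$, which is bounded iff $S(x) < \infty$. Assuming the latter, set $\varphi_x := \FS(\|\cdot\|_{x,\bullet})$; by Theorem~B this is a bounded element of $\PSH^\uparrow(L)$, and evaluating the defining sup of $\FS$ at the point $x$ itself yields $\varphi_x(x) = 0$. Applying~\eqref{e434} against the trivial graded norm gives $E(\varphi_x) = \vol(\|\cdot\|_{x,\bullet}, \|\cdot\|_{\triv,\bullet}) = S(x)$, hence $E^*(\delta_x) \geq E(\varphi_x) - \varphi_x(x) = S(x)$.

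The reverse inequality $E^*(\delta_x) \leq S(x)$ would follow from $\MA(\varphi_x) = \delta_x$ combined with the variational characterization of Monge--Amp\`ere solutions from the Calabi--Yau theorem. For a divisorial valuation $x = c\,\ord_E$, $\varphi_x$ coincides with the model metric attached to $cE$ on a suitable snc model, and the Monge--Amp\`ere identity is an intersection computation on that model. For general $x$, I would approximate by divisorial valuations (dense in $\Xval$), pass to the limit using monotone continuity of $\MA$, and control energy convergence via~\eqref{e434}. Calabi--Yau then gives uniqueness of the normalized solution. Continuity of $\varphi_x$ will follow because its lsc regularization from below (namely itself, since $\varphi_x \in \PSH^\uparrow(L)$) agrees with its usc regularization from above, obtained by dominating $\varphi_x$ by the continuous divisorial approximations.

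For the pluripolar dichotomy, if $S(x) = \infty$ then $\|\cdot\|_{x,\bullet}$ is unbounded, and a decreasing limit of $\FS$ metrics of its bounded truncations produces a psh metric taking the value $-\infty$ at $x$, so $x$ is pluripolar; conversely, if $S(x) < \infty$ then $\varphi_x$ is bounded, and a truncation--energy argument rules out any $\varphi \in \PSH(L)$ with $\varphi(x) = -\infty$ by contradicting the finiteness of $E^*(\delta_x)$. The principal obstacle in the entire plan is the Monge--Amp\`ere identity $\MA(\varphi_x) = \delta_x$ for arbitrary $x \in \Xval$: the divisorial approximation must be controlled finely enough to guarantee simultaneous convergence of the metrics in $\cE^1(L)$ and of their Monge--Amp\`ere measures to $\delta_x$.
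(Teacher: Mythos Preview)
Your plan has a genuine gap at its base case. You assert that for a divisorial valuation $x = c\,\ord_E$, the metric $\varphi_x := \FS(\|\cdot\|_{x,\bullet})$ ``coincides with the model metric attached to $cE$ on a suitable snc model,'' so that $\MA(\varphi_x) = \delta_x$ is an intersection computation. This is false in general: the graded norm $\|\cdot\|_{x,\bullet}$ is finitely generated (equivalently, $\varphi_x \in \cH(L)$) only when $x$ is a \emph{dreamy} divisorial valuation in Fujita's sense, and the paper explicitly notes (citing~\cite{Kur03}) that this fails for generic divisorial $x$. Without this, your divisorial approximation scheme has no anchor. There is a secondary subtlety: you claim $\varphi_x(x) = 0$ by evaluating the Fubini--Study sup at $x$, but $\FS(\|\cdot\|_{x,\bullet})$ is the \emph{usc regularization} of $\sup_m \varphi_m$, and you have not ruled out that this regularization bumps the value up at $x$.

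The paper's route is structurally different. It first \emph{defines} $\varphi_x$ as the envelope $\sup\{\varphi \in \PSH(L) \mid \varphi(x) \le 0\}$, and for \emph{quasimonomial} $x$ proves $\MA(\varphi_x) = \delta_x$ by realizing $\varphi_x$ as $P(f_x)$ for an explicit continuous function $f_x$ built on a dual complex, then invoking the orthogonality property of psh envelopes---no model-metric intersection computation is used. For general nonpluripolar $x$, the approximation is not by arbitrary divisorial valuations but by the retractions $x_j := r_{\cX_j}(x)$ onto dual complexes; this gives a monotone net $(\varphi_{x_j})$ and the crucial convergence $S(x_j) \nearrow S(x)$ (Lemma~\ref{L409}), from which $\varphi_x(x) = 0$ is \emph{deduced} via the variational identity $S(x) = E(\varphi_x) - \varphi_x(x)$ rather than asserted directly. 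Continuity then comes from the sandwich $\varphi_{x_j} \le \varphi_x \le \varphi_{x_j} + \varphi_x(x_j)$, which gives uniform convergence. The identification with $\FS(\|\cdot\|_{x,\bullet})$ is established only at the end, as a consequence.
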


\smallskip
The adjoint Mabuchi functional can be written as 
$\Mad(\f)=\Ent(\MA(\f))-E^*(\MA(\f))$.
By the Calabi--Yau theorem, $L$ is therefore K-semistable in the adjoint sense
iff $\Ent(\mu)\ge E^*(\mu)$ for all $\mu\in\cM^1(X)$. 
Note also that $\Mad(\f_x)=A(x)-S(x)$. 
By studying the entropy and energy functionals, we prove
\begin{thmE}
  The following conditions are equivalent:
  \begin{itemize}
  \item[(i)]
    $L$ is K-semistable in the adjoint sense;
  \item[(ii)]
    $\Mad(\f_x)\ge 0$ for all nonpluripolar $x\in\Xval$;
  \item[(iii)]
    $A(x)\ge S(x)$ for all nonpluripolar $x\in\Xval$;
  \item[(iv)]
    $A(x)\ge S(x)$ for all divisorial valuations $x\in\Xval$.
  \end{itemize}
\end{thmE}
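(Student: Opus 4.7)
The plan is to combine the Calabi--Yau bijection $\MA\colon\cE^1(L)/\R\to\cM^1(X)$, the identity $\Mad(\f)=\Ent(\MA(\f))-E^*(\MA(\f))$, and Theorem~D to organize the implications, with the main technical effort reserved for the passage from divisorial to arbitrary nonpluripolar valuations. The Calabi--Yau theorem immediately recasts (i) as the integral inequality $\Ent(\mu)\ge E^*(\mu)$ for all $\mu\in\cM^1(X)$. For a nonpluripolar $x\in\Xval$, Theorem~D gives $S(x)=E^*(\d_x)<\infty$, so $\d_x\in\cM^1(X)$ and the (continuous) potential $\f_x\in\cE^1(L)$ normalized by $\f_x(x)=0$ satisfies $\MA(\f_x)=\d_x$; consequently
\[ \Mad(\f_x)=\Ent(\d_x)-E^*(\d_x)=A(x)-S(x),\]
since $\Ent(\d_x)=A(x)$. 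This yields (ii) $\Leftrightarrow$ (iii) and (i) $\Rightarrow$ (ii) immediately, while (iii) $\Rightarrow$ (iv) is trivial because a divisorial valuation has finite log discrepancy, hence $S\le A<\infty$ and is nonpluripolar by Theorem~D.

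For (iii) $\Rightarrow$ (i) I would exploit the convexity of $E^*$ as a Legendre transform. Given $\mu\in\cM^1(X)$, we may assume $\Ent(\mu)<\infty$, so that $A<\infty$ $\mu$-a.e.\ and hence $\mu$-a.e.\ point of $X$ is nonpluripolar. For any $\f\in\cE^1(L)$ normalized by $\sup_X\f=0$, finite energy of $\mu$ forces $\int\f\,d\mu>-\infty$ and hence $\f>-\infty$ $\mu$-a.e.; integrating the pointwise bound $E(\f)-\f(x)\le E^*(\d_x)$ against $\mu$ yields
\[ E(\f)-\int\f\,d\mu\le\int_X E^*(\d_x)\,d\mu(x)=\int_X S(x)\,d\mu(x)\le\int_X A(x)\,d\mu(x)=\Ent(\mu), \]
where the final inequality uses (iii). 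Taking the supremum over $\f$ gives $E^*(\mu)\le\Ent(\mu)$, and hence (i).

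The main obstacle is (iv) $\Rightarrow$ (iii): promoting the divisorial inequality $A\ge S$ to arbitrary nonpluripolar $x\in\Xval$. The natural approach is to approximate $x$ by divisorial valuations $x_n$ satisfying $\limsup_n A(x_n)\le A(x)$ and $\liminf_n S(x_n)\ge S(x)$, so that (iv) applied to $x_n$ passes to the limit. Concretely one retracts $x$ to the quasi-monomial locus in the dual complex of an SNC model---where $A$ is linear on each simplex---and approximates quasi-monomial points along rational rays by divisorial valuations; what is required is a suitable continuity (or at least appropriate semicontinuity) of $S$ under such retractions and rational approximation, which should follow from the description of $S$ as the barycenter of a limit measure together with the good behavior of these measures along dual complexes. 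Handling valuations of higher rational rank and establishing the requisite semicontinuity of $S$ alongside the lower semicontinuity of $A$ is the delicate technical heart of this step.
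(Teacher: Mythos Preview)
Your proposal is correct and follows the same strategy as the paper, which packages the equivalences through the stability threshold $\d(L)$ (Corollary~\ref{C407} and Theorem~\ref{T409}); the ingredients you identify---$\Mad(\f_x)=A(x)-S(x)$, the bound $E^*(\mu)\le\int S\,d\mu$, and approximation via retractions to dual complexes---are precisely those used there. Two small points deserve comment.

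First, in (iii)$\Rightarrow$(i) you assert that $A(x)<\infty$ forces $x$ to be nonpluripolar. This is true but not formal; it amounts to $\a(L)>0$ (Izumi's inequality) and is not a consequence of (iii). The paper sidesteps this by noting that the retraction argument you sketch for (iv)$\Rightarrow$(iii) actually yields $A\ge S$ on \emph{all} of $X$, not just on nonpluripolar points: both $A$ and $S$ are continuous on each dual complex $\D_\cX$ (Theorem~\ref{T403}(iii), Proposition~\ref{P410}), divisorial points are dense there, so (iv) gives $A\ge S$ on every $\D_\cX$, and then $A(x)=\sup_\cX A(\retr_\cX(x))\ge\sup_\cX S(\retr_\cX(x))=S(x)$ for every $x$ by Theorem~\ref{T403}(iv) and Lemma~\ref{L409}. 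Your integration argument (which is Proposition~\ref{P411}) then gives (i) directly from (iv), with no need to know that points of finite log discrepancy are nonpluripolar. (Incidentally, your justification of (iii)$\Rightarrow$(iv) is circular as written---you invoke $S\le A$ to get nonpluripolarity---but the step is immediate since divisorial points are quasimonomial, hence nonpluripolar by Proposition~\ref{P410}.)

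Second, the step (iv)$\Rightarrow$(iii) is less delicate than you suggest. The required behavior of $S$ under retraction is just Lemma~\ref{L409}: $S(\retr_\cX(x))$ increases to $S(x)$, which is immediate from $\f(\retr_\cX(x))\searrow\f(x)$ for every $\f\in\PSH(L)$. The continuity of $S$ on $\D_\cX$ (Proposition~\ref{P410}) comes from the equicontinuity there of the family $\{\f-\sup_X\f:\f\in\cH(L)\}$, not from limit-measure considerations; no Okounkov-body machinery is needed.
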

\begin{thmEp}
  The following conditions are equivalent:
  \begin{itemize}
  \item[(i)]
    $L$ is uniformly K-stable in the adjoint sense;
  \item[(ii)]
    there exists $\e>0$ such that 
    $\Mad(\f_x)\ge\e J(\f_x)$ for all nonpluripolar $x\in\Xval$;
  \item[(iii)] 
    there exists $\e>0$ such that 
   $A(x)\ge(1+\e)S(x)$ for all for all nonpluripolar $x\in\Xval$;
  \item[(iv)]
    there exists $\e>0$ such that 
    $A(x)\ge (1+\e)S(x)$ for all divisorial valuations $x\in\Xval$.
  \end{itemize}
\end{thmEp}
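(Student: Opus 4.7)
The plan is to combine the formula $\Mad(\f) = \Ent(\MA(\f)) - E^*(\MA(\f))$ recalled in the introduction, Theorem~D, and the standard comparison between the functionals $I$ and $J$. For any nonpluripolar $x\in\Xval$, Theorem~D produces a continuous $\f_x\in\cE^1(L)$ with $\MA(\f_x)=\d_x$ and $\f_x(x)=0$, giving
$$
\Mad(\f_x) = \Ent(\d_x) - E^*(\d_x) = A(x) - S(x),
$$
using $\Ent(\d_x)=A(x)$ (by the definition of entropy) and $E^*(\d_x)=S(x)$ (Theorem~D). The implications (i)$\Rightarrow$(ii) (specialize to $\f=\f_x$) and (iii)$\Rightarrow$(iv) (restriction) are then immediate.

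For (ii)$\Leftrightarrow$(iii), introduce the translation-invariant functional $I(\f):=\int\f\,d\MA(0)-\int\f\,d\MA(\f)$. Since $\MA(0)=\d_{\xtriv}$ and any psh metric in the trivially valued setting attains its supremum at $\xtriv$, we have $I(\f_x)=\sup_X\f_x-\f_x(x)$. Combined with the Legendre identity $S(x)=E(\f_x)-\f_x(x)$ and the definition $J(\f_x)=\sup_X\f_x-E(\f_x)$, this gives $I(\f_x)=S(x)+J(\f_x)$. The non-Archimedean analog of the classical bound $(n+1)^{-1}I\leq J\leq \tfrac{n}{n+1}I$ (where $n=\dim X$) then yields $J(\f_x)/n\leq S(x)\leq n\,J(\f_x)$, so (ii) and (iii) are equivalent up to rescaling $\e$ by a factor depending only on $n$.

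For (iii)$\Rightarrow$(i), take $\f\in\cE^1(L)$ and set $\mu:=\MA(\f)$. Since $\mu\in\cM^1(X)$ has finite energy, it assigns zero mass to pluripolar sets, so (iii) applies $\mu$-a.e.\ and $\Ent(\mu)=\int A\,d\mu\geq(1+\e)\int S\,d\mu$. Convexity of $E^*$ (as a supremum of affine functions of $\mu$), together with Jensen's inequality for the tautological decomposition $\mu=\int\d_x\,d\mu(x)$, gives $E^*(\mu)\leq\int E^*(\d_x)\,d\mu(x)=\int S\,d\mu$. Combining,
$$
\Mad(\f) = \Ent(\mu)-E^*(\mu) \geq \e\,E^*(\mu) = \e\bigl(I(\f)-J(\f)\bigr) \geq (\e/n)\,J(\f),
$$
establishing (i) with a rescaled $\e$.

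The main obstacle is (iv)$\Rightarrow$(iii), which must propagate the uniform bound $A\geq(1+\e)S$ from divisorial valuations to arbitrary nonpluripolar ones. I would approximate an arbitrary nonpluripolar $x\in\Xval$ by divisorial valuations $x_k$ with $A(x_k)\to A(x)$ and $S(x_k)\to S(x)$, using quasi-monomial approximation combined with Izumi-type estimates to control both the log discrepancy and the expected vanishing order. This is the non-Archimedean counterpart of the Fujita--Li reduction, here formulated for arbitrary ample polarizations rather than just the Fano case.
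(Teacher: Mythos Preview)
Your strategy is essentially the paper's, only unpacked: the paper routes everything through the threshold $\d(L)$ (Theorem~\ref{T405} and Theorem~\ref{T409}), but the ingredients are the same --- the identity $\Mad(\f_x)=A(x)-S(x)$, the equality $(I-J)(\f_x)=E^*(\d_x)=S(x)$ (which you recover via $I(\f_x)=T(x)$), the comparability $n^{-1}J\le I-J\le nJ$, and the sublinearity $E^*(\mu)\le\int S\,d\mu$ (your Jensen step is exactly Proposition~\ref{P411}). Your approximation sketch for (iv)$\Rightarrow$(iii) is also the paper's mechanism: pass from divisorial to quasimonomial by continuity of $A$ and $S$ on each dual complex $\D_\cX$ (Theorem~\ref{T403}(iii)(a) and Proposition~\ref{P410}) and density of $\Xdiv$ there, then from quasimonomial to arbitrary $x$ via $A=\sup_\cX A\circ\retr_\cX$ (Theorem~\ref{T403}(iv)) and $S(\retr_\cX(x))\uparrow S(x)$ (Lemma~\ref{L409}).

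There is one genuine gap. In (iii)$\Rightarrow$(i) you assert that a finite-energy measure $\mu=\MA(\f)$ does not charge pluripolar sets. This is not established in the paper or its references for the trivially valued setting, and the paper does \emph{not} use it. The clean fix --- and what the paper actually does in the proof of Theorem~\ref{T409} --- is to first extend the inequality $A\ge(1+\e)S$ from divisorial points to \emph{all} $x\in X$ by the approximation just described. Once the inequality holds pointwise on $X$, integrating against $\mu$ gives $\Ent(\mu)\ge(1+\e)\int S\,d\mu\ge(1+\e)E^*(\mu)$ by Proposition~\ref{P411}, and hence $\Mad(\f)\ge\e(I-J)(\f)\ge(\e/n)J(\f)$, with no appeal to non-charging of pluripolar sets. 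In other words, merge your (iv)$\Rightarrow$(iii) and (iii)$\Rightarrow$(i) into a single step (iv)$\Rightarrow$(i); this removes the gap and is precisely the paper's route.
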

Theorem~E and Theorem~E' generalize Li and Fujita's valuative criterion~\cite{LiEquivariant,FujitaValcrit}
for K-stability to the adjoint setting. Our proof is completely different from theirs.
However, we should mention that Fujita also proves that in~(iv), it suffices
to consider ``dreamy'' valuations $x$. These are points $x\in\Xdiv$ such that 
$\f_x\in\cH(L)$.
%
% 
%%%%%%%%%%%%%%%%%%%%%%%%%%%%%%%%%%%%%%%%%%%%%%%%%%%%%%%%%%%%%%%%%%%
%
%
\subsection*{Adjoint K-stability and uniform K-stability}
Above we have used adjoint notions of K-semistability and uniform K-stability.
As an intermediate notion, we say that $L$ is \emph{K-stable} in the adjoint sense
if $\Mad(\f)\ge0$ for all $\f\in\cE^1(L)$, with equality iff $\f$ is constant.
\begin{thmF}
  If $k=\C$, then $L$ is K-stable in the adjoint sense iff $L$ is uniformly K-stable
  in the adjoint sense.
\end{thmF}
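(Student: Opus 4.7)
The forward direction (uniform K-stability implies K-stability) is immediate, since $\Mad(\f)\ge\e J(\f)>0$ whenever $\f\in\cE^1(L)$ is non-constant. For the converse, I would argue by contradiction: supposing $L$ is K-stable but not uniformly K-stable, the goal is to produce a non-constant $\f_\infty\in\cE^1(L)$ with $\Mad(\f_\infty)\le 0$, violating K-stability.

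The first step is normalization. The failure of uniform K-stability yields a sequence of non-constant $\f_j\in\cE^1(L)$ with $\Mad(\f_j)/J(\f_j)\to 0$. After replacing $\f_j$ by $\f_j-\sup_X\f_j$, one may assume $\sup_X\f_j=0$, and hence $J(\f_j)=-E(\f_j)>0$. A rescaling---implemented via Theorem~B by multiplying the associated supremum graded norm by a positive scalar before applying $\FS$, together with the positive homogeneity of $E$ and $\Mad$---allows one to further normalize to $J(\f_j)=1$, so that $\Mad(\f_j)\to 0$.

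The second step, where the complex hypothesis $k=\C$ enters crucially, is a compactness statement. To each $\f_j\in\cE^1(L)$ one associates a maximal geodesic ray $(\Phi_j^t)_{t\ge 0}$ in the space $\cE^1(L^{\mathrm{an}})$ of Archimedean psh metrics on the complex manifold $X^{\mathrm{an}}$, in such a way that $E(\f_j)$, $J(\f_j)$ and $\Mad(\f_j)$ are recovered as the asymptotic slopes at infinity of the corresponding Archimedean functionals along $\Phi_j^t$. Invoking Darvas's completeness of $(\cE^1(L^{\mathrm{an}}),d_1)$ and a diagonal extraction over rational $t$, one passes to a subsequential limit geodesic ray $(\Phi_\infty^t)$, which via the slope correspondence defines a limit $\f_\infty\in\cE^1(L)$ with $\sup\f_\infty=0$. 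Lower semicontinuity of $J$ under this convergence gives $J(\f_\infty)\ge 1$, so $\f_\infty$ is non-constant; and lower semicontinuity of $\Mad$, following from lsc of $\Ent$ on $\cM^1(X)$, continuity of $E^*$ under the Calabi--Yau bijection of Theorem~D, and the weak convergence $\MA(\f_j)\to\MA(\f_\infty)$ along the rays, yields $\Mad(\f_\infty)\le 0$, contradicting K-stability.

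The main obstacle is the Archimedean--non-Archimedean compactness transfer in step two, together with verifying that the limit $\f_\infty$ does not degenerate (\ie lies genuinely in $\cE^1$ with $J(\f_\infty)>0$). The delicate point is that $d_1$-compactness at the Archimedean level must be translated into a usable convergence statement at the level of non-Archimedean $\cE^1$-metrics, and the resulting semicontinuity of $\Mad$ must hold with respect to that convergence; this is precisely where the rich complex pluripotential theory available only when $k=\C$ is essential, and cannot be replaced by purely non-Archimedean arguments.
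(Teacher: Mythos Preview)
Your approach is entirely different from the paper's, and the compactness step you identify as the main obstacle is indeed a genuine gap that you have not closed.

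The paper's proof is short and purely non-Archimedean; it never touches Archimedean geodesic rays or Darvas completeness. It runs as follows. K-stability in the adjoint sense trivially implies K-semistability in the adjoint sense, so $\d(L)\ge 1$ by Corollary~\ref{C407}. By Theorem~\ref{T405}, uniform K-stability in the adjoint sense is equivalent to $\d(L)>1$, so it suffices to rule out $\d(L)=1$. Here the hypothesis $k=\C$ enters through \cite[Theorem~E]{BlumJonsson}: the infimum $\d(L)=\inf_x A(x)/S(x)$ is actually \emph{attained} at some nonpluripolar point $x\in\Xval$. If $\d(L)=1$, this $x$ satisfies $A(x)=S(x)>0$, so by Theorem~\ref{T410} and Corollary~\ref{C401} the metric $\f_x\in\cE^1(L)$ is nonconstant with $\Mad(\f_x)=A(x)-S(x)=0$, contradicting K-stability. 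Thus the role of $k=\C$ is to guarantee a minimizing valuation, not to invoke complex pluripotential compactness.

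By contrast, your step two is a substantial program, not a proof. The correspondence you invoke between arbitrary metrics in the non-Archimedean $\cE^1(L)$ and maximal geodesic rays in the Archimedean $\cE^1$, with matching slopes for $E$, $J$, $\Mad$, is not established in the paper and is delicate in general (it is known for rays arising from test configurations, but you need it for all of $\cE^1(L)$). Even granting that, the extraction of a limiting ray that corresponds to some $\f_\infty\in\cE^1(L)$, together with the precise semicontinuity statements you need (in particular $J(\f_\infty)\ge 1$, which goes the ``wrong way'' for the usual semicontinuity of $J$), are asserted rather than proved. A minor additional point: your rescaling is mis-attributed. Theorem~B concerns the image of the asymptotic Fubini--Study operator; multiplying a graded norm by $\exp(c\,\bullet)$ \emph{translates} the associated metric (Proposition~\ref{P402}(i)) rather than rescaling it. The correct normalization $J(\f_j)=1$ is obtained via the $\R_+^\times$-scaling action $\f\mapsto\f_t$ on $X$, under which $E$, $J$, and $\Mad$ are homogeneous of degree one.
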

This relies on~\cite[Theorem~E]{BlumJonsson}, which guarantees
that the ratio $A(x)/S(x)$ attains its minimum at some nonpluripolar 
point $x\in \Xval$.

In the Fano case, it follows from~\cite{CDS15} and~\cite{BHJ2} that K-stability
is equivalent to uniform K-stability. However, there seems to be no 
algebraic proof of this fact. We would get such a proof if we knew that the point 
$x$ above was a  ``dreamy'' divisorial valuation in the sense of Fujita, \ie 
$\f_x\in\cH(L)$.
% 
% 
%%%%%%%%%%%%%%%%%%%%%%%%%%%%%%%%%%%%%%%%%%%%%%%%%%%%%%%%%%%%%%%%%%%
%
%
\subsection*{Approaches to K-stability}
We have tried to demonstrate that psh metrics form a useful tool for 
studying K-stability. They encompass not only test configurations, but also
graded filtrations/norms on the section ring, and nonpluripolar valuations.
Another way of studying K-stability of Fano varieties 
was introduced by Chi Li~\cite{Li15,LiEquivariant} and involves considering
the cone over the variety. The cone point is a klt singularity, and 
there has been much recent activity on the study of general klt 
singularities~\cite{Blu16,LL16,LX17,BlumLiu}, but we shall not discuss this further here.
%
% 
%%%%%%%%%%%%%%%%%%%%%%%%%%%%%%%%%%%%%%%%%%%%%%%%%%%%%%%%%%%%%%%%%%%
%
%
\subsection*{Organization of paper}
After reviewing some background material from~\cite{trivval} in~\S\ref{S102} we
start in~\S\ref{sec:adjoint} by proving Theorem~A on the equivalence of adjoint K-stability
and Ding-stability. Then, in~\S\S\ref{s:filtnorm}--\ref{sec:asFS}, we study the 
relationship between graded norms/filtrations and psh metrics, proving 
Theorems~B and~C. Finally, the valuative criterion for K-stability
(Theorems~D,~E and~E') is proved in~\S\ref{S405}, as is Theorem~F.
%
% 
%%%%%%%%%%%%%%%%%%%%%%%%%%%%%%%%%%%%%%%%%%%%%%%%%%%%%%%%%%%%%%%%%%%
%
%
\begin{ackn}
  We thank E.~Bedford, R.~Berman, H.~Blum, G.~Codogni, R.~Dervan, A.~Ducros, C.~Favre, T.~Hisamoto,  C.~Li, J.~Poineau 
  and M.~Stevenson for fruitful discussions and comments.
  The first author was partially supported by the ANR grant GRACK.\@
  The second author was partially supported by NSF grant
  DMS-1600011 and the United States---Israel Binational Science Foundation.
\end{ackn}
%
%
% 
%%%%%%%%%%%%%%%%%%%%%%%%%%%%%%%%%%%%%%%%%%%%%%%%%%%%%%%%%%%%%%%%%%%
%
%
%
%\newpage
\section{Background}\label{S102}
For details on the material in this section, see~\cite{trivval}.
%
% 
%%%%%%%%%%%%%%%%%%%%%%%%%%%%%%%%%%%%%%%%%%%%%%%%%%%%%%%%%%%%%%%%%%%
%
%
\subsection{Setup}
Throughout the paper, $k$ is field of characteristic zero, equipped with the 
\emph{trivial} absolute value.
By a $k$-variety we mean an integral separated scheme of finite type over $k$. 
We fix a logarithm $\log\colon\R_+^\times\to\R$,  with inverse $\exp\colon\R\to\R_+^\times$.
%
% 
%%%%%%%%%%%%%%%%%%%%%%%%%%%%%%%%%%%%%%%%%%%%%%%%%%%%%%%%%%%%%%%%%%%
%
%
\subsection{Analytification}
The analytification functor in~\cite[\S 3.5]{BerkBook}
associates to any $k$-variety a $k$-analytic space in the sense of Berkovich. 
Typically we write
$X$ for the analytification and $\Xsch$ for the underlying variety, viewed as a scheme.
Let $\A^n=\A^n_k$, $\P^n=\P^n_k$ and $\G_m=\G_{m,k}$ be the
analytifications of $\bbA^n=\bbA^n_k$, $\bbP^n=\bbP^n_k$, $\bbG_m=\bbG_{m,k}$,
respectively.

The analytification $X$ of $\Xsch$ consists of all 
pairs $x=(\xi,|\cdot|)$, where $\xi\in\Xsch$ 
is a point and $|\cdot|=|\cdot|_x$ is a multiplicative norm on the residue field 
$\kappa(\xi)$ extending the trivial norm on $k$.
We denote by $\cH(x)$ the completion of $\kappa(\xi)$ with respect to this norm. 
The surjective map $\ker\colon X\to\Xsch$ sending $(\xi,|\cdot|)$ to $\xi$ is 
called the \emph{kernel} map. The points in $X$ whose kernel is the generic point of 
$\Xsch$ are the valuations
of the function field of $\Xsch$ that are trivial on $k$. 
They form a subset $\Xval\subset X$. 

There is a section $\Xsch\hookrightarrow X$ of the kernel map,
defined by associating to $\xi\in\Xsch$ the point in $X$
defined by the trivial norm on $\kappa(\xi)$.
The image of the generic point of $\Xsch$ is called the generic point of $X$:
it corresponds to the trivial valuation on $k(X)$.

If $\Xsch=\Spec A$ is affine, with $A$ a finitely generated $k$-algebra, 
$X$ consists of all multiplicative seminorms on $A$ extending the trivial norm on $k$.

The \emph{Zariski topology} on $X$ is the weakest topology in which $\ker\colon X\to\Xsch$ is
continuous. We shall work in \emph{Berkovich} topology, 
the coarsest refinement of the Zariski topology for which 
the following holds: for any open affine $\cU=\Spec A\subset\Xsch$ and any 
$f\in A$, the function $\ker^{-1}(\cU)\ni x\to|f(x)|\in\R_+$ is continuous, where
$f(x)$ denotes the image of $f$ in $k(\xi)\subset\cH(x)$, so that 
$|f(x)|=|f|_x$. 
The subset $\Xval\subset X$ is dense.
In general, $X$ is Hausdorff, locally compact,  and locally path connected;
it is compact iff $\Xsch$ is proper.
We say $X$ is projective (resp.\ smooth) if $\Xsch$ has the 
corresponding properties.

When $X$ is compact, there is a \emph{reduction map} $\redu\colon X\to\Xsch$,
defined as follows.
Let $x\in X$ and set $\xi:=\ker x\in\Xsch$, so that $x$ defines valuation on $\kappa(\xi)$
that is trivial on $k$. If $\cY$ is the closure of $\xi$ in $\Xsch$, then 
$\eta=\redu(x)\in\cY\subset\Xsch$ is the unique point such that 
$|f(x)|\le 1$ for $f\in\cO_{\cY,\eta}$ and $|f(x)|<1$
when further $f(\eta)=0$.
%
% 
%%%%%%%%%%%%%%%%%%%%%%%%%%%%%%%%%%%%%%%%%%%%%%%%%%%%%%%%%%%%%%%%%%%
%
%
\subsection{Divisorial and quasimonomial points}
If $x\in\Xval$, let $s(x):=\trdeg(\widetilde{\cH(x)}/k)$ be the transcendence degree
of $x$ and $t(x):=\dim_\Q\sqrt{|\cH(x)^\times|}$ the rational rank.
%  Here $\widetilde{\cH(x)}$ is the 
% residue field of the non-Archimedean field $\cH(x)$, and $\sqrt{|\cH(x)^\times|}$ is the 
% divisible value group.
The \emph{Abhyankar inequality} says that $s(x)+t(x)\le\dim X$.
A point $x$ is \emph{quasimonomial} if equality holds.
It is \emph{divisorial} if $s(x)=\dim X-1$ and $t(x)=1$. 
We write $\Xqm$ and $\Xdiv$ for the set of quasimonomial and divisorial points,
respectively. Thus $\Xdiv\subset\Xqm\subset\Xval$.

\emph{From now on, assume $X$ is smooth and projective}.
Then any divisorial point is of the form 
$\exp(-c\ord_E)$, where $c>0$ and $E$ is a prime divisor on a smooth 
projective variety $\cY$ that admits a proper birational map onto $\Xsch$.

Similarly, quasimonomial points can be geometrically described
as follows, see~\cite{jonmus}.
A \emph{log smooth pair} $(\cY,\cD)$ over $\Xsch$ is the data of a smooth $k$-variety $\cY$
together with a proper birational morphism $\pi\colon\cY\to\Xsch$,
and $\cD$ a reduced simple normal crossings divisor on $\cY$
such that $\pi$ is an isomorphism outside the support of $\cD$. 
To such a pair we associate a \emph{dual cone complex} $\D(\cY,\cD)$, the cones 
of which are in bijection with strata 
(\ie connected components of intersections of irreducible components) of $\cD$.
We embed $\D(\cY,\cD)$ into $\Xval$ as the set of monomial points with respect to 
local equations of the irreducible components of $\cD$ 
at the generic point of the given stratum. 
The apex of $\D(\cY,\cD)$ is the generic point of $X$.
We have $x\in\Xqm$ iff $x\in\D(\cY,\cD)$
for some log smooth pair $(\cY,\cD)$ over $\Xsch$.

For many purposes, it is better to describe divisorial and quasimonomial points 
using snc test configurations, see below.
%
% 
%%%%%%%%%%%%%%%%%%%%%%%%%%%%%%%%%%%%%%%%%%%%%%%%%%%%%%%%%%%%%%%%%%%
%
%
\subsection{Test configurations}
A \emph{test configuration} $\cX$ for $X$ consists of the following data: 
\begin{itemize}
\item[(i)] a flat and projective morphism of $k$-schemes $\pi:\cX\to\bbA^1_k$;
\item[(ii)] a $\bbG_m$-action on $\cX$ lifting the canonical action on $\bbA^1_k$;  
\item[(iii)] an isomorphism $\cX_1\simto X$. 
\end{itemize}
The \emph{trivial} test configuration for $X$ is given by the product 
$\Xsch\times\bbA^1_k$, with the trivial $\bbG_m$-action on $\Xsch$.
Given test configurations $\cX$, $\cX'$ for $X$ there exists a unique 
$\bbG_m$-equivariant birational map $\cX'\dashrightarrow\cX$ extending the 
isomorphism $\cX'_1\simeq X\simeq\cX_1$. We say that $\cX'$ 
\emph{dominates} $\cX$ if this map is a morphism.
Any two test configurations can be dominated by a third. 
Two test configurations that dominate each other will be identified.

For any test configuration $\cX$ for $X$, we have a \emph{Gauss embedding}
$\sigma_\cX\colon X\to\cXan$, whose image consists of all $k^\times$-invariant 
points $y\in\cXan$ satisfying $|\unipar(y)|=\exp(-1)$, where $\unipar$ is
the coordinate on $\bbA^1$. Each such point has a well-defined reduction
$\redu_\cX(y)\in\cX_0$. Somewhat abusively, we denote the composition
$\redu_\cX\circ\sigma_\cX\colon X\to\cX_0$ by $\redu_\cX$.
When $\cX$ is the trivial test configuration, $\cX_0\simeq X$, and $\redu_\cX$
coincides with the reduction map considered earlier.
%
% 
%%%%%%%%%%%%%%%%%%%%%%%%%%%%%%%%%%%%%%%%%%%%%%%%%%%%%%%%%%%%%%%%%%%
%
%
\subsection{Snc test configurations}
An \emph{snc test configuration} is a test configuration $\cX$
that dominates the trivial test configuration and whose 
central fiber $\cX_0$ has strict normal crossing support. 
By Hironaka's theorem, the set $\SNC(X)$ of snc test configurations
is directed and cofinal in the set of all test configurations.

To any snc test configuration $\cX$ of $X$ is associated a \emph{dual complex} 
$\D_\cX$, a simplicial complex whose simplices are in 1-1 correspondence with 
strata of $\cX_0$, \ie connected components of nonempty intersections of 
irreducible components of $\cX_0$. 
For example, the dual complex of the trivial test configuration has a single vertex, 
corresponding to single stratum $\Xsch\times\{0\}$.

We can view $(\cX,\cX_0)$ as a log smooth pair over $\cX$, whose dual cone complex
$\D(\cX,\cX_0)$ is the cone over the dual complex $\D_\cX$. The $\bbG_m$-equivariance
of $\cX$ implies that the image of $\D(\cX,\cX_0)$ in $\cXan$ consists of 
$k^\times$-invariant points. We identify $\D_\cX$ with the subset of 
$\D(\cX,\cX_0)\subset\cXan$ cut out by the equation $|\unipar|=\exp(-1)$. 
Via the Gauss embedding 
$\sigma_\cX\colon X\to\Xan$, this allows us to view the dual complex
$\D_\cX$ as a subset of $X$. The points in $\D_\cX$ are all quasimonomial,
and every quasimonomial point belongs to some dual complex.
Similarly we define a \emph{retraction} $\retr_\cX\colon X\to\D_\cX$. 
The directed system $(\retr_\cX)_{\cX\in\SNC(X)}$ induces a homeomorphism
$X\simto\varprojlim\D_\cX$.
%
% 
%%%%%%%%%%%%%%%%%%%%%%%%%%%%%%%%%%%%%%%%%%%%%%%%%%%%%%%%%%%%%%%%%%%
%
%
\subsection{Metrics on line bundles}
We view a line bundle $L$ on $X$ as a $k$-analytic
space with a projection $p\colon L\to X$. Its fiber over $x\in X$ is isomorphic
to the Berkovich affine line over the complete residue field $\cH(x)$.
Write $L^\times$ for $L$ with the zero section removed.
We typically work additively, so that a metric on $L$ is function $\phi\colon L^\times\to\R$
such that $|\cdot|_\phi:=\exp(-\phi)\colon L^\times\to\R_+^\times$ 
behaves like a norm on each fiber $p^{-1}(x)$.

If $\phi$ is a metric on $L$, any other metric is of the form $\phi+\f$, 
where $\f$ is a function on $X$, that is, a metric on $\cO_X$.
If $\phi_i$ is a metric on $L_i$, $i=1,2$, then $\phi_1+\phi_2$
is a metric on $L_1+L_2$.
If $\phi$ is a metric on $L$, then $-\phi$ is a metric on $-L$. 
If $\phi$ is a metric on $mL$, where $m\ge 1$, then $m^{-1}\phi$ is a metric on $L$.
If $\phi_1,\phi_2$ are metrics on $L$, so is $\max\{\phi_1,\phi_2\}$.

If $\phi_1$ and $\phi_2$ are metrics on $L$ inducing the same metric $m\phi_1=m\phi_2$
on $mL$ for some $m\ge 1$, then $\phi_1=\phi_2$. We can therefore define a metric on a $\Q$-line
bundle $L$ as the choice of a metric $\phi_m$ on $mL$ for $m$ sufficiently divisible, 
with the compatibility condition $l\phi_m=\phi_{ml}$.
%
% 
%%%%%%%%%%%%%%%%%%%%%%%%%%%%%%%%%%%%%%%%%%%%%%%%%%%%%%%%%%%%%%%%%%%
%
%
\subsection{Metrics as functions}
Any line bundle $L$ on $X$
admits a \emph{trivial metric} $\phitriv$ defined as follows. Given a point $x\in X$,
set $\xi:=\redu(x)\in\Xsch$, and let $t$ be a nonvanishing section of $L$ on an open
neighborhood $\cU\subset\Xsch$ of $\xi$.
Then $t$ defines a nonvanishing analytic section of $L$ on the Zariski open neighborhood 
$\cUan$ of $x$ in $X$, and $\phitriv(t(x))=0$.

As multiplicative notation for the trivial metric, we use $|\cdot|_0=\exp(-\phitriv)$. 
Given a global section $s$ of $L$, the function $|s|_0$ on $X$ has the following
properties. Given $x\in X$, pick $\cU$ and $t$ as above, and write $s=ft$, where 
$f\in\Gamma(\cU,\cO_{\Xsch})$. Then $|s|_0(x)=|f(x)|$.

The trivial metric allows us to think of metrics on $L$ as \emph{functions} on $X$,
and we shall frequently do so in what follows. 
Indeed, if $\phi$ is a metric on $L$, then $\f:=\phi-\phitriv$ is a function on $X$.
In this way, the trivial metric becomes the zero function.
%
% 
%%%%%%%%%%%%%%%%%%%%%%%%%%%%%%%%%%%%%%%%%%%%%%%%%%%%%%%%%%%%%%%%%%%
%
%
\subsection{Metrics from test configurations}
A \emph{test configuration} for a $\Q$-line bundle $L$ on $X$ consists of a 
test configuration $\cX$ for $X$ together with the following data:
\begin{itemize}
\item[(iv)] a $\bbG_m$-linearized $\Q$-line bundle $\cL$ on $\cX$;
\item[(v)] an isomorphism $\cL|_{\cX_1}\simeq L$.
\end{itemize}
If more precision is needed, we say that $(\cX,\cL)$ is a test configuration for $(X,L)$.

Given a $\bbG_m$-action on $(\Xsch,\Lsch)$ we have a \emph{product test configuration}
$(\Xsch\times\bbA^1,\Lsch\times\bbA^1)$, with diagonal action.
If the action on $(\Xsch,\Lsch)$ is trivial, we obtain the 
\emph{trivial} test configuration for $(X,L)$.
A test configuration $(\tcX,\tcL)$ \emph{dominates} another test configuration $(\cX,\cL)$ 
for $(X,L)$, if $\tcX$ dominates $\cX$ and $\tcL$ is the pullback of $\cL$ under $\tcX\to\cX$.

Any test configuration $\cL$ for $L$ induces a metric $\f_\cL$ 
(viewed as function on $L^\times$) on $L$ as follows.
First assume $\cL$ is a line bundle. Using the Gauss embedding 
$\sigma_\cL\colon L\to\cLan$ it suffices to define a metric $\f_\cL$ on $\cLan$.
Pick any point $y\in\cXan$ with $|\unipar(y)|=\exp(-1)$, write
$\xi:=\redu_\cX(y)\in\cX_0$, and let $s$ be a section of $\cL$ 
on a Zariski open neighborhood of $\xi$ in $\cX$ such that $s(\xi)\ne0$. 
Then $s$ defines a section of $\cLan$ on a Zariski open neighborhood of $y$,
and we declare $\f_\cL(s(y))=0$.
One checks that this definition does not depend on the choice of $s$. 
Further, $\f_{m\cL}=m\f_\cL$, which allows us to define $\f_\cL$ when $\cL$
is a $\Q$-line bundle.
The metric $\f_L$ does not change when replacing $\cL$ by a pullback. 
When $\cL$ is the trivial test configuration for $L$, $\phi_\cL$ is 
the trivial metric on $L$.

As in~\cite{BHJ1}, the restriction of the function $\f=\f_\cL$ to $\Xdiv$
can be described as follows.
We may assume $\cL$ dominates the trivial test configuration 
via $\rho\colon\cX\to\Xsch\times\bbA^1$. Then 
$\cL-\rho^*(\Lsch\times\bbA^1)=\cO_{\cX}(D)$ for a 
divisor $D$ on $\cX$ cosupported on $\cX_0$, and
\begin{equation*}
  \f_\cL(v_E)=\frac{\ord_E(D)}{\ord_E(\cX_0)}
\end{equation*}
for every irreducible component
$E$ of $\cX_0$, where $v_E\in\Xdiv$ is the restriction of $\ord_E/\ord_E(\cX_0)$ 
from $k(\cX)$ to $k(X)$.
Varying $\cX$ determines $\f|_{\Xdiv}$ completely.
%
% 
%%%%%%%%%%%%%%%%%%%%%%%%%%%%%%%%%%%%%%%%%%%%%%%%%%%%%%%%%%%%%%%%%%%
%
%
\subsection{Positive metrics}\label{S401}
Suppose $L$ is ample.
A test configuration $\cL$ for $L$ is \emph{ample} (resp.\ semiample)
if $\cL$ is relatively ample (resp.\ semiample) for the canonical morphism $\cL\to\bbA^1$.
A metric on $L$ is \emph{positive} if it is defined by a semiample test configuration for $L$. 
Every positive metric is in fact associated to a unique normal ample
test configuration for $L$ (which may not dominate the trivial test configuration).
A positive metric is the same thing
as a \emph{Fubini--Study metric} (or FS metric), \ie
a metric $\f$ (viewed as function on $X$) of the form
\begin{equation}\label{e201}
  \f:=m^{-1}\max_\a(\log|s_\a|_0+\la_\a),
\end{equation}
where $m\ge1$,  $(s_\a)$ is a finite set of global sections of $mL$
without common zero, and $\la_\a\in\Z$. 

Denote by $\cH(L)_\R$ the set of metrics of the form~\eqref{e201}
with $\la_\a\in\R$ for each $\a$. Such metrics are continuous and can be uniformly
approximated by positive metrics.
%
% 
%%%%%%%%%%%%%%%%%%%%%%%%%%%%%%%%%%%%%%%%%%%%%%%%%%%%%%%%%%%%%%%%%%%
%
%
\subsection{DFS metrics}
A \emph{DFS metric} on a $\Q$-line bundle $L$ on $X$ is a metric of the form 
$\phi_1-\phi_2$, with $\phi_i$ an FS metric on $L_i$, $i=1,2$, where 
$L=L_1-L_2$. Equivalently, a DFS metric is a metric defined by a test configuration for $L$.
The set of DFS metrics on $L$ is dense in the space of continuous metrics in 
the topology of uniform convergence. It plays the role of smooth metrics in complex 
geometry.
%
% 
%%%%%%%%%%%%%%%%%%%%%%%%%%%%%%%%%%%%%%%%%%%%%%%%%%%%%%%%%%%%%%%%%%%
%
%
\subsection{Monge--Amp\`ere operator and energy functionals}
As a special case of the theory of Chambert-Loir and 
Ducros~\cite{CD12}, there is a \emph{mixed Monge--Amp\`ere operator}
that associates to any DFS metrics $\phi_1,\dots,\phi_n$ on line bundles
$L_1,\dots,L_n$ on $X$, a signed finite atomic measure
$dd^c\phi_1\wedge\dots\wedge dd^c\phi_n$ on $X$, supported on 
divisorial points, and of mass $(L_1\cdot\ldots\cdot L_n)$.
This measure is positive if the $L_i$ are positive metrics. 

As above, we think of the metrics as functions 
$\f_i=\phi_i-\phitriv$ on $X$, and write
\begin{equation*}
  dd^c\phi_i=\omega_{\f_i}=\omega+dd^c\f_i,
\end{equation*}
so that the mixed Monge--Amp\`ere measure becomes
$\omega_{\f_1}\wedge\dots\wedge\omega_{\f_n}$.
Here $\omega$ and $\omega_{\f_i}$ can be viewed as currents~\cite{CD12}
or $\delta$-forms~\cite{GK17} on $X$, but we do not need this terminology.

Now suppose $L_1=\dots=L_n=:L$ is an ample $\Q$-line bundle.
We then write 
\begin{equation*}
  \MA(\f):=V^{-1}\omega_\f^n,
\end{equation*}
where $V=(L^n)$. If $\f\in\cH(L)$, this is a probability measure on $X$.
Since $\f=0$ corresponds to the trivial metric on $L$, 
$\MA(0)=\omega^n$ is a Dirac mass at the generic point of $X$.

If $\f$ and $\p$ are DFS metrics on $L$, the 
\emph{Monge--Amp\`ere energy} of $\f$ with respect to $\p$ is defined by 
\begin{equation*}
  E(\f,\p)=\frac{1}{(n+1)V}\sum_{j=0}^n\int(\f-\p)\omega_\f^j\wedge\omega_\p^{n-j}.
\end{equation*}
In addition to the functional $E$, we set
\begin{equation*}
  I(\f,\p)=\int(\f-\p)(\MA(\p)-\MA(\f))
  \quad\text{and}\quad
  J_\p(\f)=\int(\f-\p)\MA(\p)-E(\f,\p).
\end{equation*}
On positive metrics, the functionals $I$, $J$, and $I-J$ are nonnegative and comparable:
\begin{equation}\label{e410}
  n^{-1}J\le I-J\le nJ.
\end{equation}
The functionals $E$, $I$ and $J$ naturally have two arguments. Often it is convenient to 
fix the second argument $\p$ as the trivial metric, and write
$E(\f)$, $I(\f)$ and $J(\f)$.
%
% 
%%%%%%%%%%%%%%%%%%%%%%%%%%%%%%%%%%%%%%%%%%%%%%%%%%%%%%%%%%%%%%%%%%%
%
%
\subsection{Psh metrics}
For the rest of this section, $L$ is an ample $\Q$-line bundle.
A \emph{psh} (plurisubharmonic) metric on $L$ is the pointwise limit of any
decreasing net of positive metrics, provided the limit is $\not\equiv-\infty$. We denote
by $\PSH(L)$ the set of all psh metrics. 

By subtracting the trivial metric, we can view the elements of $\PSH(L)$
as functions on $X$ with values in $\R\cup\{-\infty\}$. 
This is analogous to the notion of $\omega$-psh functions in complex geometry;
we can think of $\omega$ as the curvature form of the trivial metric,

We equip $\PSH(L)$ with the topology of pointwise convergence on $\Xqm$. 
If $\f\in\PSH(L)$ and $x\in X$, then the net $(\f\circ\retr_\cX(x))_{X\in\SNC(X)}$ 
is decreasing, with limit $\f(x)$.
Any $\f\in\PSH(L)$ takes its maximum value at the generic point of $X$.
If $\f\in\PSH(L)$, then $\f+c\in\PSH(L)$ for all $c\in\R$. 
If $\f,\p\in\PSH(L)$, then $\f\vee\p:=\max\{\f,\p\}\in\PSH(L)$.

If $(\f_j)_j$ is a decreasing net in $\PSH(L)$, and $\f$ is the pointwise limit of $(\f_j)$,
then $\f\in\PSH(L)$, or $\f_j\equiv-\infty$. If instead $(\f_j)_j$ is an
increasing net that is bounded from above, then $\f_j$ converges in $\PSH(L)$
to some $\f\in\PSH(L)$. Thus $\f=\lim_j\f_j$ pointwise on $\Xqm$, and 
in fact $\f$ is the usc regularization of the pointwise limit of the $\f_j$.

The set of continuous psh metrics can be viewed as the set of \emph{uniform}
limits of positive metrics, in agreement with~\cite{Zha95,GublerLocal}. 
In particular, any metric in $\cH(L)_\R$ is psh.
%
% 
%%%%%%%%%%%%%%%%%%%%%%%%%%%%%%%%%%%%%%%%%%%%%%%%%%%%%%%%%%%%%%%%%%%
%
%
\subsection{Metrics of finite energy}
We extend the Monge--Amp\`ere energy functional to all metrics 
$\f\in\PSH(L)$ by setting 
$E(\f):=\inf\{E(\p)\mid \f\le\p\in\cH(L)\}\in\R\cup\{-\infty\}$ and 
define the class $\cE^1(L)$ as the set of metrics of finite energy, $E(\f)>-\infty$.

The mixed Monge--Amp\`ere operator and the functionals $I$, $J$ 
extend to $\cE^1(L)$ and are continuous under decreasing and increasing 
(but not arbitrary) limits, as is $E$. The inequalities~\eqref{e410} hold on $\cE^1(L)$.
If $\f,\p\in\cE^1(L)$, then $I(\f,\p)=0$ iff $\f-\p$ is a constant, and similarly for $J$ and $I-J$.
%
% 
%%%%%%%%%%%%%%%%%%%%%%%%%%%%%%%%%%%%%%%%%%%%%%%%%%%%%%%%%%%%%%%%%%%
%
%
\subsection{Measures of finite energy and the Calabi--Yau theorem}
The \emph{energy} of a probability measure $\mu$ on $X$ is defined by 
\begin{equation*}
  E^*(\mu)=\sup\{E(\f)-\int\f\,d\mu\mid \f\in\cE(L)\}\in\R\cup\{+\infty\}.
\end{equation*}
This quantity depends
on the ample line bundle $L$, but the set $\cM^1(X)$ of measures of finite energy,
$E^*(\mu)<+\infty$, does not.
The following result will be referred to as the
\emph{Calabi--Yau theorem}. It is proved in~\cite{trivval}, 
see also~\cite{nama,BFJSimons,YZ17},
and is a trivially valued analogue of the fundamental results in~\cite{Yau,GZ2,BBGZ}.
\begin{thm}\cite{trivval}
  The Monge--Amp\`ere operator defines a bijection
  \begin{equation*}
    \MA\colon\cE^1(L)/\R\to\cM^1(X)
  \end{equation*}
  between plurisubharmonic metrics of finite energy modulo constants, 
  and Radon probability measures of finite energy. 
  For any $\f\in\cE^1(L)$, we have $E^*(\MA(\f))=(I-J)(\f)$.
\end{thm}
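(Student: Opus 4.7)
The strategy is variational, closely paralleling the Berman--Boucksom--Guedj--Zeriahi treatment of the complex Monge--Amp\`ere equation. For each $\mu\in\cM^1(X)$, introduce
$$
F_\mu(\f) := E(\f) - \int \f\, d\mu
$$
on $\cE^1(L)$. By the very definition of $E^*$ we have $\sup_{\cE^1(L)} F_\mu = E^*(\mu)$; the functional $F_\mu$ is concave in $\f$ and invariant under $\f\mapsto\f+c$, and its formal Euler--Lagrange equation is $\MA(\f)=\mu$. The plan is to (a) produce a maximizer of $F_\mu$, (b) derive the Euler--Lagrange equation rigorously, and (c) establish uniqueness; the identity $E^*(\MA(\f))=(I-J)(\f)$ will then drop out immediately.

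For step (a), take a maximizing net $(\f_j)\subset\cE^1(L)$, normalized by $\sup_X\f_j=0$. Weak compactness of the sup-normalized subclass of $\PSH(L)$ (pointwise convergence on $\Xqm$) yields a subsequential cluster value $\f\in\PSH(L)$ after passing to upper regularizations of partial suprema, which forms a decreasing sequence of sup-normalized elements of $\PSH(L)$. Continuity of $E$ under monotone limits then gives $E(\f)\ge\limsup E(\f_j)$. The delicate point is the semi-continuity of $\f\mapsto\int\f\,d\mu$: this is where the finite energy assumption $\mu\in\cM^1(X)$ becomes essential, ruling out concentration of $\mu$ on pluripolar sets and permitting a monotone-approximation reduction to the continuous case. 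The outcome is $F_\mu(\f)\ge\sup F_\mu$, hence $\f\in\cE^1(L)$ is a maximizer.

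For step (b), I perturb by continuous test functions: given $\chi\in C^0(X)$, let $\p_t$ denote the largest psh metric dominated by $\f+t\chi$. Since $\f-|t|\sup|\chi|\le\f+t\chi$ and the left-hand side is in $\cE^1(L)$, so is $\p_t$. The Perron envelope satisfies $\p_t\le\f+t\chi$ with equality on the support of $\MA(\f)$, and the standard differentiability of the Monge--Amp\`ere energy along envelope perturbations gives
$$
\left.\tfrac{d}{dt}\right|_{t=0} E(\p_t) = \int \chi\, \MA(\f).
$$
Combined with $F_\mu(\p_t)\le F_\mu(\f+t\chi)$ and the maximality of $\f$, this forces $\int\chi\,\MA(\f)=\int\chi\,d\mu$ for every continuous $\chi$, hence equality of measures. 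I expect this step to be the main obstacle: the derivative formula for $E$ along non-smooth envelope perturbations is the technical heart of the variational approach, and in the trivially valued non-Archimedean setting it must be set up via monotone approximation by elements of $\cH(L)$ together with the continuity of the mixed Monge--Amp\`ere operator under such limits.

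For uniqueness, if $\MA(\f_1)=\MA(\f_2)$ with $\f_i\in\cE^1(L)$, the polarization identity $I(\f_1,\f_2)=\int(\f_1-\f_2)(\MA(\f_2)-\MA(\f_1))=0$, combined with the stated nondegeneracy of $I$ on $\cE^1(L)$, forces $\f_1-\f_2$ to be constant. Finally, applying steps (a)--(b) to $\mu:=\MA(\f)$ shows that $\f$ itself is a maximizer of $F_{\MA(\f)}$, so
$$
E^*(\MA(\f)) = F_{\MA(\f)}(\f) = E(\f) - \int \f\, \MA(\f) = (I-J)(\f),
$$
the last equality being the direct computation from the definitions of $I$ and $J$ with the trivial metric as reference, using $\MA(0)=\d_{\xtriv}$.
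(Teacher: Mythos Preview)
This theorem is not proved in the present paper: it is quoted from \cite{trivval} (see also \cite{nama,BFJSimons} for the discretely valued case), so there is no in-paper proof to compare against. Your outline is nonetheless the correct one, and is essentially the variational strategy of \cite{BBGZ} as transported to the non-Archimedean setting in those references.

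Two minor comments. First, for the identity $E^*(\MA(\f))=(I-J)(\f)$ you do not need to re-invoke existence and uniqueness: concavity of $E$ gives $E(\psi)-E(\f)\le\int(\psi-\f)\,\MA(\f)$ for all $\psi\in\cE^1(L)$, so $\f$ already maximizes $F_{\MA(\f)}$, and the algebraic identity $E(\f)-\int\f\,\MA(\f)=(I-J)(\f)$ finishes it. Second, your description of the compactness step is slightly garbled: in \cite{trivval} the set $\{\f\in\PSH(L):\sup_X\f=0\}$ is shown to be compact for pointwise convergence on $\Xqm$, and $E$ is usc in that topology, so one extracts a convergent subnet directly rather than manufacturing a decreasing sequence via upper envelopes. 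You are right, however, that the differentiability of $E$ along envelope perturbations (equivalently, the orthogonality relation $\int(P(\f+t\chi)-\f-t\chi)\,\MA(P(\f+t\chi))=0$) is the technical core; in the non-Archimedean setting this is where the algebraic input---continuity of the psh envelope, ultimately resting on vanishing theorems and a Nakai--Moishezon type criterion---enters, and it is the main content of \cite{nama,trivval}.
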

%
% 
%%%%%%%%%%%%%%%%%%%%%%%%%%%%%%%%%%%%%%%%%%%%%%%%%%%%%%%%%%%%%%%%%%%
%
%
\subsection{Scaling action and homogeneity}
As $k$ is trivially valued, 
there is a \emph{scaling action} of the multiplicative group $\R_+^\times$ on $X$
defined by powers of norms.
We denote by $x^t$ the image of $x$ by $t\in\R_+^\times$.
There is an induced action on functions.
If $\f$ is a metric on a $\Q$-line bundle (viewed as function on $X$)
and $t\in\R_+^\times$, we denote by 
$\f_t$ the metric on $L$ defined by $\f_t(x^t)=t\f(x)$. 
This action preserves the classes $\cH(L)_\R$, $\PSH(L)$, and $\cE^1(L)$. 
The energy functional $E$ is homogeneous in the sense $E(\f_t)=tE(\f)$
for $\f\in\cE^1(L)$ and $t\in\R_+^\times$. The same is true for $I$ and $J$.

There is also an induced action on Radon probability measures.
We have $E^*(t_*\mu)=tE^*(\mu)$ for any $t\in\R_+^\times$,
so the space $\cM^1(X)$ is invariant under scaling. 
Further, $\MA(\f_t)=t_*\MA(\f)$ for $\f\in\cE^1(L)$ and $t\in\R_+^\times$.
%
%
% 
%%%%%%%%%%%%%%%%%%%%%%%%%%%%%%%%%%%%%%%%%%%%%%%%%%%%%%%%%%%%%%%%%%%
%
%
%
%\newpage
\section{Adjoint K-stability and Ding-stability}\label{sec:adjoint}
In~\cite{BermThermo}, Berman introduced ideas from thermodynamics to the 
existence of K\"ahler--Einstein metrics on Fano manifolds. 
In particular, he used the Legendre transform, together
with the (Archimedean) Calabi--Yau theorem to prove that the Mabuchi functional 
is proper on the space of smooth positive metrics on $L$ if and only if the Ding functional
is proper. When $X$ admits no nontrivial vector fields, 
these two conditions are, further, equivalent to the existence of a K\"ahler--Einstein 
metric on $X$~\cite{BBEGZ,DR17}

Here we adapt Berman's ideas to the non-Archimedean setting. Namely, we use the 
Legendre transform and the non-Archimedean Calabi--Yau theorem to prove that 
uniform Ding stability is equivalent to uniform K-stability.
In fact, we work on an arbitrary polarized smooth variety, using adjoint versions of the Ding
and Mabuchi functionals, the latter inducing Dervan's notion of twisted K-stability in the twisted Fano case \cite{Der16}. The results here are used in~\cite{BBJ18},
to give criteria for the existence of twisted K\"ahler--Einstein metrics.

\smallskip
\emph{In the rest of the paper, $X$ is smooth and projective, and 
$L$ is an ample $\Q$-line bundle}.
%
% 
%%%%%%%%%%%%%%%%%%%%%%%%%%%%%%%%%%%%%%%%%%%%%%%%%%%%%%%%%%%%%%%%%%%
%
%
\subsection{Log discrepancy}
There is a natural \emph{log discrepancy} function 
\begin{equation*}
  A=A_X\colon X\to\R_+\cup\{+\infty\}.
\end{equation*}
This is perhaps most naturally viewed as a metric on the canonical 
bundle~\cite{TemkinMetric}, see~\cite{MMS}, but here we present it differently.
First consider a divisorial point $x\in\Xdiv$. There exists $c>0$, 
a proper birational morphism $\cY\to\Xsch$, with $\cY$ smooth, and a
prime divisor $\cD\subset\cY$, such that $x=\exp(-c\,\ord_\cD$).
We then set 
\begin{equation}\label{e407}
  A(x)=c\left(1+\ord_\cD(K_{\cY/\Xsch})\right),
\end{equation}
where $K_{\cY/\Xsch}$ is the relative canonical divisor.

The log discrepancy functional was extended to all of $\Xval$ 
in~\cite{jonmus} (following earlier work in~\cite{valtree,hiro}).
Instead of explaining this construction here, we state the following characterization, 
which is proved in the appendix.
\begin{thm}\label{T403}
  There exists a unique maximal lsc extension $A\colon X\to[0,+\infty]$ 
  of the function $A\colon\Xdiv\to\R_+^\times$ defined above.
  Further, this function satisfies:
  \begin{itemize}
  \item[(i)]
    $A=+\infty$ on $X\setminus\Xval$ and $A<+\infty$ on $\Xqm$;
  \item[(ii)]
    $A(x^t)=tA(x)$ for all $x\in X$ and $t\in\R_+^\times$;
  \item[(iii)]
    for any snc test configuration $\cX$ for $X$, we have
    \begin{itemize}
    \item[(a)]
      $A$ is continuous on the dual complex $\D_\cX$ and affine 
      on each simplex;
    \item[(b)]
      for any point $x\in X$, we have $A(x)\ge A(\retr_\cX(x))$, with equality iff 
      $x\in\D_\cX$;
    \end{itemize}
  \item[(iv)]
    $A=\sup_\cX A\circ\retr_\cX$, where $\cX$ ranges over snc test configurations
    for $X$. 
  \end{itemize}
  Here $\retr_\cX\colon X\to\D_\cX$ denotes the retraction onto the dual complex.
\end{thm}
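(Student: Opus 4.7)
The plan is to construct $A$ directly via formula (iv) and verify the remaining properties, taking the divisorial formula (\ref{e407}) as the input data. For each snc test configuration $\cX$, I would first extend $A$ to the dual complex $\D_\cX\subset\Xqm$ as the piecewise affine function whose values at the divisorial vertices are given by (\ref{e407}); the standard monomial description of quasimonomial valuations (as in \cite{jonmus}) shows that this extension agrees with the classical log discrepancy wherever the latter is already defined. Compatibility across different snc models reduces, by passing to common refinements, to a computation with relative canonical divisors on blowups of a toroidal pair. This yields a well-defined continuous function on $\Xqm$ satisfying (iii)(a), together with (ii) on $\Xqm$ (by the homogeneity of (\ref{e407}) combined with the scaling-equivariance of the monomial embedding of each simplex).

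I would then extend $A$ to all of $X$ by formula (iv). The central step is the monotonicity $A\circ\retr_\cX\le A\circ\retr_{\cX'}$ whenever $\cX'$ dominates $\cX$. Using the compatibility $\retr_\cX=\retr_\cX\circ\retr_{\cX'}$, this amounts to the inequality $A(y)\ge A(\retr_\cX(y))$ for $y\in\D_{\cX'}$. Writing $y$ in barycentric coordinates in a simplex of $\D_{\cX'}$ and tracking how $\retr_\cX$ acts simplex-wise---it contracts precisely the new vertices coming from components of $\cX'_0$ that are exceptional over $\cX$---the inequality reduces to the nonnegativity of log discrepancies of exceptional divisors over an snc pair. \emph{This is the main technical obstacle}, and essentially the heart of the theorem. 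Once established, the supremum in (iv) is an increasing net of continuous functions, hence its limit $A$ is lsc on $X$, and (iii)(b) holds with equality exactly when $\retr_{\cX'}(x)\in\D_\cX$ for all $\cX'$ dominating $\cX$, i.e.\ when $x\in\D_\cX$.

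For maximality, let $A'$ be any lsc extension of $A|_{\Xdiv}$. Since $\Xdiv$ is dense in each $\D_\cX$ and $A$ is continuous there, applying lsc of $A'$ along a net of divisorial points converging to $x\in\D_\cX$ gives $A'(x)\le A(x)$ on $\Xqm$. For general $x\in X$, the homeomorphism $X\simto\varprojlim\D_\cX$ ensures $\retr_\cX(x)\to x$, and lsc of $A'$ then yields $A'(x)\le\liminf_\cX A'(\retr_\cX(x))\le\sup_\cX A(\retr_\cX(x))=A(x)$. Uniqueness of the maximal lsc extension is automatic, since the pointwise maximum of two lsc extensions is again an lsc extension agreeing with $A$ on $\Xdiv$.

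Finally, property (i)---that $A\equiv+\infty$ on $X\setminus\Xval$---follows by observing that for such $x$, the kernel $\ker(x)$ is not the generic point of $\Xsch$, so $x$ lies in the analytification of a proper closed subvariety of $\Xsch$; choosing snc models that repeatedly blow up neighborhoods of this subvariety and tracking the log discrepancies of the exceptional components shows that $A\circ\retr_\cX(x)$ is unbounded as $\cX$ varies. Property (ii) on all of $X$ then follows from the scaling-equivariance of each $\retr_\cX$ combined with (ii) on $\Xqm$.
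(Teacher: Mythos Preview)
Your overall strategy---defining $A$ by the supremum formula~(iv) and verifying the remaining properties from there---is a reasonable route, and your identification of the monotonicity $A\circ\retr_{\cX'}\ge A\circ\retr_\cX$ as the technical core is correct. The paper, however, proceeds quite differently. It first defines $A_X$ on $\Xval$ via log smooth pairs $(\cY,\cD)$ over $\Xsch$ (the construction of~\cite{jonmus}), declares $A_X\equiv+\infty$ on $X\setminus\Xval$, and proves lower semicontinuity at those points using the Izumi inequality applied to a section of an ample line bundle vanishing on the kernel. The test-configuration statements~(iii) and~(iv) are then deduced by a separate device: the Gauss extension $\sigma\colon X\to X\times\P^1$, together with the identity $A_{X\times\P^1}(\sigma(x))=A_X(x)+1$ (Lemma~\ref{L406}), which transports the already-established properties of $A_{X\times\P^1}$ on the dual \emph{cone} complex $\D(\cX,\cX_0)$ back to the simplicial slice $\D_\cX$. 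Your approach avoids this detour but must redo the monotonicity argument from~\cite{jonmus} inside the test-configuration framework; the paper's approach simply quotes it.

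There is one concrete error in your write-up. Your justification of~(ii)---``scaling-equivariance of each $\retr_\cX$''---is false: the dual complex $\D_\cX$ is the slice $|\unipar|=\exp(-1)$ of the cone $\D(\cX,\cX_0)$ and is \emph{not} invariant under the $\R_+^\times$-action, so $\retr_\cX(x^t)\neq(\retr_\cX(x))^t$ in general. The fix is easy once maximality is in hand: for fixed $t>0$ the function $x\mapsto t^{-1}A(x^t)$ is lsc and agrees with $A$ on $\Xdiv$ by~(\ref{e407}), hence is $\le A$ by maximality; replacing $t$ by $t^{-1}$ gives equality. Your argument for $A\equiv+\infty$ on $X\setminus\Xval$ via ``repeated blowups'' is also rather sketchy---you would need to track exactly where $\retr_\cX(x)$ lands for non-valuative $x$ and why its log discrepancy is forced upward---whereas the paper's Izumi-based argument bypasses this entirely.
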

%
%
%%%%%%%%%%%%%%%%%%%%%%%%%%%%%%%%%%%%%%%%%%%%%%%%%%%%%%%%%%%%%%%%%%%
%
%
\subsection{Entropy}
The \emph{entropy} of a Radon probability measure $\mu$ on $X$ is defined by
\begin{equation*}
  \Ent(\mu)=\int_XA(x)\,d\mu(x)\in[0,+\infty].
\end{equation*}
where $A$ is the log discrepancy function on $X$.
The integral is well-defined since $A$ is lsc. 

\begin{rmk}
  If $\mu$, $\nu$ are probability measures on a space $X$, then 
  the classical entropy of $\mu$ with respect to $\nu$ is defined as
  $\int\log\left(\frac{d\mu}{d\nu}\right)\,d\mu$,
  when $\mu\ll\nu$, and $+\infty$ otherwise.
  Our notion of entropy can be seen as a non-Archimedean
  degeneration of the usual notion, see~\cite{BHJ2}.
\end{rmk}

The entropy behaves well with respect to regularizations of measures:
\begin{lem}\label{L403}
  For any snc model $\cX$ of $X$, set $\mu_\cX:=(\retr_\cX)_*\mu$. Then 
  $(\Ent(\mu_\cX))_\cX$ forms an increasing net converging to $\Ent(\mu)$.
\end{lem}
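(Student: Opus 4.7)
The plan is to rewrite $\Ent(\mu_\cX)$ via change of variables and then reduce the lemma to a pointwise monotone statement about the log discrepancy $A$, at which point everything follows from Theorem~\ref{T403} and monotone convergence.

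First, since $\mu_\cX=(\retr_\cX)_*\mu$, a direct change of variable gives
\begin{equation*}
  \Ent(\mu_\cX)=\int_X A\,d\mu_\cX=\int_X A\circ\retr_\cX\,d\mu,
\end{equation*}
noting that $A\circ\retr_\cX$ is well-defined and finite $\mu$-a.e.\ since $\retr_\cX$ lands in $\D_\cX\subset\Xqm$, where $A<+\infty$ by Theorem~\ref{T403}(i). The integral thus makes sense in $[0,+\infty]$. So it suffices to prove two pointwise assertions about the net $(A\circ\retr_\cX)_{\cX\in\SNC(X)}$: first that it is increasing, and second that it converges pointwise to $A$ on $X$.

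For monotonicity, I would use the compatibility of the retractions coming from the homeomorphism $X\simto\varprojlim\D_\cX$ recalled in \S\ref{S102}: if $\cX'\in\SNC(X)$ dominates $\cX$, then $\D_\cX\subset\D_{\cX'}$ and $\retr_\cX=\retr_\cX\circ\retr_{\cX'}$. Applying Theorem~\ref{T403}(iii)(b) at the point $\retr_{\cX'}(x)\in X$, we get $A(\retr_{\cX'}(x))\ge A(\retr_\cX(\retr_{\cX'}(x)))=A(\retr_\cX(x))$ for every $x\in X$, which is exactly the desired pointwise monotonicity. Taking integrals against $\mu$ gives that $(\Ent(\mu_\cX))_\cX$ is increasing.

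Pointwise convergence $A\circ\retr_\cX\uparrow A$ on $X$ is then precisely Theorem~\ref{T403}(iv), upgraded from a $\sup$ to a limit by the monotonicity just established and by the fact that $\SNC(X)$ is directed (and cofinal among test configurations). Since each $A\circ\retr_\cX$ is nonnegative and the net is increasing, the monotone convergence theorem for increasing nets of nonnegative measurable functions yields
\begin{equation*}
  \lim_\cX\Ent(\mu_\cX)
  =\lim_\cX\int_X A\circ\retr_\cX\,d\mu
  =\int_X A\,d\mu
  =\Ent(\mu),
\end{equation*}
with both sides possibly $+\infty$, completing the proof.

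The only genuinely non-routine point is the identity $\retr_\cX=\retr_\cX\circ\retr_{\cX'}$ when $\cX'$ dominates $\cX$; but this is built into the inverse-limit description $X\simto\varprojlim_{\cX\in\SNC(X)}\D_\cX$ already recalled in the background section, so there is no real obstacle. All other steps are either the definition of push-forward, the properties of $A$ collected in Theorem~\ref{T403}, or monotone convergence.
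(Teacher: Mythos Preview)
Your approach is essentially the paper's: rewrite $\Ent(\mu_\cX)=\int A\circ\retr_\cX\,d\mu$, observe via Theorem~\ref{T403} that $(A\circ\retr_\cX)_\cX$ is an increasing net with pointwise limit $A$, and pass to the limit under the integral. One caution on the last step: there is no general ``monotone convergence theorem for increasing nets of nonnegative measurable functions'' (consider the net of characteristic functions of finite subsets of $[0,1]$, ordered by inclusion). What makes the passage to the limit legitimate here is that each $A\circ\retr_\cX$ is continuous---$\retr_\cX$ is continuous and $A$ is continuous on $\D_\cX$ by Theorem~\ref{T403}(iii)(a)---hence lsc, and $\mu$ is Radon; the paper invokes precisely this version of monotone convergence by citing~\cite[7.12]{Folland}.
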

\begin{proof}
  Note that $\Ent(\mu_\cX)=\int(A\circ\retr_\cX)\mu$. The result follows since 
  $\mu$ is a Radon measure and $(A\circ\retr_\cX)_\cX$ is an increasing 
  net of lsc functions converging to $A$, see~\cite[7.12]{Folland}.
\end{proof}
The entropy functional is a \emph{linear} functional. In particular,
it is convex. It is homogeneous and lsc on the space of Radon probability measures
(with weak convergence) since $A$ is lsc. However, it is not continuous, since $A$
is not continuous.
% 
%
%%%%%%%%%%%%%%%%%%%%%%%%%%%%%%%%%%%%%%%%%%%%%%%%%%%%%%%%%%%%%%%%%%%
%
%
\subsection{The twisted Mabuchi functional}
Given a $\Q$-line bundle $T$ on $X$, define a relative Monge--Amp\`ere 
energy functional $E_T$ on $\cE^1(L)$ by 
\begin{equation*}
  E_T(\f)
  =(nV)^{-1}\sum_{j=0}^{n-1}\int\f\,\omega_\f^j\wedge\omega^{n-1-j}\wedge\eta.
\end{equation*}
Here $\omega$ and $\eta$ are the curvature forms for the trivial metrics on 
$L$ and $T$, respectively. 
When $\f\in\cH(L)$, $E_T(\f)$ can be computed using intersection numbers,
as in~\cite[\S7.4]{BHJ1}.
That $E_T$ is well-defined and finite
on $\cE^1(L)$ is seen by writing $T$ as a difference between ample
$\Q$-line bundles.
We have $E_T(\f+c)=E_T(\f)+V^{-1}(T\cdot L^{n-1})c$ for $c\in\R$.

As in~\cite{BHJ1}, we define the \emph{Mabuchi} functional $M$ on $\cE^1(L)$ via the 
Chen--Tian formula:
\begin{equation}\label{e432}
  M=H+E_{K_X}+\bar{S}E,
\end{equation}
where $\bar{S}=-nV^{-1}(K_X\cdot L^n)$.
Now, given a $\Q$-line bundle $T$ on $X$, we define 
the \emph{twisted Mabuchi functional} $M_T$ on $\cE^1(L)$ by replacing $K_X$ 
by $K_X+T$ everywhere. This amounts to 
\begin{equation*}
  M_T=M+nE_T-nV^{-1}(T\cdot L^n)E,
\end{equation*}
and is translation invariant: $M_T(\f+c)=M_T(\f)$ for $c\in\R$. 
% 
%
%%%%%%%%%%%%%%%%%%%%%%%%%%%%%%%%%%%%%%%%%%%%%%%%%%%%%%%%%%%%%%%%%%%
%
%
\subsection{The adjoint Mabuchi functional and free energy}
We are interested in the adjoint case $L=-(K_X+T)$, \ie 
$T=-(K_X+L)$, which corresponds to the \emph{twisted Fano case} in the terminology of \cite{Der16}. In this situation, the \emph{adjoint Mabuchi functional}
$\Mad:=M_{-(K_X+L)}$ takes a particularly nice form:
\begin{equation}
  \Mad=H-(I-J),
\end{equation}
where the functional $H\colon\cE^1(L)\to\R_+\cup\{+\infty\}$ is given by
\begin{equation*}
  H(\f)=\Ent(\MA(\f))=\int_XA\MA(\f).
\end{equation*}
While $I$ and $J$ are continuous under decreasing nets, this is not true 
for $H$ and $\Mad$.
\begin{exam}
  Let $X=\P^1$ and $L=\cO(1)$. Pick each $n\ge 1$, pick $2^n$
  divisorial points $x_{n,j}\in X$, $1\le j\le 2^n$ such that $\redu(x_{n,i})\ne\redu(x_{n,j})$
  for $i\ne j$, and $A(x_{n,j})=1$. Define $\f_n\in\PSH(L)$ by $\max(\f_n)=2^{1-n}$ and 
  $\MA(\f_n)=2^{-n}\sum_{j=1}^{2^n}\d_{x_{n,j}}$. Then $\f_n(x_{n,j})=2^{-n}$ for all $n,j$,
  from which it follows that $2^{-n}\le\f_n\le 2^{1-n}$ on $X$. In particular,
  the sequence $(\f_n)_1^\infty$ is decreasing, and converges to 0. 
  Now $H(\f_n)=1$ for all $n$, while $H(0)=0$.
\end{exam}
For this reason---and in contrast to~Lemma~\ref{L408} below---we do not know whether 
$\Mad\ge0$ on $\cH(L)$ implies $\Mad\ge0$ on $\cE^1(L)$.
However, this implication would follow from
\begin{conj}\label{Conj302}
  Given any metric $\f\in\cE^1(L)$, there 
  exists a decreasing net $(\f_j)_j$ of positive metrics converging to $\f$, such that 
  $\lim_j\Ent(\MA(\f_j))=\Ent(\MA(\f))$.
\end{conj}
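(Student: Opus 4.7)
The plan is to transfer the problem to the measure side via the Calabi--Yau theorem, regularize by pushforward along retractions to dual complexes, and then lift back to metrics. Given $\f\in\cE^1(L)$, set $\mu:=\MA(\f)\in\cM^1(X)$, and for each snc test configuration $\cX$ let $\mu_\cX:=(\retr_\cX)_*\mu$, a probability measure supported on the compact dual complex $\D_\cX$. By Theorem~\ref{T403}~(iii), $A$ is continuous and bounded on $\D_\cX$, so $\mu_\cX\in\cM^1(X)$; by Lemma~\ref{L403}, $\Ent(\mu_\cX)$ forms an increasing net with limit $\Ent(\mu)$; and standard dominated convergence, applied to $\int(f\circ\retr_\cX)\,d\mu$ for continuous $f$ on $X$, shows $\mu_\cX\to\mu$ weakly. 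The Calabi--Yau theorem then produces, modulo constants, a unique $\f_\cX\in\cE^1(L)$ with $\MA(\f_\cX)=\mu_\cX$.

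The first step is to show that $\f_\cX$ can be chosen in $\cH(L)$. Since $\mu_\cX$ is finitely supported on the divisorial points associated to strata of $\cX_0$, by analogy with the toric situation and the solvability of Monge--Amp\`ere equations for piecewise linear data, one expects $\f_\cX$ to be PL on $\D_\cX$ and extendable via $\retr_\cX$ to the Fubini--Study metric of a semiample test configuration refining $\cX$. Establishing this requires a PL regularity statement for the non-Archimedean Monge--Amp\`ere equation with atomic data supported on $\D_\cX$, which is plausible but not explicitly available in the paper's framework.

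The second, and main, step is to arrange monotonicity: to find translates $\f_\cX+c_\cX$ that majorize $\f$, form a decreasing net in $\cX$, and converge pointwise to $\f$ with $c_\cX\downarrow 0$. Once achieved, the entropy convergence is automatic since $\Ent(\MA(\f_\cX+c_\cX))=\Ent(\mu_\cX)\to\Ent(\mu)=\Ent(\MA(\f))$. The obstacle is that Calabi--Yau determines $\f_\cX$ only up to a constant, and refining $\cX'\succ\cX$ produces no a priori comparison between $\f_\cX$ and $\f_{\cX'}$. A natural remedy is to combine $\f_\cX$ with a fixed decreasing family of positive majorants $\psi_\cX\downarrow\f$, furnished by the definition of $\cE^1(L)$, via envelopes of the form considered in Theorem~C, to coerce both monotonicity and dominance of $\f$. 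Controlling the MA measure, and hence the entropy, of the resulting metrics requires a non-Archimedean orthogonality principle for envelopes of psh metrics, parallel to the complex-analytic one; this regularity input is where I expect the main difficulty to lie, and, given it, a diagonal extraction combined with Lemma~\ref{L403} would close the argument.
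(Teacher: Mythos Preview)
The statement you are attempting to prove is Conjecture~\ref{Conj302}, which the paper presents as an \emph{open problem}, not a theorem. There is no proof in the paper to compare against. Immediately after stating the conjecture, the authors remark that the natural strategy---approximate $\MA(\f)$ and lift via the Calabi--Yau theorem, exactly as you propose---succeeds in the Archimedean setting (citing~\cite[Lemma~3.1]{BDL17}) but fails here because ``we don't know how to characterize the image of $\cH(L)$ under the Monge--Amp\`ere operator.'' Your proposal does not overcome this obstruction; it rediscovers it.

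Concretely, your first step contains an error and then runs into the known wall. The pushforward $\mu_\cX=(\retr_\cX)_*\mu$ is a Radon probability measure on the dual complex $\D_\cX$, not a finitely supported measure on divisorial points; the retraction lands in all of $\D_\cX$, whose interior consists of non-divisorial quasimonomial points. You could of course approximate further by atomic measures at divisorial vertices while retaining entropy convergence (since $A$ is continuous on $\D_\cX$). But even for a single Dirac mass $\d_x$ at a divisorial point $x$, the paper notes (in the Remark following Theorem~\ref{T410}) that the Monge--Amp\`ere solution $\f_x$ lies in $\cH(L)$ only when $x$ is ``dreamy'' in Fujita's sense, and that this is \emph{not} expected to hold in general (cf.~\cite{Kur03}). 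So the ``PL regularity statement'' you call ``plausible but not explicitly available'' is in fact the heart of the conjecture, and is already known to fail in the simplest atomic case.

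Your second step compounds the difficulty: the envelope and orthogonality manipulations you sketch would not preserve membership in $\cH(L)$ even if the first step went through, and there is no mechanism offered for controlling the Monge--Amp\`ere measure (hence the entropy) of the envelopes. In short, your outline is the natural one, and the paper agrees---but it is an outline of why the conjecture is hard, not a proof.
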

In the Archimedean case, the corresponding conjecture is true: any metric $\f$ of finite energy
can be approximated by smooth positive metrics, see~\cite[Lemma~3.1]{BDL17}.
The proof in~\loccit~proceeds by approximating the Monge--Amp\`ere measure of $\f$,
and then using the Calabi--Yau theorem. The problem in the non-Archimedean
case is that we don't know how to characterize the image of $\cH(L)$
under the Monge--Amp\`ere operator.
% 
%
%%%%%%%%%%%%%%%%%%%%%%%%%%%%%%%%%%%%%%%%%%%%%%%%%%%%%%%%%%%%%%%%%%%
%
%
\subsection{Free energy}
Being a translation invariant functional on $\cE^1(L)$, the adjoint Mabuchi
functional factors through the Monge--Amp\`ere operator: we have 
\begin{equation*}
  \Mad(\f):=F(\MA(\f)),
\end{equation*}
where the \emph{free energy} functional
$F\colon\cM^1(X)\to\R\cup\{+\infty\}$ is given by 
\begin{equation*}
  F=\Ent-E^*;
\end{equation*}
see~\cite{BermThermo}. In other words,
\begin{equation*}
  F(\mu)=\int_XA\mu-E^*(\mu).
\end{equation*}
Note that while the space $\cM(X)$ does not depend on $L$, the energy functional $E^*$
does; hence the same is true for $F$.
%
% 
%%%%%%%%%%%%%%%%%%%%%%%%%%%%%%%%%%%%%%%%%%%%%%%%%%%%%%%%%%%%%%%%%%%
%
%
\subsection{The Legendre transform of entropy}
Define $L\colon\cE^1(L)\to\R\cup\{-\infty\}$ by
\begin{equation}\label{e402}
  L(\f)=\inf_x(A(x)+\f(x)),
\end{equation}
where the infimum is taken over divisorial valuations $x\in\Xdiv$.
When $X$ is Fano and $L=-K_X$, this extends the functional
in~\cite[Definition~7.26]{BHJ1}.
\begin{prop}\label{P405}
  The infimum in~\eqref{e402} is unchanged when taking it over quasimonomial
  points $x\in\Xqm$, or over points $x\in\Xval$ with $A(x)<\infty$. Further, we have 
  \begin{equation}\label{e403}
    L(\f)=\inf_{\mu\in\cM^1(X)}\{\Ent(\mu)+\int\f\mu\},
  \end{equation}
\end{prop}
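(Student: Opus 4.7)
My plan is to prove the proposition in two stages: first the equality of the three infima (over $\Xdiv$, over $\Xqm$, and over $\{x\in\Xval:A(x)<\infty\}$), and then the Legendre identity~\eqref{e403}.

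For the equality of infima, the inclusions $\Xdiv\subset\Xqm\subset\{A<\infty\}\cap\Xval$ (the last by Theorem~\ref{T403}(i)) immediately give $\inf_{\Xdiv}(A+\f)\ge\inf_{\Xqm}(A+\f)\ge\inf_{\{A<\infty\}\cap\Xval}(A+\f)$. For the reverse inequalities, I use regularization via snc test configurations. To pass from $\{A<\infty\}$ down to $\Xqm$: for $x\in\Xval$ with $A(x)<\infty$, Theorem~\ref{T403}(iv) provides an increasing net $A(\retr_\cX(x))\nearrow A(x)$, while the defining property of $\PSH(L)$ gives a decreasing net $\f(\retr_\cX(x))\searrow\f(x)$; choosing a single snc test configuration $\cX$ dominating witnesses of both approximations yields $\retr_\cX(x)\in\D_\cX\subset\Xqm$ with $A(\retr_\cX(x))+\f(\retr_\cX(x))\le A(x)+\f(x)+\e$. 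To pass from $\Xqm$ down to $\Xdiv$: for $x\in\D_\cX\subset\Xqm$, Theorem~\ref{T403}(iii)(a) ensures $A$ is continuous on $\D_\cX$, while $\f$ is usc there as a decreasing limit of continuous positive metrics; since divisorial points are dense in each simplex, the usc function $A+\f$ can be approximated from above at $x$ by its values on $\Xdiv\cap\D_\cX$.

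For the Legendre identity, the direction $L(\f)\le\inf_\mu(\Ent(\mu)+\int\f\,d\mu)$ is direct: for $\mu\in\cM^1(X)$ with $\Ent(\mu)<\infty$ we have $\mu(\{A=\infty\})=0$, so $\mu$ is supported on $\{A<\infty\}\cap\Xval$, where the first part gives $A+\f\ge L(\f)$ pointwise; integration then yields $\Ent(\mu)+\int\f\,d\mu=\int(A+\f)\,d\mu\ge L(\f)$, with the case $\Ent(\mu)=\infty$ being trivial. For the reverse direction, I pick $x\in\Xqm$ (using the first part) with $A(x)+\f(x)<L(\f)+\e$ and take $\mu=\d_x$, giving $\Ent(\d_x)+\int\f\,d\d_x=A(x)+\f(x)<L(\f)+\e$.

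The main obstacle is justifying that $\d_x\in\cM^1(X)$ for the chosen $x\in\Xqm$, i.e., that $E^*(\d_x)<\infty$, equivalently that $x$ is non-pluripolar in the sense that $\{\f(x):\f\in\cE^1(L),\,E(\f)=0\}$ is bounded from below. I would take this as a standard regularity property of psh metrics of finite energy at quasimonomial valuations, established in the preceding work~\cite{trivval}. Once granted, combining it with the equality of the three infima closes the argument.
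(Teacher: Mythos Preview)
Your proof is correct and close to the paper's, with two differences worth noting. First, you establish the equality of the three infima (over $\Xdiv$, $\Xqm$, $\{A<\infty\}\cap\Xval$) directly via retractions \emph{before} turning to~\eqref{e403}; the paper instead proves $L(\f)=L^{\mathrm{qm}}(\f)=L'(\f)$ (with $L'$ the right-hand side of~\eqref{e403}) and then sandwiches the $\{A<\infty\}$-infimum between $L'(\f)$ and $L(\f)$. Second, for the inequality $\inf_\mu\{\Ent(\mu)+\int\f\,d\mu\}\ge L(\f)$, the paper pushes a near-optimal $\mu$ forward to a dual complex via Lemma~\ref{L403} and bounds $\int_{\D_\cX}(A+\f)\,d\mu\ge\inf_{\D_\cX}(A+\f)$; you instead integrate the pointwise bound $A+\f\ge L(\f)$ over $\{A<\infty\}$, which is available precisely because you settled the three-infimum equality first. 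Your route is slightly more elementary (it avoids Lemma~\ref{L403}) and also sidesteps a tacit forward reference in the paper's sandwiching step, which needs $\d_x\in\cM^1(X)$ for \emph{all} $x$ with $A(x)<\infty$---a fact proved only later in the paper. The one input you flag, that $\d_x\in\cM^1(X)$ for $x\in\Xqm$, is indeed the key external fact needed in \emph{both} arguments; it follows from~\cite[Theorem~5.29]{trivval} and is recorded later in this paper as Proposition~\ref{P410}.
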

\begin{proof}
  Let $L^{\mathrm{qm}}(\f)$ denote the infimum in~\eqref{e402} taken over 
  quasimonomial points, and let $L'(\f)$ denote the right-hand side of~\eqref{e403}.
  We will prove that $L(\f)=L^{\mathrm{qm}}(\f)=L'(\f)$, which implies
  the result since taking $\mu=\d_x$ shows that the infimum in~\eqref{e402} 
  over $x\in\Xval$ with $A(x)<\infty$ is bounded from below and above
  by $L(\f)$ and $L'(\f)$, respectively.

  We have $L^{\mathrm{qm}}(\f)=L(\f)$ since for any snc test configuration
  $\cX$ for $X$, the functions $A$ and $\f$ are continuous on the dual complex
  $\D_\cX$, inside which divisorial points are dense. 

  Taking $\mu=\d_x$ for $x\in\Xqm$ shows that $L'(\f)\le L^{\mathrm{qm}}(\f)$. 
  For the reverse inequality, pick $\e>0$ and
  $\mu\in\cM^1(X)$ such that $\Ent(\mu)+\int\f\,\mu\le L'(\f)+\e$.
  Replacing $\mu$ by $\retr_{\cX*}\mu$ for a large enough $\cX\in\SNC(X)$,
  we may assume $\mu$ is supported on a dual complex $\D_\cX$,
  see Lemma~\ref{L403}. 
  But then it is clear that 
  \begin{equation*}
    \Ent(\mu)+\int\f\,\mu
    =\int_{\D_\cX}(A+\f)\,d\mu
    \ge\inf_{\D_\cX}(A+\f),
  \end{equation*}
  so $L^{\mathrm{qm}}(\f)\le L'(\f)+\e$.
\end{proof}
\begin{lem}\label{L401}
  The functional $L$ is usc and non-increasing on $\cE^1(L)$. 
  As a consequence, it is continuous along decreasing nets in $\cE^1(L)$. 
\end{lem}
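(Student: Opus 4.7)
The plan is to reduce everything to the Legendre representation established in Proposition~\ref{P405}, namely
$$L(\f) = \inf_{x \in \Xdiv}\bigl(A(x) + \f(x)\bigr),$$
which exhibits $L$ as a pointwise infimum of the functionals $\f \mapsto A(x) + \f(x)$, indexed by $x \in \Xdiv$. Each such functional is monotone in $\f$ (the evaluation $\f \mapsto \f(x)$ is order-preserving) and, crucially, continuous on $\cE^1(L)$ in the topology of pointwise convergence on $\Xqm$, since $A(x)$ is finite at divisorial points by Theorem~\ref{T403}.

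From this, both asserted properties of $L$ are immediate. Monotonicity in $\f$ follows from the stability of infima under monotone families of functions: if $\f_1 \le \f_2$, then $A(x)+\f_1(x)\le A(x)+\f_2(x)$ for every $x$, and passing to the infimum yields the corresponding inequality for $L$. Upper semicontinuity follows since any pointwise infimum of continuous functions is usc.

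For the consequence---continuity along decreasing nets---I would combine the two properties in the standard way. Let $\f_j \downarrow \f$ in $\cE^1(L)$. Monotonicity forces the net $L(\f_j)$ to be monotone, bounded on the appropriate side by $L(\f)$, which gives one of the two one-sided bounds (namely $\liminf_j L(\f_j) \ge L(\f)$). Since decreasing nets in $\PSH(L)$ automatically converge pointwise on all of $X$, upper semicontinuity of $L$ yields the reverse bound $\limsup_j L(\f_j) \le L(\f)$, and equality $\lim_j L(\f_j) = L(\f)$ follows.

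The main subtlety I anticipate is purely a matter of well-definedness: one must know that the pointwise evaluations $\f(x)$ at $x \in \Xdiv$ are finite for $\f \in \cE^1(L)$, so that the representation of $L$ as an infimum of real-valued continuous functions genuinely makes sense and the usc argument goes through. This is guaranteed by the non-pluripolarity of divisorial (and more generally quasimonomial) points in the paper's framework---a feature made fully explicit later in Theorem~D.
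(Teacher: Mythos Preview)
Your proof is correct and follows essentially the same approach as the paper: both exploit that $L(\f)=\inf_{x\in\Xdiv}(A(x)+\f(x))$ is a pointwise infimum of the evaluation functionals $\f\mapsto A(x)+\f(x)$, which are continuous for the topology of pointwise convergence on $\Xqm$ (since $\Xdiv\subset\Xqm$), and hence $L$ is usc; monotonicity is immediate, and continuity along decreasing nets is the standard consequence. The paper's own proof is simply the one-line version of what you wrote, and your additional remark on finiteness of $\f(x)$ at divisorial points, while not strictly needed for the usc argument (infima of extended-real-valued continuous functions are still usc), is a reasonable sanity check.
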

\begin{proof}
  The only nontrivial statement is the upper semicontinuity, and this follows
  from the continuity of $\f\mapsto\f(x)$ for $x\in\Xqm$.
\end{proof}
We may think of~\eqref{e403} as saying that $L$ is the \emph{Legendre transform} 
of $\Ent$. Now, the natural setting of the Legendre duality
is between the space $\cM'(X)$ of all signed Radon measures on $X$ 
and the space $C^0(X)$ of continuous functions on $X$. 
Extend $\Ent$ to all of $\cM'(X)$ by $\Ent(\mu)=\int A\mu$ when $\mu$
is a probability measure, and $\Ent(\mu)=+\infty$ otherwise. Then 
define $L(f)$ for $f\in C^0(X)$ by $L(f)=\inf_{x\in X}\{A(x)+f(x)\}$.
\begin{prop}
  For any $f\in C^0(X)$ we have 
  \begin{equation}\label{e405}
    L(f)=\inf_{\mu\in\cM'(X)}\{\Ent(\mu)+\int f\mu\},
  \end{equation}
  and for every $\mu\in\cM'(X)$, we have 
  \begin{equation}\label{e404}
    \Ent(\mu)=\sup_{f\in C^0(X)}\{L(f)-\int f\mu\}.
  \end{equation}
\end{prop}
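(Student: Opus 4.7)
The plan is to establish~\eqref{e405} first, as it is a direct extension of Proposition~\ref{P405} with the metric $\f \in \cE^1(L)$ replaced by a continuous function $f \in C^0(X)$, and then to derive~\eqref{e404} by exhibiting an explicit family of test functions that realizes the supremum.

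For~\eqref{e405}, since $\Ent(\mu)=+\infty$ unless $\mu$ is a probability measure, the infimum reduces to $\mu\in\cM^1(X)$. Taking $\mu=\d_x$ for $x\in\Xdiv$ (or more generally for $x$ with $A(x)<\infty$) and using density of divisorial points inside each $\D_\cX$, together with the continuity of $A$ and $f$ there, gives the inequality $\inf\le L(f)$. For the reverse inequality, I would repeat the argument in the proof of Proposition~\ref{P405}: given a probability measure $\mu$, set $\mu_\cX:=(\retr_\cX)_*\mu$, which is supported on $\D_\cX$. Since $A$ is continuous on the dual complex,
\begin{equation*}
  \Ent(\mu_\cX)+\int f\,d\mu_\cX
  =\int_{\D_\cX}(A+f)\,d\mu_\cX
  \ge\inf_{\D_\cX}(A+f)\ge L(f).
\end{equation*}
Letting $\cX$ increase, Lemma~\ref{L403} gives $\Ent(\mu_\cX)\uparrow\Ent(\mu)$, while continuity of $f$ together with $f\circ\retr_\cX\to f$ pointwise on quasimonomial points yields $\int f\,d\mu_\cX\to\int f\,d\mu$; this finishes~\eqref{e405}.

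For~\eqref{e404}, the inequality $\sup\le\Ent(\mu)$ is an immediate consequence of~\eqref{e405}: for every $f\in C^0(X)$ we have $L(f)-\int f\,d\mu\le\Ent(\mu)$. The main work is to produce test functions that saturate this inequality. When $\mu$ is a probability measure, the natural choice is $f_\cX:=-A\circ\retr_\cX$, which lies in $C^0(X)$ because $\retr_\cX$ is continuous and $A$ is continuous on $\D_\cX$ by Theorem~\ref{T403}(iii)(a). Theorem~\ref{T403}(iii)(b) gives $A(x)\ge A(\retr_\cX(x))$ with equality on $\D_\cX$, so $L(f_\cX)=0$; thus
\begin{equation*}
  L(f_\cX)-\int f_\cX\,d\mu
  =\int A\circ\retr_\cX\,d\mu
  =\Ent(\mu_\cX),
\end{equation*}
which increases to $\Ent(\mu)$ by Lemma~\ref{L403}. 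This handles the probability case whether $\Ent(\mu)$ is finite or not.

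The remaining obstacle is to show $\sup=+\infty$ when $\mu\in\cM'(X)$ is not a probability measure. If $\mu(X)\ne1$, take $f\equiv c$ constant: since $A\ge 0$ and $A(\xtriv)=0$, we have $L(c)=c$, so $L(c)-\int c\,d\mu=c(1-\mu(X))$, which can be sent to $+\infty$ by an appropriate choice of sign. If $\mu(X)=1$ but $\mu$ is signed, the Jordan decomposition provides a function $g\in C^0(X)$ with $g\ge 0$ and $\int g\,d\mu<0$; taking $f=tg$ for $t\to+\infty$, we have $L(tg)\ge 0$ because $A+tg\ge 0$, whence $L(tg)-\int tg\,d\mu\ge t\bigl(-\!\int g\,d\mu\bigr)\to+\infty$. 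Since $\Ent(\mu)=+\infty$ in both subcases, the sup agrees with $\Ent(\mu)$, completing the proof.
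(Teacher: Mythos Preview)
Your proof is correct and follows the same overall architecture as the paper's. The one noteworthy difference is in the probability case of~\eqref{e404}: the paper argues abstractly that since $-A$ is usc on the compact space $X$, one can choose continuous $f\ge -A$ with $\int f\,d\mu$ arbitrarily close to $-\Ent(\mu)$, while you produce the explicit family $f_\cX=-A\circ\retr_\cX$ and invoke Theorem~\ref{T403}(iii) and Lemma~\ref{L403}. Your choice is precisely a concrete realization of the paper's approximation (an increasing net of continuous functions converging to $A$), so the two arguments are really the same idea, yours being more constructive.

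One small point on~\eqref{e405}: your justification that $\int f\,d\mu_\cX\to\int f\,d\mu$ via ``pointwise convergence on quasimonomial points'' is a little loose for a general Radon measure $\mu$. In fact $f\circ\retr_\cX\to f$ \emph{uniformly} (functions factoring through some $\retr_\cX$ are dense in $C^0(X)$), which settles it; but the whole retraction detour is unnecessary here, since for any probability measure $\mu$ one has directly $\Ent(\mu)+\int f\,d\mu=\int(A+f)\,d\mu\ge\inf_X(A+f)=L(f)$.
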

\begin{proof}
  In~\eqref{e405} it clearly suffices to take the infimum over Radon probability measures 
  $\mu$. The equality then follows as in the proof of Proposition~\ref{P405}.
  
  That~\eqref{e404} holds in now a formal consequence of $A$ being lsc.
  Indeed, fix $\mu\in\cM'(X)$ and let $\Ent'(\mu)$ be the right-hand side of~\eqref{e404}.
  It follows from~\eqref{e405} that $\Ent'(\mu)\le\Ent(\mu)$.
  To prove the reverse inequality, first suppose $\mu$ is a probability measure 
  and pick $\e>0$. Since $-A$ is usc, we can find a continuous function $f$
  on $X$ with $f\ge-A$ and $\int f\,\mu\le-\int A\,\mu+\e=-\Ent(\mu)+\e$.
  Thus $L(f)\ge0$, and hence $\Ent'(\mu)\ge L(f)-\int f\,\mu\ge\Ent(\mu)-\e$.

  Now suppose $\mu\in\cM'(X)$ is not a probability measure. If $\mu(X)\ne1$,
  picking $f\equiv\pm C$, where $C\gg1$ gives $\Ent'(\mu)=+\infty$.
  If $\mu$ is not a positive measure, then there exists $g\in C^0(X)$ with 
  $g\ge0$ but $\int g\,\mu<0$. Picking $f=Cg$ for $C\gg0$ again gives
  $\Ent'(\mu)=+\infty$,
\end{proof}
% 
% 
%%%%%%%%%%%%%%%%%%%%%%%%%%%%%%%%%%%%%%%%%%%%%%%%%%%%%%%%%%%%%%%%%%%
%
%
\subsection{The adjoint Ding functional}
Define the \emph{adjoint Ding functional} on $\cE^1(L)$ by 
\begin{equation}
  \Dad=L- E,
\end{equation}
where $L$ is the functional in~\eqref{e402}, and $E$ is the Monge--Amp\`ere energy.
When $X$ is Fano and $L=-K_X$, the restriction of $\Dad$ to $\cH(L)$ coincides
with the Ding functional of~\cite{Berm16,BHJ1}. The name derives from~\cite{Din88}.
%We shall be interested in bounding $D$ from below.
%To this end, we prove two results.
\begin{lem}\label{L408}
  The adjoint Ding functional $\Dad$ is continuous along decreasing nets in 
  $\cE^1(L)$. As a consequence, $\Dad\ge0$ on $\cE^1(L)$ iff $\Dad\ge0$ on $\cH(L)$.
\end{lem}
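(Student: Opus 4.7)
The proof plan is straightforward given the material already established in the excerpt. The key input is that both summands of $\Dad = L - E$ have already been shown to be continuous along decreasing nets in $\cE^1(L)$: for the Monge--Amp\`ere energy $E$, this is recorded in the subsection on ``Metrics of finite energy'' above; for the functional $L$, this is precisely the last assertion of Lemma~\ref{L401}. Subtracting, the functional $\Dad$ is continuous along decreasing nets in $\cE^1(L)$.

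For the ``As a consequence'' part, one direction is trivial: $\cH(L) \subset \cE^1(L)$, so nonnegativity on $\cE^1(L)$ implies nonnegativity on $\cH(L)$. For the nontrivial direction, assume $\Dad \geq 0$ on $\cH(L)$, and let $\f \in \cE^1(L)$. By the very definition of $\PSH(L)$ recalled in the ``Psh metrics'' subsection, there exists a decreasing net $(\f_j)_j$ of positive metrics, i.e.\ $\f_j \in \cH(L)$, with $\f_j \searrow \f$ pointwise. By hypothesis, $\Dad(\f_j) \geq 0$ for every $j$. Applying the continuity of $\Dad$ along decreasing nets established in the first part gives
\begin{equation*}
  \Dad(\f) = \lim_j \Dad(\f_j) \geq 0,
\end{equation*}
as desired.

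There is no real obstacle here; the lemma is a packaging statement. The one thing to be careful about is that when one writes $\Dad = L - E$ and argues summand by summand, one needs each term to have a well-defined limit (not an $\infty - \infty$ situation). This is guaranteed because we are working inside $\cE^1(L)$, where $E(\f) > -\infty$ by definition and $L(\f) \leq A(\xtriv) + \f(\xtriv) = \max_X \f < +\infty$ since any psh metric attains its maximum at the generic (trivial) point, where $A = 0$. So both $E$ and $L$ take finite values at every point of the decreasing net and at the limit, and the continuity of each separately yields continuity of the difference.
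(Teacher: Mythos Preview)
Your proof is correct and follows essentially the same approach as the paper: continuity of $\Dad$ along decreasing nets follows from the separate continuity of $E$ (recorded in the background) and of $L$ (Lemma~\ref{L401}), and the consequence follows since every metric in $\cE^1(L)$ is a decreasing limit of positive metrics. One small over-claim in your final paragraph: the paper allows $L(\f)=-\infty$ for $\f\in\cE^1(L)$, so $L$ need not be finite at the limit; but since $E(\f)$ is finite on $\cE^1(L)$, the difference $L-E$ is still well-defined (possibly $-\infty$) and the limiting argument goes through unchanged.
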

\begin{proof}
  It suffices to prove the continuity assertion, since every element in $\cE^1(L)$ is the limit of 
  a decreasing net in $\cH(L)$. Now, we know that $E$ is continuous along decreasing nets,
  so the result follows from Lemma~\ref{L401}.
\end{proof}
% 
% 
%%%%%%%%%%%%%%%%%%%%%%%%%%%%%%%%%%%%%%%%%%%%%%%%%%%%%%%%%%%%%%%%%%%
%
%
\subsection{Non-Archimedean thermodynamics}
Using the Calabi--Yau theorem, we now relate the adjoint Ding and Mabuchi functionals.
The following result can be viewed as a non-Archimedean version of~\cite[Theorem~1.1]{BermThermo}.
\begin{thm}\label{T401}
  Let $L$ be an ample line bundle on a smooth projective variety $X$.
  Then we have $\Mad\ge\Dad$ on $\cE^1(L)$. 
  Further, the following conditions are equivalent:
  \begin{itemize}
  \item[(i)]
    $F\ge0$ on $\cM^1(X)$;
  \item[(ii)]
    $\Mad\ge0$ on $\cE^1(L)$;
  \item[(iii)]
    $\Dad\ge0$ on $\cE^1(L)$.
  \end{itemize}
\end{thm}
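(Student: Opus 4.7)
My strategy is to exploit the Calabi--Yau theorem, which converts $\Mad$ into the free energy $F$ on $\cM^1(X)$, together with the Legendre-duality formula for $L$ supplied by Proposition~\ref{P405}. The key calculation, obtained by unwinding the definitions of $I$, $J$ and $E$ with reference metric $\phitriv=0$, is
\begin{equation*}
(I-J)(\f) = E(\f) - \int \f\,\MA(\f).
\end{equation*}
Combined with the identity $E^*(\MA(\f)) = (I-J)(\f)$ from the Calabi--Yau theorem, this rewrites the adjoint Mabuchi functional as
\begin{equation*}
\Mad(\f) = F(\MA(\f)) = \Ent(\MA(\f)) + \int \f\,\MA(\f) - E(\f).
\end{equation*}

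To prove $\Mad \ge \Dad$ on $\cE^1(L)$, I would then apply the Legendre representation $L(\f) = \inf_{\mu\in\cM^1(X)}\{\Ent(\mu) + \int\f\,\mu\}$ of Proposition~\ref{P405} with the admissible choice $\mu = \MA(\f)$, obtaining
\begin{equation*}
\Dad(\f) = L(\f) - E(\f) \le \Ent(\MA(\f)) + \int\f\,\MA(\f) - E(\f) = \Mad(\f).
\end{equation*}
For the equivalences, (i) $\Leftrightarrow$ (ii) is an immediate consequence of the Calabi--Yau bijection $\MA\colon\cE^1(L)/\R\to\cM^1(X)$, together with the translation invariance of $\Mad$ and the identity $\Mad=F\circ\MA$. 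The implication (ii) $\Rightarrow$ (iii) is the inequality just established. The remaining implication (iii) $\Rightarrow$ (i) is the one true Legendre inversion: assuming $L(\f)\ge E(\f)$ for every $\f\in\cE^1(L)$, Proposition~\ref{P405} gives $\Ent(\mu)+\int\f\,\mu \ge L(\f)\ge E(\f)$ for every $\mu\in\cM^1(X)$ and every $\f$; rearranging yields $\Ent(\mu)\ge E(\f)-\int\f\,\mu$, and taking the supremum over $\f\in\cE^1(L)$ yields $\Ent(\mu) \ge E^*(\mu)$, i.e.\ $F(\mu)\ge0$.

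The argument poses no real obstacle once the Calabi--Yau theorem and Proposition~\ref{P405} are in hand, both of which are established earlier; the whole proof is a short Legendre-duality loop, in the same spirit as Berman's thermodynamic formalism in the Archimedean case. The most delicate technical point to verify is just that $\int\f\,\mu$ is well-defined (and finite enough for the manipulations above) whenever $\f\in\cE^1(L)$ and $\mu\in\cM^1(X)$, which is built into the finite-energy framework recalled in Section~\ref{S102}.
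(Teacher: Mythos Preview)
Your proof is correct and follows the same thermodynamic strategy as the paper: both use the Calabi--Yau identity $\Mad=F\circ\MA$ together with the Legendre formula of Proposition~\ref{P405} to compare $\Mad$ and $\Dad$. The only difference is in how the cycle of implications is closed. The paper runs (iii)$\Rightarrow$(ii)$\Rightarrow$(i)$\Rightarrow$(iii), proving the last step by specializing (i) to Dirac masses at divisorial points (so $A(x)\ge E^*(\delta_x)$, which unwinds to $E(\f)-\f(x)\le A(x)$ and hence $L(\f)\ge E(\f)$); you instead close via (iii)$\Rightarrow$(i) using the full Legendre inequality for arbitrary $\mu\in\cM^1(X)$. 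The paper's route has the slight advantage of showing that only Dirac masses are needed in (i), a refinement that resurfaces later in the valuative criterion, but for the equivalence itself the two arguments are interchangeable.
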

In the Archimedean situation, the conditions analogous to~(ii) and~(iii) are that 
the Mabuchi and Ding functionals are bounded from below. In the non-Archimedean
setting, the presence of an $\R_+^\times$-action on $\cE^1(L)$ under which $\Mad$
and $\Dad$ are homogeneous, shows that this is equivalent to~(ii) and~(iii).
%Here~(ii) and~(iii) are analogous to the 
\begin{proof}
  Pick any $\f\in\cE^1(L)$. If  $\Dad(\f)>-\infty$, then 
  \begin{equation*}
    \Mad(\f)-\Dad(\f)
    =\int(A+\f)\MA(\f)
    -\inf(A+\f)\ge0,
  \end{equation*}
  so we get $\Mad\ge\Dad$ on $\cE^1(L)$.

  Thus~(iii) implies~(ii). By the Calabi-Yau theorem, it follows 
  that~(ii) implies~(i). Hence it suffices to prove that~(i) implies~(iii).
  By~(i) we have $F(\d_x)\ge0$ for every divisorial point $x\in\Xdiv$, which 
  translates into $A(x)\ge E^*(\d_x)$ for every $x\in\Xdiv$. By the definition
  of $E^*$, this implies $E(\f)-\f(x)\le A(x)$ for every $\f\in\cE^1(L)$.
  Taking the infimum over $x\in\Xdiv$ we get $L(\f)\ge E(\f)$,
  that is, $\Dad(\f)\ge 0$ for all $\f\in\cE^1(L)$.
\end{proof}
% 
% 
%%%%%%%%%%%%%%%%%%%%%%%%%%%%%%%%%%%%%%%%%%%%%%%%%%%%%%%%%%%%%%%%%%%
%
%
\subsection{Adjoint semistability}\label{sec:twisted}
We can reformulate Theorem~\ref{T401} as follows.
First, we say that $L$ is \emph{Ding semistable in the adjoint sense} 
if $\Dad\ge0$ on $\cE^1(L)$, and 
\emph{K-semistable in the adjoint sense} if $\Mad\ge0$ on $\cE^1(L)$.

As in \cite[Proposition 8.2]{BHJ1}, one shows that $\Mad\ge 0$ on $\cH(L)$ is equivalent to \emph{twisted K-semistability} in the twisted Fano case, in the sense of \cite{Der16}. According to Conjecture~\ref{Conj302}, $\Mad\ge 0$ on $\cH(L)$ should imply $\Mad\ge 0$ on $\cE^1(L)$, \ie adjoint $K$-semistability. 

Second, we define the \emph{stability threshold} of $L$ as
\begin{equation}\label{e411}
  \d(L)=\inf_{\mu\in\cM(X)}\frac{\Ent(\mu)}{E^*(\mu)}.
\end{equation}
We shall see in Theorem~\ref{T409} that this invariant, 
which was suggested by Berman~\cite{BermComm}, coincides with the one defined 
in~\cite{FO16,BlumJonsson}. In particular, this will show that 
$\d(L)>0$.
\begin{cor}\label{C407}
    For any ample line bundle $L$ on $X$, the following are equivalent:
  \begin{itemize}
  \item[(i)]
    $\d(L)\ge1$;
  \item[(ii)]
    $L$ is Ding semistable in the adjoint sense;
  \item[(ii)]
    $L$ is K-semistable in the adjoint sense.
  \end{itemize}
\end{cor}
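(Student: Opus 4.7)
The plan is to deduce the corollary essentially tautologically from Theorem~\ref{T401}. That theorem directly gives the equivalence of (ii) and (iii) and identifies both with the inequality $F = \Ent - E^* \ge 0$ on $\cM^1(X)$, so the only task remaining is to verify that condition (i) rephrases this same condition.

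To this end I would unwrap the definition $\d(L) = \inf_{\mu \in \cM(X)} \Ent(\mu)/E^*(\mu)$ in \eqref{e411}. The ratio is only genuinely meaningful when $0 < E^*(\mu) < \infty$, \ie for nontrivial $\mu \in \cM^1(X)$. At the exceptional values the ratio is harmless: $E^*(\mu) = 0$ forces $\mu$ to be the Dirac mass at the generic point of $X$, where the log discrepancy vanishes and hence $\Ent(\mu) = 0$ as well; and $E^*(\mu) = +\infty$ means $\mu \notin \cM^1(X)$, which is precisely the set of measures that does not appear in the reformulation of Theorem~\ref{T401}. Restricted to the nontrivial $\mu \in \cM^1(X)$, the inequality $\Ent(\mu)/E^*(\mu) \ge 1$ is exactly $\Ent(\mu) \ge E^*(\mu)$, that is, $F(\mu) \ge 0$, while the inequality is automatic on the trivial measure since both sides vanish.

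I do not expect any real obstacle here; the only concern is minor bookkeeping around measures with infinite energy, which one handles either by tacitly restricting the infimum in \eqref{e411} to $\cM^1(X)$ or by adopting a consistent convention for infinite ratios. Either choice is invisible to the statement we wish to match, and combining the resulting identification of (i) with ``$F \ge 0$ on $\cM^1(X)$'' with the equivalences of Theorem~\ref{T401} yields the three-way equivalence of (i), (ii), and (iii).
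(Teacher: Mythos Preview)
Your proposal is correct and matches the paper's own approach: the paper presents Corollary~\ref{C407} without proof, introducing it as a direct reformulation of Theorem~\ref{T401} together with the definition~\eqref{e411} of $\d(L)$. Your unpacking of the equivalence between $\d(L)\ge 1$ and $F\ge 0$ on $\cM^1(X)$, including the bookkeeping at $E^*(\mu)=0$ and $E^*(\mu)=+\infty$, is exactly the tautological translation the paper has in mind.
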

% 
% 
%%%%%%%%%%%%%%%%%%%%%%%%%%%%%%%%%%%%%%%%%%%%%%%%%%%%%%%%%%%%%%%%%%%
%
%
\subsection{Uniform adjoint stability}
In analogy with~\cite{BHJ1,Der16}) we say that $L$ is 
\emph{uniformly Ding-stable in the adjoint sense}
if there exists $\e>0$ such that $\Dad\ge\e J$ on $\cE^1(L)$.
Similarly, $L$ is \emph{uniformly K-stable in the adjoint sense}
if there exists $\e>0$ such that $\Mad\ge\e J$ on $\cE^1(L)$. Here again, this is equivalent (at least up to Conjecture~\ref{Conj302}) to uniform twisted K-stability in the twisted Fano case, in the sense of \cite{Der16}. 

\begin{thm}\label{T405}
    For any ample line bundle $L$, the following are equivalent:
  \begin{itemize}
  \item[(i)]
    $\d(L)>1$;
    \item[(ii)]
    $L$ is uniformly K-stable in the adjoint sense;
  \item[(iii)]
    $L$ is uniformly Ding-stable in the adjoint sense.
\end{itemize}
\end{thm}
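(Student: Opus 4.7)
The plan is to establish the three-way equivalence as a cycle $(\mathrm{iii}) \Rightarrow (\mathrm{ii}) \Rightarrow (\mathrm{i}) \Rightarrow (\mathrm{iii})$, mimicking Theorem~\ref{T401} while tracking quantitative factors.

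The first two implications are direct. For $(\mathrm{iii}) \Rightarrow (\mathrm{ii})$, the pointwise bound $\Mad \ge \Dad$ on $\cE^1(L)$ immediately propagates $\Dad \ge \e J$ to $\Mad \ge \e J$. For $(\mathrm{ii}) \Rightarrow (\mathrm{i})$, I would start from the decomposition $\Mad = H - (I-J)$ combined with $J \ge n^{-1}(I-J)$ from~\eqref{e410}: the hypothesis $\Mad \ge \e J$ rearranges to $H \ge (1+\e/n)(I-J)$, i.e., $\Ent(\MA(\f)) \ge (1+\e/n)\,E^*(\MA(\f))$. The Calabi--Yau theorem then extends this to $\Ent \ge (1+\e/n)\,E^*$ on all of $\cM^1(X)$, giving $\d(L) \ge 1 + \e/n > 1$.

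For $(\mathrm{i}) \Rightarrow (\mathrm{iii})$, I would write $\d(L) \ge 1+\gamma$ and combine the Legendre representation $L(\f) = \inf_\mu\{\Ent(\mu) + \int \f\,\mu\}$ of Proposition~\ref{P405} with the conjugate identity $E(\psi) = \inf_\mu\{E^*(\mu) + \int \psi\,\mu\}$ (the inversion of the Legendre transform defining $E^*$) to obtain
\[
L(\f) \ge \inf_\mu\bigl\{(1+\gamma)\,E^*(\mu) + \textstyle\int \f\,\mu\bigr\} = (1+\gamma)\,E\bigl(\f/(1+\gamma)\bigr),
\]
hence $\Dad(\f) \ge (1+\gamma)\,E(\f/(1+\gamma)) - E(\f)$.

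The main obstacle is the remaining algebraic step: showing that this right-hand side dominates $\e J(\f)$ for some $\e = \e(\gamma) > 0$. My approach would exploit that $s \mapsto E(s\f)$ is a polynomial $\sum_{k=0}^{n} a_k(\f)\,s^{k+1}$ of degree $\le n+1$ whose leading coefficient is $a_0 = \sup_X \f$. Both sides of the target inequality are translation invariant, so I would normalize $\sup_X \f = 0$ to kill $a_0$; the identity $\sum_k k\,a_k = -(I-J)(\f)$ then gives the inequality to first order in $\gamma$, of strength $\sim (\gamma/n)\,J(\f)$, via $I-J \ge n^{-1} J$. Upgrading to all $\gamma > 0$ requires a careful term-by-term analysis using concavity of $s \mapsto E(s\f)$ on $[0,1]$ and integration-by-parts identities for the $a_k$, which I expect to yield an explicit uniform constant $\e(\gamma)$.
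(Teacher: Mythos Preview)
Your cycle structure and the implications $(\mathrm{iii})\Rightarrow(\mathrm{ii})\Rightarrow(\mathrm{i})$ match the paper exactly. For $(\mathrm{i})\Rightarrow(\mathrm{iii})$ you also reach the same pivotal inequality $\Dad(\f)\ge\d\,E(\d^{-1}\f)-E(\f)$ (with $\d=1+\gamma$); the paper obtains it by specializing $\Ent\ge\d E^*$ to Dirac masses $\mu=\d_x$ and then using the definition of $E^*$, while you invoke the biconjugate identity $E(\psi)=\inf_\mu\{E^*(\mu)+\int\psi\,d\mu\}$. These are equivalent packagings of the same Legendre duality.

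The genuine gap is your last step. After normalizing $\sup_X\f=0$ so that $E(s\f)=-J(s\f)$, you must show $J(\f)-\d\,J(\d^{-1}\f)\ge\e J(\f)$ for some $\e=\e(\d)>0$. Concavity of $s\mapsto E(s\f)$ gives only $\d\,E(\d^{-1}\f)\ge E(\f)$, i.e.\ $\Dad\ge0$, with no uniform margin; so that half of your plan cannot succeed by itself. The paper instead cites the scaling inequality $J(t\f)\le t^{1+1/n}J(\f)$ for $t\in(0,1)$ from~\cite[Lemma~6.17]{trivval}, which immediately yields $\e=1-\d^{-1/n}$. Your integration-by-parts instinct does point in the right direction: setting $b_i:=-V^{-1}\int\f\,\omega_\f^i\wedge\omega^{n-i}\ge0$ (with $b_0=0$), one has
\[
b_{i+1}-b_i=V^{-1}\int d\f\wedge d^c\f\wedge\omega_\f^i\wedge\omega^{n-1-i}\ge0,
\]
so the $b_i$ are \emph{nondecreasing} in $i$; it is this monotonicity, rather than concavity, that underlies the scaling lemma. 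Without isolating that constraint your polynomial analysis cannot close, since for unconstrained nonnegative coefficients the desired bound fails (e.g.\ for $n=2$, taking $b_1>0$ and $b_2=0$ would violate it---but such data never arise from an actual $\f\in\PSH(L)$). You should either invoke the scaling lemma directly or make the monotonicity of the $b_i$ explicit and derive the bound from it.
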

Together with Corollary~\ref{C407}, this proves Theorem~A in the introduction.
\begin{proof}
  The Calabi--Yau theorem shows that if $\d\ge1$, then 
  $\Ent\ge\d E^*$ on $\cM^1(X)$ iff $\Mad\ge(\d-1)(I-J)$
  on $\cE^1(X)$. Since the functionals $I-J$ and $J$ are comparable,
  see~\eqref{e410}, this implies that~(i)$\Leftrightarrow$(ii). 
  The inequality $\Mad\ge\Dad$ shows that~(iii) implies~(ii).

  It remains to prove that~(i) implies~(iii).  We follow the proof of Theorem~\ref{T401}.
  Pick any $\d\in(1,\d(L))$. 
  Then $\Ent(\mu)\ge\d E^*(\mu)$ for any $\mu\in\cM^1(X)$.
  When $\mu=\d_x$, this gives $A(x)\ge \d E^*(\d_x)$
  for any $x\in\Xdiv$. Now consider any $\f\in\cE^1(L)$. 
  Since $\d>1$ and $\cE^1(L)$ is convex, we have $\d^{-1}\f\in\cE^1(L)$. 
  This gives $E^*(\d_x)\ge E(\d^{-1}\f)-\d^{-1}\f(x)$.
  Thus $A(x)+\f(x)\ge\d E^*(\d_x)+\f(x)\ge\d E(\d^{-1}\f)$. Taking the infimum
  over $x\in\Xdiv$ and subtracting $E(\f)$ gives
  $\Dad(\f)\ge \d E(\d^{-1}\f)-E(\f)$.
  By translation invariance, we may assume $\sup_X\f=0$. Then $E(\f)=-J(\f)$
  and $E(\d^{-1}\f)=-J(\d^{-1}\f)$. By~\cite[Lemma~6.17]{trivval}, we have 
  \begin{equation*}
    J(\d^{-1}\f)\le \d^{-(1+n^{-1})}J(\f).
  \end{equation*}
  This implies $\Dad(\f)\ge\e J(\f)$, with $\e=(1-\d^{-n^{-1}})$.
\end{proof}
% 
% 
%%%%%%%%%%%%%%%%%%%%%%%%%%%%%%%%%%%%%%%%%%%%%%%%%%%%%%%%%%%%%%%%%%%
%
%
\subsection{The Fano case}
The adjoint stability notions above are defined in terms
of the space $\cE^1(L)$ of metrics of finite energy. This is a natural framework for 
applying the Calabi--Yau theorem. On the other hand, 
K-stability and Ding stability are usually expressed in terms of test configurations, that is,
metrics in $\cH(L)$. For Ding-stability, this makes no difference, in view of
Lemma~\ref{L408}.
If Conjecture~\ref{Conj302} holds, then the same is true for K-stability. 

When $X$ is Fano and $L=-K_X$, the adjoint notions do coincide with
the usual ones:
\begin{thm}\label{T402}
  If $X$ is a Fano manifold, then the following are equivalent:
  \begin{itemize}
  \item[(i)]
    $X$ is $K$-semistable (resp.\ uniformly $K$-stable);
  \item[(ii)]
    $X$ is $K$-semistable (resp.\ uniformly $K$-stable) in the adjoint sense;
  \item[(iii)]
    $X$ is Ding semistable (resp.\ uniformly Ding-stable);
  \item[(iv)]
    $X$ is Ding semistable (resp.\ uniformly Ding-stable) in the adjoint sense;
  \item[(v)]
    $\d(-K_X)\ge1$ (resp.\ $\d(-K_X)>1$.
  \end{itemize}
\end{thm}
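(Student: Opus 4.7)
The plan proceeds by noting that when $L=-K_X$ the twisting $\Q$-line bundle $T=-(K_X+L)$ is trivial, so by construction the adjoint Mabuchi functional $\Mad=M_{-(K_X+L)}$ coincides with the classical Mabuchi functional $M$, and the restriction of $\Dad$ to $\cH(L)$ agrees with the usual Ding functional of \cite{Berm16,BHJ1} (as already recorded after the definition of $\Dad$). Thus conditions (ii) and (iv) are precisely the extensions of (i) and (iii) from test configurations to metrics of finite energy. The block (ii) $\iff$ (iv) $\iff$ (v) is then a direct specialization of Corollary~\ref{C407} (for semistability) and Theorem~\ref{T405} (for uniform stability), both already established above.

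For the passage (iii) $\iff$ (iv), the key is that every $\f\in\cE^1(L)$ is a decreasing limit of a net $(\f_j)\subset\cH(L)$, and both the Ding functional $\Dad$ and the exhaustion functional $J$ are continuous along such decreasing nets. In the semistable case this is exactly Lemma~\ref{L408}. The uniform case follows by the same limiting argument: if $\Dad(\f_j)\ge\e J(\f_j)$ for every $j$, then passing to the decreasing limit yields $\Dad(\f)\ge\e J(\f)$ for every $\f\in\cE^1(L)$, and the converse inclusion $\cH(L)\subset\cE^1(L)$ is trivial.

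The remaining equivalence (i) $\iff$ (iii) asserts that in the Fano case K-stability and Ding stability already coincide at the level of test configurations. One direction is immediate: since $M\ge D$ pointwise on $\cH(L)$, Ding-semistability (resp.\ uniform Ding-stability) implies K-semistability (resp.\ uniform K-stability). The converse, which is the deep ingredient, is the main result of \cite{BBJ15} (with the semistable version also obtained in \cite{FujitaValcrit}), proved there by techniques from the Minimal Model Program along the lines of \cite{LX14}. Chaining these equivalences yields (i) $\iff$ (ii) $\iff$ (iii) $\iff$ (iv) $\iff$ (v).

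The main obstacle is precisely the external input (i) $\iff$ (iii): the discontinuity of $M$ under decreasing nets prevents us from reproving, within the non-Archimedean framework of this paper, the passage from $M\ge 0$ on $\cH(L)$ to $M\ge 0$ on $\cE^1(L)$ (which is what would be needed if one bypassed the Ding functional entirely). This gap is exactly what Conjecture~\ref{Conj302} is designed to bridge in the general adjoint setting; in the Fano case it is circumvented by going through the Ding functional and invoking \cite{BBJ15}.
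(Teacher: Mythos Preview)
Your argument is correct, and the block (ii) $\Leftrightarrow$ (iv) $\Leftrightarrow$ (v) via Corollary~\ref{C407} and Theorem~\ref{T405} is exactly what the paper does. The remaining chain differs, however. The paper does not argue (iii) $\Leftrightarrow$ (iv) by continuity and then (i) $\Leftrightarrow$ (iii) via \cite{BBJ15}; instead it invokes the forward reference Theorem~\ref{T409} to identify $\d(-K_X)$ with the stability threshold of \cite{FO16,BlumJonsson}, and then quotes \cite[Theorem~B]{BlumJonsson} to obtain (i) $\Leftrightarrow$ (iii) $\Leftrightarrow$ (v) simultaneously. Your route has the advantage of not forward-referencing Theorem~\ref{T409}, and the extension of Lemma~\ref{L408} to the uniform case (using that $J$ is also continuous along decreasing nets) is a clean observation the paper does not spell out. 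On the other hand, the paper's route packages the external input into a single citation and makes explicit that both (i) and (iii) are governed by the numerical invariant $\d$, which is the conceptual point of \S\ref{S405}. Either way the deep step---that K-(semi)stability on test configurations implies the corresponding Ding condition---is outsourced (to \cite{BBJ15} in your version, to \cite{BlumJonsson} in the paper's), so neither approach is self-contained.
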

\begin{proof}
  The equivalence of~(ii),~(iv) and~(v) follow from Corollary~\ref{C407} and
  Theorem~\ref{T405}. Now Theorem~\ref{T409} below shows that $\d(-K_X)$
  agrees with the invariant considered in~\cite{FO16,BlumJonsson}.
  The equivalence of~(i),~(iii) and~(v) therefore follow from~\cite[Theorem~B]{BlumJonsson}.
\end{proof}
%
% 
% 
%%%%%%%%%%%%%%%%%%%%%%%%%%%%%%%%%%%%%%%%%%%%%%%%%%%%%%%%%%%%%%%%%%%
%
%
%\newpage
\section{Graded norms and filtrations}\label{s:filtnorm}
In this section, we study the space of bounded 
graded norms (or, equivalently, filtrations) on the section ring $R=R(X,L)$.
As before, $k$ is a trivially valued field, whereas
$L$ is an ample line bundle (as opposed to a $\Q$-line bundle) on $X$.
%s is discussed in~\S\ref{S409}.

Much of the material here is studied for more general
valued fields $k$ in~\cite{Chen-Maclean,BE18}, but we present the details for the 
convenience of the reader.
There is also some overlap with the recent work of Codogni~\cite{Cod18}.
% 
% 
%%%%%%%%%%%%%%%%%%%%%%%%%%%%%%%%%%%%%%%%%%%%%%%%%%%%%%%%%%%%%%%%%%%
%
%
\subsection{Norms and filtrations}
Let $V$ be a $k$-vector space.
By a \emph{filtration} $\cF$ of $V$ we mean
a family $(\cF^\la V)_{\la\in\R}$ of $k$-vector subspaces of $V$, 
satisfying $\cF^\la V=\bigcap_{\la'<\la}\cF^{\la'}V$,
$\bigcup_\la\cF^\la V=V$, and $\bigcap_\la\cF^\la V=0$.
A filtration is \emph{bounded} if $\cF^\la V=V$ for $\la\ll0$
and $\cF^\la V=0$ for $\la\gg0$.

Filtrations of $V$ are in bijection with 
(non-Archimedean) \emph{norms} on $V$, \ie functions
$\|\cdot\|\colon V\to\R_+$ satisfying $\|v\|=0$ iff $v=0$, 
$\|av\|=|a|\|v\|$ for $a\in k$, $v\in V$, and 
$\|v+w\|\le\max\{\|v\|,\|w\|\}$ for $v,w\in V$.
To a filtration $\cF$ is associated the 
norm $\|v\|=\exp(-\sup\{\la\in\R\mid v\in\cF^\la V\})$;
conversely, a norm $\|\cdot\|$ on $V$ induces the filtration
$\cF^\la V=\{v\in V\mid \|v\|\le\exp(-\la)\}$.
In what follows, we will usually work with norms rather than filtrations.

Bounded filtrations correspond to bounded norms, \ie norms for
which there exists $A>0$ such that 
$A^{-1}\le\|v\|\le A$ for all $v\ne0$. If $V$ is finite dimensional,
then any filtration/norm on $V$ is bounded.
The \emph{trivial} norm on $V$ is defined by $\|v\|=1$ for $v\ne0$.
A norm is \emph{almost trivial} if it is a multiple of the trivial norm.

If $V$ is a normed vector space, any subspace $W\subset V$ is naturally
equipped with the subspace norm, and the quotient $V/W$ 
with the quotient norm defined by $\|v+W\|:=\inf\{\|v+w\|\mid w\in W\}$.
In general, this is only a seminorm on $V/W$ (\ie there may be nonzero
elements of norm zero) but it is a norm when $V$ is finite dimensional.

The space $\cN_V$ of norms on $V$ admits two natural operations.
First, if $\|\cdot\|$ and $\|\cdot\|'$ are norms on $V$,
so is their maximum $\|\cdot\|\vee\|\cdot\|':=\max\{\|\cdot\|,\|\cdot\|'\}$.
Second, we have an action of $\R$ on $\cN_V$ given by 
$(t,\|\cdot\|)\mapsto\exp(t)\|\cdot\|$.
% 
% 
%%%%%%%%%%%%%%%%%%%%%%%%%%%%%%%%%%%%%%%%%%%%%%%%%%%%%%%%%%%%%%%%%%%
%
%
\subsection{Relative successive minima and volume}
As $k$ is trivially valued, any finite-dimensional
normed $k$-vector space $V$ admits a basis $\{e_j\}_j$ that is \emph{orthogonal}
for the norm in the sense that $\|\sum_ja_je_j\|=\max_j|a_j|\|e_j\|$
for all $a_j\in k$.
More generally, given any two norms $\|\cdot\|$, $\|\cdot\|'$ 
on a finite-dimensional $k$-vector space $V$,
there exists a basis $\{e_j\}_{j=1}^N$ for $V$ that is orthogonal for both norms.
The numbers 
\begin{equation*}
  \la_j:=\log\frac{\|e_j\|'}{\|e_j\|},\quad 1\le j\le N,
\end{equation*}
are called the 
\emph{relative successive minima} of $\|\cdot\|$ with respect to $\|\cdot\|'$.
They do not depend on the choice of orthogonal basis\footnote{This is not completely obvious: see~\cite[\S3.1]{BE18}}.

Following~\cite{BE18}, the average of the relative successive minima,
\begin{equation}\label{e417}
  \vol(\|\cdot\|,\|\cdot\|'):=N^{-1}\sum_{j=1}^N\la_j,
\end{equation}
is called the (logarithmic) \emph{relative volume}
of $\|\cdot\|$ with respect to $\|\cdot\|'$. It can be described as follows.
The norms $\|\cdot\|$, $\|\cdot\|'$ on $V$ canonically induce norms 
$\det\!\|\cdot\|$, $\det\!\|\cdot\|'$ on the determinant line $\det V$, and 
\begin{equation}\label{e422}
  \vol(\|\cdot\|,\|\cdot\|')
  =N^{-1}(\log\det\!\|\eta\|'-\log\det\!\|\eta\|)
\end{equation}
for any nonzero element $\eta\in\det V$. As a consequence we have the 
\emph{cocycle condition}
\begin{equation}\label{eq:cocycle}
  \vol(\|\cdot\|,\|\cdot\|')+\vol(\|\cdot\|',\|\cdot\|'')=\vol(\|\cdot\|,\|\cdot\|'')
\end{equation}
for any three norms on $V$.

When $\|\cdot\|'$ is the trivial norm, we drop the term ``relative'' and
simply say \emph{successive minima}
and \emph{volume}, and write $\vol(\|\cdot\|)$ for the latter.
The successive minima of a norm are exactly the \emph{jumping numbers} of the 
associated filtration, \ie the $\la\in\R$ such that $\cF^\la\supsetneq\cF^{\la'}$ for any $\la'>\la$ (counted with multiplicity).
% 
% 
%%%%%%%%%%%%%%%%%%%%%%%%%%%%%%%%%%%%%%%%%%%%%%%%%%%%%%%%%%%%%%%%%%%
%
%
\subsection{Distances}
We use the relative successive minima to define a distances $d_p$, $1\le p\le\infty$
on the space $\cN_V$ of norms on $V$. Namely, we set 
\begin{equation}\label{e418}
  d_p(\|\cdot\|,\|\cdot\|'):=(N^{-1}\sum_{j=1}^N|\la_j|^p)^{1/p}
\end{equation}
for $p\in[1,\infty)$, and 
$d_\infty(\|\cdot\|,\|\cdot\|'):=\max_{1\le j\le N}|\la_j|$.
One can prove that $d_p$ satisfies the triangle inequality, see~\cite[\S3.1]{BE18}. 
Note that 
\begin{equation}\label{e420}
  d_p(\|\cdot\|,\|\cdot\|')^p
  =d_p(\|\cdot\|,\|\cdot\|\vee\|\cdot\|')^p
  +d_p(\|\cdot\|',\|\cdot\|\vee\|\cdot\|')^p
\end{equation}
for $p\in[1,\infty)$.
There is a similar formula when $p=\infty$.

The distance $d_1$ is easier to control than the others, because of its close relationship
to the relative volume.
Indeed, if $\|\cdot\|\le\|\cdot\|'$ pointwise on $V$, then 
\begin{equation}\label{e419}
  d_1(\|\cdot\|,\|\cdot\|')=\vol(\|\cdot\|,\|\cdot\|').
\end{equation}
We will later need the following estimate.
\begin{lem}\label{L407}
  Let $\|\cdot\|_i$, $\|\cdot\|'_i$, $i=1,2$, be norms on $V$. Then 
  \begin{equation}\label{e421}
    d_1(\|\cdot\|_1\vee\|\cdot\|_2,\|\cdot\|'_1\vee\|\cdot\|'_2)
    \le d_1(\|\cdot\|_1,\|\cdot\|'_1)+d_1(\|\cdot\|_2,\|\cdot\|'_2).
  \end{equation}
\end{lem}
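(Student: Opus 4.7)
The plan is to reduce the inequality to a \emph{monotone version} and then verify the monotone version by a dimension count on filtrations.

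For the reduction, I would set $M_i:=\|\cdot\|_i\vee\|\cdot\|'_i$ for $i=1,2$, and $M:=M_1\vee M_2=\|\cdot\|_1\vee\|\cdot\|_2\vee\|\cdot\|'_1\vee\|\cdot\|'_2$. Applying \eqref{e420} together with \eqref{e419} (each pair being split is already comparable) to both sides of \eqref{e421}, the desired inequality reduces to
\begin{equation*}
\vol(\|\cdot\|_1\vee\|\cdot\|_2,\,M)+\vol(\|\cdot\|'_1\vee\|\cdot\|'_2,\,M)\le\sum_{i=1,2}\bigl[\vol(\|\cdot\|_i,M_i)+\vol(\|\cdot\|'_i,M_i)\bigr].
\end{equation*}
This in turn follows by adding two instances of the following \emph{monotone statement}:
\begin{equation*}
\vol(N_1\vee N_2,\,N'_1\vee N'_2)\le\vol(N_1,N'_1)+\vol(N_2,N'_2)\qquad\text{whenever }N_i\le N'_i,
\end{equation*}
applied with $(N_i,N'_i)=(\|\cdot\|_i,M_i)$ and with $(N_i,N'_i)=(\|\cdot\|'_i,M_i)$.

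To prove the monotone statement, I would translate to filtrations. For a norm $N$ on $V$, denote its filtration by $\cF_N$; then $N\le N'$ translates into $\cF^\la_N V\supseteq\cF^\la_{N'} V$ for every $\la\in\R$, and the filtration of $N_1\vee N_2$ at level $\la$ is the intersection $\cF^\la_{N_1}V\cap\cF^\la_{N_2}V$. The definition of the relative successive minima in~\eqref{e417} yields the integral formula
\begin{equation*}
\dim(V)\cdot\vol(N,N')=\int_\R\bigl(\dim\cF^\la_N V-\dim\cF^\la_{N'} V\bigr)\,d\la
\end{equation*}
for any bounded pair $N\le N'$, with compactly supported nonnegative integrand. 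The monotone statement thus reduces to the pointwise (in $\la$) dimension inequality
\begin{equation*}
\dim(\cF^\la_{N_1}\cap\cF^\la_{N_2})-\dim(\cF^\la_{N'_1}\cap\cF^\la_{N'_2})\le\bigl[\dim\cF^\la_{N_1}-\dim\cF^\la_{N'_1}\bigr]+\bigl[\dim\cF^\la_{N_2}-\dim\cF^\la_{N'_2}\bigr]
\end{equation*}
(suppressing $V$ for readability). Applying $\dim(A\cap B)=\dim A+\dim B-\dim(A+B)$ to both intersections, this rearranges into $\dim(\cF^\la_{N'_1}+\cF^\la_{N'_2})\le\dim(\cF^\la_{N_1}+\cF^\la_{N_2})$, which is immediate since $\cF^\la_{N'_i}\subseteq\cF^\la_{N_i}$.

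The main obstacle is the reduction in the first step and setting up the integral formula for $\vol$ precisely; both are routine, and once in place, the monotone statement is a one-line consequence of $\dim(A+B)+\dim(A\cap B)=\dim A+\dim B$.
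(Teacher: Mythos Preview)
Your proof is correct. The reduction to the monotone case via \eqref{e420} and \eqref{e419} is exactly what the paper does (the paper's $\|\cdot\|''_i$ is your $M_i$). Where you diverge is in the proof of the monotone inequality itself: the paper first treats the case of a common orthogonal basis for all four norms by a direct computation, and then reduces the general monotone case to this one using a ``projection'' $\rho_e\colon\cN_V\to\cN_V$ onto norms diagonalized by a basis $e$ orthogonal for $\|\cdot\|'_1,\|\cdot\|'_2$, citing~\cite{BE18} for its properties. Your route via the integral formula $\dim(V)\cdot\vol(N,N')=\int_\R(\dim\cF^\la_N-\dim\cF^\la_{N'})\,d\la$ and the subspace identity $\dim(A\cap B)+\dim(A+B)=\dim A+\dim B$ bypasses orthogonal bases entirely and is more self-contained. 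The paper's approach has the advantage of making explicit the role of diagonalization (which is useful elsewhere in the theory), while yours is shorter and needs no auxiliary construction.
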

\begin{proof}
  To begin, consider the case when $\|\cdot\|_i\le\|\cdot\|'_i$, $i=1,2$. 

  Further, we first assume there exists a basis $e=(e_1,\dots,e_N)$ for $V$ 
  that is orthogonal for
  all four norms. Write $\|e_j\|_i=\exp(a_{i,j})$ and $\|e_j\|'_i=\exp(a'_{i,j})$ for
  $1\le j\le N$ and $i=1,2$. Then $a_{i,j}\le a'_{i,j}$ for all $i,j$, and
  we must show that
  \begin{equation*}
    \sum_{j=1}^Na'_{1,j}\vee a'_{2,j}-\sum_{j=1}^Na_{1,j}\vee a_{2,j}
    \le
    \sum_{j=1}^N(a'_{1,j}-a_{1,j})+\sum_{j=1}^N(a'_{2,j}-a_{2,j});
  \end{equation*}
  this is straightforward.

  When no such basis exists, we 
  use the following construction: see~\cite[\S3.1]{BE18} for details. 
  For any basis $e=(e_1,\dots,e_N)$ of $V$ there is a 
  ``projection'' $\rho_e\colon\cN_V\to\cN_V$ with the following properties:
  (1) $\rho_e(\|\cdot\|)=\|\cdot\|$ iff $\|\cdot\|$ is orthogonal for $e$;
  (2) $\rho_e\circ\rho_e=\rho_e$; 
  (3) $\det\!\rho_e(\|\cdot\|)=\det\!\|\cdot\|$; and
  (4) if $\|\cdot\|\le\|\cdot\|'$, then $\rho_e(\|\cdot\|)\le\rho_e(\|\cdot\|')$.

  Now assume $e$ is orthogonal for $\|\cdot\|'_i$, $i=1,2$.
  Replacing $\|\cdot\|_i$ by $\rho_e(\|\cdot\|_i)$, $i=1,2$ does not change the 
  right-hand side of~\eqref{e421}. As for the left-hand side,~(2) and~(4) above imply
  \begin{equation*}
    \rho_e(\|\cdot\|_1)\vee\rho_e(\|\cdot\|_2)
    \le\rho_e(\|\cdot\|_1\vee\|\cdot\|_2)
    \le\|\cdot\|'_1\vee\|\cdot\|'_2,
  \end{equation*}
  which in view of~(3) and~\eqref{e422} implies that the left-hand side of~\eqref{e421}
  can only increase upon replacing $\|\cdot\|_i$ by $\rho_e(\|\cdot\|_i)$, $i=1,2$.

  \smallskip
  Finally consider arbitrary norms. Set $\|\cdot\|''_i=\|\cdot\|_i\vee\|\cdot\|'_i$
  for $i=1,2$. By~\eqref{e420} we have 
  \begin{equation*}
    d_1(\|\cdot\|_1\vee\|\cdot\|_2,\|\cdot\|'_1\vee\|\cdot\|'_2)
    =d_1(\|\cdot\|_1\vee\|\cdot\|_2,\|\cdot\|''_1\vee\|\cdot\|''_2)
    +d_1(\|\cdot\|'_1\vee\|\cdot\|'_2,\|\cdot\|''_1\vee\|\cdot\|''_2)
  \end{equation*}
  and $\|\cdot\|_i,\|\cdot\|'_i\le\|\cdot\|''_i$, for $i=1,2$,
  so~\eqref{e421} follows from~\eqref{e420} and the case just considered.
\end{proof}
% 
% 
%%%%%%%%%%%%%%%%%%%%%%%%%%%%%%%%%%%%%%%%%%%%%%%%%%%%%%%%%%%%%%%%%%%
%
%
\subsection{Graded norms and filtrations}
Let $L$ be an ample line bundle.
%, \ie a pair $(r,L')$, where $r\ge1$ and $L'$ is a line bundle.
%For $m\in\N$, write $R_m:=H^0(X,\frac{m}{r}L')$ if $r\mid m$ and $R_m=0$
For $m\in\N$, write $R_m:=H^0(X,mL)$. Thus $R_m\ne0$ for $m\gg0$.
Consider the section ring $R=\bigoplus_mR_m$. 
%Let $M(L)\subset\N^\times$ be the subsemigroup consisting of $m$ such that $R_m\ne0$.

A \emph{graded norm} $\|\cdot\|_\bullet$ 
on $R$ is the data of a norm $\|\cdot\|_m$ on the $k$-vector space $R_m$ for each $m$,
satisfying $\|s\otimes s'\|_{m+m'}\le\|s\|_m\cdot\|s'\|_{m'}$ for $s\in R_m$, $s'\in R_{m'}$.
Since $R$ is finitely generated, there exists $C\ge0$ such that 
$\|\cdot\|_m\le\exp(Cm)$ on $R_m$ for all $m$. We say
$\|\cdot\|$ is (exponentially) \emph{bounded} if 
$\|\cdot\|_m\ge\exp(-Cm)$ on $R_m\setminus\{0\}$ for some $C\ge0$ and all $m$. 

If $\|\cdot\|_\bullet$ and $\|\cdot\|'_\bullet$ are (bounded) graded norms on $V$, so is
their maximum $\|\cdot\|_\bullet\vee\|\cdot\|'_\bullet$.
If $\|\cdot\|_\bullet$ is a (bounded) graded norm and $c\in\R$, then $\exp(c\,\bullet)\|\cdot\|_\bullet$, 
defined by $\exp(cm)\|\cdot\|_m$ on $R_m$, is a (bounded) graded norm on $V$.
The \emph{trivial graded norm} on $R$ is the graded norm
for which $\|\cdot\|_m$ is the trivial norm on $R_m$ for every $m$.

A graded norm is \emph{generated in degree one} if $R$ is generated in degree
one, that is, the canonical morphism $S^mR_1\to R_m$ is surjective for all $m\ge1$,
and the associated norm on $R_m$ is equal to the quotient norm from this morphism.
If $R$ is generated in degree 1, and $\|\cdot\|_1$ is any norm
on $R_1$, then $R$ admits a unique graded norm that is generated in degree one
and extends $\|\cdot\|_1$.
A graded norm is \emph{finitely generated} if the induced graded norm 
on $R(X,rL)$ is generated in degree 1 for some $r\ge1$.

A \emph{graded filtration} on $R$ is the collection of a filtration $(\cF^\la R_m)_\la$
of $R_m$ for all $m$, satisfying $\cF^\la R_m\cdot\cF^{\la'}R_{m'}\subset\cF^{\la+\la'}R_{m+m'}$.
As above, graded norms on $R$ are in bijection with graded filtrations of $R$,
and bounded graded norms correspond to (linearly) bounded
graded filtrations, \ie graded filtrations for which there exists $C\in\R$ such that 
$\cF^\la R_m=0$ for $\la\ge Cm$ and $\cF^\la R_m=R_m$ for $\la\le-Cm$.
We say that a graded filtration is generated in degree one if the associated graded norm
is generated in degree one.
The trivial graded norm on $R$ corresponds to the trivial graded filtration of $R$, defined by 
$\cF^\la R_m=R_m$ for $\la\le 0$ and $\cF^\la R_m=0$ for $\la>0$.

A graded filtration $\cF$ of $R(X,L)$ is a graded \emph{$\Z$-filtration} if all jumping numbers are integers, \ie
$\cF^\la R_m=\cF^{\lceil \la \rceil}R_m$ for all $\la$ and $m$. 
They correspond to graded norms taking values in $\{0\}\cup\exp(\Z)$.
Any graded filtration $\cF$  induces a graded $\Z$-filtration $\cF_\Z$ by setting
$\cF_\Z^\la R_m:= \cF^{\lceil \la \rceil} R_m$.
There is a similar operation on graded norms.

Finitely generated bounded $\Z$-filtrations of $R(X,L)$ are in 1-1 correspondence with 
ample test configurations for $(X,L)$, see~\cite[\S2.5]{BHJ1}.
% 
% 
%%%%%%%%%%%%%%%%%%%%%%%%%%%%%%%%%%%%%%%%%%%%%%%%%%%%%%%%%%%%%%%%%%%
%
%
\subsection{Relative limit measures}
Let $\|\cdot\|_\bullet$ and $\|\cdot\|'_\bullet$ be bounded graded norms on $R(X,L)$.
For $m\ge1$, let $\la_{m,j}$, $1\le j\le N_m$ be the relative successive
%For any $m\in M(L)$, let $\la_{m,j}$, $1\le j\le N_m$ be the relative successive
minima of $\|\cdot\|_m$ with respect to $\|\cdot\|'_m$.
Since the graded norms are bounded, there exists $C>0$ such that 
$|\la_{m,j}|\le Cm$ for all $m,j$. The following result was proved
by Chen and Maclean~\cite{Chen-Maclean}, building upon~\cite{BC11}.
\begin{thm}\label{T408}
  There exists a compactly supported Borel probability
  measure $\nu$ on $\R$ such that the probability measures
  \begin{equation*}
    \nu_m:=\frac1{N_m}\sum_{j=1}^{N_m}\delta_{\la_{m,j}/m}
  \end{equation*}
%  converge weakly to $\nu$ as $M(L)\ni m\to\infty$.
  converge weakly to $\nu$ as $m\to\infty$.
\end{thm}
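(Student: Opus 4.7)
The result is a bivariate extension of Boucksom--Chen's classical theorem \cite{BC11} on successive minima of a single bounded graded filtration, and the plan is to prove it via the Newton--Okounkov body machinery. First I would reduce to the case where both $\|\cdot\|_\bullet$ and $\|\cdot\|'_\bullet$ correspond to graded $\Z$-filtrations $\cF, \cF'$ by rounding jumping numbers: rounding perturbs each individual norm by a factor of $\exp(O(1))$ independent of $m$, hence each $\lambda_{m,j}$ by $O(1)$ and each $\lambda_{m,j}/m$ by $O(1/m)$, so $\nu_m$ changes by $O(1/m)$ in Wasserstein distance and the weak limit is unaffected. Next I would fix an admissible flag on $X$ yielding a rank-$n$ valuation $v\colon k(X)^\times\to\Z^n$ (with $n=\dim X$) of one-dimensional graded leaves, and let $\Delta:=\Delta(L)\subset\R^n$ be the Newton--Okounkov body of $L$, of Lebesgue volume $(L^n)/n!$, so that $m^{-1}v(R_m\setminus 0)$ equidistributes on $\Delta$ as $m\to\infty$.

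The next step is to attach to each bounded graded $\Z$-filtration $\cF$ on $R=R(X,L)$ the Boucksom--Chen concave transform
\[
  G_\cF(\xi):=\sup\bigl\{t\in\R : \xi\in\overline{\textstyle\bigcup_m m^{-1} v(\cF^{\lceil tm\rceil} R_m\setminus 0)}\bigr\},
\]
a bounded upper semicontinuous concave function on $\Delta$ whose pushforward of $d\lambda_\Delta/\vol(\Delta)$ recovers the single-filtration limit measure. Applied to both $\cF$ and $\cF'$ this produces $G,G'\colon\Delta\to\R$, and the candidate limit is
\[
  \nu:=(G-G')_*\bigl(d\lambda_\Delta/\vol(\Delta)\bigr),
\]
a probability measure with compact support (in $[-2C,2C]$ by boundedness). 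To establish $\nu_m\to\nu$ weakly, I would choose, for each $m$, a basis $(e_{m,j})$ of $R_m$ simultaneously orthogonal for $\|\cdot\|_m$ and $\|\cdot\|'_m$ (available because $k$ is trivially valued), so that $\lambda_{m,j}=w(e_{m,j})-w'(e_{m,j})$ where $w,w'$ are the jumping levels for $\cF,\cF'$, and then reduce the claim to the statement that the empirical joint distribution of the triples $\bigl(v(e_{m,j})/m,\,w(e_{m,j})/m,\,w'(e_{m,j})/m\bigr)$ on $\R^{n+2}$ converges to the image of $d\lambda_\Delta/\vol(\Delta)$ under $\xi\mapsto(\xi,G(\xi),G'(\xi))$. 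Pushing forward by $(\alpha,\beta)\mapsto\alpha-\beta$ in the last two coordinates then yields $\nu$. A sanity check on $X=\P^1$ with $\cF,\cF'$ the vanishing filtrations at $0$ and $\infty$ confirms the formula: one gets $G(\xi)=\xi$, $G'(\xi)=1-\xi$ on $\Delta=[0,1]$, and the pushforward of Lebesgue measure by $2\xi-1$ is indeed the uniform measure on $[-1,1]$, matching the empirical distribution of $\lambda_{m,j}/m=2j/m-1$.

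The principal obstacle lies in the joint equidistribution above. The one-variable Boucksom--Chen argument rests on a Khovanskii--Lazarsfeld--Mustata analysis of the graded semigroup $\{(m,v(s),w(s)) : s\in R_m\setminus 0\}\subset\N\times\Z^{n+1}$; here one needs a genuine bivariate refinement that simultaneously tracks $(w,w')$. The subtle point is that a simultaneous orthogonal basis $(e_{m,j})$ is far from canonical---only the multiset of pairs $(w(e_{m,j}),w'(e_{m,j}))$ is intrinsic---and the individual basis vectors do not interact cleanly with the Okounkov valuation $v$, so the marginal statements of \cite{BC11} do not formally combine to the joint statement. Surmounting this requires a bivariate Khovanskii-type argument on the $\N\times\Z^{n+2}$-graded semigroup encoding both filtrations simultaneously, which is precisely the content of Chen and Maclean's extension of \cite{BC11}.
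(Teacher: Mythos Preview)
Your candidate formula $\nu=(G-G')_*\bigl(d\lambda_\Delta/\vol(\Delta)\bigr)$ is wrong in general, so the approach cannot be repaired by supplying the missing ``bivariate Khovanskii'' step. Take $X=\P^1$, $L=\cO(1)$, flag at $[0{:}1]$ (so $v(s^at^b)=a$), and let $\cF,\cF'$ be the vanishing filtrations at $[1{:}0]$ and $[1{:}1]$ respectively. A short computation gives $G(\xi)=G'(\xi)=1-\xi$ on $\Delta=[0,1]$, hence $(G-G')_*=\delta_0$. But the basis $e_i=t^i(s-t)^{m-i}$ is orthogonal for both norms with $\lambda_{m,i}=2i-m$, so the genuine relative limit measure is uniform on $[-1,1]$. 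The point is that while $G_*$ and $G'_*$ are each flag-independent (they equal the intrinsic limit measures of $\cF,\cF'$), the \emph{joint} law of $(G,G')$ on $\Delta$ does depend on the flag, so $(G-G')_*$ cannot equal the intrinsic object $\nu$. Your own caveat that a simultaneously orthogonal basis ``does not interact cleanly with the Okounkov valuation $v$'' is exactly why the joint-equidistribution statement you formulate is false as stated; deferring it to Chen--Maclean is also circular, since Theorem~\ref{T408} \emph{is} their result.

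The paper's proof (which is the Chen--Maclean argument) avoids any bivariate Okounkov construction. One observes that for compactly supported probability measures on $\R$, weak convergence follows from convergence of $c\mapsto\int\max\{\lambda,c\}\,d\nu_m(\lambda)$ for every $c$. But $\max\{\lambda_{m,j},cm\}$ are precisely the relative successive minima of $\|\cdot\|_m$ against $\|\cdot\|'_m\vee e^{cm}\|\cdot\|_m$, so this integral is a relative volume. Using a basis orthogonal for both norms, the relative volume splits as a difference of two \emph{absolute} volumes (barycenters of single-filtration limit measures), each of which converges by~\cite{BC11}. No joint distribution is ever needed.
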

We call $\nu$ is the \emph{relative limit measure}
of $\|\cdot\|_\bullet$ with respect to $\|\cdot\|'_\bullet$. To indicate
the dependence on the graded norms, we write 
$\nu=\RLM(\|\cdot\|_\bullet,\|\cdot\|'_\bullet)$.
When $\|\cdot\|'_\bullet$ is the trivial graded norm, we write $\nu=\LM(\|\cdot\|_\bullet)$
and call it the \emph{limit measure} of $\|\cdot\|_\bullet$ following~\cite{BC11}.
\begin{proof}
  Since our setting and notation differs slightly from~\cite{Chen-Maclean},
  we sketch the proof.
  The idea is to reduce to the case when $\|\cdot\|'_\bullet$ is the trivial graded norm;
  this case was treated in~\cite{BC11} (see also~\cite{Bou14}) using the technique
  of Okounkov bodies. 

  % Pick $r$ such that $rL$ is a line bundle. After replacing $L$ by $rL$,
  % we may assume $L$ is a line bundle. 
%  Since $L$ is ample, $\N\setminus M(L)$ is finite. 
  Since $L$ is ample, $R_m\ne0$ for $m\gg0$.
  As already noted, there exists $C>0$ such that $\nu_m$ is supported in $[-C,C]$
  for $m\gg0$. 
  It suffices to prove that 
  \begin{equation*}
    \int_\R\max\{\la,c\}\,d\nu_m(\la)
    =\frac1{mN_m}\sum_{j=1}^{N_m}\max\{\la_{j,m},mc\}
  \end{equation*}
  converges as $m\to\infty$, for all $c\in\R$,
  see~\cite[Proposition~5.1]{Chen-Maclean}.
  But the numbers $\max\{\la_{j,m},cm\}$, $1\le j\le N_m$, are the 
  relative successive minima of $\|\cdot\|_m$ with respect to the norm
  $\|\cdot\|'_m\vee\exp(cm)\|\cdot\|_m$ on $R_m$.
  Replacing $\|\cdot\|'_\bullet$ by $\|\cdot\|'_\bullet\vee\exp(c\,\bullet)\|\cdot\|_\bullet$,
  we are reduced to proving that the \emph{barycenters} $(mN_m)^{-1}\sum_j\la_{m,j}$
  of the measures $\nu_m$ converge as $m\to\infty$.
  But if $\{e_{m,j}\}_j$ is a basis for $R_m$ that is orthogonal for both norms,  then
  \begin{equation*}
    \frac{1}{mN_m}\sum_{j=1}^{N_m}\la_{m,j}
    =\frac{1}{mN_m}\sum_{j=1}^{N_m}\log\frac{\|e_{m,j}\|'}{\|e_{m,j}\|}
    =\frac{1}{mN_m}\sum_{j=1}^{N_m}\log\frac{1}{\|e_{m,j}\|}
    -\frac{1}{mN_m}\sum_{j=1}^{N_m}\log\frac{1}{\|e_{m,j}\|'}
  \end{equation*}
  is the difference of the barycenters of the probability measures 
  defined by the successive minima of $\|\cdot\|_m$ and $\|\cdot\|'_m$
  respectively, and hence converges to the difference of the barycenters of the 
  limit measures of $\|\cdot\|_\bullet$ and $\|\cdot\|'_\bullet$.
\end{proof}
\begin{cor}\label{C406}
  For any two bounded graded norms  $\|\cdot\|$ and $\|\cdot\|'$ on $R$,
  the limit 
  \begin{equation*}
    \vol(\|\cdot\|_\bullet,\|\cdot\|'_\bullet)
    :=\lim_{m\to\infty}m^{-1}\vol(\|\cdot\|_m,\|\cdot\|'_m)
  \end{equation*}
  exists, and equals the barycenter of the relative limit measure
  $\RLM(\|\cdot\|_\bullet,\|\cdot\|'_\bullet)$.
\end{cor}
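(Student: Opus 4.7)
The plan is to reduce the statement to a direct consequence of Theorem~\ref{T408}. By definition~\eqref{e417}, we have
\begin{equation*}
  m^{-1}\vol(\|\cdot\|_m,\|\cdot\|'_m)
  =\frac{1}{mN_m}\sum_{j=1}^{N_m}\la_{m,j}
  =\int_\R\la\,d\nu_m(\la),
\end{equation*}
so the quantity whose limit we want to compute is precisely the barycenter of the probability measure $\nu_m$ appearing in Theorem~\ref{T408}.

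By that theorem, $\nu_m$ converges weakly to $\nu=\RLM(\|\cdot\|_\bullet,\|\cdot\|'_\bullet)$ as $m\to\infty$. A priori, weak convergence only guarantees convergence of integrals against continuous, bounded test functions, and the identity function $\la\mapsto\la$ is unbounded on $\R$. However, the graded norms are bounded, so there exists $C>0$ such that $|\la_{m,j}|\le Cm$ for all $m,j$; hence all the $\nu_m$ are supported in the fixed compact interval $[-C,C]$, and so is $\nu$. Choosing any continuous function $\chi$ on $\R$ that agrees with $\la\mapsto\la$ on $[-C,C]$ and has compact support, weak convergence of $\nu_m$ to $\nu$ yields
\begin{equation*}
  \lim_{m\to\infty}\int_\R\la\,d\nu_m(\la)
  =\lim_{m\to\infty}\int_\R\chi\,d\nu_m
  =\int_\R\chi\,d\nu
  =\int_\R\la\,d\nu(\la),
\end{equation*}
which is exactly the barycenter of the relative limit measure. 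This establishes both the existence of the limit defining $\vol(\|\cdot\|_\bullet,\|\cdot\|'_\bullet)$ and the claimed identification.

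There is no real obstacle here; the only point requiring any care is the passage from weak convergence to convergence of first moments, which is immediate from the uniform bound $|\la_{m,j}|\le Cm$ provided by boundedness of the graded norms. The substantive content of the corollary already resides in Theorem~\ref{T408}.
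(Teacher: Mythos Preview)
Your proof is correct and follows essentially the same approach as the paper: identify $m^{-1}\vol(\|\cdot\|_m,\|\cdot\|'_m)$ as the barycenter of $\nu_m$, invoke Theorem~\ref{T408} for weak convergence, and use the uniform support bound $[-C,C]$ to upgrade weak convergence to convergence of first moments. The paper's proof is simply a terser version of yours; your cutoff-function argument makes explicit what the paper leaves implicit.
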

We call $\vol(\|\cdot\|_\bullet,\|\cdot\|'_\bullet)$ the \emph{relative volume}
of $\|\cdot\|_\bullet$ with respect to $\|\cdot\|'_\bullet$. It satisfies
a cocycle condition as in~\eqref{eq:cocycle}. The proof of Theorem~\ref{T408}
shows that if $\nu=\RLM(\|\cdot\|_\bullet,\|\cdot\|'_\bullet)$, then 
\begin{equation}\label{e423}
  \int\max\{\la,c\}\,d\nu(\la)
  =\vol(\|\cdot\|_\bullet,\|\cdot\|'_\bullet\vee\exp(c\,\bullet)\|\cdot\|_\bullet).
\end{equation}

\begin{proof}[Proof of Corollary~\ref{C406}]
  This follows from Theorem~\ref{T408} since 
  $m^{-1}\vol(\|\cdot\|_m,\|\cdot\|'_m)$ is the barycenter of the measure $\nu_m$,
  which converges weakly to $\nu:=\RLM(\|\cdot\|_\bullet,\|\cdot\|'_\bullet)$. Indeed, 
  the supports of all the $\nu_m$ are all contained in a fixed interval $[-C,C]$.
\end{proof}
The next results show how relative limit measures behave under
operations on graded norms. 
They follow from elementary computations of relative successive minima
in bases for $R_m$ that are orthogonal for both $\|\cdot\|_m$ and $\|\cdot\|'_m$. 
The details are left to the reader.
\begin{prop}\label{P401}
  Let $\|\cdot\|_\bullet$ and $\|\cdot\|'_\bullet$ be bounded graded norms on $R$,
  and $c\in\R$. Then:
  \begin{itemize}
  \item[(i)]
    $\RLM(\|\cdot\|'_\bullet,\|\cdot\|_\bullet)$ is the pushforward of 
    $\RLM(\|\cdot\|_\bullet,\|\cdot\|'_\bullet)$ 
    under $\la\mapsto-\la$.
  \item[(ii)]
    $\RLM(\exp(c\,\bullet)\|\cdot\|_\bullet,\|\cdot\|'_\bullet)$ is the pushforward of
    $\RLM(\|\cdot\|_\bullet,\|\cdot\|'_\bullet)$ under $\la\mapsto\la-c$.
  \item[(iii)]
    $\RLM(\|\cdot\|_\bullet,\|\cdot\|_\bullet\vee\|\cdot\|'_\bullet)$ is the 
pushforward 
    of $\RLM(\|\cdot\|_\bullet,\|\cdot\|'_\bullet)$ under $\la\mapsto\max\{\la,0\}$.
 \end{itemize}
\end{prop}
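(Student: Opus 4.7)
The plan is to argue at the level of the approximating measures $\nu_m$ from Theorem~\ref{T408}, carry out the three operations in a single simultaneously orthogonal basis, and then pass to the limit using the fact that the $\nu_m$ are supported in a fixed compact interval. Concretely, fix for each $m$ a basis $\{e_{m,j}\}_{j=1}^{N_m}$ of $R_m$ that is orthogonal for both $\|\cdot\|_m$ and $\|\cdot\|'_m$, and set $a_{m,j}:=\log\|e_{m,j}\|_m$, $a'_{m,j}:=\log\|e_{m,j}\|'_m$, so that $\la_{m,j}=a'_{m,j}-a_{m,j}$ are the relative successive minima. Since for a continuous map $T\colon\R\to\R$ the pushforward $T_*$ is continuous on compactly supported probability measures under weak convergence, it suffices to verify the three identities at the level of $\nu_m$.

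For (i), swapping the roles of $\|\cdot\|_m$ and $\|\cdot\|'_m$ produces the relative successive minima $a_{m,j}-a'_{m,j}=-\la_{m,j}$, so that the pre-limit measure gets pushed forward under $\la\mapsto-\la$. For (ii), multiplying $\|\cdot\|_m$ by $\exp(cm)$ preserves the orthogonal basis and replaces $a_{m,j}$ by $a_{m,j}+cm$, hence the relative successive minima become $\la_{m,j}-cm$; after the normalization $\la\mapsto\la/m$ in the definition of $\nu_m$, this is precisely the pushforward under $\la\mapsto\la-c$.

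For (iii), the key point is that the same basis $\{e_{m,j}\}$ is also orthogonal for $\|\cdot\|_m\vee\|\cdot\|'_m$, with $\|e_{m,j}\|_\vee=\max\{\|e_{m,j}\|_m,\|e_{m,j}\|'_m\}$. Indeed, for $v=\sum_ja_je_{m,j}$,
\begin{equation*}
  \|v\|_\vee=\max\{\|v\|_m,\|v\|'_m\}
  =\max_j|a_j|\max\{\|e_{m,j}\|_m,\|e_{m,j}\|'_m\},
\end{equation*}
since the maximum over $j$ commutes with the maximum over the two norms. Consequently, the relative successive minima of $\|\cdot\|_m$ with respect to $\|\cdot\|_m\vee\|\cdot\|'_m$ are $\max\{a_{m,j},a'_{m,j}\}-a_{m,j}=\max\{\la_{m,j},0\}$, giving precisely the pushforward of $\nu_m$ under $\la\mapsto\max\{\la,0\}$.

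The only step that is not a direct computation is the passage from $\nu_m$ to $\nu$, but here the uniform bound $|\la_{m,j}|\le Cm$ provided by boundedness of the graded norms (and the analogous bounds after applying any of the three operations, which again produce bounded graded norms) ensures that all pre-limit measures live in a fixed compact interval, so weak convergence is preserved under pushforward by any continuous map. I expect no real obstacle: the main point to double-check is the orthogonality claim for $\|\cdot\|_m\vee\|\cdot\|'_m$ in (iii), which is the one place where the simultaneous orthogonality hypothesis is essential.
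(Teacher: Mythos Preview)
Your proof is correct and follows exactly the approach the paper indicates: the paper states that the results ``follow from elementary computations of relative successive minima in bases for $R_m$ that are orthogonal for both $\|\cdot\|_m$ and $\|\cdot\|'_m$'' and leaves the details to the reader. You have supplied precisely those details, including the key observation for~(iii) that a simultaneously orthogonal basis is automatically orthogonal for the maximum norm, and the routine passage to the limit via uniform compact support.
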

\begin{prop}\label{P407}
  Let $\|\cdot\|_\bullet$ and $\|\cdot\|'_\bullet$ be bounded graded norms on $R$.
  Given $r\ge1$, let $\|\cdot\|^{(r)}_\bullet$ and $\|\cdot\|^{\prime(r)}_\bullet$ be 
  their restrictions to $R(X,rL)$. Then 
  $\RLM(\|\cdot\|^{(r)}_\bullet,\|\cdot\|^{\prime(r)}_\bullet)$ is the
  pushforward 
  of $\RLM(\|\cdot\|_\bullet,\|\cdot\|'_\bullet)$ under $\la\mapsto r\la$.
\end{prop}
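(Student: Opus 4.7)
The plan is to directly compare the two defining sequences of discrete atomic measures, and exploit the fact that restriction to $R(X,rL)$ does nothing more than reindex the graded pieces. Concretely, $R(X,rL)_m=H^0(X,rmL)=R_{rm}$, and by construction the restricted norms $\|\cdot\|^{(r)}_m$ and $\|\cdot\|^{\prime(r)}_m$ on this space are literally $\|\cdot\|_{rm}$ and $\|\cdot\|'_{rm}$. Hence the relative successive minima $\la^{(r)}_{m,j}$ of the restricted pair at level $m$ coincide, as an unordered list, with the relative successive minima $\la_{rm,j}$ of the original pair at level $rm$, and $N^{(r)}_m=N_{rm}$.

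First I would record that the restricted graded norms are still bounded: if $|\la_{m,j}|\le Cm$ for all $m,j$, then $|\la^{(r)}_{m,j}|=|\la_{rm,j}|\le Crm$, so Theorem~\ref{T408} applies to the restricted pair and produces a compactly supported relative limit measure $\RLM(\|\cdot\|^{(r)}_\bullet,\|\cdot\|^{\prime(r)}_\bullet)$.

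Next, using the identification of successive minima above, the approximating measure for the restricted pair can be rewritten as
\[
\nu^{(r)}_m=\frac{1}{N_{rm}}\sum_{j=1}^{N_{rm}}\delta_{\la_{rm,j}/m}
=(r\cdot)_{*}\left(\frac{1}{N_{rm}}\sum_{j=1}^{N_{rm}}\delta_{\la_{rm,j}/(rm)}\right)
=(r\cdot)_{*}\nu_{rm},
\]
where $r\cdot$ denotes the map $\la\mapsto r\la$ on $\R$. Theorem~\ref{T408} gives $\nu^{(r)}_m\to\RLM(\|\cdot\|^{(r)}_\bullet,\|\cdot\|^{\prime(r)}_\bullet)$ weakly as $m\to\infty$, while the same theorem (applied along the subsequence of multiples of $r$) gives $\nu_{rm}\to\RLM(\|\cdot\|_\bullet,\|\cdot\|'_\bullet)$ weakly. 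Since $\la\mapsto r\la$ is continuous and all measures involved are supported in a common compact interval, pushforward under it commutes with weak limits, and one concludes
\[
\RLM(\|\cdot\|^{(r)}_\bullet,\|\cdot\|^{\prime(r)}_\bullet)
=(r\cdot)_{*}\RLM(\|\cdot\|_\bullet,\|\cdot\|'_\bullet).
\]

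There is essentially no obstacle here: once one notices that restriction to $R(X,rL)$ leaves the data $(R_{rm},\|\cdot\|_{rm},\|\cdot\|'_{rm})$ untouched, the whole content of the proposition is the rescaling of the normalization factor from $rm$ to $m$ in the atomic measures, which is exactly the pushforward under $\la\mapsto r\la$. The only place one has to be a little careful is to invoke Theorem~\ref{T408} for both the full graded ring and its Veronese subring, noting that boundedness of the original pair transfers to boundedness of the restricted pair.
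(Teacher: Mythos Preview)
Your argument is correct and follows exactly the approach the paper indicates: it states that this proposition ``follow[s] from elementary computations of relative successive minima in bases for $R_m$ that are orthogonal for both $\|\cdot\|_m$ and $\|\cdot\|'_m$,'' leaving the details to the reader. You have supplied precisely those details, by identifying the approximating measure $\nu^{(r)}_m$ for the restricted pair with $(r\cdot)_*\nu_{rm}$ and passing to the limit.
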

% 
% 
%%%%%%%%%%%%%%%%%%%%%%%%%%%%%%%%%%%%%%%%%%%%%%%%%%%%%%%%%%%%%%%%%%%
%
%
\subsection{Equivalence of bounded graded norms}
Theorem~\ref{T408} yields
\begin{cor}\label{C403}
  Let $\|\cdot\|_\bullet$ and $\|\cdot\|'_\bullet$ be bounded graded norms on $R$.
  For $p\in[1,\infty)$, the limit 
  \begin{equation*}
    d_p(\|\cdot\|_\bullet,\|\cdot\|'_\bullet)=\lim m^{-1}d_{m,p}(\|\cdot\|_m,\|\cdot\|'_m)
  \end{equation*}
  exists, and we have
  \begin{equation}\label{e401}
    d_p(\|\cdot\|_\bullet,\|\cdot\|'_\bullet)=\int_\R|\la|^p\,d\nu(\la),
  \end{equation}
  where $\nu=\RLM(\|\cdot\|_\bullet,\|\cdot\|'_\bullet)$ is the relative limit measure of 
  $\|\cdot\|_\bullet$ with respect to $\|\cdot\|'_\bullet$.
\end{cor}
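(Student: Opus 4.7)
The strategy is to recognise $m^{-1}d_p(\|\cdot\|_m,\|\cdot\|'_m)$ as essentially the $L^p$-norm (in $\la$) of the empirical measure $\nu_m$ that appears in the proof of Theorem~\ref{T408}, and then pass to the limit using weak convergence together with uniform compact support.

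Concretely, I would start from the definition~\eqref{e418} with $N=N_m$, which gives
\begin{equation*}
  d_p(\|\cdot\|_m,\|\cdot\|'_m)^p = N_m^{-1}\sum_{j=1}^{N_m}|\la_{m,j}|^p.
\end{equation*}
Dividing by $m^p$ and pulling the factor inside the sum rewrites this as
\begin{equation*}
  \bigl(m^{-1}d_p(\|\cdot\|_m,\|\cdot\|'_m)\bigr)^p
  = N_m^{-1}\sum_{j=1}^{N_m}|\la_{m,j}/m|^p
  = \int_\R |\la|^p\,d\nu_m(\la),
\end{equation*}
where $\nu_m=N_m^{-1}\sum_j\delta_{\la_{m,j}/m}$ is precisely the measure from Theorem~\ref{T408}. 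Thus the corollary reduces to showing that $\int_\R|\la|^p\,d\nu_m\to\int_\R|\la|^p\,d\nu$ as $m\to\infty$.

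Next I would invoke boundedness of the two graded norms to obtain a constant $C>0$ with $|\la_{m,j}|\le Cm$ for all $m\gg 0$ and all $j$. Consequently every $\nu_m$ is supported in the fixed compact interval $[-C,C]$, and by weak convergence the limit measure $\nu=\RLM(\|\cdot\|_\bullet,\|\cdot\|'_\bullet)$ is supported in $[-C,C]$ as well. Since $\la\mapsto|\la|^p$ is continuous on $\R$, replacing it by any continuous bounded function that agrees with it on $[-C,C]$ does not alter any of the integrals in question; the weak convergence $\nu_m\Rightarrow\nu$ supplied by Theorem~\ref{T408} therefore gives $\int|\la|^p\,d\nu_m\to\int|\la|^p\,d\nu$, and taking $p$-th roots yields~\eqref{e401}.

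There is essentially no serious obstacle: the only potential subtlety is that $|\la|^p$ is unbounded on $\R$, so weak convergence alone does not formally suffice, but this is immediately dealt with by the uniform compact support of all the $\nu_m$ (and of $\nu$). The argument runs exactly in parallel to the reduction carried out for the test function $\max\{\la,c\}$ in the proof of Theorem~\ref{T408}; indeed, $|\la|^p$ could equally be written as a linear combination of $\max\{\la,c\}$-type functions to reuse that convergence verbatim.
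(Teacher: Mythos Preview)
Your argument is correct and is precisely the intended one: the paper does not give a separate proof but simply states that ``Theorem~\ref{T408} yields'' the corollary, and your computation makes explicit why --- namely that $(m^{-1}d_p(\|\cdot\|_m,\|\cdot\|'_m))^p=\int|\la|^p\,d\nu_m$, together with the uniform compact support of the $\nu_m$, reduces everything to the weak convergence of Theorem~\ref{T408}. (Note that, as written, \eqref{e401} is off by a $p$-th root; your computation gives $d_p^p=\int|\la|^p\,d\nu$, which is the correct formula.)
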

The function $d_p$ on pairs of bounded graded norms is symmetric and satisfies
the triangle inequality. Further~\eqref{e420} implies
\begin{equation}\label{e415}
  d_p(\|\cdot\|_\bullet,\|\cdot\|'_\bullet)^p
  =d_p(\|\cdot\|_\bullet,\|\cdot\|_\bullet\vee\|\cdot\|'_\bullet)^p
  +d_p(\|\cdot\|'_\bullet,\|\cdot\|_\bullet\vee\|\cdot\|'_\bullet)^p.
\end{equation}
In particular, the distance $d_1$ can be computed in terms of relative volumes:
\begin{equation}\label{e426}
  d_1(\|\cdot\|_\bullet,\|\cdot\|'_\bullet)
  =\vol(\|\cdot\|_\bullet,\|\cdot\|_\bullet\vee\|\cdot\|'_\bullet)
  +\vol(\|\cdot\|'_\bullet,\|\cdot\|_\bullet\vee\|\cdot\|'_\bullet).
\end{equation}

We'd like to analyze how far $d_p$ is from being  a distance.
 If $\|\cdot\|_\bullet$ and $\|\cdot\|'_\bullet$ are 
bounded graded norms on $R$, then 
$p\mapsto d_p(\|\cdot\|_\bullet,\|\cdot\|'_\bullet)$ is increasing.
Further, if $C\ge0$ is such that 
\begin{equation*}
  \exp(-Cm)\le\|\cdot\|_m,\|\cdot\|'_m\le\exp(Cm)
\end{equation*}
on $R_m$ for all $m\ge1$, then
$d_p(\|\cdot\|_\bullet,\|\cdot\|'_\bullet)\le C^{p-1}d_1(\|\cdot\|_\bullet,\|\cdot\|'_\bullet)$
for all $p\in[1,\infty)$. 
In particular, for any two graded norms and any $p\in[1,\infty)$, we have 
$d_p(\|\cdot\|_\bullet,\|\cdot\|'_\bullet)=0$ iff 
$d_1(\|\cdot\|_\bullet,\|\cdot\|'_\bullet)=0$.
In this case we say that the two graded norms are \emph{equivalent}.
As a consequence of Corollary~\ref{C403} we have:
\begin{cor}\label{C402}
  If $\|\cdot\|_\bullet$ and $\|\cdot\|'_\bullet$ are bounded graded norms on $R$, then 
  $\|\cdot\|_\bullet$ and $\|\cdot\|'_\bullet$ are equivalent iff 
  the relative limit measure $\RLM(\|\cdot\|_\bullet,\|\cdot\|'_\bullet)$ is 
  a Dirac mass at the origin.
\end{cor}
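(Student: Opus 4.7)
The plan is to read off the statement directly from Corollary~\ref{C403}, which is the main tool that has just been established. Applying that corollary with $p=1$ gives
\begin{equation*}
  d_1(\|\cdot\|_\bullet,\|\cdot\|'_\bullet)=\int_\R|\la|\,d\nu(\la),
\end{equation*}
where $\nu=\RLM(\|\cdot\|_\bullet,\|\cdot\|'_\bullet)$ is a compactly supported Borel probability measure on $\R$ by Theorem~\ref{T408}.

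Next I would invoke the definition of equivalence: by construction, $\|\cdot\|_\bullet$ and $\|\cdot\|'_\bullet$ are equivalent precisely when $d_1(\|\cdot\|_\bullet,\|\cdot\|'_\bullet)=0$. Combining this with the displayed identity, equivalence is equivalent to $\int_\R|\la|\,d\nu(\la)=0$, which for the nonnegative Borel function $|\la|$ holds iff $\nu$ is supported at the origin. Since $\nu$ is a probability measure, this forces $\nu=\d_0$.

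No step here looks like a genuine obstacle: both implications are immediate once Corollary~\ref{C403} is in hand, the only subtlety being to remember that equivalence was defined via $d_1$ (and a priori could have been defined via any $d_p$, but the monotonicity and comparability of the $d_p$ recorded just above show these choices coincide). Thus the proof reduces to citing Corollary~\ref{C403} and the vanishing criterion for the integral of a nonnegative function against a probability measure.
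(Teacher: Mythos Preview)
Your argument is correct and matches the paper's approach exactly: the paper simply introduces Corollary~\ref{C402} with the phrase ``As a consequence of Corollary~\ref{C403} we have'' and gives no further proof, so your sketch is precisely the unpacking of that one-line deduction.
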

\begin{rmk}
  The equivalence notion above is the natural one in our study, but there other possibilities.
  For example, let us say two bounded graded norms $\|\cdot\|_\bullet$ and $\|\cdot\|'_\bullet$
  are \emph{$\infty$-equivalent} if $\lim m^{-1}d_\infty(\|\cdot\|_m,\|\cdot\|'_m)=0$.
  This implies equivalence in the sense above, but the converse is not true. 
  Indeed, let $X=\P^1$ and $L=\cO(1)$, so that $R=k[S,T]$. Let $\|\cdot\|'_\bullet$ be the trivial
  graded norm, and define a graded norm $\|\cdot\|_\bullet$ 
  on $R$ by $\|Q(S,T)\|_m=\exp(m)$ if $S\nmid Q$ and $\|Q(S,T)\|_m=1$ if 
  $S\mid Q$. The measure in Theorem~\ref{T408} is given by 
  $\nu_m=\frac{1}{m+1}\delta_{-1}+\frac{m}{m+1}\delta_0$. 
  Thus the relative limit measure is equal to $\d_0$, but 
  $d_\infty(\|\cdot\|_m,|\cdot\|'_m)=m$ for all $m$.
  See also~\cite[p.458]{Sze15}.
\end{rmk}
All the constructions so far involving graded norms only depend on equivalence classes:
\begin{thm}\label{T411}
  Let $\|\cdot\|_{i,\bullet}$ and $\|\cdot\|'_{i,\bullet}$, $i=1,2$, be bounded graded norms
  on $R$ such that $\|\cdot\|_{i,\bullet}$ is equivalent to $\|\cdot\|'_{i,\bullet}$. Then
  $\RLM(\|\cdot\|_{1,\bullet},\|\cdot\|_{2,\bullet})=\RLM(\|\cdot\|'_{1,\bullet},\|\cdot\|'_{2,\bullet})$.
  As a consequence, 
  $\vol(\|\cdot\|_{1,\bullet},\|\cdot\|_{2,\bullet})=\vol(\|\cdot\|'_{1,\bullet},\|\cdot\|'_{2,\bullet})$.
\end{thm}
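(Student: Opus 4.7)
My strategy is to reduce the equality of relative limit measures to an equality of certain relative volumes via formula~\eqref{e423}, and then to exploit that $\vol$ is insensitive to replacing either argument by an equivalent graded norm. The core input is the asymptotic form of Lemma~\ref{L407}, which ensures that the $\max$ operation appearing in~\eqref{e423} preserves equivalence classes of graded norms.

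First I would establish that $\vol(\|\cdot\|_\bullet,\|\cdot\|'_\bullet)$ depends only on the equivalence classes of the two arguments. Indeed, Corollaries~\ref{C406} and~\ref{C403} express the relative volume and $d_1$ as the first moment and first absolute moment, respectively, of $\RLM(\|\cdot\|_\bullet,\|\cdot\|'_\bullet)$, so $|\vol(\|\cdot\|_\bullet,\|\cdot\|'_\bullet)|\le d_1(\|\cdot\|_\bullet,\|\cdot\|'_\bullet)$. Combined with the cocycle relation~\eqref{eq:cocycle}, this immediately yields the desired invariance of $\vol$ under equivalence on either side, and already settles the second assertion of Theorem~\ref{T411}.

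For the first assertion, I would invoke~\eqref{e423} to write, for every $c\in\R$,
\[
\int\max\{\la,c\}\,d\RLM(\|\cdot\|_{1,\bullet},\|\cdot\|_{2,\bullet})
=\vol\bigl(\|\cdot\|_{1,\bullet},\,\|\cdot\|_{2,\bullet}\vee\exp(c\,\bullet)\|\cdot\|_{1,\bullet}\bigr),
\]
and similarly with primed norms on both sides. The key step is then to show that $\|\cdot\|_{2,\bullet}\vee\exp(c\,\bullet)\|\cdot\|_{1,\bullet}$ is equivalent to $\|\cdot\|'_{2,\bullet}\vee\exp(c\,\bullet)\|\cdot\|'_{1,\bullet}$. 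Applying Lemma~\ref{L407} at each level $m$, and noting that scaling a norm by the constant $\exp(cm)$ leaves relative successive minima (and hence $d_{m,1}$) unchanged, one obtains
\[
d_{m,1}\bigl(\|\cdot\|_{2,m}\vee\exp(cm)\|\cdot\|_{1,m},\,\|\cdot\|'_{2,m}\vee\exp(cm)\|\cdot\|'_{1,m}\bigr)
\le d_{m,1}(\|\cdot\|_{1,m},\|\cdot\|'_{1,m})+d_{m,1}(\|\cdot\|_{2,m},\|\cdot\|'_{2,m}).
\]
Dividing by $m$ and using Corollary~\ref{C403}, the right-hand side tends to $0$, proving equivalence. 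Combined with the invariance of $\vol$ established in the previous step, this gives $\int\max\{\la,c\}\,d\nu=\int\max\{\la,c\}\,d\nu'$ for every $c\in\R$, where $\nu,\nu'$ denote the two relative limit measures to be compared.

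Finally, both $\nu$ and $\nu'$ are supported in a common compact interval (by the boundedness of the graded norms), and the right derivative at $c$ of the convex function $c\mapsto\int\max\{\la,c\}\,d\nu(\la)$ recovers the cumulative distribution function $\nu((-\infty,c])$ at every one of its continuity points. The equality of these integrals therefore forces $\nu=\nu'$, completing the proof of the first assertion. The main obstacle is really only the asymptotic promotion of Lemma~\ref{L407}: once that is in place, the conclusion follows formally from the material already developed in this section.
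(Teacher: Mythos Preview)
Your proposal is correct and follows essentially the same route as the paper: reduce to equality of $\int\max\{\la,c\}\,d\nu$ via~\eqref{e423}, and use the asymptotic form of Lemma~\ref{L407} (together with scaling invariance of $d_1$) to see that the $\vee$-combinations remain equivalent. The paper packages your inline argument into a separate Lemma~\ref{L411}, and is terser about two points you make explicit---the invariance of $\vol$ under equivalence (via the cocycle relation and $|\vol|\le d_1$), and the fact that knowing $\int\max\{\la,c\}\,d\nu$ for all $c$ determines a compactly supported $\nu$---but the logic is the same.
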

\begin{lem}\label{L411}
  With notation and assumptions as in Theorem~\ref{T411}, we have:
  \begin{itemize}
  \item[(i)]
    $\|\cdot\|_{1,\bullet}\vee\|\cdot\|_{2,\bullet}$ is equivalent to
    $\|\cdot\|'_{1,\bullet}\vee\|\cdot\|'_{2,\bullet}$;
  \item[(ii)]
    For $i=1,2$ and any $c\in\R$, 
    $\exp(c\,\bullet)\|\cdot\|_{i,\bullet}$ and $\exp(c\,\bullet)\|\cdot\|'_{i,\bullet}$
    are equivalent.
  \end{itemize}
\end{lem}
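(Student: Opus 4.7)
The plan is to leverage Lemma~\ref{L407} and Corollary~\ref{C403} for part (i), and a direct computation of relative successive minima for part (ii).

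For part (i), I would apply Lemma~\ref{L407} at each degree $m\ge 1$ to the four norms $\|\cdot\|_{1,m}$, $\|\cdot\|_{2,m}$, $\|\cdot\|'_{1,m}$, $\|\cdot\|'_{2,m}$ on $R_m$, obtaining
\begin{equation*}
  d_1(\|\cdot\|_{1,m}\vee\|\cdot\|_{2,m},\|\cdot\|'_{1,m}\vee\|\cdot\|'_{2,m})
  \le d_1(\|\cdot\|_{1,m},\|\cdot\|'_{1,m})+d_1(\|\cdot\|_{2,m},\|\cdot\|'_{2,m}).
\end{equation*}
Dividing by $m$ and passing to the limit as $m\to\infty$, Corollary~\ref{C403} identifies each of the three limits with the corresponding $d_1$-distance of graded norms, yielding
\begin{equation*}
  d_1(\|\cdot\|_{1,\bullet}\vee\|\cdot\|_{2,\bullet},\|\cdot\|'_{1,\bullet}\vee\|\cdot\|'_{2,\bullet})
  \le d_1(\|\cdot\|_{1,\bullet},\|\cdot\|'_{1,\bullet})+d_1(\|\cdot\|_{2,\bullet},\|\cdot\|'_{2,\bullet}) = 0,
\end{equation*}
since by hypothesis each summand on the right vanishes. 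Hence the two max-graded norms are equivalent.

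For part (ii), the key observation is that multiplying two norms on $R_m$ by a common positive scalar does not affect their relative successive minima. More precisely, if $e=(e_1,\ldots,e_{N_m})$ is a basis of $R_m$ that is orthogonal for both $\|\cdot\|_{i,m}$ and $\|\cdot\|'_{i,m}$, then it is also orthogonal for the scaled norms $\exp(cm)\|\cdot\|_{i,m}$ and $\exp(cm)\|\cdot\|'_{i,m}$, and
\begin{equation*}
  \log\frac{\exp(cm)\|e_j\|'_{i,m}}{\exp(cm)\|e_j\|_{i,m}} = \log\frac{\|e_j\|'_{i,m}}{\|e_j\|_{i,m}}
\end{equation*}
coincides with the $j$-th relative successive minimum of the original pair. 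Thus the measures $\nu_m$ in Theorem~\ref{T408} are the same for both pairs, and consequently so is the relative limit measure, which in turn gives $d_1(\exp(c\,\bullet)\|\cdot\|_{i,\bullet},\exp(c\,\bullet)\|\cdot\|'_{i,\bullet})=d_1(\|\cdot\|_{i,\bullet},\|\cdot\|'_{i,\bullet})=0$ by~\eqref{e401}.

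Neither part presents a real obstacle: (i) is an immediate limiting argument from Lemma~\ref{L407}, and (ii) is even simpler since the relative successive minima are literally unchanged by a common exponential rescaling. The only point requiring a moment of care is making sure the passage to the limit in (i) is legitimate, but this is already packaged in Corollary~\ref{C403}.
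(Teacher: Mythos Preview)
Your proof is correct and follows essentially the same approach as the paper's own proof: for (i) you pass the degree-$m$ estimate of Lemma~\ref{L407} to the limit via Corollary~\ref{C403}, and for (ii) you justify the scaling invariance of $d_1$ by noting that the relative successive minima are unchanged, which is precisely what underlies the displayed identity the paper invokes.
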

\begin{proof}
  The estimate in Lemma~\ref{L407} implies a corresponding estimate for
  bounded graded norms, and yields~(i).
  The statement in~(ii) follows from the equality
  \begin{equation*}
    d_1(\exp(c\,\bullet)\|\cdot\|_\bullet,\exp(c\,\bullet)\|\cdot\|'_\bullet)
    =d_1(\|\cdot\|_\bullet,\|\cdot\|'_\bullet)
  \end{equation*}
  for any two bounded graded norms.
\end{proof}
\begin{proof}[Proof of Theorem~\ref{T411}]
  Set $\nu:=\RLM(\|\cdot\|_{1,\bullet},\|\cdot\|_{2,\bullet})$ 
  and $\nu':=\RLM(\|\cdot\|'_{1,\bullet},\|\cdot\|'_{2,\bullet})$.
  We must show that $\nu=\nu'$.
  It suffices to prove that 
  $\int\max\{\la,c\}\,d\nu(\la)=\int\max\{\la,c\}\,d\nu'(\la)$
  for all $c\in\R$. This follows from~\eqref{e423} and Lemma~\ref{L411}.
\end{proof}
% 
% 
%%%%%%%%%%%%%%%%%%%%%%%%%%%%%%%%%%%%%%%%%%%%%%%%%%%%%%%%%%%%%%%%%%%
%
%
\subsection{Almost trivial graded norms and filtrations}\label{S407}
Given $p\in[1,\infty)$ we define the \emph{$p$th central moment} $\|\cF\|_p$ 
of a graded filtration $\cF$ of $R$ as the $p$th central moment of its limit measure.
Thus $\|\cF\|_p=(\int|\la-\overline\la|^p\,d\nu(\la))^{1/p}$, where 
$\nu$ is the limit measure and $\overline\la=\int\la\,d\nu(\la)$ is its barycenter.
We define $p$th central moments of bounded graded norms in the same way, although 
the notation becomes rather cumbersome!

When $\cF$ is associated to a test configuration $(\cX,\cL)$ for $(X,L)$,
$\|\cF\|_p$ coincides with the $L^p$-norm of the test configuration as
defined in~\cite{Don05,WN12,His16}.
For a general bounded graded filtration, $\|\cF\|_2$ 
equals the $L^2$-norm considered by Sz\'ekelyhidi~\cite{Sze15}.

We say that a bounded graded norm (or filtration) is \emph{almost trivial} if its $p$th 
central moment is zero, \ie its limit measure is a Dirac mass. This means that it is equivalent to
a graded norm in the orbit of the trivial graded norm under the $\R$-action. By what 
precedes, a bounded graded filtration is almost trivial iff its $L^2$-norm in the 
sense of~\cite{Sze15} is zero.
% 
% 
%%%%%%%%%%%%%%%%%%%%%%%%%%%%%%%%%%%%%%%%%%%%%%%%%%%%%%%%%%%%%%%%%%%
%
%
% \subsection{The case of $\Q$-line bundles}\label{S409}
% All the main results above are valid for an ample $\Q$-line bundle $L$.
% *** COMPLETE ***
%
% 
% 
%%%%%%%%%%%%%%%%%%%%%%%%%%%%%%%%%%%%%%%%%%%%%%%%%%%%%%%%%%%%%%%%%%%
%
%
%\newpage
\section{The asymptotic Fubini--Study operator}\label{sec:asFS}
In this section we define and study the asymptotic Fubini--Study operator.
This associates a bounded psh metric on $L$ to any graded norm
on the section ring $R=R(X,L)$.
In particular, we prove Theorems~B and~C in the introduction.
As in~\S\ref{s:filtnorm}, $L$ is an ample line bundle 
(as opposed to $\Q$-line bundle) on $X$.
%
% 
%%%%%%%%%%%%%%%%%%%%%%%%%%%%%%%%%%%%%%%%%%%%%%%%%%%%%%%%%%%%%%%%%%%
%
%
\subsection{Psh metrics regularizable from below}
A bounded psh metric $\f\in\PSH(L)$ is \emph{regularizable from below} 
if there exists an increasing net $(\f_j)_{j\in J}$ of metrics in $\cH(L)$ 
converging to $\f$ in $\PSH(L)$. Thus $\f=\lim_j\f_j$ pointwise on $\Xqm$, and 
$\f$ is the usc regularization of the pointwise limit of the $\f_j$.
It is equivalent to demand the same property with the $\f_j$ being continuous psh
metrics.
We write $\PSH^\uparrow\!(L)$ for the set of metrics regularizable from below.
\begin{rmk}
  Inspired by the main result in~\cite{Bed80}, we conjecture that a psh metric 
  is regularizable from below iff it is continuous outside a pluripolar set,
  see Definition~\ref{D401}.
\end{rmk}
Not every bounded psh metric is regularizable from below.
\begin{exam}\label{Ex402}
  Let $X=\P^1_k$ and $L=\cO(1)$, 
  where $k$ is an infinite field, and let $(x_n)_1^\infty$ be a Zariski dense set 
  in $X(k)$. There exists a metric $\p\in\PSH(L)$ such that $\max\p=0$ 
  and $\p(x_n)=-\infty$
  for all $j$. (We can take $\p$ with $\MA(\p)=\sum_jc_n\d_{x_n}$ where $c_n>0$
  and $\sum_jc_n=1$). Then $\f:=\exp(\p)\in\PSH(L)$ is bounded,
  $\sup_X\f=1$, and $\f(x_n)=0$ for all $j$. 
  Suppose $\f=\lim_j\f_j$ for an increasing net $(\f_j)_j$ of positive metrics.
  Then $\f_j(x_n)\le\f(x_n)=0$ for all $j$ and $n$, so 
  since $\f_j$ is continuous and $x_n$ converges to the generic point of $X$,
  we must have $\f_j\le0$ for all $j$, and hence $\f\le0$,
  a contradiction.
\end{exam}
%
% 
%%%%%%%%%%%%%%%%%%%%%%%%%%%%%%%%%%%%%%%%%%%%%%%%%%%%%%%%%%%%%%%%%%%
%
%
\subsection{Psh envelopes of bounded metrics}\label{S402}
Next we introduce
\begin{defi}\label{D402}
  For any bounded metric $\f$ on $L$ we set 
  \begin{equation}\label{e412}
    Q(\f)
    =\usc\sup\{\p\in\cH(L)\mid \p\le\f\},
  \end{equation}
\end{defi}
In~\eqref{e412} it would be equivalent to replace $\cH(L)$ by
$\cH(L)_\R$ or $\PSH(L)\cap C^0(X)$, since any continuous psh metric can
be uniformly approximated by positive metrics.
Also note that $Q(\f)=Q(\lsc\f)$. Indeed, if $\p\in\cH(L)$, then $\p\le\f$
iff $\p\le\lsc\f$.
\begin{lem}
  For any bounded metric $\f$, we have 
  $Q(\f)\in\PSH^\uparrow(L)$.
  Further, if $\f$ is usc, then $Q(\f)\le\f$, with equality iff $\f\in\PSH^\uparrow(L)$. 
\end{lem}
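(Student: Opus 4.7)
The plan is to exploit only the formal structure of the definition of $Q(\f)$. First I would verify that the family
\[
\cF:=\{\p\in\cH(L)\mid \p\le\f\}
\]
is nonempty and upward-directed. Nonemptiness is immediate because $\f$ is bounded below: any sufficiently negative integer constant is a Fubini--Study metric lying in $\cF$. Upward-directedness holds because $\cH(L)$ is stable under pointwise maxima and the condition $\p\le\f$ is preserved under taking maxima.

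Viewing $\cF$ as an increasing net indexed by itself via pairwise maxima, the background material on $\PSH(L)$ says that an increasing, uniformly bounded net in $\PSH(L)$ converges in $\PSH(L)$ to the upper semicontinuous regularization of its pointwise supremum. That limit is, by definition, $Q(\f)$. Hence $Q(\f)\in\PSH(L)$, and the very same convergence exhibits $Q(\f)$ as the limit of an increasing net in $\cH(L)$, so $Q(\f)\in\PSH^\uparrow\!(L)$.

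For the second part, assume $\f$ is usc. The pointwise supremum of $\cF$ is $\le\f$; and the usc regularization of any function bounded above by a usc function remains bounded above by it, since for any $x\in X$ one has $\limsup_{y\to x}\p(y)\le\limsup_{y\to x}\f(y)\le\f(x)$. This yields $Q(\f)\le\f$. If $\f\in\PSH^\uparrow\!(L)$, pick an increasing net $(\f_j)\subset\cH(L)$ whose usc regularization of the pointwise sup equals $\f$; then each $\f_j\le\f$, so $\f_j\in\cF$, and consequently $\f=\usc\sup_j\f_j\le Q(\f)$. Combined with $Q(\f)\le\f$ this gives equality. Conversely, if $Q(\f)=\f$, the definition of $Q$ exhibits $\f$ itself as the usc regularization of an increasing net in $\cH(L)$, which is precisely the defining property of $\PSH^\uparrow\!(L)$.

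I do not anticipate a serious obstacle here: the whole argument is a formal consequence of the definition, the stability of $\cH(L)$ under maxima, and the already-recorded convergence behavior of increasing nets in $\PSH(L)$. The only point requiring genuine care is the comparison $Q(\f)\le\f$, where upper semicontinuity of $\f$ is essential because the usc regularization could in principle raise values above the naive pointwise bound; it is precisely this step that would fail without the hypothesis that $\f$ be usc.
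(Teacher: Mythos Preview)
Your proof is correct and follows essentially the same approach as the paper's. The paper is more terse: it observes that the set $\{\p\in\cH(L)\mid\p\le\f\}$ is directed (implicitly using closure of $\cH(L)$ under maxima, which you make explicit), views it as an increasing net indexed by itself converging to $Q(\f)$, and then declares the remaining statements ``clear''---precisely the details you have supplied.
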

\begin{proof}
  The set $J$ of metrics $\p\in\cH(L)$ such that $\p\le\f$ is directed in the obvious way:
  $\p\le\p'$ iff $\p(x)\le\p'(x)$ for all $x\in X$.
  We can then view $J$ as an increasing net in $\cH(L)$, indexed by itself. This
  net converges to $Q(\f)$ in $\PSH(L)$, so $Q(\f)\in\PSH^\uparrow(L)$.
  The remaining statements are clear.
\end{proof}

\smallskip
In~\cite{trivval} we considered a different envelope: for any bounded 
metric $\f$ on $L$, set 
\begin{equation}\label{e414}
  P(\f)=\sup\{\p\in\PSH(L)\mid \p\le\f\}.
\end{equation}
In view of Example~\ref{Ex402}, we can have $P(\f)\ne Q(\f)$, even
when $\f\in\PSH(L)$. 
\begin{prop}\label{P404}
  Let $\f$ be a bounded metric on $L$.
  \begin{itemize}
  \item[(i)]
    If $\f$ is continuous, so if $P(\f)$, and $Q(\f)=P(\f)$;
  \item[(ii)]
    In general, $Q(\f)=\usc P(\lsc\f)$.
  \end{itemize}
  % If $\f$ is a continuous metric on $L$, then $P(\f)$ is a continuous psh
  % metric, and $Q(\f)=P(\f)$.
\end{prop}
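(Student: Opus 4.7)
For part~(i), assume $\f$ is continuous. The continuity of $P(\f)$ is a non-Archimedean envelope regularity result established in~\cite{trivval}, so $P(\f)\in\PSH(L)$ is continuous. The density of $\cH(L)_\R$ in the space of continuous psh metrics for uniform convergence, recalled in~\S\ref{S401}, then supplies, for every $\e>0$, a metric $\eta\in\cH(L)_\R$ with $\sup_X|\eta - P(\f)|<\e$; the shifted metric $\eta - \e\in\cH(L)_\R$ satisfies $\eta - \e\le P(\f)\le\f$, hence $\eta - \e\le Q(\f)$ by the definition of $Q$, which is unchanged whether the supremum is taken over $\cH(L)$ or $\cH(L)_\R$ (as noted just before Definition~\ref{D402}). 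Letting $\e\to 0$ yields $P(\f)\le Q(\f)$, and the reverse inequality $Q(\f)\le P(\f)$ is immediate from $\cH(L)\subset\PSH(L)$.

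For part~(ii), since $Q(\f)=Q(\lsc\f)$, I may replace $\f$ by $\psi:=\lsc\f$ and assume $\f$ is lsc and bounded. The inequality $Q(\psi)\le\usc P(\psi)$ is trivial. For the reverse, upper semicontinuity of $Q(\psi)$ reduces the claim to showing that every $\p\in\PSH(L)$ with $\p\le\psi$ satisfies $\p\le Q(\psi)$. Since $X$ is compact Hausdorff and $\psi$ is bounded lsc, the set of continuous functions dominated by $\psi$ is upward directed with pointwise supremum equal to $\psi$, so I may fix an increasing net $(\f_i)$ of continuous functions with $\f_i\le\psi$ and $\f_i\nearrow\psi$ pointwise. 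By~(i), each $Q(\f_i)=P(\f_i)$ is a continuous psh metric with $Q(\f_i)\le Q(\psi)$.

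The key step is then a compactness argument. Fix $\e>0$. The function $\p-\f_i$ is usc (as the sum of the usc $\p$ and the continuous $-\f_i$) and decreases pointwise to $\p-\psi\le 0<\e$ as $i$ grows, so the open sets $U_i:=\{\p-\f_i<\e\}$ form an increasing open cover of the compact space $X$; consequently $U_{i_0}=X$ for some $i_0$, giving $\p-\e\le\f_{i_0}$ everywhere on $X$. Since a constant shift of a psh metric is still psh, $\p-\e\in\PSH(L)$, and hence $\p-\e\le P(\f_{i_0})=Q(\f_{i_0})\le Q(\psi)$. Letting $\e\to 0$ yields $\p\le Q(\psi)$, as required. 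The main obstacle is the nontrivial continuity-of-envelope statement invoked in part~(i), which I take as an input from~\cite{trivval}; granted that, the remainder is a soft argument driven by compactness of $X$ and the lower semicontinuity of $\psi$.
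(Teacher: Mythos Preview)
Your proof is correct and follows essentially the same route as the paper's: continuity of $P(\f)$ from~\cite{trivval} plus uniform approximation by positive metrics for~(i), then a Dini-type compactness argument against an increasing net of continuous minorants for~(ii). The only cosmetic difference is that the paper compresses your open-cover step into the single line ``$\p<\f_j+\e$ for $j\gg0$'' and then writes $\p=P(\p)\le P(\f_j+\e)=Q(\f_j)+\e$, whereas you shift $\p$ by $\e$ instead; the content is identical.
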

\begin{proof}
  First assume $\f$ is continuous. 
  By Corollary~5.28 in~\cite{trivval}, $P(\f)$ is a continuous psh metric.
  Since the supremum in~\eqref{e414} is taken over a larger set of metrics
  than in~\eqref{e412}, we have $Q(\f)\le\usc(P(\f))=P(\f)$.
  On the other hand, since $P(\f)$ can be uniformly approximated by positive 
  metrics, it follows from~\eqref{e412} that $Q(\f)\ge P(\f)$.
  This proves~(i).

  To prove~(ii), we may assume $\f$ is lsc, since $Q(\f)=Q(\lsc\f)$. 
  We must then prove that $Q(\f)=\usc P(\f)$. The inequality $Q(\f)\le\usc P(\f)$
  is clear from the definitions. 
  Being lsc, $\f$ is the limit of an increasing net $\f_j$ of continuous metrics on $L$.
  Pick $\e>0$ and suppose $\p\in\PSH(L)$ satisfies $\p\le\f$.
  Then $\p<\f_j+\e$ for $j\gg0$. Thus~(i) gives
  \begin{equation*}
    \p
    =P(\p)
    \le P(\f_j+\e)
    = P(\f_j)+\e
    =Q(\f_j)+\e
    \le Q(\f)+\e,
  \end{equation*}
  Taking the supremum over $\p$ yields $P(\f)\le Q(\f)+\e$, and hence
  $\usc P(\f)\le Q(\f)+\e$. 
\end{proof}
%
% 
%%%%%%%%%%%%%%%%%%%%%%%%%%%%%%%%%%%%%%%%%%%%%%%%%%%%%%%%%%%%%%%%%%%
%
%
\subsection{The asymptotic Fubini--Study operator}
First suppose $L$ is globally generated, and consider a norm $\|\cdot\|$ on $H^0(X,L)$. 
Define a metric $\FS(\|\cdot\|)$ on $L$ by setting
\begin{equation*}
  \FS(\|\cdot\|):=\max\{\log(|s|_0/\|s\|)\mid s\in H^0(X,L)\setminus\{0\}\}.
\end{equation*}
This can more concretely be described as follows. Pick a basis $\{s_j\}_{j=1}^N$ for 
$H^0(X,L)$ that is orthogonal for $\|\cdot\|$ and write $\la_j:=-\log\|s_j\|$.
Then 
\begin{equation}\label{e413}
  \FS(\|\cdot\|)=\max_j(\log|s_j|_0+\la_j)
\end{equation}
Clearly $\FS(\|\cdot\|)$ belongs to $\cH(L)_\R$, and is hence 
a continuous psh metric.

Now assume $L$ is an ample bundle. Then $mL$ is globally generated for $m\gg0$.
% Let $M'(L)\subset M(L)$ be the set of $m$ such that $mL$ is a 
% globally generated line bundle. Then $M(L)\setminus M'(L)$ is a finite set.
\begin{lem}
  The set $\cH(L)_\R$ coincides with the set of metrics 
  of the form $m^{-1}\FS(\|\cdot\|)$,
  where $m\gg0$ and $\|\cdot\|$ is a norm on $R_m$.
\end{lem}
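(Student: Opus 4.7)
The inclusion $\{m^{-1}\FS(\|\cdot\|)\mid m\gg 0\}\subseteq\cH(L)_\R$ is immediate from~\eqref{e413}: given a norm $\|\cdot\|$ on $R_m$, any orthogonal basis $\{s_j\}$ of $R_m$ exhibits $\FS(\|\cdot\|)=\max_j(\log|s_j|_0+\la_j)$ with $\la_j=-\log\|s_j\|$, and the $s_j$ are automatically base-point-free as soon as $R_m$ globally generates $mL$.

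For the converse, fix $\f=m^{-1}\max_{\a\in A}(\log|s_\a|_0+\la_\a)\in\cH(L)_\R$ with $(s_\a)_{\a\in A}$ a finite base-point-free family of sections of $mL$. My first step is to reduce to the case where $mL$ is globally generated, using for any $r\ge 1$ the identity
\[
r\max_\a(\log|s_\a|_0+\la_\a)=\max_{\vec\a\in A^r}\bigl(\log|s_{\a_1}\cdots s_{\a_r}|_0+\textstyle\sum_i\la_{\a_i}\bigr),
\]
which rewrites $\f=(rm)^{-1}\max_{\vec\a}(\log|t_{\vec\a}|_0+\mu_{\vec\a})$ with $t_{\vec\a}:=\prod_i s_{\a_i}\in R_{rm}$ still a finite base-point-free family; one then picks $r\gg 0$ so that $rmL$ is globally generated and relabels.

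Assuming now $mL$ globally generated, the candidate norm is the supremum norm with respect to $m\f$,
\[
\|t\|:=\sup_{x\in X}|t(x)|_0\exp(-m\f(x)),\quad t\in R_m,
\]
which is a genuine non-Archimedean norm because $\f$---a max of finitely many continuous $[-\infty,\infty)$-valued functions that is everywhere finite by the base-point-free hypothesis---is continuous and hence bounded on the compact Berkovich space $X$; positivity, ultrametricity and $k$-multiplicativity all descend from their pointwise analogues. It then remains to verify $\FS(\|\cdot\|)=m\f$ by a two-sided inequality: the bound $\le$ is tautological from the definition of $\|\cdot\|$, while for $\ge$ one fixes $x\in X$, picks $\a\in A$ realizing the max $m\f(x)=\max_\a(\log|s_\a(x)|_0+\la_\a)$, and observes that $|s_\a(y)|_0\exp(-m\f(y))\le\exp(-\la_\a)$ on $X$ with equality at $y=x$, so $\|s_\a\|=\exp(-\la_\a)$ and $\log|s_\a(x)|_0-\log\|s_\a\|=m\f(x)$. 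The main subtlety handled by this plan is that the given family $(s_\a)$ need neither span $R_m$ nor be linearly independent, so it cannot be used directly as the orthogonal basis of a norm; the sup-norm construction sidesteps this by using all of $R_m$ simultaneously, while passage to products takes care of the case of small $m$.
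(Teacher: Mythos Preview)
Your proof is correct, but the route differs from the paper's. The paper stays on the linear-algebra side: given $\f=m^{-1}\max_{1\le j\le N'}(\log|s_j|_0+\la_j)$, it first orders the $\la_j$ decreasingly, discards any $s_j$ lying in the span of the earlier ones (which does not change the max), extends the resulting linearly independent family to a full basis $(s_j)_{j\le N_m}$ of $R_m$ with $\la_j\ll 0$ for the new indices, and declares this basis orthogonal with $\|s_j\|=\exp(-\la_j)$; then $\FS$ of that norm is visibly $m\f$.

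Your argument instead takes the supremum norm $\|t\|=\sup_X|t|_0\exp(-m\f)$ on $R_m$ and checks $\FS(\|\cdot\|)=m\f$ directly. This is the same construction the paper later introduces as $\|\cdot\|_{\f,m}$ in~\S\ref{sec:FSrange}, so you are in effect anticipating Proposition~\ref{P403} (restricted to a single degree and to $\f\in\cH(L)_\R$). The advantage of your approach is that it is coordinate-free and requires no pruning or basis-extension bookkeeping; the advantage of the paper's approach is that it produces an explicit orthogonal basis realizing the norm, and needs nothing beyond the formula~\eqref{e413}. One small redundancy: your product-reduction to make $mL$ globally generated is unnecessary, since the base-point-free hypothesis on $(s_\a)$ already forces $mL$ to be globally generated; the argument goes through without it.
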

\begin{proof}
  It is clear from~\eqref{e413} that any metric of the form $m^{-1}\FS(\|\cdot\|)$
  belongs to $\cH(L)_\R$. Conversely, any metric $\f\in\cH(L)_\R$ is of the form
  $\f=m^{-1}\max_{1\le j\le N'}(\log|s_j|_0+\la_j)$, where $m\ge1$,
  $s_1,\dots,s_{N'}$ are global sections of $mL$ without common zero,
  and $\la_j\in\R$, $1\le j\le N'$. 
  We may assume $\la_1\ge\la_2\ge\dots\ge\la_{N'}$.
  We may also assume that $s_j$ does not lie in the linear span of 
  $(s_i)_{i<j}$ for $1\le j\le N'$, or else we could remove the entry $\log|s_j|+\la_j$
  from the max defining $\f$. 
  Now $(s_j)_{j\le N'}$ are linearly independent, and 
  can be completed to a basis $(s_j)_{j\le N_m}$ of $R_m$. 
  If we pick $\la_j\ll0$ for $j>N'$, then $\f=m^{-1}\max_{j\le N_m}(\log|s_j|+\la_j)$.
  Define a norm $\|\cdot\|$ on $R_m$ by $\|\sum a_js_j\|=\max_j|a_j|\exp(-\la_j)$.
  Then $\f=m^{-1}\FS(\|\cdot\|)$.
\end{proof}

Now consider a bounded graded norm $\|\cdot\|_\bullet$ on $R$. 
Then $\f_m:=m^{-1}\FS(\|\cdot\|_m)\in\cH(L)_\R$ for $m\gg0$.
The fact that $\|\cdot\|_\bullet$ is bounded means that there exists $C>0$ such that 
$|\f_m|\le C$ for all $m\gg0$.
The submultiplicative property $\|s\otimes s'\|_{m+m'}\le\|s\|_m\cdot\|s'\|_{m'}$ implies
$(m+m')\f_{m+m'}\ge m\f_{m}+m'\f_{m'}$ pointwise on $X$. By Fekete's lemma, the limit
$\lim_{m\to\infty}\f_m$ therefore exists pointwise on $X$, is finite, 
and equals $\sup_m\f_m$. Now set
\begin{equation*}
  \FS(\|\cdot\|_\bullet)
  :=\usc(\lim_{m\to\infty}\f_m)
  =\usc(\sup_m\f_m),
\end{equation*}
where $\usc$ denotes upper semicontinuous regularization.
Thus $\FS(\|\cdot\|_\bullet)$ is a bounded psh metric on $L$, 
called the 
\emph{asymptotic Fubini--Study metric} associated to $\|\cdot\|_\bullet$.
\begin{rmk}\label{R401}
  It will be useful to consider $\Z_{>0}$ as a set directed by $r_1\le r_2$ 
  iff $r_1\mid r_2$; then $(\f_r)_r$ is an increasing net in $\PSH(L)$ that converges to $\f$.
\end{rmk}
The next result shows how the asymptotic Fubini--Study operator 
interacts with natural operations on graded norms and metrics.
The proof is left to the reader. See Proposition~\ref{P421} for a 
deeper result.
\begin{prop}\label{P402}
  Let $\|\cdot\|_\bullet$ be a bounded graded norm on $R$,
  $c\in\R$, and $r\in\Z_{>0}$. Then:
  \begin{itemize}
  \item[(i)]
    $\FS(\exp(c\,\bullet)\|\cdot\|_\bullet)=\FS(\|\cdot\|_\bullet)-c$;
  \item[(ii)]
    if $\|\cdot\|^{(r)}_\bullet$ is the restriction of $\|\cdot\|_\bullet$ to
    $R(X,rL)$, then $\FS(\|\cdot\|_\bullet)=r^{-1}\FS(\|\cdot\|^{(r)}_\bullet)$
 \end{itemize}
\end{prop}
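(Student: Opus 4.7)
\bigskip

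The plan is to prove both assertions by unwinding the definition of $\FS(\|\cdot\|_\bullet)$ in terms of the finite-level Fubini--Study metrics $\f_m := m^{-1}\FS(\|\cdot\|_m)$, and then using, in each case, a simple identification at each finite level together with the compatibility of $\usc\sup$ with the operations involved.

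For (i), I would fix $m \gg 0$ and choose a basis $(s_j)_{j=1}^{N_m}$ of $R_m$ that is orthogonal for $\|\cdot\|_m$; this basis is automatically orthogonal for the rescaled norm $\exp(cm)\|\cdot\|_m$ as well, with $-\log\bigl(\exp(cm)\|s_j\|_m\bigr) = -\log\|s_j\|_m - cm$. Substituting into the explicit formula~\eqref{e413} gives
\begin{equation*}
  m^{-1}\FS\!\bigl(\exp(cm)\|\cdot\|_m\bigr)
  = m^{-1}\max_j\bigl(\log|s_j|_0 - \log\|s_j\|_m - cm\bigr)
  = \f_m - c
\end{equation*}
at every point of $X$. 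Taking $\usc\sup_m$ on both sides, and noting that the constant shift by $-c$ commutes with $\usc\sup$, yields $\FS(\exp(c\,\bullet)\|\cdot\|_\bullet) = \FS(\|\cdot\|_\bullet) - c$.

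For (ii), the key point is the identification $\|\cdot\|^{(r)}_m = \|\cdot\|_{rm}$ on $R^{(r)}_m = R_{rm}$ at every finite level. Consequently the approximation sequence $\f^{(r)}_m := m^{-1}\FS(\|\cdot\|^{(r)}_m)$ satisfies
\begin{equation*}
  r^{-1}\f^{(r)}_m
  = (rm)^{-1}\FS(\|\cdot\|_{rm})
  = \f_{rm}.
\end{equation*}
By Remark~\ref{R401}, the net $(\f_m)_m$ directed by divisibility is increasing in $\PSH(L)$ with $\usc\sup_m \f_m = \FS(\|\cdot\|_\bullet)$, and similarly $(\f^{(r)}_m)_m$ converges to $\FS(\|\cdot\|^{(r)}_\bullet)$. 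Since the subset $\{rm : m \geq 1\}$ is cofinal in $(\Z_{>0}, \mid)$ (every $m_0$ divides $rm_0$), the increasing cofinal subnet $(\f_{rm})_m$ has the same pointwise supremum, hence the same $\usc$ regularization. Therefore
\begin{equation*}
  r^{-1}\FS(\|\cdot\|^{(r)}_\bullet)
  = \usc\sup_m r^{-1}\f^{(r)}_m
  = \usc\sup_m \f_{rm}
  = \FS(\|\cdot\|_\bullet).
\end{equation*}

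Neither part appears to present a real obstacle: once the finite-level identifications are in place, both statements reduce to elementary manipulations with $\usc\sup$. The only minor care needed is justifying that passage to a cofinal subnet (in (ii)) does not alter the $\usc$ regularization of the limit, which is immediate from the increasing monotonicity already recorded in Remark~\ref{R401}.
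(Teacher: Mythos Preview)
Your proof is correct. The paper itself leaves this proof to the reader, so there is nothing to compare against; your argument supplies exactly the straightforward finite-level verification the paper has in mind, and the cofinality step in~(ii) via Remark~\ref{R401} is a clean way to pass from the subsequence $(\f_{rm})_m$ back to the full net.
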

% 
% 
%%%%%%%%%%%%%%%%%%%%%%%%%%%%%%%%%%%%%%%%%%%%%%%%%%%%%%%%%%%%%%%%%%%
%
%
\subsection{The range of the asymptotic Fubini--Study operator}\label{sec:FSrange}
We now turn to the proof of Theorem~B in the introduction. 
It is clear that any asymptotic Fubini--Study
metric is regularizable from below. To prove the converse, we associate
to any (not necessarily psh) bounded metric $\f$ on $L$ its \emph{graded supremum norm} 
$\|\cdot\|_{\f,\bullet}$ defined by 
\begin{equation*}
  \|\cdot\|_{\f,m}:=\sup_X|\cdot|_0\exp(-m\f)
\end{equation*}
for $m\ge1$. This is a bounded graded norm on $R$, and it follows from 
the definition that 
\begin{equation}\label{e424}
  \|\cdot\|_{\f,\bullet}\vee\|\cdot\|_{\f,\bullet}=\|\cdot\|_{\f\wedge\f',\bullet}
\end{equation}
for bounded metrics $\f$ and $\f'$ on $L$. 
% It is also easy to see that 
% \begin{equation}
%   \|\cdot\|_{\f,\bullet}=\|\cdot\|_{Q(\f),\bullet}
% \end{equation}
% for any bounded usc metric $\f$ on $L$.
\begin{prop}\label{P403}
  For any bounded metric $\f$ on $L$ we have $Q(\f)=\FS(\|\cdot\|_{\f,\bullet})$.
\end{prop}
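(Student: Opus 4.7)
The plan is to establish both inequalities separately, exploiting the Legendre-type duality between bounded metrics and graded norms implicit in the definition of $\|\cdot\|_{\f,\bullet}$.

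First I would show $\FS(\|\cdot\|_{\f,\bullet}) \le Q(\f)$. For each sufficiently large $m$, by the very definition of the supremum norm, $\log\|s\|_{\f,m} = \sup_X(\log|s|_0 - m\f)$, so that
\[
\log|s|_0 - \log\|s\|_{\f,m} \le m\f
\]
pointwise on $X$, for every nonzero $s \in R_m$. Taking the max over (representatives of) an orthogonal basis and using the explicit description~\eqref{e413}, this gives $\FS(\|\cdot\|_{\f,m}) \le m\f$, hence $\f_m := m^{-1}\FS(\|\cdot\|_{\f,m}) \le \f$. Since $\f_m \in \cH(L)_\R$, it is one of the metrics appearing in the supremum defining $Q(\f)$ (using that we may equivalently take this sup over $\cH(L)_\R$). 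Therefore $\sup_m \f_m \le Q(\f)$, and since $Q(\f)$ is usc, taking usc regularization yields $\FS(\|\cdot\|_{\f,\bullet}) \le Q(\f)$.

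For the reverse inequality $Q(\f) \le \FS(\|\cdot\|_{\f,\bullet})$, pick any $\p \in \cH(L)_\R$ with $\p \le \f$. By the lemma characterizing $\cH(L)_\R$, we can write $\p = m^{-1}\FS(\|\cdot\|)$ for some $m \gg 0$ and some norm $\|\cdot\|$ on $R_m$. The inequality $m\p \le m\f$ combined with $\FS(\|\cdot\|) = \max_{s \neq 0}(\log|s|_0 - \log\|s\|)$ gives, for each nonzero $s \in R_m$,
\[
\log|s|_0 - \log\|s\| \le m\f,
\]
i.e.\ $\|s\| \ge \sup_X |s|_0 \exp(-m\f) = \|s\|_{\f,m}$. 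Thus $\|\cdot\| \ge \|\cdot\|_{\f,m}$, and the monotonicity of $\FS$ with respect to the norm then gives $\FS(\|\cdot\|) \le \FS(\|\cdot\|_{\f,m})$, whence $\p \le m^{-1}\FS(\|\cdot\|_{\f,m}) \le \sup_m \f_m \le \FS(\|\cdot\|_{\f,\bullet})$. Taking supremum over such $\p$ and then usc regularization, and using that $\FS(\|\cdot\|_{\f,\bullet})$ is already usc, yields $Q(\f) \le \FS(\|\cdot\|_{\f,\bullet})$.

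There is really no serious obstacle: the proof is a two-way translation between the inequality $\log|s|_0 - \log\|s\| \le m\f$ (the local/sectional condition) and the inequality $m^{-1}\FS(\|\cdot\|) \le \f$ (the metric condition), which is built into the definition of $\|\cdot\|_{\f,\bullet}$. The only mildly subtle point is keeping the usc regularizations straight: in the first direction we need to know that $Q(\f)$ is usc (which it is, being in $\PSH^\uparrow(L)$), and in the second direction we need that $\FS(\|\cdot\|_{\f,\bullet})$ is usc (which holds by its very definition as an usc regularized limit).
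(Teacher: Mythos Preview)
Your proof is correct and follows essentially the same approach as the paper: both directions are established by translating between the pointwise inequality $\log|s|_0-\log\|s\|\le m\f$ and the metric inequality $m^{-1}\FS(\|\cdot\|)\le\f$, using that $Q(\f)$ may be computed with $\cH(L)_\R$ in place of $\cH(L)$. The only cosmetic difference is that the paper writes $\p\in\cH(L)_\R$ explicitly as $m^{-1}\max_j(\log|s_j|_0+\la_j)$ and argues section by section, whereas you invoke the lemma $\cH(L)_\R=\{m^{-1}\FS(\|\cdot\|)\}$ and compare norms globally; the content is identical.
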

\begin{rmk}
  As a consequence, it would have sufficed to use sequences rather than nets
  in Definition~\ref{D402}.
  This can be viewed as an instance of Choquet's lemma.
\end{rmk}
\begin{proof}
  We will use the fact that~\eqref{e412} remains true with $\cH(L)$
  replaced by $\cH(L)_\R$. 
  Write $\f_m:=m^{-1}\FS(\|\cdot\|_{\f,m})$ for $m\ge1$ so that 
  $\FS(\|\cdot\|_{\f,\bullet})=\usc\sup_m\f_m$.
  On the one hand, $\f_m\in\cH(L)_\R$ and $\f_m\le\f$ for all 
  $m\gg0$, so $\FS(\|\cdot\|_{\f,\bullet})\le Q(\f)$. 
  On the other hand, suppose $\p\in\cH(L)_\R$ and $\p\le\f$.
  Write $\p=m^{-1}\max_{1\le j\le N}(\log|s_j|_0+\la_j)$, where $s_j\in R_m$, 
  $\la_j\in\R$ and the $s_j$ have no common zero.
  Since $\p\le\f$, we have $\|s_j\|_{m\f}=\sup|s_j|_0\exp(-m\f)\le\exp(-m\la_j)$,
  and hence $\log(|s_j|_0/\|s_j\|_{m\f})\ge\log|s_j|_0+\la_j$ for all $j$.
  Taking the max over $j$ gives 
  \begin{equation*}
    \p
    \le m^{-1}\max_j\log(|s_j|_0/\|s_j\|_{m\f})
    \le m^{-1}\FS(\|\cdot\|_{\f,m})
    =\f_m
    \le\FS(\|\cdot\|_{\f,\bullet}).
  \end{equation*}
  This gives $Q(\f)\le\FS(\|\cdot\|_{\f,\bullet})$ and concludes the proof.
\end{proof}
As a consequence, we obtain the following result, which implies Theorem~B.
\begin{cor}\label{C408}
  If $\f\in\PSH(L)$ is regularizable from below, then $\f=Q(\f)=\FS(\|\cdot\|_{\f,\bullet})$.
\end{cor}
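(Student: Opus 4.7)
The proof is essentially an immediate combination of two results already established in the excerpt, so there is no real obstacle—only bookkeeping.

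My plan is as follows. First, I note that any $\f\in\PSH(L)$ is upper semicontinuous (psh metrics are by definition pointwise limits of decreasing nets of positive, hence continuous, metrics, and such limits are usc). By hypothesis $\f$ is additionally bounded and regularizable from below, i.e., $\f\in\PSH^\uparrow(L)$.

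Second, I invoke the lemma stated just after Definition~\ref{D402}: for any bounded usc metric $\f$ on $L$, one has $Q(\f)\le\f$, with equality iff $\f\in\PSH^\uparrow(L)$. Applying this to our $\f$, both hypotheses are met, so $Q(\f)=\f$.

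Finally, Proposition~\ref{P403} gives, for any bounded metric, the identity $Q(\f)=\FS(\|\cdot\|_{\f,\bullet})$. Chaining these together yields $\f=Q(\f)=\FS(\|\cdot\|_{\f,\bullet})$, which is the claim. Since this establishes the converse to the already-observed fact that any asymptotic Fubini--Study metric is regularizable from below, Theorem~B follows immediately.
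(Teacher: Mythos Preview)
Your proof is correct and matches the paper's approach exactly: the paper presents this corollary as an immediate consequence of Proposition~\ref{P403} (together with the lemma following Definition~\ref{D402}), without writing out a separate proof. Your argument simply makes explicit the two-step combination the paper leaves implicit.
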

The following result is more restrictive but also more precise.
\begin{lem}\label{L402}
  Suppose that $R$ is generated in degree 1, and that $\f=\f_\cL$ is 
  defined by a globally generated test configuration $(\cX,\cL)$ for $(X,L)$.
  Then $\f=Q(\f)=\FS(\|\cdot\|_{\f,1})$.
\end{lem}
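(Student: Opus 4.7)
The proof splits into two halves. First, since global generation of $\cL$ implies semiampleness, $\f=\f_\cL$ is a positive metric, hence $\f\in\cH(L)\subset\PSH^\uparrow(L)$ (the constant net $\f_j\equiv\f$ witnesses membership). Corollary~\ref{C408} then gives $\f=Q(\f)=\FS(\|\cdot\|_{\f,\bullet})$ for free, so the task reduces to identifying $\FS(\|\cdot\|_{\f,\bullet})$ with $\FS(\|\cdot\|_{\f,1})$, where the latter is the Fubini--Study metric in $\cH(L)_\R$ defined by the norm $\|\cdot\|_{\f,1}$ on $R_1$ via~\eqref{e413}.

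One of the two inequalities is already contained in the proof of Proposition~\ref{P403}: for any $s\in R_1\setminus\{0\}$ and any $x\in X$,
\begin{equation*}
\log|s|_0(x)-\log\|s\|_{\f,1}\le\f(x),
\end{equation*}
by the very definition $\log\|s\|_{\f,1}=\sup_X(\log|s|_0-\f)$. Taking the maximum over $s$ yields $\FS(\|\cdot\|_{\f,1})\le\f$.

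For the reverse inequality, the plan is to produce an explicit Fubini--Study presentation of $\f$ using only degree-$1$ sections. Decompose $H^0(\cX,\cL)$ into $\bbG_m$-weight eigenspaces and choose a basis of eigenvectors $s_1,\dots,s_N$ of integer weights $\la_1,\dots,\la_N$; since $\cL$ is globally generated, these sections have no common zero on $\cX$. Restricting to $\cX_1\simeq X$ gives $t_i:=s_i|_X\in R_1$ which generate $L$. The crux is then the formula
\begin{equation*}
\f(x)=\max_i\bigl(\log|t_i|_0(x)+\la_i\bigr)\qquad\text{for all } x\in X.
\end{equation*}
Granting this, the estimate $\log\|t_i\|_{\f,1}=\sup_X(\log|t_i|_0-\f)\le-\la_i$ follows by plugging the formula into the definition, and
\begin{equation*}
\FS(\|\cdot\|_{\f,1})(x)\ge\max_i\bigl(\log|t_i|_0(x)-\log\|t_i\|_{\f,1}\bigr)\ge\max_i\bigl(\log|t_i|_0(x)+\la_i\bigr)=\f(x),
\end{equation*}
finishing the proof.

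The main obstacle is establishing the key formula $\f=\max_i(\log|t_i|_0+\la_i)$. I would verify this on divisorial points $v_E\in\Xdiv$ coming from irreducible components $E\subset\cX_0$, using the intersection-theoretic description $\f_\cL(v_E)=\ord_E(D)/\ord_E(\cX_0)$ recalled after~\eqref{e413} of the excerpt, together with the $\bbG_m$-equivariant trivialization $\cL|_{\cX\setminus\cX_0}\simeq L\boxtimes\cO_{\bbG_m}$ under which a weight-$\la$ eigenvector $s$ corresponds to $\unipar^{-\la}\otimes(s|_X)$. Evaluating such sections at the Gauss point $\sigma_\cL(v_E)\in\cXan$, which satisfies $|\unipar|=\exp(-1)$, converts each weight $\la_i$ into the additive shift in the formula, and the maximum over the generating sections $(s_i)$ computes $\f_\cL$ because global generation of $\cL$ means at least one $s_i$ is nonvanishing at $\redu_\cX(\sigma_\cL(v_E))$. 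Both sides being continuous on $X$, and $\Xdiv$ being dense, the formula extends from divisorial points to all of $X$. The bookkeeping of $\bbG_m$-weights and sign conventions at the Gauss embedding is the only delicate step; everything else is routine.
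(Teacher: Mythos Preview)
Your approach and the paper's share the same core: establish the formula $\f_\cL=\max_i(\log|t_i|_0+\la_i)$ for eigensections $t_i\in R_1$ with weights $\la_i$, from which $\FS(\|\cdot\|_{\f,1})\ge\f$ follows immediately. The paper streamlines this by first using global generation to produce a $\bbG_m$-equivariant morphism $\cX\to\bbP(H^0(X,L))\times\bbA^1$ with $\cL$ pulled back from $\cO(1)$, thereby reducing to a product test configuration on projective space, where both sides of the key formula are computed explicitly from the weight decomposition. You work directly on $\cX$ via the Gauss embedding, which is the same computation without the reduction; the paper's route mainly buys cleaner bookkeeping of weights and sections.

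There is one gap in your write-up worth flagging. You verify the key formula only at the vertices $v_E$ associated to irreducible components $E\subset\cX_0$---finitely many points---and then appeal to density of $\Xdiv$ in $X$. Those finitely many points are not dense, so the extension does not follow as stated. The Gauss-embedding argument you sketch, however, works at \emph{every} point $x\in X$: global generation of $\cL$ guarantees that some $s_i$ is nonvanishing at $\redu_\cX(\sigma_\cX(x))$, and then the definition of $\f_\cL$ gives $\f_\cL(x)=\log|t_i|_0(x)+\la_i$ for that $i$, while the analogous quantity for every other $j$ is $\le\f_\cL(x)$. So you can drop the intersection-theoretic formula for $v_E$ and the density step entirely, and just run the Gauss-point computation pointwise. (A minor notational point: you write $\sigma_\cL(v_E)$ where you mean $\sigma_\cX(v_E)$.) Also note that your appeal to Corollary~\ref{C408} is logically fine---it rests only on Proposition~\ref{P403}---but since it appears after the lemma in the paper you might instead just observe directly that $\f\in\cH(L)$ gives $Q(\f)=\f$.
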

\begin{proof}
  Since $\FS(\|\cdot\|_{\f,1})\le Q(\f)\le\f$, it suffices to prove
  $\FS(\|\cdot\|_{\f,1})=\f$.

  By assumption, $\cL$ defines a $\bbG_m$-equivariant 
  morphism of $\cX$ into a product test configuration 
  $\bbP\times\bbA^1_k$, where $\bbP=\bbP(H^0(X,L))$,
  and $\cL$ is the pullback of $\cO_{\bbP\times\bbA^1_k}(1)$. 
  Since $\f$ and $\FS(\|\cdot\|_{\f,1})$ are the pullbacks of the 
  corresponding metrics on $\cO_\P(1)$, it suffices to consider the 
  case when $X$ is a projective space, $L=\cO_\P(1)$, and the test
  configuration for $(X,L)$ is a product (but not necessarily trivial) test configuration.
  Pick a basis $s_0,\dots,s_N$ of $H^0(X,L)$ that diagonalizes the $\bbG_m$-action,
  with the weights being $\la_0\le\la_1\le\dots\le\la_N$.
  
  On the one hand, $\FS(\|\cdot\|_{\f,1})=\max_j\log|s_j|_0+\la_j$.
  On the other hand, for each $j$, $s_j$ extends as a $\bbG_m$-equivariant section
  $\bar{s}_j$ of $\cL$  over $\bbP\times\bbG_m$. Then $\unipar^{-\la_j}\bar{s_j}$, $1\le j\le N$
  are global sections of $\cL$ without common zero on $\bbP\times\bbA^1_k$.
  Thus the metric $\f_\cL$ defined by the test configuration $(\cX,\cL)$ is given by
  $\f_\cL=\max_j(|s_j|_0+\la_j)$, which completes the proof.
\end{proof}
% 
% 
%%%%%%%%%%%%%%%%%%%%%%%%%%%%%%%%%%%%%%%%%%%%%%%%%%%%%%%%%%%%%%%%%%%
%
%
\subsection{Monge--Amp\`ere energy and relative limit measures}
Next we show that the relative Monge--Amp\`ere energy of two 
asymptotic Fubini--Study metrics is equal to the relative volume of the
graded norms, as well as the barycenter of their relative limit measure.
\begin{thm}\label{T407}
  Let $\|\cdot\|_\bullet$ and $\|\cdot\|'_\bullet$ be bounded graded norms on $R=R(X,L)$,
  and set $\f:=\FS(\|\cdot\|_\bullet)$, $\f':=\FS(\|\cdot\|'_\bullet)$.
  Then we have 
  \begin{equation}\label{e430}
    E(\f,\f')
    =\int_\R\la\,d\nu(\la)
    =\vol(\|\cdot\|_\bullet,\|\cdot\|'_\bullet),
  \end{equation}
  where $\nu=\RLM(\|\cdot\|_\bullet,\|\cdot\|'_\bullet)$ is the relative limit measure
  of $\|\cdot\|_\bullet$ with respect to $\|\cdot\|'_\bullet$.
\end{thm}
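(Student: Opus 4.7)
The plan is to reduce to an \emph{absolute} version of the identity via a cocycle argument, then attack that version through a monotone approximation by finite-dimensional Fubini--Study data, where both sides can be matched through intersection theory.

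\medskip

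\textbf{Step 1 (Reduction to the absolute case).} The second equality $\int \lambda\,d\nu(\lambda)=\vol(\|\cdot\|_\bullet,\|\cdot\|'_\bullet)$ is already Corollary~\ref{C406}, so the content of the theorem is the first equality. The Monge--Amp\`ere energy satisfies the cocycle $E(\f,\f')=E(\f,\phitriv)-E(\f',\phitriv)$ (an integration-by-parts computation on DFS metrics extended to $\cE^1(L)$), and $\vol$ satisfies the cocycle~\eqref{eq:cocycle}. Since $\FS(\|\cdot\|_{\triv,\bullet})=\phitriv$, these cocycles reduce the problem to showing
\begin{equation*}
E(\FS(\|\cdot\|_\bullet))=\vol(\|\cdot\|_\bullet,\|\cdot\|_{\triv,\bullet})
\end{equation*}
for any single bounded graded norm $\|\cdot\|_\bullet$.

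\medskip

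\textbf{Step 2 (Monotone approximation).} For $r\in\Z_{>0}$ directed by divisibility, set $\f_r:=r^{-1}\FS(\|\cdot\|_r)\in\cH(L)_\R$. By the Fekete property recorded in Remark~\ref{R401}, $(\f_r)_r$ is an increasing net converging to $\f=\FS(\|\cdot\|_\bullet)\in\cE^1(L)$. Since $E$ is continuous along increasing nets in $\cE^1(L)$, we have $E(\f)=\lim_r E(\f_r)$. The plan is to exhibit a finitely generated graded norm $\|\cdot\|^{[r]}_\bullet$ on $R(X,rL)$ such that (a) $rE(\f_r)=\vol(\|\cdot\|^{[r]}_\bullet,\|\cdot\|_{\triv,\bullet})$ for each $r$, and (b) $r^{-1}\vol(\|\cdot\|^{[r]}_\bullet,\|\cdot\|_{\triv,\bullet})\to\vol(\|\cdot\|_\bullet,\|\cdot\|_{\triv,\bullet})$ as $r\to\infty$.

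\medskip

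\textbf{Step 3 (Finite-dimensional identification).} Define $\|\cdot\|^{[r]}_\bullet$ as the graded norm on $R(X,rL)$ generated in degree one by $\|\cdot\|_r$. Choose an $\|\cdot\|_r$-orthogonal basis $(s_j)$ of $R_r$ with weights $\lambda_j=-\log\|s_j\|_r$; after approximating the $\la_j$ by rationals (which perturbs both $E$ and $\vol$ continuously, so is harmless in the limit), the pair $(R_r,\|\cdot\|_r)$ corresponds to an ample test configuration $(\cX,\cL)$ for $(X,rL)$ via the Kodaira embedding twisted by the weights $\la_j$. Lemma~\ref{L402} gives $\FS(\|\cdot\|_r)=\f_\cL$, and the scaling identity $E_{rL}(r\psi)=rE_L(\psi)$ combined with the intersection formula for test configurations from~\cite{BHJ1} yields
\begin{equation*}
rE(\f_r)=E_{rL}(\f_\cL)=\frac{(\bar\cL^{n+1})}{(n+1)((rL)^n)}.
\end{equation*}
On the other hand, the same intersection number is the barycenter of the Duistermaat--Heckman measure of $(\cX,\cL)$, which by the Okounkov body argument underlying the proof of Theorem~\ref{T408} (following~\cite{BC11}) equals $\vol(\|\cdot\|^{[r]}_\bullet,\|\cdot\|_{\triv,\bullet})$. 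This establishes (a).

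\medskip

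\textbf{Step 4 (Convergence of volumes---the main obstacle).} For (b), note that submultiplicativity of $\|\cdot\|_\bullet$ gives $\|\cdot\|^{[r]}_m\ge\|\cdot\|_{rm}=\|\cdot\|^{(r)}_m$ on $R(X,rL)_m=R_{rm}$, so that $\vol(\|\cdot\|^{[r]}_\bullet,\|\cdot\|_{\triv,\bullet})\le\vol(\|\cdot\|^{(r)}_\bullet,\|\cdot\|_{\triv,\bullet})=r\,\vol(\|\cdot\|_\bullet,\|\cdot\|_{\triv,\bullet})$ by Proposition~\ref{P407}. The reverse asymptotic inequality is the crux: it amounts to showing that a typical element $s\in R_{rm}$ of small $\|\cdot\|_{rm}$-norm can be written, up to error $o(r)$ in log-scale, as a sum of products of $m$ elements of $R_r$ of correspondingly small $\|\cdot\|_r$-norms. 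This is a non-Archimedean analogue of the saturation/subadditivity arguments of~\cite{BC11,Chen-Maclean}, and the expected route is to exploit the submultiplicative sequence $(-\log\|\cdot\|_{rm})$ together with Fekete's lemma applied uniformly across the successive minima (parametrized by the Okounkov body). Once (b) is in place, combining Steps~2--4 gives
\begin{equation*}
E(\f)=\lim_r E(\f_r)=\lim_r r^{-1}\vol(\|\cdot\|^{[r]}_\bullet,\|\cdot\|_{\triv,\bullet})=\vol(\|\cdot\|_\bullet,\|\cdot\|_{\triv,\bullet}),
\end{equation*}
which by Step~1 completes the proof.
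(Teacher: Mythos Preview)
Your overall architecture matches the paper's proof closely: reduce to the absolute case via the cocycle, approximate $\f$ by the finitely generated metrics $\f_r$, identify $E(\f_r)$ with the volume of the degree-one-generated norm via~\cite[Proposition~5.9]{BHJ1}, and then pass to the limit. The paper also first replaces $L$ by a multiple so that $S^mR_r\to R_{rm}$ is surjective for all $r,m$, and passes to the associated $\Z$-filtration to avoid your rational-approximation detour in Step~3; these are cosmetic.

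The substantive gap is Step~4(b), which you flag as the crux but do not prove. Your heuristic (``Fekete's lemma applied uniformly across the successive minima'') is too vague to be an argument, and Fekete alone does not give what is needed. The paper's mechanism is different and concrete. One does not compare volumes directly; instead one shows the \emph{limit measures} $\nu_r\to\nu$ weakly, which suffices since all are supported in a fixed compact interval. Writing $\nu=-\vol(L)^{-1}\frac{d}{d\la}\vol(V^{(\la)}_\bullet)$ and $\nu_r=-\vol(L)^{-1}\frac{d}{d\la}r^{-n}\vol(W^{(r,\la)}_\bullet)$ for the graded subalgebras $V^{(\la)}_m=\{s\in R_m:\|s\|_m\le e^{-m\la}\}$ and $W^{(r,\la)}_m=\{s\in R_{rm}:\|s\|^{[r]}_m\le e^{-rm\la}\}$, the task reduces to pointwise convergence $r^{-n}\vol(W^{(r,\la)}_\bullet)\to\vol(V^{(\la)}_\bullet)$. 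This is Lemma~\ref{L412} in the paper: since $W^{(r,\la)}_1=V^{(\la)}_r$, the valuation images agree in degree one, so $r^{-1}\D(W^{(r,\la)}_\bullet)$ contains the convex hull of $r^{-1}\mu(V^{(\la)}_r\setminus\{0\})$, and the latter exhausts the Okounkov body $\D(V^{(\la)}_\bullet)$ as $r\to\infty$ by the lattice-point density argument of~\cite[Lemme~1.13]{Bou14}. That is the missing idea you need to complete your Step~4.
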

\begin{cor}\label{C409}
  If $\f$ and $\f'$ are bounded metrics on $L$, then 
  \begin{equation}\label{e416}
    E(Q(\f),Q(\f'))
    =\vol(\|\cdot\|_{\f,\bullet},\|\cdot\|_{\f',\bullet}).
  \end{equation}
 \end{cor}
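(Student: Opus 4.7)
The statement follows by combining Proposition~\ref{P403} with Theorem~\ref{T407}, so the plan is very short.

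First I would verify that for any bounded metric $\f$ on $L$, the graded supremum norm $\|\cdot\|_{\f,\bullet}$ is indeed a \emph{bounded} graded norm on $R$, so that Theorem~\ref{T407} can be applied to it. This is essentially noted in the paragraph introducing $\|\cdot\|_{\f,\bullet}$: if $|\f|\le C$ on $X$, then for any nonzero $s\in R_m$ the function $|s|_0$ is bounded on the compact space $X$, and one gets the two-sided bound
\begin{equation*}
  \exp(-mC)\sup_X|s|_0\le\|s\|_{\f,m}\le\exp(mC)\sup_X|s|_0,
\end{equation*}
which together with the obvious submultiplicativity shows $\|\cdot\|_{\f,\bullet}$ is a bounded graded norm. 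The same applies to $\|\cdot\|_{\f',\bullet}$.

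Next I would apply Proposition~\ref{P403} to each of $\f$ and $\f'$, obtaining
\begin{equation*}
  Q(\f)=\FS(\|\cdot\|_{\f,\bullet}),\qquad Q(\f')=\FS(\|\cdot\|_{\f',\bullet}).
\end{equation*}
Both of these metrics lie in $\PSH^{\uparrow}(L)$; in particular they are bounded psh metrics on which the relative Monge--Amp\`ere energy $E(\cdot,\cdot)$ is defined.

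Finally I would invoke Theorem~\ref{T407} with the two bounded graded norms $\|\cdot\|_{\f,\bullet}$ and $\|\cdot\|_{\f',\bullet}$. That theorem gives
\begin{equation*}
  E\bigl(\FS(\|\cdot\|_{\f,\bullet}),\FS(\|\cdot\|_{\f',\bullet})\bigr)
  =\vol(\|\cdot\|_{\f,\bullet},\|\cdot\|_{\f',\bullet}),
\end{equation*}
and substituting the identities from the previous step yields exactly~\eqref{e416}. There is no real obstacle here: the only content specific to this corollary is the observation that $Q(\f)$ admits the explicit asymptotic Fubini--Study representation of Proposition~\ref{P403}, which is what converts Theorem~\ref{T407} (an identity about asymptotic Fubini--Study metrics) into the desired identity for general bounded metrics via their envelopes.
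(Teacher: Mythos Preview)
Your proposal is correct and follows exactly the paper's own argument: apply Proposition~\ref{P403} to write $Q(\f)=\FS(\|\cdot\|_{\f,\bullet})$ and $Q(\f')=\FS(\|\cdot\|_{\f',\bullet})$, then invoke Theorem~\ref{T407}. The extra verification that $\|\cdot\|_{\f,\bullet}$ is bounded is already recorded in the paper where the graded supremum norm is introduced, so it need not be repeated.
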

\begin{proof}
  This follows since $Q(\f)=\FS(\|\cdot\|_{\f,\bullet})$ and $Q(\f')=\FS(\|\cdot\|_{\f',\bullet})$.
\end{proof}
If $\f$ and $\f'$ are \emph{continuous} metrics on $L$, then by Proposition~\ref{P404},
the envelopes $Q(\f)$, $Q(\f')$
coincide with the envelopes considered in~\cite{BE18,BGJKM16}, namely $P(\f)$, $P(\f')$,
so Corollary~\ref{C409} recovers the main result of~\loccit~in the case of a trivially valued field. 
\begin{proof}[Proof of Theorem~\ref{T407}]
  The last equality in~\eqref{e430} follows from Corollary~\ref{C406}. 
  By the cocycle properties of $E$ and $\vol$,
  we may assume $\|\cdot\|'_\bullet$ is the trivial graded norm, and $\f'$
  the trivial metric on $L$.

  After replacing $L$ by a multiple, and using Propositions~\ref{P407} and~\ref{P402}, 
  we may assume that $L$ is a line bundle, and that 
  the canonical map $S^mR_r\to R_{mr}$ is surjective for all $r,m\ge1$.

  Let $\cF$ be the graded filtration associated to $\|\cdot\|_\bullet$. 
  It is easy to see that replacing $\cF$
  by the associated graded $\Z$-filtration $\cF_\Z$ (and modifying the graded norm
  accordingly) does not change $\f$, $\nu$, or $\vol(\|\cdot\|_\bullet)$.
  We may therefore assume that $\cF$ is a graded $\Z$-filtration.

  First assume $\cF$ is generated in degree 1. In this case, 
  $\cF$ is associated to an ample test configuration $(\cX,\cL)$ for $(X,L)$,
  and $\f=\f_\cL$ is the metric defined by $(\cX,\cL)$, see Lemma~\ref{L402}.
  The first equality in~\eqref{e430} then follows from~\cite[Proposition~5.9]{BHJ1}.

  Now consider a general bounded graded norm $\|\cdot\|_\bullet$ whose associated
  graded filtration $\cF$ is a graded $\Z$-filtration.
  Since the canonical map $S^mR_r\to R_{rm}$ is surjective for all $r,m\ge1$, 
  we can equip $R_{rm}$ with the quotient norm from the norm $\|\cdot\|_r$ on $R_r$. 
  This defines a bounded graded norm
  $\|\cdot\|_\bullet^{(r)}$ on the section ring $R(X,rL)$. 
  By definition, this graded norm is generated in degree 1. 
  Let $\cF^{(r)}$ be the associated graded filtration of $R(X,rL)$,
  $\nu_r:=r^{-1}_*\LM(\|\cdot\|^{(r)}_\bullet)$ its (scaled) limit measure,
  and $\f_r:=r^{-1}\FS(\|\cdot\|^{(r)}_\bullet)$ the corresponding metric on $L$. 
  By what precedes, we have $E(\f_r)=\int\la\,d\nu_r(\la)$ for each $r\ge1$.

  As in Remark~\ref{R401}, consider $\Z_{>0}$ as a set directed by $r_1\le r_2$ 
  iff $r_1\mid r_2$. Then $(\f_r)_r$ is an increasing net in $\PSH(L)$ that converges to $\f$.
  Since the Monge--Amp\`ere energy is continuous along increasing nets,
  we get $\lim_rE(\f_r)=E(\f)$.

  It remains to prove that $\lim_{r\to\infty}\int\la\,d\nu_r(\la)=\int\la\,d\nu(\la)$.
%  Let $\nu_r=r^{-1}_*\LM(\|\cdot\|^{(r)}_\bullet)$. 
  Since the measures $\nu_r$ have
  support contained in a fixed compact interval, it suffices to prove that 
  $\lim_r\nu_r=\nu$ weakly.
  To do so, we recall the construction of the limit measure from~\cite{BC11}.
  For each $\la\in\R$, consider the graded subalgebra 
  $V^{(\la)}_\bullet$ of $R(X,L)$ defined by 
  $V^{(\la)}_m=\{s\in R_m\mid \|s\|_m\le\exp(-m\la)\}$.
  Using Okounkov bodies~\cite{LM09}, one shows that 
  the limit $\vol(V^{(\la)}_\bullet):=\lim_{m\to\infty}n!\vol(V^{(\la)}_m)/m^n$ exists,
  and is a decreasing function of $\la$. The limit measure then
  satisfies 
  \begin{equation*}
    \nu=-\vol(L)^{-1}\frac{d}{d\la}\vol(V^{(\la)}_\bullet)
  \end{equation*}
   in the sense of distributions.
  Similarly, for $r\ge1$ and $\la\in\R$, we have a graded subalgebra
  $W^{(r,\la)}_\bullet$ of $R(X,rL)$, given by 
  $W^{(r,\la)}_m=\{s\in R_{rm}\mid \|s\|^{(r)}_m\le\exp(-rm\la)\}$.
  Again, the limit $\vol(W^{(r,\la)}_\bullet):=\lim_{m\to\infty}n!\vol(W^{(r,\la)}_m)/m^n$ exists,
  and is a decreasing function of $\la$, and
  \begin{equation*}
    \nu_r
    =-\vol(rL)^{-1}\frac{d}{d\la}\vol(W^{(r,\la)}_\bullet)
    =-\vol(L)^{-1}\frac{d}{d\la}r^{-n}\vol(W^{(r,\la)}_\bullet).
  \end{equation*}
  Set $g(\la):=\vol(V_\bullet^{(\la)})$ and $g_r(\la):=r^{-n}\vol(W^{(r,\la)}_\bullet)$. 
  By Lemma~\ref{L412} below, $g_r\to g$ pointwise on $\R$.
  Since $0\le g_r,g\le\vol(L)$, it follows from dominated convergence that 
  $g_r\to g$ in $L^1_{\mathrm{loc}}(\R)$. Hence $\nu_r\to\nu$ in the sense
  of distributions, which completes the proof.
\end{proof}
\begin{lem}\label{L412}
  Let $V_\bullet\subset R(X,L)$ be a graded subalgebra.
  Suppose that for $r\ge1$ we have a graded subalgebra
  $W^{(r)}_\bullet\subset V^{(r)}_\bullet$, where $V^{(r)}_m:=V_{rm}$ for $m\ge1$,
  satisfying $W^{(r)}_1=V^{(r)}_1$. 
  Then $\lim_{r\to\infty}r^{-n}\vol(W^{(r)}_\bullet)=\vol(V_\bullet)$.
\end{lem}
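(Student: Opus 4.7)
The plan is to sandwich $r^{-n}\vol(W^{(r)}_\bullet)$ between two quantities both converging to $\vol(V_\bullet)$, with the upper bound trivial from inclusion and the lower bound reducing to a Fujita-type approximation statement. For the upper bound, the inclusion $W^{(r)}_\bullet\subset V^{(r)}_\bullet$ combined with $V^{(r)}_m=V_{rm}$ gives, straight from the defining limit,
\[
\vol(V^{(r)}_\bullet)=\lim_{m\to\infty}\frac{n!\dim V_{rm}}{m^n}=r^n\vol(V_\bullet),
\]
whence $\limsup_{r\to\infty}r^{-n}\vol(W^{(r)}_\bullet)\le\vol(V_\bullet)$.

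For the lower bound, I would introduce the graded subalgebra $A^{(r)}_\bullet\subset V^{(r)}_\bullet$ of $R(X,rL)$ generated in degree one by $W^{(r)}_1=V_r$. Since $W^{(r)}_\bullet$ is a graded subalgebra of $V^{(r)}_\bullet$ containing $V_r$ in degree one, $A^{(r)}_m\subset W^{(r)}_m$ for every $m$, and it is enough to prove
\[
\liminf_{r\to\infty}r^{-n}\vol(A^{(r)}_\bullet)\ge\vol(V_\bullet).
\]
This is a Fujita-type approximation statement for $V_\bullet$: the subalgebra generated by a single sufficiently divisible graded piece $V_r$ recovers its asymptotic volume. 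The case $\vol(V_\bullet)=0$ is trivial from the upper bound, so one may assume $V_\bullet$ has positive volume.

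To establish this lower bound I would argue through Okounkov bodies, using the same framework already invoked in the proof of Theorem~\ref{T407}. Fix an admissible flag on $X$ with associated valuation $\nu\colon k(X)^\times\to\Z^n$, let $\Delta=\Delta(V_\bullet)\subset\R^n$ be the Okounkov body of $V_\bullet$, so $\vol(\Delta)=\vol(V_\bullet)/n!$, and let $\Delta^{(r)}\subset\R^n$ be the Okounkov body of $A^{(r)}_\bullet$ viewed as a graded linear series inside $R(X,rL)$, so $\vol(\Delta^{(r)})=\vol(A^{(r)}_\bullet)/n!$. The inclusion $\tfrac{1}{r}\Delta^{(r)}\subset\Delta$ is immediate from the definitions; the content is to show that every rational point $v=\nu(s)/m\in\Delta$ with $s\in V_m\setminus\{0\}$ lies in $\tfrac{1}{r}\Delta^{(r)}$ as soon as $r$ is a sufficiently divisible multiple of $m$ and enough elements of $V_r$ are chosen so that their $\nu$-values generate a full-rank sublattice near $rv$. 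A standard convex-geometric density/cone argument then yields $\vol(\tfrac{1}{r}\Delta^{(r)})\to\vol(\Delta)$, which is the desired lower bound.

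The main obstacle is precisely this density step: it is the heart of Fujita-type approximation for graded linear series, and I would execute it following~\cite[\S3]{LM09} or the parallel treatment in~\cite{BC11}, with only cosmetic modifications to accommodate the fact that here $A^{(r)}_\bullet$ is built from $V_r$ rather than from the full series $V_\bullet$.
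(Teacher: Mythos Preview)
Your proposal is correct and follows essentially the same route as the paper: both arguments use Okounkov bodies, obtain the upper bound from the inclusion $W^{(r)}_\bullet\subset V^{(r)}_\bullet$, and reduce the lower bound to the density of $r^{-1}\Gamma_r$ (where $\Gamma_r=\nu(V_r\setminus\{0\})$) inside the Okounkov body $\Delta(V_\bullet)$. The paper is slightly more direct---it does not introduce your auxiliary algebra $A^{(r)}_\bullet$, but simply observes that $\Gamma^{(r)}_1=\Gamma_r$ and that $r^{-1}\Delta(W^{(r)}_\bullet)$ contains the convex hull of $r^{-1}\Gamma_r$, then invokes~\cite[Lemme~1.13]{Bou14} for the density step you sketch via~\cite{LM09,BC11}.
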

\begin{proof}
  We use Okounkov bodies, following~\cite{Bou14}.
  Pick a valuation $\mu\colon k(X)^\times\to\Z^n$ of rational rank $n$.
  Set $\Gamma_m:=\mu(V_m\setminus\{0\})$ and 
  $\Gamma^{(r)}_m:=\mu(W^{(r)}_m\setminus\{0\})$ for $m\ge1$. 
  Let $\D(V_\bullet)$ and $\D(W^{(r)}_\bullet)$ be the 
  closed convex hull inside $\R^n$ of $\bigcup_mm^{-1}\Gamma_m$
  and of $\bigcup_mm^{-1}\Gamma^{(r)}_m$, respectively.
  Then $\vol(V_\bullet)=n!\vol(\D(V_\bullet))$ and 
  $\vol(W^{(r)}_\bullet)=n!\vol(\D(W^{(r)}_\bullet))$,
  so it suffices to prove that 
  $\lim_{r\to\infty}\vol(r^{-1}\D(W^{(r)}_\bullet))=\vol(\D(V_\bullet))$.

  Since $W^{(r)}_m\subset V_{rm}$, we get 
  $\Gamma^{(r)}_m\subset\Gamma_{rm}$ for all $r,m$, and hence
  $r^{-1}\D(W^{(r)}_\bullet)\subset\D(V_\bullet)$ for all $r$.
  If $\vol(\D(V_\bullet))=0$, we are done, so we may assume $\D(V_\bullet)$
  has nonempty interior. Pick compact subsets $K$ and $L$ of $\R^n$
  with $K\subset\joinrel\subset L\subset\joinrel\subset\D(V_\bullet)$. 
  It suffices to prove that $r^{-1}\D(W^{(r)}_\bullet)\supset K$ for $r\gg1$.
  Now $r^{-1}\Z^n\cap L=r^{-1}\Gamma_r\cap L$ for $r\gg1$,
  see~\cite[Lemme~1.13]{Bou14}.
  If $\D_r$ is the convex hull of $r^{-1}\Gamma_r$, it follows that 
  $\D_r\supset K$ for $r\gg1$.
  But $W^{(r)}_1=V_r$, so $\Gamma^{(r)}_1=\Gamma_r$, 
  and hence $r^{-1}\D(W^{(r)}_\bullet)\supset\D_r\supset K$, 
  which completes the proof.
\end{proof}
% 
% 
%%%%%%%%%%%%%%%%%%%%%%%%%%%%%%%%%%%%%%%%%%%%%%%%%%%%%%%%%%%%%%%%%%%
%
%
\subsection{Injectivity of the asymptotic Fubini--Study operator}
The next result is a reformulation of Theorem~C in the introduction. 
\begin{thm}\label{T412}
  Let $\|\cdot\|_\bullet$ and $\|\cdot\|'_\bullet$ be bounded graded norms on $R(X,L)$
  and write $\f:=\FS(\|\cdot\|_\bullet)$, $\f':=\FS(\|\cdot\|'_\bullet)$.
  Then $\|\cdot\|_\bullet$ and $\|\cdot\|'_\bullet$ are equivalent iff $\f=\f'$.
\end{thm}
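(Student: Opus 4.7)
\emph{Proof plan.} The plan is to pass through the supremum graded norms $\|\cdot\|_{\f,\bullet}, \|\cdot\|_{\f',\bullet}$ and reduce the claim to the energy identity
\begin{equation*}
d_1(\|\cdot\|_\bullet, \|\cdot\|'_\bullet) = E(\f, \psi) + E(\f', \psi), \qquad \psi := Q(\f \wedge \f'),
\end{equation*}
which is formula~\eqref{e433} from the introduction. Both summands on the right-hand side will be non-negative, and this will make the theorem formally equivalent to the strict monotonicity of $E$.

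I will first prove that $\|\cdot\|_\bullet \sim \|\cdot\|_{\f,\bullet}$ (and analogously for $\f'$). Since $\f_m \le \f$ for every $m$, one has $\|\cdot\|_{\f,m} \le \|\cdot\|_m$, so~\eqref{e419} yields
\begin{equation*}
d_1(\|\cdot\|_\bullet, \|\cdot\|_{\f,\bullet}) = -\vol(\|\cdot\|_\bullet, \|\cdot\|_{\f,\bullet}).
\end{equation*}
Theorem~\ref{T407} combined with Proposition~\ref{P403} identifies the right-hand side with $-E(\f, Q(\f))$, and Corollary~\ref{C408} gives $Q(\f) = \f$ since $\f \in \PSH^\uparrow(L)$, so $d_1 = 0$. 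The forward direction of the theorem then follows immediately: if $\f = \f'$ then $\|\cdot\|_{\f,\bullet} = \|\cdot\|_{\f',\bullet}$ by definition, and transitivity of $\sim$ gives $\|\cdot\|_\bullet \sim \|\cdot\|_{\f,\bullet} = \|\cdot\|_{\f',\bullet} \sim \|\cdot\|'_\bullet$.

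For the converse, I will establish the key identity. Starting from~\eqref{e426}, the self-equivalence just proved together with Theorem~\ref{T411} and Lemma~\ref{L411}(i) permits replacing $\|\cdot\|_\bullet$ and $\|\cdot\|'_\bullet$ (and their max) by their $\f$- and $\f'$-counterparts in every relative volume. Equation~\eqref{e424} identifies $\|\cdot\|_{\f,\bullet} \vee \|\cdot\|_{\f',\bullet} = \|\cdot\|_{\f \wedge \f',\bullet}$, and Theorem~\ref{T407} together with Proposition~\ref{P403} and Corollary~\ref{C408} converts the resulting relative volumes into $E(\f,\psi)$ and $E(\f',\psi)$. Since $\psi \le \f$ and $\psi \le \f'$, both summands are non-negative. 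If $\|\cdot\|_\bullet \sim \|\cdot\|'_\bullet$ then $d_1 = 0$, so $E(\f,\psi) = E(\f',\psi) = 0$, whence $\f = \psi = \f'$ by the strict monotonicity of $E$ recorded below.

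The strict monotonicity I invoke is: for $u \ge v$ in $\cE^1(L)$ with $E(u,v) = 0$, one has $u = v$. This follows from the integral representation $E(u,v) = \tfrac{1}{(n+1)V} \sum_{j=0}^n \int(u-v)\,\omega_u^j \wedge \omega_v^{n-j}$, extended from DFS metrics to $\cE^1(L)$ by monotone convergence: each summand is non-negative under $u \ge v$, so all vanish, giving in particular $\int(u-v)\MA(u) = 0 = \int(u-v)\MA(v)$ and hence $I(u,v) = 0$; the uniqueness property recalled in~\S\ref{S102} forces $u - v$ to be a constant, which must be zero because $\MA(v)$ is a probability measure. The principal obstacle is the self-equivalence step, which hinges on the identification $\FS(\|\cdot\|_{\f,\bullet}) = Q(\f) = \f$ (Proposition~\ref{P403} and Corollary~\ref{C408}); once this is in hand, the rest of the argument is bookkeeping via the machinery already developed.
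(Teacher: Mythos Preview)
Your proof is correct and follows essentially the same route as the paper's. You have reconstructed, in a single pass, what the paper splits into Lemma~\ref{L405} (strict monotonicity of $E$), Lemma~\ref{L413} (the self-equivalence $\|\cdot\|_\bullet\sim\|\cdot\|_{\f,\bullet}$), Proposition~\ref{P421} (identifying $\FS(\|\cdot\|_\bullet\vee\|\cdot\|'_\bullet)$ with $Q(\f\wedge\f')$), and Corollary~\ref{C405} (the $d_1$ formula), and then concluded exactly as the paper does.
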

We will prove this using the Monge--Amp\`ere energy. For this we need 
some preparation.
\begin{lem}\label{L405}
  If $\f,\f'\in\cE^1(L)$ and $\f\ge\f'$, then $\f=\f'$ iff $E(\f,\f')=0$.
\end{lem}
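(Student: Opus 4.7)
The forward implication $\f=\f'\Rightarrow E(\f,\f')=0$ is immediate: the defining sum
\[
E(\f,\f')=\frac{1}{(n+1)V}\sum_{j=0}^n\int(\f-\f')\,\omega_\f^j\wedge\omega_{\f'}^{n-j}
\]
has every integrand identically zero when $\f-\f'\equiv 0$. The real content is the converse: assume $\f\ge\f'$ in $\cE^1(L)$ and $E(\f,\f')=0$, and conclude $\f=\f'$.

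My plan is to exploit sign-positivity in the sum above. Since $\f-\f'\ge 0$ pointwise and each mixed Monge--Amp\`ere measure $\omega_\f^j\wedge\omega_{\f'}^{n-j}$ is a positive Radon measure (the positivity extends from DFS metrics to $\cE^1(L)$ under monotone limits, as recalled in the background), every one of the $n+1$ integrals is nonnegative. The hypothesis $E(\f,\f')=0$ therefore forces each of them to vanish. In particular, the $j=0$ term yields $\int(\f-\f')\,\MA(\f')=0$, and plugging into the identity
\[
J_{\f'}(\f)=\int(\f-\f')\,\MA(\f')-E(\f,\f')
\]
gives $J_{\f'}(\f)=0$.

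To finish, I appeal to the stated fact that $J_{\f'}(\f)=0$ on $\cE^1(L)$ forces $\f-\f'$ to be constant. Writing $\f=\f'+c$ with $c\ge 0$, the sum expansion collapses (since $\omega_\f=\omega_{\f'}$ and $\int\omega_{\f'}^n=V$) to $E(\f'+c,\f')=c$, so $E(\f,\f')=0$ gives $c=0$ and hence $\f=\f'$. The only delicate point, which I would verify carefully by monotone approximation from $\cH(L)$, is that both the integral-sum formula for $E$ and the positivity of the individual mixed Monge--Amp\`ere measures remain valid on $\cE^1(L)$; everything else is a sign check and a direct application of the quoted background results.
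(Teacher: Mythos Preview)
Your proof is correct and follows essentially the same line as the paper's: exploit nonnegativity of each term in the sum for $E(\f,\f')$ (using $\f\ge\f'$ and positivity of the mixed Monge--Amp\`ere measures), deduce that a translation-invariant functional vanishes, conclude $\f-\f'$ is constant, and then read off $c=0$ from $E(\f'+c,\f')=c$. The only cosmetic difference is that the paper routes through $I$ via the inequality $E(\f,\f')\ge\frac{1}{n+1}I(\f,\f')$, whereas you isolate the $j=0$ term to get $J_{\f'}(\f)=0$; both conclusions are covered by the background fact that $I$, $J$, and $I-J$ each vanish on $\cE^1(L)$ only for constant differences.
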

\begin{proof}
  The direct implication is trivial. For the reverse direction, note that 
  \begin{equation*}
    E(\f,\f')
    =\frac{1}{(n+1)V}\sum_{j=0}^n\int(\f'-\f)\omega_\f^j\wedge\omega_\p^{n-j}
    \ge\frac1{n+1}I(\f,\f').
  \end{equation*}
  This implies $I(\f,\f')=0$, and hence $\f-\f'$ is constant, say $\f=\f'+c$, with $c\ge0$.
  But then $c=E(\f,\f')=0$, completing the proof.
\end{proof}
\begin{cor}\label{C410}
  The assertion of Theorem~\ref{T412} holds when 
  $\|\cdot\|_\bullet\le\|\cdot\|'_\bullet$.
\end{cor}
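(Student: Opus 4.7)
The plan is to assemble three already-proved ingredients: the envelope formula from Theorem~\ref{T407}, the identification \eqref{e419} of $d_1$ with the relative volume for ordered norms, and the strict monotonicity of $E(\cdot,\cdot)$ on ordered pairs (Lemma~\ref{L405}). Under the hypothesis $\|\cdot\|_\bullet\le\|\cdot\|'_\bullet$, each of these takes a particularly rigid form, and they chain together to give the equivalence.

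First I would observe that the order relation passes to the Fubini--Study side: since $\|\cdot\|_m\le\|\cdot\|'_m$ implies $|s|_0/\|s\|_m\ge|s|_0/\|s\|'_m$ for every nonzero $s\in R_m$, the formula~\eqref{e413} gives $m^{-1}\FS(\|\cdot\|_m)\ge m^{-1}\FS(\|\cdot\|'_m)$ pointwise, and letting $m\to\infty$ yields $\f\ge\f'$.

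Next I would invoke \eqref{e419} in each degree, which under the assumption $\|\cdot\|_m\le\|\cdot\|'_m$ gives
\begin{equation*}
  d_1(\|\cdot\|_m,\|\cdot\|'_m)=\vol(\|\cdot\|_m,\|\cdot\|'_m).
\end{equation*}
Dividing by $m$ and passing to the limit, Corollary~\ref{C403} and Corollary~\ref{C406} identify the two sides with $d_1(\|\cdot\|_\bullet,\|\cdot\|'_\bullet)$ and $\vol(\|\cdot\|_\bullet,\|\cdot\|'_\bullet)$, respectively. Theorem~\ref{T407} further identifies the latter with $E(\f,\f')$. Thus, under the ordering hypothesis,
\begin{equation*}
  d_1(\|\cdot\|_\bullet,\|\cdot\|'_\bullet)=\vol(\|\cdot\|_\bullet,\|\cdot\|'_\bullet)=E(\f,\f').
\end{equation*}

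Finally, by definition $\|\cdot\|_\bullet$ and $\|\cdot\|'_\bullet$ are equivalent iff $d_1(\|\cdot\|_\bullet,\|\cdot\|'_\bullet)=0$, hence iff $E(\f,\f')=0$; combined with $\f\ge\f'$ and the fact that bounded psh metrics lie in $\cE^1(L)$, Lemma~\ref{L405} converts this into $\f=\f'$, completing the corollary. There is no genuine obstacle here: every ingredient has been prepared, and the role of the hypothesis $\|\cdot\|_\bullet\le\|\cdot\|'_\bullet$ is precisely to collapse the two summands in the defining identity \eqref{e415}/\eqref{e426} for $d_1$ into a single relative volume and, on the metric side, to ensure the ordering $\f\ge\f'$ needed to apply Lemma~\ref{L405}. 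The only point requiring a moment's care is checking that the limit $m^{-1}d_1(\|\cdot\|_m,\|\cdot\|'_m)\to d_1(\|\cdot\|_\bullet,\|\cdot\|'_\bullet)$ is compatible with the limit $m^{-1}\vol(\|\cdot\|_m,\|\cdot\|'_m)\to\vol(\|\cdot\|_\bullet,\|\cdot\|'_\bullet)$ (both are valid by the cited corollaries).
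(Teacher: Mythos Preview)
Your proof is correct and follows essentially the same route as the paper: observe $\f\ge\f'$, identify $d_1(\|\cdot\|_\bullet,\|\cdot\|'_\bullet)=\vol(\|\cdot\|_\bullet,\|\cdot\|'_\bullet)=E(\f,\f')$, and apply Lemma~\ref{L405}. The only cosmetic difference is that the paper reaches the first equality directly from~\eqref{e426} (since $\|\cdot\|_\bullet\vee\|\cdot\|'_\bullet=\|\cdot\|'_\bullet$ collapses one summand to zero), whereas you obtain it by invoking~\eqref{e419} degree by degree and passing to the limit.
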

\begin{proof}
  Indeed, in this case, $\f\ge\f'$, and~\eqref{e426},~\eqref{e430} imply that
  \begin{equation*}
    d_1(\|\cdot\|_\bullet,\|\cdot\|'_\bullet)
    =\vol(\|\cdot\|_\bullet,\|\cdot\|'_\bullet)
    =E(\f,\f'),
  \end{equation*}
  which concludes the proof in view of Lemma~\ref{L405}.
\end{proof}
\begin{lem}\label{L413}
  If $\|\cdot\|_\bullet$ is a bounded graded norm, and $\f:=\FS(\|\cdot\|_\bullet)$
  its asymptotic Fubini--Study metric,
  then the supremum graded norm $\|\cdot\|_{\f,\bullet}$ is equivalent to $\|\cdot\|_\bullet$.
  It is the smallest bounded graded norm equivalent to $\|\cdot\|_\bullet$.
\end{lem}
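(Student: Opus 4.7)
The plan is to establish the two assertions in sequence, using only Corollary~\ref{C410}, Proposition~\ref{P403}, and the $d_1$ decomposition identity~\eqref{e415}.

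First I would verify the pointwise inequality $\|\cdot\|_{\f,\bullet}\le\|\cdot\|_\bullet$. Writing $\f_m:=m^{-1}\FS(\|\cdot\|_m)$, the very definition of $\FS(\|\cdot\|_m)$ gives $\log|s|_0-\log\|s\|_m\le m\f_m$ on $X$ for every $s\in R_m\setminus\{0\}$. As noted in the construction of the asymptotic Fubini--Study metric, the net $(\f_r)_r$ increases to $\f$ (with the divisibility ordering of Remark~\ref{R401}), so in particular $\f\ge\f_m$ pointwise. Thus $|s|_0\exp(-m\f)\le\|s\|_m$ on $X$, and taking the supremum yields $\|s\|_{\f,m}\le\|s\|_m$, as desired.

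Second, I would identify the asymptotic Fubini--Study metric of $\|\cdot\|_{\f,\bullet}$. By Proposition~\ref{P403}, $\FS(\|\cdot\|_{\f,\bullet})=Q(\f)$. Since $\f=\FS(\|\cdot\|_\bullet)\in\PSH^\uparrow(L)$ (again by Remark~\ref{R401}), one has $Q(\f)=\f$: indeed, the defining increasing net lies inside $\{\p\in\cH(L):\p\le\f\}$ and its usc supremum equals $\f$, while $Q(\f)\le\f$ trivially because $\f$ is usc. Therefore $\FS(\|\cdot\|_{\f,\bullet})=\f=\FS(\|\cdot\|_\bullet)$, and Corollary~\ref{C410}, applied to the ordered pair $\|\cdot\|_{\f,\bullet}\le\|\cdot\|_\bullet$ with equal asymptotic Fubini--Study metrics, delivers the claimed equivalence.

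For the minimality statement, let $\|\cdot\|'_\bullet$ be any bounded graded norm equivalent to $\|\cdot\|_\bullet$, and set $\|\cdot\|^{\vee}_\bullet:=\|\cdot\|_\bullet\vee\|\cdot\|'_\bullet$. The identity~\eqref{e415} with $p=1$ gives
\begin{equation*}
0=d_1(\|\cdot\|_\bullet,\|\cdot\|'_\bullet)=d_1(\|\cdot\|_\bullet,\|\cdot\|^{\vee}_\bullet)+d_1(\|\cdot\|'_\bullet,\|\cdot\|^{\vee}_\bullet),
\end{equation*}
so $\|\cdot\|^{\vee}_\bullet$ is equivalent to each of $\|\cdot\|_\bullet$ and $\|\cdot\|'_\bullet$. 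Since $\|\cdot\|_\bullet\le\|\cdot\|^{\vee}_\bullet$ and $\|\cdot\|'_\bullet\le\|\cdot\|^{\vee}_\bullet$, Corollary~\ref{C410} yields $\FS(\|\cdot\|_\bullet)=\FS(\|\cdot\|^{\vee}_\bullet)=\FS(\|\cdot\|'_\bullet)=\f$. Applying the first part of the lemma to $\|\cdot\|'_\bullet$, whose associated asymptotic Fubini--Study metric is also $\f$, then gives $\|\cdot\|_{\f,\bullet}\le\|\cdot\|'_\bullet$, proving minimality.

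The main technical point is the passage from equivalence to pointwise domination by a common upper bound, which is where the decomposition~\eqref{e415} does the essential work; the remaining ingredients are either definitional or already packaged in Proposition~\ref{P403} and Corollary~\ref{C410}. A small subtlety worth flagging is that the argument crucially uses $Q(\f)=\f$ for $\f\in\PSH^\uparrow(L)$, which is where the hypothesis that $\f$ itself arises from a graded norm (and is therefore regularizable from below) enters.
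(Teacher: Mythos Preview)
Your proof of the equivalence claim is correct and matches the paper's approach: you establish $\|\cdot\|_{\f,\bullet}\le\|\cdot\|_\bullet$ pointwise, identify $\FS(\|\cdot\|_{\f,\bullet})=Q(\f)=\f$ via Proposition~\ref{P403} and the fact that $\f\in\PSH^\uparrow(L)$ (the paper packages this as Corollary~\ref{C408}), and then invoke Corollary~\ref{C410}. One small imprecision: the increasing net $(\f_r)_r$ from Remark~\ref{R401} lies in $\cH(L)_\R$, not $\cH(L)$, but as noted after Definition~\ref{D402} the envelope $Q$ is unchanged if one replaces $\cH(L)$ by $\cH(L)_\R$, so this is harmless.

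Where your proof goes beyond the paper is the minimality claim: the paper's proof actually stops after establishing equivalence and leaves ``it is the smallest bounded graded norm equivalent to $\|\cdot\|_\bullet$'' unproved. Your argument via the decomposition~\eqref{e415} is a clean and correct way to close this gap: from $d_1(\|\cdot\|_\bullet,\|\cdot\|'_\bullet)=0$ you deduce that both norms are equivalent to their join $\|\cdot\|^\vee_\bullet$, then Corollary~\ref{C410} (applied to the comparable pairs) forces $\FS(\|\cdot\|'_\bullet)=\f$, after which the first step of the lemma, applied to $\|\cdot\|'_\bullet$, yields $\|\cdot\|_{\f,\bullet}\le\|\cdot\|'_\bullet$. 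This is not circular, since you only use the ``ordered'' direction of Theorem~\ref{T412} (i.e., Corollary~\ref{C410}), which is available at this point in the paper. Your observation that~\eqref{e415} is the key device here is exactly right; without it one would be tempted to invoke the full Theorem~\ref{T412}, which is proved \emph{using} Lemma~\ref{L413}.
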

\begin{proof}
  We first prove that $\|\cdot\|_{\f,\bullet}\le\|\cdot\|_\bullet$.
  Recall that $\f=\sup_m\f_m$, where
  $\f_m:=m^{-1}\sup_{s\in R_m\setminus\{0\}}\log\frac{|s|_0}{\|s\|_m}$.
  Thus, for every $s\in R_m\setminus\{0\}$, 
  \begin{equation*}
    \|s\|_{\f,m}
    =\sup_X|s|_0\exp(-m\f)
    \le\sup_X|s|_0\exp(-mm^{-1}\log\frac{|s|_0}{\|s\|_m})
    =\|s\|_m.
  \end{equation*}
  Since $\f\in\PSH^\uparrow(L)$, 
  Corollary~\ref{C408} shows that $\FS(\|\cdot\|_{\f,\bullet})=\f=\FS(\|\cdot\|_\bullet)$.
  Thus $\|\cdot\|_\bullet$ and $\|\cdot\|_{\f,\bullet}$ are equivalent by Corollary~\ref{C410}.
\end{proof}
\begin{prop}\label{P421}
  Let $\|\cdot\|_\bullet$ and $\|\cdot\|'_\bullet$  be bounded graded norms on $R$.
  Then 
  \begin{equation}\label{e427}
    \FS(\|\cdot\|_\bullet\vee\|\cdot\|'_\bullet)=Q(\f\wedge\f'),
  \end{equation}
  where $\f:=\FS(\|\cdot\|_\bullet)$ and $\f':=\FS(\|\cdot\|'_\bullet)$.
\end{prop}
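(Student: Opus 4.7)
The plan is to establish the two inequalities separately. Writing $\psi := \FS(\|\cdot\|_\bullet\vee\|\cdot\|'_\bullet)$, one direction should be immediate from monotonicity, while the other will require an energy computation combined with Lemma~\ref{L405}.

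For $\psi \le Q(\f\wedge\f')$, I would use that the asymptotic Fubini--Study operator reverses the natural order on bounded graded norms (clear from the definitions of $\FS$ on $R_m$ and the pointwise sup defining the asymptotic version). The inequalities $\|\cdot\|_\bullet\vee\|\cdot\|'_\bullet\ge\|\cdot\|_\bullet$ and $\ge\|\cdot\|'_\bullet$ therefore give $\psi\le\f$ and $\psi\le\f'$, so $\psi\le\f\wedge\f'$. Since $\psi\in\PSH^\uparrow(L)$ by construction, the lemma after Definition~\ref{D402} gives $\psi=Q(\psi)$, and the obvious monotonicity of $Q$ in its argument then yields $\psi\le Q(\f\wedge\f')$.

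For the reverse inequality, both $Q(\f\wedge\f')$ and $\psi$ are bounded psh metrics, hence lie in $\cE^1(L)$. In view of the order already established, Lemma~\ref{L405} reduces the problem to showing $E(Q(\f\wedge\f'),\psi)=0$. The plan is to compute this energy via Theorem~\ref{T407}. Using Proposition~\ref{P403} together with~\eqref{e424},
\[
Q(\f\wedge\f')=\FS(\|\cdot\|_{\f\wedge\f',\bullet})=\FS(\|\cdot\|_{\f,\bullet}\vee\|\cdot\|_{\f',\bullet}),
\]
so Theorem~\ref{T407} identifies
\[
E(Q(\f\wedge\f'),\psi)=\vol\bigl(\|\cdot\|_{\f,\bullet}\vee\|\cdot\|_{\f',\bullet},\ \|\cdot\|_\bullet\vee\|\cdot\|'_\bullet\bigr).
\]

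It remains to check that this relative volume vanishes. By Lemma~\ref{L413}, $\|\cdot\|_{\f,\bullet}\le\|\cdot\|_\bullet$ and is equivalent to $\|\cdot\|_\bullet$, and likewise for the primed versions; Lemma~\ref{L411}(i) then shows that $\|\cdot\|_{\f,\bullet}\vee\|\cdot\|_{\f',\bullet}$ is equivalent to $\|\cdot\|_\bullet\vee\|\cdot\|'_\bullet$. Since these two graded norms are also ordered ($\le$), formula~\eqref{e426} collapses to $d_1=\vol$ for this pair (the second summand vanishes), and equivalence $d_1=0$ forces the relative volume to be zero, as desired. I do not expect any serious obstacle: the whole argument is a careful assembly of already-established results, and the only delicacy is tracking the correct direction of the inequality so that the relative volume is a priori nonnegative and can be read off as $d_1$.
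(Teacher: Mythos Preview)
Your proof is correct and follows essentially the same route as the paper. The paper's version is simply more compressed: having noted via Lemma~\ref{L413} and Lemma~\ref{L411}(i) that $\|\cdot\|_\bullet\vee\|\cdot\|'_\bullet$ is equivalent to (and dominates) $\|\cdot\|_{\f,\bullet}\vee\|\cdot\|_{\f',\bullet}$, it invokes Corollary~\ref{C410} to replace the former by the latter outright, and then concludes directly from~\eqref{e424} and Proposition~\ref{P403}; your energy/volume computation is exactly the content of that invocation of Corollary~\ref{C410}, unfolded.
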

\begin{proof}
  By Lemma~\ref{L413}, the graded norms $\|\cdot\|_\bullet$ and $\|\cdot\|_{\f,\bullet}$
  are equivalent, as are $\|\cdot\|'_\bullet$ and $\|\cdot\|_{\f',\bullet}$.
  Lemma~\ref{L411} then shows that 
  $\|\cdot\|_\bullet\vee\|\cdot\|'_\bullet$ is equivalent to 
  $\|\cdot\|_{\f,\bullet}\vee\|\cdot\|_{\f',\bullet}$.
  Thus we may assume 
  $\|\cdot\|_\bullet=\|\cdot\|_{\f,\bullet}$ and $\|\cdot\|'_\bullet=\|\cdot\|_{\f',\bullet}$.
  Now $\|\cdot\|_\bullet\vee\|\cdot\|'_\bullet=\|\cdot\|_{\f\wedge\f',\bullet}$, see~\eqref{e424},
  so the result follows from Proposition~\ref{P403}.
\end{proof}
\begin{cor}\label{C405}
  If $\|\cdot\|_\bullet$ and $\|\cdot\|'_\bullet$ are bounded graded norms on $R(X,L)$,
  then 
  \begin{equation}\label{e431}
    d_1(\|\cdot\|_\bullet,\|\cdot\|'_\bullet)
    =E(\f,Q(\f\wedge\f'))+E(\f',Q(\f\wedge\f')),
  \end{equation}
  where $\f:=\FS(\|\cdot\|_\bullet)$ and $\f':=\FS(\|\cdot\|'_\bullet)$.
\end{cor}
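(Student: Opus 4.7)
The proof is essentially a short deduction that combines three results already established in this section: the decomposition formula for $d_1$ in~\eqref{e426}, the identification of relative volume with relative Monge--Amp\`ere energy in Theorem~\ref{T407}, and the formula for the asymptotic Fubini--Study metric of the maximum of two graded norms in Proposition~\ref{P421}. No new analytic input is needed.

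The plan is as follows. First I apply~\eqref{e426} to express
\begin{equation*}
  d_1(\|\cdot\|_\bullet,\|\cdot\|'_\bullet)
  =\vol(\|\cdot\|_\bullet,\|\cdot\|_\bullet\vee\|\cdot\|'_\bullet)
  +\vol(\|\cdot\|'_\bullet,\|\cdot\|_\bullet\vee\|\cdot\|'_\bullet).
\end{equation*}
Then, in order to convert these relative volumes into relative Monge--Amp\`ere energies via Theorem~\ref{T407}, I need to identify the asymptotic Fubini--Study metric of the graded norm $\|\cdot\|_\bullet\vee\|\cdot\|'_\bullet$. Proposition~\ref{P421} does exactly this: it says
\begin{equation*}
  \FS(\|\cdot\|_\bullet\vee\|\cdot\|'_\bullet)=Q(\f\wedge\f').
\end{equation*}

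Applying Theorem~\ref{T407} to each of the two pairs of bounded graded norms then yields
\begin{equation*}
  \vol(\|\cdot\|_\bullet,\|\cdot\|_\bullet\vee\|\cdot\|'_\bullet)
  =E(\f,Q(\f\wedge\f'))
\end{equation*}
and similarly
\begin{equation*}
  \vol(\|\cdot\|'_\bullet,\|\cdot\|_\bullet\vee\|\cdot\|'_\bullet)
  =E(\f',Q(\f\wedge\f')).
\end{equation*}
Summing the two gives the desired identity~\eqref{e431}.

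Since every step is an immediate invocation of an earlier result, there is no genuine obstacle. The only mild point to be careful about is verifying that the hypotheses of Theorem~\ref{T407} apply to $\|\cdot\|_\bullet\vee\|\cdot\|'_\bullet$, which is immediate since the maximum of two bounded graded norms is again a bounded graded norm.
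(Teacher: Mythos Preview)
Your proof is correct and follows exactly the same route as the paper: the paper's proof is the single sentence ``This follows from Theorem~\ref{T407},~\eqref{e426} and~\eqref{e427},'' which is precisely the combination of results you invoke (equation~\eqref{e427} being the content of Proposition~\ref{P421}).
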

\begin{proof}
  This follows from Theorem~\ref{T407},~\eqref{e426} and~\eqref{e427}.
\end{proof}
\begin{proof}[Proof of Theorem~\ref{T412}]
  By definition,  the graded norms are equivalent iff 
  $d_1(\|\cdot\|_\bullet,\|\cdot\|'_\bullet)=0$. This
  happens iff both terms in the right hand side of~\eqref{e431} vanish,
  since the terms are nonnegative.
  By Lemma~\ref{L405}, this occurs iff $\f=Q(\f\wedge\f')=\f'$.
\end{proof}
We also mention a related result.
Let $\cF$ be a bounded graded filtration, with associated graded norm $\|\cdot\|_\bullet$
and metric $\f=\FS(\|\cdot\|_\bullet)\in\PSH^\uparrow(L)$.
Then
\begin{equation}\label{e435}
  c_n J(\f)
  \le
  \|\cF\|_1
  \le 2J(\f), 
\end{equation}
where $c_n=2n^n/(n+1)^{n+1}$ and $\|\cdot\|_1$ is the first central moment
defined in~\S\ref{S407}.
Indeed, by~\cite[Theorem~7.9]{BHJ1}, this holds when $\cF$ is 
induced by an ample test configuration, and the general case is 
reduced to this case by approximation, as in the proof of Theorem~\ref{T407}.
%  
% 
%%%%%%%%%%%%%%%%%%%%%%%%%%%%%%%%%%%%%%%%%%%%%%%%%%%%%%%%%%%%%%%%%%%
%
%
\subsection{The Darvas distance}\label{S406}
If $\f,\f'\in\PSH(L)$ are regularizable from below, then we set 
\begin{equation*}
  d_1(\f,\f'):=E(\f,Q(\f\wedge\f'))+E(\f',Q(\f\wedge\f')).
\end{equation*}
It follows that $d_1$ defines a distance on the space $\PSH^\uparrow(L)$.
Further, the asymptotic Fubini--Study operator gives an isometric bijection
between the space of equivalence classes of bounded graded norms on $R$, 
and the space $\PSH^\uparrow(L)$.

The metric on $\PSH^\uparrow(L)$ is analogous to the metric on $\cE^1(L)$
considered by Darvas in the Archimedean case. Indeed, in our setting,
if $\f$ and $\f'$ are continuous, then 
$d_1(\f,\f'):=E(\f,P(\f\wedge\f'))+E(\f',P(\f\wedge\f'))$,
in accordance with~\cite[Corollary~4.14]{Dar15}. A positive answer to the following
conjecture would make the analogy even stronger.
\begin{conj}
  We have $P(\f\wedge\f')=Q(\f\wedge\f')$ for $\f,\f'\in\PSH^\uparrow(L)$.
\end{conj}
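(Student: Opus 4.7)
My plan is to pass to the limit along approximations of $\f$ and $\f'$ from below. The inequality $Q(\f\wedge\f') \le P(\f\wedge\f')$ is immediate since $\cH(L) \subset \PSH(L)$. For the reverse, I fix increasing nets $(\f_i)$ and $(\f'_j)$ of positive metrics with $\f_i \nearrow \f$ and $\f'_j \nearrow \f'$, which exist by the hypothesis $\f,\f' \in \PSH^\uparrow(L)$. For each pair $(i,j)$, the metric $\psi_{ij} := \f_i \wedge \f'_j$ is continuous, so Proposition~\ref{P404}(i) yields $P(\psi_{ij}) = Q(\psi_{ij})$, a continuous psh metric. Monotonicity of $Q$ combined with $\psi_{ij} \le \f\wedge\f'$ gives
\begin{equation*}
  P(\psi_{ij}) = Q(\psi_{ij}) \le Q(\f\wedge\f').
\end{equation*}
The net $(P(\psi_{ij}))_{ij}$ is increasing, so admits a limit $\chi := \usc\sup_{ij} P(\psi_{ij}) \in \PSH^\uparrow(L)$ with $\chi \le Q(\f\wedge\f')$. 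Since each $P(\psi_{ij})\le \f\wedge\f'$, one also has $\chi \le \f\wedge\f'$, so $\chi$ is a candidate for $P(\f\wedge\f')$, giving $\chi \le P(\f\wedge\f')$. The proof thus reduces to establishing $P(\f\wedge\f') \le \chi$.

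Concretely, I must show that any bounded $\p \in \PSH(L)$ with $\p \le \f\wedge\f'$ satisfies $\p \le \chi$. My preferred route is a domination-principle argument: since $\chi \in \PSH^\uparrow(L)$ is bounded, it lies in $\cE^1(L)$, and the Monge--Amp\`ere measure $\mu := \MA(\chi)$ has finite energy. The first step is an orthogonality-type statement: $\mu$ is supported on the contact set $K := \{\chi = \f\wedge\f'\}$. Granting this, on $\supp\mu$ one has $\p \le \f\wedge\f' = \chi$, and a non-Archimedean domination principle in $\cE^1(L)$---to the effect that $\p \le \chi$ $\mu$-a.e.\ forces $\p \le \chi$ globally---would finish the argument.

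The main obstacle lies in establishing both the contact-set property and the domination principle in the present trivially valued setting. For the orthogonality, I would try to deduce $\int(\f\wedge\f' - \chi)\,d\mu = 0$ by variational arguments in the spirit of~\cite{BE18,BGJKM16}, using the identification $\chi = \lim_{ij} P(\psi_{ij})$ together with continuity of the mixed Monge--Amp\`ere operator along increasing nets (\S\ref{S102}) and the formula $\MA(P(\psi_{ij}))$ orthogonal to $\psi_{ij} - P(\psi_{ij})$ in the continuous case. For the domination principle, the Calabi--Yau bijection $\MA\colon \cE^1(L)/\R \to \cM^1(X)$ together with strict convexity of $E^*$ on $\cM^1(X)$ should give the needed uniqueness statement, but this step is the technically demanding part, paralleling its complex counterpart which rests on Bedford--Taylor theory and the comparison principle. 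This is where I expect the real work to lie.
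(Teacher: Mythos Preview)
The statement you are attempting to prove is stated in the paper as a \emph{conjecture}; the paper offers no proof. So there is no argument to compare yours against --- the question is whether your outline closes the gap.

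It does not, and you have essentially identified why. Your reduction to showing $P(\f\wedge\f')\le\chi$ via an orthogonality property for $\MA(\chi)$ together with a domination principle in $\cE^1(L)$ is a natural strategy, but both ingredients are genuinely missing in the trivially valued setting of~\cite{trivval}. The orthogonality theorem quoted there (Theorem~6.30) applies to envelopes of \emph{continuous} obstacles; passing to the limit from $\int(\psi_{ij}-P(\psi_{ij}))\,d\MA(P(\psi_{ij}))=0$ to the corresponding identity for $\chi$ and $\f\wedge\f'$ is delicate, since the integrands need not converge uniformly and the limiting obstacle $\f\wedge\f'$ is only usc. Likewise, a domination principle of the form ``$\p\le\chi$ $\MA(\chi)$-a.e.\ $\Rightarrow$ $\p\le\chi$'' for bounded (not continuous) psh metrics over a trivially valued field is not established in the references; the Calabi--Yau bijection gives uniqueness of solutions to $\MA(\f)=\mu$ modulo constants, which is a weaker statement and does not directly yield domination.

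A smaller technical point: you assert $\chi\le Q(\f\wedge\f')$, but in fact your argument only gives $\chi\le P(\f\wedge\f')$ (since $\chi$ is psh and $\le\f\wedge\f'$) and $P(\psi_{ij})=Q(\psi_{ij})\le Q(\f\wedge\f')$ for each $(i,j)$, hence $\chi\le Q(\f\wedge\f')$ after taking usc sup --- so this step is fine, but note you have not shown $\chi=Q(\f\wedge\f')$, and indeed establishing that equality would require a Dini-type argument that is not obvious when $\f,\f'$ are merely usc regularizations of increasing nets. In short, your plan isolates the right difficulties, but proving the conjecture would require new input beyond what the paper or its references provide.
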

% 
% 
%%%%%%%%%%%%%%%%%%%%%%%%%%%%%%%%%%%%%%%%%%%%%%%%%%%%%%%%%%%%%%%%%%%
%
%
% \subsection{The case of $\Q$-line bundles}\label{S410}
% *** COMPLETE ***
% 
% 
%%%%%%%%%%%%%%%%%%%%%%%%%%%%%%%%%%%%%%%%%%%%%%%%%%%%%%%%%%%%%%%%%%%
%
%
 \subsection{K-stability and filtrations}\label{S408}
Sz\'ekelyhidi proved in~\cite{Sze15} that if the (reduced) automorphism group of $(X,L)$ 
is finite and 
$X$ admits a cscK metric in $c_1(L)$, then $(X,L)$ satisfies a condition---involving
filtrations---that is stronger than the usual notion of $K$-stability. 

For simplicity assume $R(X,L)$ is generated in degree 1.  
Consider a bounded graded $\Z$-filtration $\cF$ of $R(X,L)$.
For $r\ge 1$,
the filtration of $R_r$ induces a filtration $\cF^{(r)}$ of $R(X,rL)$  that is 
generated in degree one. This filtration defines an ample
test configuration $\cL_r$ for $L$.
Let $\DF(\cL_r)$ be its Donaldson--Futaki invariant, and set 
$\Fut(\cF):=\liminf_r\DF(\cL_r)$. 

Now suppose $(X,L)$ is uniformly K-stable
in the strong sense that there exists $\e>0$ such that $M(\f)\ge\e J(\f)$
for all $\f\in\cE^1(L)$. 
Pick $\f=\FS(\|\cdot\|_\bullet)$, where $\|\cdot\|_\bullet$ is the graded norm 
associated to $\cF$. By~\eqref{e435}, we have
$M(\f)\ge\frac{\e}{2}\|\cF\|_1$.
For $r\ge1$, $\f_r:=r^{-1}\FS(\|\cdot\|_r)$ is the positive metric
associated to $\cL_r$.
As in Remark~\ref{R401}, partially order $\Z_{>0}$ by $r\le r'$ iff $r\mid r'$.
Then the net $(\f_r)_r$ increases to $\f$.
Thus $\lim_rE(\f_r)=E(\f)$, and similarly $E_{K_X}(\f_r)\to E_{K_X}(\f)$.
Further $\MA(\f_r)\to\MA(\f)$, which implies $\liminf_r H(\f_r)\ge H(\f)$,
since $H(\f)=\Ent(\MA(\f))$, and $\Ent$ is lsc. In view of~\eqref{e432},
this gives $\liminf_r M(\f_r)\ge M(\f)\ge\frac{\e}{2}\|\cF\|_1$. 
Now $\DF(\cL_r)\ge M(\f_r)$ by~\cite[(7.7)]{BHJ1}, so we get 
\begin{equation*}
  \Fut(\cF)\ge\tfrac{\e}{2}\|\cF\|_1.
\end{equation*}
Since $\|\cF\|_2=0$ iff $\|\cF\|_1=0$, we conclude that 
$\Fut(\cF)>0$ whenever $\|\cF\|_2>0$; this is the condition
considered in~\cite{Sze15}.

If the (reduced) automorphism group of $(X,L)$ 
is finite and $X$ admits a cscK metric in $c_1(L)$, then $(X,L)$ is 
uniformly K-stable in the usual sense, see~\cite[Theorem~1.7]{BDL16}.
% 
%
% 
%%%%%%%%%%%%%%%%%%%%%%%%%%%%%%%%%%%%%%%%%%%%%%%%%%%%%%%%%%%%%%%%%%%
%
%
%
%\newpage
\section{A valuative criterion of K-stability}\label{S405}
In~\cite{FujitaValcrit}, Fujita gave a valuative criterion of K-semistability
and uniform K-stability of Fano varieties. The case of K-semistability was treated 
independently by C.~Li~\cite{LiEquivariant}.
Here we use psh metrics to prove a version of the valuative
criterion in the general adjoint setting of an ample $\Q$-line bundle $L$
on a smooth projective variety $X$. 
In particular, we prove Theorems~D,~E and~E' of the introduction.
% 
% 
%%%%%%%%%%%%%%%%%%%%%%%%%%%%%%%%%%%%%%%%%%%%%%%%%%%%%%%%%%%%%%%%%%%
%
%
\subsection{Alexander-Taylor capacity}
Inspired by~\cite{AT84} we define  the \emph{Alexander-Taylor capacity}\footnote{It is the ``multiplicative'' version $e^{-T(x)}$ of $T(x)$ that behaves like the capacity $T$ in~\cite{AT84}.}
of a point $x\in X$ (relative to $L$) as 
\begin{equation*}
  T(x)=\sup\{\sup_X\f-\f(x)\mid \f\in\PSH(L)\}.
\end{equation*}
Here it is equivalent to take the supremum over metrics $\f\in\cH(L)$ 
and/or normalized by $\f(x)\le 0$. 
The following definition will be used in what follows.
\begin{defi}\label{D401}
  A subset $E\subset X$ is \emph{pluripolar} if there exists $\f\in\PSH(L)$ 
  with $\f|_E\equiv-\infty$.
\end{defi}
This notion does not depend on the choice of ample $\Q$-line bundle $L$.
Indeed, if $L'$ is another ample $\Q$-line bundle, then there exists $\e>0$ rational
such that $L'':=L'-\e L$ is ample. If $\f''\in\cH(L'')$, then 
$\f':=\e\f+\f''\in\PSH(L'')$ and $\f'\equiv-\infty$ on $E$.

We say that a point $x\in X$ is pluripolar if $\{x\}$ is pluripolar. 
Any point $x\in X\setminus\Xval$ is pluripolar. 
Indeed, there exists a subvariety $Y\subsetneq X$ containing $x$. Since $L$ is ample,
there exists $m\ge1$ and a nonzero section $s\in H^0(X,mL)$ vanishing along $Y$.
Then $\f:=m^{-1}\log|s|_0\in\PSH(L)$ and $\f(x)=-\infty$.
There also exist pluripolar points $x\in\Xval$.
A simple example from~\cite{ELS} is explained in~\cite[Remark~2.19]{BKMS}.
%See Example~\ref{Ex401} below for a refined example.
\begin{prop}\label{P406}
  Let $x\in X$ be any point. Then:
  \begin{itemize}
  \item[(i)]
    $T(x)\ge0$, with equality iff $x$ is the generic point of $X$;
  \item[(ii)]
    $T(x)=\infty$ iff $x$ is pluripolar.
  \end{itemize}
\end{prop}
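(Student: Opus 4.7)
The plan is to treat (i) and (ii) separately, leaning on two structural facts recalled in \S\ref{S102}: every $\f \in \PSH(L)$ attains its supremum at the generic point $\xtriv$ of $X$, and $\PSH(L)$ is closed under convex combinations as well as under decreasing limits (as long as the limit is not identically $-\infty$).

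For (i), the inequality $T(x) \ge 0$ is immediate by testing with the trivial metric $\f \equiv 0 \in \cH(L)$. If $x = \xtriv$, then $\sup_X \f - \f(x) = 0$ for every $\f \in \PSH(L)$, so $T(\xtriv) = 0$. For the converse, suppose $T(x) = 0$; I fix $m \gg 1$ and sections $s_1, \dots, s_N$ generating $mL$ without common zero, and consider the one-parameter family of positive metrics
\[
  \f_\la := m^{-1} \max\bigl(\log|s_1|_0 + \la,\, \log|s_2|_0,\, \dots,\, \log|s_N|_0\bigr), \qquad \la \ge 0.
\]
Then $\f_\la(\xtriv) = \la/m$. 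If $|s_1|_0(x) = 0$, the term $\log|s_1|_0 + \la$ is $-\infty$ at $x$ and $\f_\la(x)$ stays bounded as $\la \to \infty$, contradicting $T(x) = 0$; if $|s_1|_0(x) > 0$, then for $\la$ large enough, $\f_\la(x) = m^{-1}(\log|s_1|_0(x) + \la)$, so $T(x) \ge -m^{-1}\log|s_1|_0(x) \ge 0$, forcing $|s_1|_0(x) = 1$. Since $s_1$ was an arbitrary member of a generating set (and $m$ was arbitrary), $|s|_0(x) = 1$ for every nonzero global section of every $mL$. Taking ratios $s/s'$ of such sections, which generate $k(X)^\times$ as $m$ varies since $L$ is ample, we conclude that the norm $|\cdot|_x$ on the residue field is trivial, hence $x = \xtriv$.

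For (ii), the implication ``$x$ pluripolar $\Rightarrow T(x) = \infty$'' is immediate: if $\psi \in \PSH(L)$ satisfies $\psi(x) = -\infty$, then $\sup_X \psi \in \R$ (as $\psi \not\equiv -\infty$), so $T(x) \ge \sup_X \psi - \psi(x) = +\infty$. For the converse, assume $T(x) = \infty$; I will construct a single $\psi \in \PSH(L)$ with $\psi(x) = -\infty$. Choose $\f_n \in \PSH(L)$ normalized by $\sup_X \f_n = 0$ (so $\f_n \le 0$ on $X$) and, after extraction, with $\f_n(x) \le -2^n$, and set
\[
  \psi_N := \sum_{n=1}^{N} 2^{-n}\,\f_n + 2^{-N}\cdot 0,
\]
whose coefficients sum to $1$. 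Since $0 \in \PSH(L)$ and $\PSH(L)$ is convex, $\psi_N \in \PSH(L)$. The identity $\psi_N - \psi_{N+1} = -2^{-(N+1)} \f_{N+1} \ge 0$ shows that $(\psi_N)$ is decreasing, and $\psi_N(\xtriv) = 0$ for every $N$, so the pointwise limit $\psi := \lim_N \psi_N = \sum_{n \ge 1} 2^{-n} \f_n$ is not identically $-\infty$ and therefore lies in $\PSH(L)$ by closure under decreasing limits. Finally,
\[
  \psi(x) \le \sum_{n \ge 1} 2^{-n}\,\f_n(x) \le \sum_{n \ge 1} 2^{-n}(-2^n) = -\infty,
\]
so $x$ is pluripolar.

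The main obstacle will be the converse direction in (ii): fabricating a single psh metric that is $-\infty$ at $x$ from an unbounded family of psh metrics requires simultaneously the convexity of $\PSH(L)$ (to justify that the partial sums $\psi_N$ are psh) and its closure under decreasing limits (to identify the infinite series $\psi$ as an honest element of $\PSH(L)$, after ruling out $\psi \equiv -\infty$ via the behavior at $\xtriv$). The argument in (i) is more elementary but still requires exploiting the explicit $\max$-of-logs form of positive metrics and the generation of $k(X)^\times$ by section ratios, which is where ampleness of $L$ enters.
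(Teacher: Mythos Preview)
Your proof is correct. Part~(ii) follows the same line as the paper's: both take $\f_n\in\PSH(L)$ with $\sup_X\f_n=0$ and $\f_n(x)\le-2^n$, then form $\psi=\sum_n 2^{-n}\f_n$. You give more detail than the paper in checking that the partial sums are psh (via convexity, padding with the trivial metric so the coefficients sum to~$1$) and that the limit is not identically $-\infty$; the paper simply asserts $\psi\in\PSH(L)$.

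For part~(i), your route is correct but more elaborate than necessary. The paper argues directly via the reduction map: if $x$ is not the generic point, then $\xi:=\redu(x)$ is not the generic point of $\Xsch$, so by ampleness there is a nonzero section $s\in H^0(X,mL)$ with $s(\xi)=0$; the very definition of the reduction then forces $|s|_0(x)<1$, whence $\f:=m^{-1}\log|s|_0$ satisfies $\sup_X\f=0>\f(x)$ and $T(x)>0$. Your argument instead deforms a Fubini--Study metric along a one-parameter family to deduce $|s|_0(x)=1$ for \emph{every} nonzero section, then recovers the trivial valuation from ratios of sections. This works, but note two points of exposition: first, you should remark that the case $|s_1|_0(x)=0$ for \emph{some} section already gives $T(x)>0$, so that in the remaining case every nonzero section has $|s|_0(x)>0$, hence $=1$; second, before invoking ratios $s/s'$ you implicitly use that $\ker(x)$ is the generic point (otherwise some section vanishes on $\overline{\{\ker(x)\}}$ and has $|s|_0(x)=0$, which you have ruled out). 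The paper's approach via $\redu$ sidesteps both issues in one line.
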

\begin{proof}[Proof of Proposition~\ref{P406}]
  Clearly, $T(x)\ge0$.
  If $x$ is the generic point, then $\f(x)=\sup_X\f$ for every $\f\in\PSH(L)$,
  so $T(x)=0$. Now suppose $x$ is not the generic point of $X$ and set 
  $\xi=\redu(x)$. There exists $m\ge1$ and a nonzero section $s\in H^0(X,mL)$
  vanishing at $\xi$. Then $\f:=m^{-1}\log|s|_0\in\PSH(L)$ satisfies
  $\sup_X\f=0$ and $\f(x)<0$, so $T(x)>0$. This proves~(i).

  As for~(ii), it is clear that $T(x)=\infty$ when $x$ is pluripolar.
  Conversely, if $T(x)=\infty$, there exists a sequence $(\f_j)_1^\infty$
  of metrics in $\PSH(L)$ such that $\sup_X\f_j=0$ and $\f_j(x)\le-2^j$.
  Then $\f:=\sum_j2^{-j}\f_j\in\PSH(L)$ satisfies $\f(x)=-\infty$,
  so $x$ is pluripolar.
\end{proof}
We shall later use the following description of the Alexander-Taylor capacity.
\begin{lem}\label{L404}
  Given a subset $P\subset\PSH(L)$, set $T_P(x):=\sup_{\f\in P}\{\sup_X\f-\f(x)\}$.
  Then we have $T_{\cH(L)}=T_{P_1}(x)=T_{P_2}(x)=T(x)$, where 
  \begin{equation*}
    P_1=\{\f=m^{-1}\log|s|_0\mid m\ge 1, s\in H^0(X,mL)\setminus\{0\}\},
  \end{equation*}
  and $P_2=\{f=\p+r\mid \p\in P_1, r\in\Q\}$.
\end{lem}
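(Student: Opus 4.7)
The plan is to establish the chain
\[
T_{P_1}(x)\le T_{P_2}(x)\le T_{\cH(L)}(x)\le T(x)\le T_{\cH(L)}(x)\le T_{P_2}(x)\le T_{P_1}(x),
\]
which forces all four quantities to agree. The two steps requiring work are $T_{\cH(L)}(x)\le T_{P_2}(x)$ and $T(x)\le T_{\cH(L)}(x)$; the other inequalities are bookkeeping.

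For the routine pieces, the identity $T_{P_1}(x)=T_{P_2}(x)$ is immediate from the translation invariance of $\f\mapsto\sup_X\f-\f(x)$, since elements of $P_2$ differ from elements of $P_1$ by rational constants. The inclusion $P_2\subset\cH(L)$ (yielding $T_{P_2}(x)\le T_{\cH(L)}(x)$) follows from the rewriting $m^{-1}\log|s|_0+p/q=(mq)^{-1}(\log|s^q|_0+mp)$ with $mp\in\Z$, which exhibits such a metric as a single-section Fubini--Study metric with integer shift. Finally, $\cH(L)\subset\PSH(L)$ gives $T_{\cH(L)}(x)\le T(x)$.

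For the inequality $T_{\cH(L)}(x)\le T_{P_2}(x)$, I use the characterization in~\S\ref{S401}: any $\f\in\cH(L)$ is of the form $\f=m^{-1}\max_\alpha(\log|s_\alpha|_0+\la_\alpha)$ with $\la_\alpha\in\Z$ and $(s_\alpha)$ a finite set of sections of $mL$ without common zero. Setting $\psi_\alpha:=m^{-1}\log|s_\alpha|_0+m^{-1}\la_\alpha\in P_2$, we have $\f=\max_\alpha\psi_\alpha$ pointwise, and since the supremum over $X$ commutes with a finite maximum, $\sup_X\f=\max_\alpha\sup_X\psi_\alpha$. Choosing an index $\alpha^\ast$ achieving this outer max yields $\sup_X\psi_{\alpha^\ast}=\sup_X\f$ while trivially $\psi_{\alpha^\ast}(x)\le\f(x)$, so $\sup_X\psi_{\alpha^\ast}-\psi_{\alpha^\ast}(x)\ge\sup_X\f-\f(x)$. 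Taking the supremum over $\f\in\cH(L)$ gives the claim.

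For the inequality $T(x)\le T_{\cH(L)}(x)$, I invoke the definition of $\PSH(L)$: any $\f\in\PSH(L)$ is the pointwise limit of a decreasing net $(\f_j)$ in $\cH(L)$. Since every psh metric attains its supremum at the generic point $x_{\rm gen}$ and $\f_j(x_{\rm gen})\searrow\f(x_{\rm gen})\in\R$ (finite because $\f\not\equiv-\infty$), we have $\sup_X\f_j\searrow\sup_X\f$. Combined with $\f_j(x)\searrow\f(x)$, the inequalities $\sup_X\f_j-\f_j(x)\le T_{\cH(L)}(x)$ pass to the limit, and taking the supremum over $\f$ concludes. The only real subtlety is the case $\f(x)=-\infty$, where both sides tend to $+\infty$; this is handled by monotone convergence and is the sole delicate point in an otherwise mechanical argument.
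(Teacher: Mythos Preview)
Your argument follows the same route as the paper's, and most of the steps are fine, but there is one genuine slip. You claim $P_2\subset\cH(L)$, justifying it by rewriting $m^{-1}\log|s|_0+p/q=(mq)^{-1}(\log|s^q|_0+mp)$ as a ``single-section Fubini--Study metric.'' This is not correct: by definition (see~\S\ref{S401}) an element of $\cH(L)$ is of the form $m^{-1}\max_\alpha(\log|s_\alpha|_0+\la_\alpha)$ where the sections $s_\alpha$ have \emph{no common zero}; in particular positive metrics are continuous and finite everywhere. A single nonzero section $s\in H^0(X,mL)$ will typically vanish somewhere, so $m^{-1}\log|s|_0$ takes the value $-\infty$ and does not lie in $\cH(L)$. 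Thus the inclusion $P_2\subset\cH(L)$ fails in general.

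Fortunately this step is not needed. You do have $P_2\subset\PSH(L)$, which gives $T_{P_2}(x)\le T(x)$ directly. Combined with your (correct) arguments that $T_{P_1}=T_{P_2}$, $T=T_{\cH(L)}$, and $T_{\cH(L)}\le T_{P_2}$, this already forces all four quantities to coincide; the link $T_{P_2}\le T_{\cH(L)}$ is redundant. So the fix is simply to drop that step from your chain (or replace it by the trivial $T_{P_2}\le T$), after which your proof is complete and matches the paper's.
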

\begin{proof}
  That $T_{\cH(L)}(x)=T_{\PSH(L)}(x)$ is clear since 
  every function in $\PSH(L)$ is a decreasing limit of functions in $\cH(L)$. 
  That $T_{P_1}(x)=T_{P_2}(x)$ is also clear since $\f\mapsto\sup_X\f-\f(x)$ is
  translation invariant. 
  Finally, since every function in $\cH(L)$ is the maximum of finitely many functions in $P_2$,
  it follows that $T_{\cH(L)}=T_{P_2}(x)$. This completes the proof.
\end{proof}
% 
% 
%%%%%%%%%%%%%%%%%%%%%%%%%%%%%%%%%%%%%%%%%%%%%%%%%%%%%%%%%%%%%%%%%%%
%
%
\subsection{Monge--Amp\`ere energy}
The \emph{Monge--Amp\`ere energy} of a point $x\in X$ is 
\begin{equation*}
  S(x):=E^*(\d_x)\in\R_+\cup\{+\infty\}.
\end{equation*}
We can write this as
\begin{equation*}
  S(x)=\sup\{E(\f)-\f(x)\mid \f\in\cE^1(L)\},
\end{equation*}
where we may equivalently take the supremum over $\f\in\cH(L)$.
For such $\f$ we have 
\begin{equation}\label{e429}
  \frac{n}{n+1}\min\f+\frac{1}{n+1}\sup_X\f
  \le E(\f)
  \le\sup_X\f,
\end{equation}
so that $(n+1)^{-1}T(x)\le S(x)\le T(x)$.
(See~\eqref{e409} for a better estimate.)
In particular,
\begin{lem}
  For any $x\in X$, we have $S(x)=\infty$ iff $T(x)=\infty$ iff $x$ is pluripolar.
\end{lem}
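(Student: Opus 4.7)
The plan is short because the lemma is essentially a packaging of facts already established just before its statement. I would split the triple equivalence into two separate bijections: ``$T(x)=\infty \Leftrightarrow x$ pluripolar'' and ``$S(x)=\infty \Leftrightarrow T(x)=\infty$''.

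The first bijection is exactly Proposition~\ref{P406}(ii), so nothing new is needed for it. The second bijection is an immediate consequence of the sandwich $(n+1)^{-1}T(x)\le S(x)\le T(x)$ quoted just above the lemma: if $T(x)<\infty$ then the right-hand inequality gives $S(x)\le T(x)<\infty$, and conversely if $T(x)=\infty$ then the left-hand inequality gives $S(x)\ge (n+1)^{-1}T(x)=\infty$. Combining the two equivalences yields the statement.

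For completeness, I would briefly recall why the sandwich holds. Both $T(x)$ and $S(x)$ may be computed as suprema over $\f\in\cH(L)$: for $T(x)$ this is Lemma~\ref{L404}, and for $S(x)$ it is the remark following its definition. The upper bound $S(x)\le T(x)$ follows from $E(\f)\le\sup_X\f$, the right-hand inequality in~\eqref{e429}. The lower bound $(n+1)^{-1}T(x)\le S(x)$ uses the left-hand inequality in~\eqref{e429}; by translation invariance one may further replace $\f$ by $\max\{\f,\f(x)\}\in\cH(L)$, which leaves $\sup_X\f-\f(x)$ unchanged while enforcing $\min_X\f=\f(x)$, so that $E(\f)-\f(x)\ge\tfrac{1}{n+1}(\sup_X\f-\f(x))$.

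There is no real obstacle. The only thing to keep in mind is that the supremum defining $T(x)$ over $\PSH(L)$ agrees with that over $\cH(L)$, which has already been recorded; once this is in hand the argument is purely formal manipulation of the two inequalities and Proposition~\ref{P406}(ii).
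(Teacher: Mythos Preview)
Your proposal is correct and matches the paper's approach exactly: the lemma appears with no separate proof, introduced by ``In particular,'' right after the sandwich $(n+1)^{-1}T(x)\le S(x)\le T(x)$, so it is meant to follow immediately from that inequality together with Proposition~\ref{P406}(ii). A minor quibble with your supplementary justification of the lower bound: $\max\{\f,\f(x)\}$ need not lie in $\cH(L)$ when $\f(x)$ is irrational, but it does lie in $\cH(L)_\R\subset\cE^1(L)$, where~\eqref{e429} still holds and over which $S(x)$ is computed, so the argument goes through unchanged.
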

The following result will be used later on.
\begin{prop}\label{P411}
  We have $E^*(\mu)\le\int S(x)\,d\mu(x)$ for any 
  Radon probability measure $\mu$ on~$X$.
\end{prop}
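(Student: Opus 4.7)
The plan is to recognize the statement as a Fubini-type interchange of supremum and integral, given the dual variational definitions
$$S(x) = \sup_{\varphi\in\cE^1(L)}\bigl(E(\varphi) - \varphi(x)\bigr), \qquad E^*(\mu) = \sup_{\varphi\in\cE^1(L)}\Bigl(E(\varphi) - \int\varphi\, d\mu\Bigr).$$
The pointwise inequality $\varphi(x) + S(x) \ge E(\varphi)$ is tautological from the definition of $S$; integrating it against $\mu$ and then taking the sup over $\varphi$ should yield the result, provided one is careful with measurability and $\pm\infty$ values.

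First I would verify that $S$ is Borel measurable, so that $\int S\, d\mu \in [0,+\infty]$ is well-defined. As recorded after~\eqref{e429}, the sup defining $S(x)$ may be restricted to $\varphi \in \cH(L)$; since every such $\varphi$ is continuous on $X$, the function $x \mapsto E(\varphi) - \varphi(x)$ is continuous, and $S$ is lsc as a supremum of continuous functions. Next, for an arbitrary fixed $\varphi \in \cE^1(L)$, note that $\varphi$ is usc and bounded above (its maximum being attained at the generic point of $X$), so $\int \varphi\, d\mu$ is well-defined in $[-\infty, +\infty)$. Integrating the pointwise inequality $\varphi(x) + S(x) \ge E(\varphi)$ against $\mu$ then gives
$$\int \varphi\, d\mu + \int S\, d\mu \ge E(\varphi),$$
with the convention that if $\int \varphi\, d\mu = -\infty$ the bound $S(x) \ge E(\varphi) - \varphi(x)$ forces $\int S\, d\mu = +\infty$, so the inequality holds trivially.

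Rearranging produces $E(\varphi) - \int \varphi\, d\mu \le \int S\, d\mu$ for every $\varphi \in \cE^1(L)$, and taking the supremum over $\varphi$ yields $E^*(\mu) \le \int S\, d\mu$, as claimed. There is no substantial obstacle in this argument; the only points demanding care are the lower semicontinuity of $S$ (used for measurability) and the bookkeeping needed when $\int\varphi\, d\mu$ or $\int S\, d\mu$ is infinite, both handled above. The fact that we are allowed to apply $S \ge E(\varphi) - \varphi$ pointwise, even where $\varphi(x) = -\infty$ (in which case both sides of the pointwise inequality are $+\infty$), is what makes the argument robust against the presence of pluripolar mass in $\mu$.
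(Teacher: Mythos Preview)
Your proof is correct and follows essentially the same approach as the paper: fix $\varphi$, use the tautological pointwise bound $S(x)\ge E(\varphi)-\varphi(x)$, integrate against $\mu$, then take the supremum over $\varphi$. The paper's version is slightly cleaner only because it restricts to $\varphi\in\cH(L)$ from the outset (which you also note is permissible), so that $\varphi$ is continuous and bounded and no bookkeeping with $\pm\infty$ is needed; your extra care with measurability of $S$ and the case $\int\varphi\,d\mu=-\infty$ is fine but could be bypassed in this way.
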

\begin{proof}
  For any $x\in X$ and any $\f\in\cH(L)$, we have 
  $E(\f)\le S(x)+\f(x)$. Integrating with respect to $x$, we get
  $E(\f)\le\int S(x)\,d\mu(x)+\int\f\,d\mu$, and taking the
  supremum over $\f\in\cH(L)$ gives the desired inequality.
\end{proof}
% 
% 
%%%%%%%%%%%%%%%%%%%%%%%%%%%%%%%%%%%%%%%%%%%%%%%%%%%%%%%%%%%%%%%%%%%
%
%
\subsection{Continuity properties}
We now study $S$ and $T$ as functions on $X$.
\begin{prop}\label{P410}
  Any point in $\Xqm$ is nonpluripolar. 
  Further, $S$ and $T$ are continuous on the dual complex $\D_\cX$ of
  any snc test configuration $\cX$ for $X$.
\end{prop}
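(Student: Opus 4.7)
I would prove both assertions by analyzing the Alexander--Taylor capacity $T$, then transfer the result to $S$ via the sandwich $(n{+}1)^{-1}T\le S\le T$ from~\eqref{e429} together with a parallel direct argument.

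For finiteness of $T$ on $\Xqm$, Lemma~\ref{L404} gives
\[
T(x)=\sup_{m\ge 1,\,s\in H^0(X,mL)\setminus\{0\}}\tfrac{v_x(s)}{m},
\]
where $v_x(f)=-\log|f(x)|$. Given $x\in\D_\cX$ with monomial weights $a_1,\dots,a_r\ge 0$ in snc coordinates cutting out components $E_1,\dots,E_r$ of $\cX_0$ through the relevant stratum, the formula $v_x(f)=\min_\alpha\sum_i a_i\alpha_i$ yields $v_x\le(\max_i a_i)\,\ord_F$, where $\ord_F$ is the monomial valuation with unit weights---a \emph{divisorial} valuation coming from the blowup of that stratum. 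Since $L$ is ample, the pseudoeffective threshold $\tau_F(L):=\sup_{m,s}m^{-1}\ord_F(s)$ is finite, so $T(x)\le(\max_i a_i)\,\tau_F(L)<\infty$, and $x$ is nonpluripolar by Proposition~\ref{P406}(ii). The inequality $S\le T$ shows $S$ is likewise finite on $\Xqm$.

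For continuity on $\D_\cX$, I show $T$ is \emph{Lipschitz} on each simplex $\sigma$. Pick $x,y\in\sigma$ with weights $a,b$ and let $\alpha^*$ attain the minimum in $v_x(s)=\min_\alpha\sum_i a_i\alpha_i$. Then $v_y(s)\le\sum_i b_i\alpha^*_i$, giving
\[
v_y(s)-v_x(s)\le\sum_i(b_i-a_i)\alpha^*_i\le|b-a|_\infty\,|\alpha^*|_1,
\]
and symmetrically, whence
\[
\bigl|m^{-1}v_x(s)-m^{-1}v_y(s)\bigr|\le|a-b|_\infty\cdot m^{-1}\max_{\alpha\in\supp(s)}|\alpha|_1.
\]
The crucial input is that $m^{-1}\max_{\alpha\in\supp(s)}|\alpha|_1$ is uniformly bounded over $m\ge 1$ and $s\in H^0(X,mL)$: the supports lie in $m\,\Delta(L)$, where $\Delta(L)\subset\R^n$ is the (bounded) Okounkov body of the ample $L$ relative to a flag compatible with the snc components of $\cX_0$. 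Writing $C_\cX$ for this uniform bound, we obtain $|T(x)-T(y)|\le C_\cX\,|a-b|_\infty$ on $\sigma$, so $T$ is Lipschitz, hence continuous, on each simplex. The finite CW-structure of $\D_\cX$ promotes this to continuity on the whole dual complex. Applying the same Lipschitz argument to $S(x)=\sup_{\f\in\cE^1(L)}(E(\f)-\f(x))$ (the sup may be taken over $\f\in\cH(L)$ by density and continuity of $E$ under decreasing nets) with finiteness from $S\le T$, yields continuity of $S$.

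\textbf{Main obstacle.} The key technical step is the uniform Okounkov-body bound $m^{-1}\max_{\alpha\in\supp(s)}|\alpha|_1\le C_\cX$, which converts a pointwise comparison of monomial valuations into a Lipschitz estimate independent of $m$ and $s$. Making the constant depend only on $L$ and $\cX$ (and not on the particular section) requires choosing the Okounkov flag compatibly with the snc stratification of $\cX_0$; with that setup in place, the argument becomes a standard consequence of asymptotic volume theory for ample line bundles.
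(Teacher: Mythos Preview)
Your finiteness argument for $T$ on $\Xqm$ via comparison with a divisorial valuation is sound; it is essentially an Izumi-type estimate, and is close in spirit to how such bounds are usually obtained.

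The continuity argument, however, has a genuine gap precisely at the step you single out. Your Lipschitz bound requires controlling $m^{-1}|\alpha^*|_1$, where $\alpha^*\in\supp(s)$ is a minimizer of $\langle a,\cdot\rangle$. The claim that ``the supports lie in $m\,\Delta(L)$'' is not correct: the local power-series expansion of $s\in H^0(X,mL)$ in the snc coordinates typically has \emph{infinite} support, and the Okounkov body only records the single leading exponent $\nu(s)$ for the chosen flag valuation $\nu$, not the whole support. There is no reason for the minimizer $\alpha^*$ of $\langle a,\cdot\rangle$ to equal $\nu(s)$, and near a boundary face of the simplex (say $a_r\to 0$) the minimizer can have arbitrarily large $r$-th coordinate, so $m^{-1}|\alpha^*|_1$ is not uniformly bounded. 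The only bound you actually have is $|\alpha^*|_1\le v_a(s)/\min_i a_i$, which blows up at the boundary; hence your Lipschitz constant is not uniform on the closed simplex, and the argument does not give continuity up to lower-dimensional faces. The ``same Lipschitz argument'' for $S$ reduces to the identical estimate, so the gap propagates. The paper avoids this entirely by invoking \cite[Theorem~5.29]{trivval}, which asserts directly that the family $\{\f-\sup_X\f\mid\f\in\cH(L)\}$ is equicontinuous on $\D_\cX$; together with the value $0$ at the generic point this yields both finiteness and continuity of $T$, and then of $S$ via~\eqref{e429}. That equicontinuity theorem encapsulates exactly the uniform control your Okounkov-body shortcut does not supply.
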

\begin{proof}
  For any $\cX\in\SNC(X)$, it follows from~\cite[Theorem~5.29]{trivval}
  that the restriction to $\D_\cX$ of the family $\{\f-\sup_X\f\mid \f\in\cH(L)\}$
  is equicontinuous. In view of~\eqref{e429}, the same holds for 
  the family $\{\f-E(\f)\mid \f\in\cH(L)\}$. The result follows.
\end{proof}
\begin{rmk}
  There exist nonpluripolar points in $\Xval\setminus\Xqm$. See~Example~2.14 
  in~\cite{BKMS},   and also Example~\ref{Ex401} below.
\end{rmk}
\begin{lem}\label{L409}
  For any $x\in X$, the nets $(S(\retr_\cX(x)))_{\cX\in\SNC(X)}$ and 
  $(T(\retr_\cX(x)))_{\cX\in\SNC(X)}$ are increasing, with limits
  $S(x)$ and $T(x)$, respectively.
\end{lem}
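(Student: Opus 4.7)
The plan is to exploit the key monotonicity property of psh metrics recalled in the background: for any $\f\in\PSH(L)$ and any $x\in X$, the net $(\f\circ\retr_\cX(x))_{\cX\in\SNC(X)}$ is decreasing with limit $\f(x)$. Since $\cH(L)\subset\PSH(L)$, the same decreasing convergence holds for continuous positive metrics. Both $S$ and $T$ are defined as suprema over $\f$ of quantities of the form $c(\f)-\f(y)$, where $c(\f)$ is independent of $y$ (namely $c(\f)=\sup_X\f$ for $T$, and $c(\f)=E(\f)$ for $S$), and the observation is then that this ``$y$-dependence through $\f(y)$ alone'' allows us to switch the sup over $\f$ and the monotone limit over $\cX$.

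In more detail, by Lemma~\ref{L404} and the fact recalled in~\S4 of the background that the sup defining $E^*$ can equivalently be taken over $\cH(L)$, we may write
\begin{equation*}
  T(y)=\sup_{\f\in\cH(L)}\bigl(\sup_X\f-\f(y)\bigr),\qquad
  S(y)=\sup_{\f\in\cH(L)}\bigl(E(\f)-\f(y)\bigr).
\end{equation*}
For each fixed $\f\in\cH(L)$, the sequence $\f(\retr_\cX(x))$ decreases to $\f(x)$ as $\cX$ increases in $\SNC(X)$; hence the nets $\sup_X\f-\f(\retr_\cX(x))$ and $E(\f)-\f(\retr_\cX(x))$ are increasing, with limits $\sup_X\f-\f(x)$ and $E(\f)-\f(x)$ respectively. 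Taking the sup over $\f\in\cH(L)$ yields first that $\cX\mapsto T(\retr_\cX(x))$ and $\cX\mapsto S(\retr_\cX(x))$ are increasing in $\cX$, and second, by the trivial interchange $\sup_\cX\sup_\f=\sup_\f\sup_\cX$,
\begin{equation*}
  \sup_\cX T(\retr_\cX(x))=\sup_{\f\in\cH(L)}\bigl(\sup_X\f-\f(x)\bigr)=T(x),
\end{equation*}
and similarly $\sup_\cX S(\retr_\cX(x))=S(x)$.

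I do not foresee a significant obstacle: the argument is essentially formal once the decreasing convergence $\f\circ\retr_\cX\downarrow\f$ is invoked. The only small point to verify is that there is no issue with possibly infinite values when $x$ is pluripolar. But since we restrict to $\f\in\cH(L)$ throughout, both $\f(x)$ and $\f(\retr_\cX(x))$ are finite, and the argument goes through; when $T(x)=+\infty$ (equivalently $x$ pluripolar, by Proposition~\ref{P406}), the same reasoning applied to a sequence $\f_k\in\cH(L)$ with $\sup_X\f_k-\f_k(x)\to+\infty$ forces $\sup_\cX T(\retr_\cX(x))=+\infty=T(x)$, and identically for $S$.
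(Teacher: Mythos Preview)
Your proof is correct and follows essentially the same approach as the paper: both rely on the fact that for $\f\in\PSH(L)$ the net $(\f(\retr_\cX(x)))_\cX$ decreases to $\f(x)$, then interchange the supremum over $\f$ with the increasing limit over $\cX$. The paper's proof is a terse one-liner (``This implies both results''), and your version simply spells out the details.
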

\begin{proof}
  For every $\f\in\PSH(L)$, the net $(\f(\retr_\cX(x)))_\cX$ 
  is decreasing, with limit $\f(x)$. This implies both results.
\end{proof}
\begin{cor}
  The functions $S$ and $T$ are lsc on $X$.
\end{cor}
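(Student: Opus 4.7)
The plan is to combine the two previous results---Proposition~\ref{P410} (continuity on each dual complex) and Lemma~\ref{L409} (monotone approximation via retractions)---into a sup-of-continuous representation.

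First I would observe that the retraction $\retr_\cX\colon X\to\D_\cX$ is continuous for every snc test configuration $\cX$: this is part of the structural statement recalled in \S\ref{S102} that the directed system $(\retr_\cX)_{\cX\in\SNC(X)}$ induces a homeomorphism $X\simto\varprojlim\D_\cX$. Next, by Proposition~\ref{P410}, both $S$ and $T$ are continuous on the (compact) dual complex $\D_\cX$. Composing, the functions $S\circ\retr_\cX$ and $T\circ\retr_\cX$ are continuous on $X$ for every $\cX\in\SNC(X)$.

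Now apply Lemma~\ref{L409}: for every $x\in X$,
\begin{equation*}
  S(x)=\sup_{\cX\in\SNC(X)}S(\retr_\cX(x)),\qquad
  T(x)=\sup_{\cX\in\SNC(X)}T(\retr_\cX(x)),
\end{equation*}
the nets on the right being increasing. Thus $S$ and $T$ are pointwise suprema of increasing nets of continuous real-valued (possibly $+\infty$-valued) functions on $X$, and any such supremum is lower semicontinuous. Therefore $S$ and $T$ are lsc on $X$.

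No genuine obstacle is expected: the work has already been done in Proposition~\ref{P410} and Lemma~\ref{L409}, so this corollary is essentially a one-line consequence of the fact that a sup of lsc functions is lsc, once one notes continuity of the retractions.
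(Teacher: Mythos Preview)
Your argument is correct and is exactly the approach implicit in the paper: the corollary is stated immediately after Lemma~\ref{L409} without proof, precisely because it follows at once from writing $S=\sup_\cX S\circ\retr_\cX$ and $T=\sup_\cX T\circ\retr_\cX$ as suprema of continuous functions (using Proposition~\ref{P410} and continuity of the retractions). The only cosmetic point is that ``real-valued (possibly $+\infty$-valued)'' is unnecessary here: each $S\circ\retr_\cX$ is genuinely finite-valued, since $S$ is continuous on the compact set $\D_\cX$.
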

We also note
\begin{prop}
  The functions $S$ and $T$ are homogeneous on $X$ in the sense that 
  $S(x^t)=tS(x)$ and $T(x^t)=tT(x)$ for $x\in X$ and $t\in\R_+^\times$.
\end{prop}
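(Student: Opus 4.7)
The plan is to verify both homogeneity statements by a direct change-of-variable argument, using only two facts already recorded in the Background: the scaling action $\f\mapsto\f_t$ preserves $\PSH(L)$ and $\cE^1(L)$, and the Monge--Amp\`ere energy is homogeneous, $E(\f_t)=tE(\f)$.

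First I will compute how the two quantities in the definition of $T(x)$ transform. For $\psi\in\PSH(L)$, the definition $\psi_t(y)=t\psi(y^{1/t})$ together with the fact that $y\mapsto y^{1/t}$ is a bijection of $X$ gives $\sup_X\psi_t=t\sup_X\psi$; evaluating at $y=x^t$ gives $\psi_t(x^t)=t\psi(x)$. Hence
\begin{equation*}
  \sup_X\psi_t-\psi_t(x^t)=t(\sup_X\psi-\psi(x)).
\end{equation*}
Since the map $\psi\mapsto\psi_t$ is a bijection of $\PSH(L)$, substituting $\f=\psi_t$ into the definition of $T(x^t)$ and taking the supremum over $\psi$ yields $T(x^t)=tT(x)$.

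Next I will handle $S$ in exactly the same way, using in addition the homogeneity of $E$. Writing $\f=\psi_t$ with $\psi\in\cE^1(L)$, we have $E(\f)=E(\psi_t)=tE(\psi)$ and $\f(x^t)=\psi_t(x^t)=t\psi(x)$, so $E(\f)-\f(x^t)=t(E(\psi)-\psi(x))$. Taking the supremum over $\psi\in\cE^1(L)$ (which is bijectively mapped onto itself by $\psi\mapsto\psi_t$) gives $S(x^t)=tS(x)$. Both statements also trivially extend to the pluripolar case, since $S(x)=T(x)=+\infty$ is preserved under multiplication by $t>0$. There is no real obstacle here; the only small point to check carefully is that the scaling identities on $\sup_X\f$ and on pointwise evaluation follow from the defining formula $\f_t(x^t)=t\f(x)$ combined with the fact that the $\R_+^\times$-action on $X$ is a bijection, which is immediate from the definition via powers of norms.
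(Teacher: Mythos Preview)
Your proof is correct and follows essentially the same approach as the paper. For $T$, the paper argues via one inequality plus the substitution $(x,t)\mapsto(x^t,t^{-1})$, whereas you use the bijectivity of $\psi\mapsto\psi_t$ on $\PSH(L)$ to get equality directly; for $S$, the paper simply invokes the previously recorded identity $E^*(t_*\mu)=tE^*(\mu)$ with $\mu=\delta_x$, while you unfold that identity by the same change of variable---these are cosmetic differences only.
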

\begin{proof}
  By~\cite[Proposition~7.12]{trivval} we have
  $S(x^t)=E^*(t\cdot\d_x)=tE^*(\d_x)=tS(x)$.
  Second,
  if $\f\in\PSH(L)$, the function $\f_t$ on $X$ defined by 
  $\f_t(y)=t\f(y^{1/t})$ also lies in $\PSH(L)$, see~\cite[Proposition~5.13]{trivval}.
  Further, $\f_t\le 0$ when $\f\le0$. 
  This easily implies $T(x^t)\ge tT(x)$. Replacing $(x,t)$
  by $(x^t,t^{-1})$ shows that equality must hold.
\end{proof}
% 
% 
%%%%%%%%%%%%%%%%%%%%%%%%%%%%%%%%%%%%%%%%%%%%%%%%%%%%%%%%%%%%%%%%%%%
%
%
\subsection{Associated graded norm}
If $L$ is an ample line bundle (as opposed to $\Q$-line bundle),
any point $x\in\Xval$ defines
a graded norm $\|\cdot\|_{x,\bullet}$ on $R=R(X,L)$ given by 
\begin{equation*}
  \|s\|_m=|s|_0(x)
\end{equation*} 
for $s\in R_m$.
Its associated graded filtration
was studied in detail in~\cite{BKMS}. 
%Here we first show
\begin{prop}
  The graded norm $\|\cdot\|_{x,\bullet}$ is bounded iff $x$ is nonpluripolar.
  Further, the Alexander-Taylor capacity $T(x)$ coincides with the invariant 
  $T$ in~\cite{BlumJonsson}.
\end{prop}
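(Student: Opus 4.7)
My plan is to compute $T(x)$ as an explicit supremum over sections, and then compare it with the definition of boundedness of the graded norm $\|\cdot\|_{x,\bullet}$.

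The key preliminary observation is the normalization $\sup_X(m^{-1}\log|s|_0)=0$ for any $m\ge 1$ and any $s\in R_m\setminus\{0\}$. Indeed, $\f_s:=m^{-1}\log|s|_0$ is a psh metric on $L$, so it attains its maximum at the generic point $\xtriv$ of $X$. Since $X$ is integral and $s\ne 0$, $s$ does not vanish at the generic point, and $\xtriv$ corresponds to the trivial valuation on $k(X)$, so $|s|_0(\xtriv)=1$. In particular, $\|s\|_{x,m}=|s|_0(x)\le 1$ for all $m,s$, which shows that the upper bound $\|\cdot\|_m\le\exp(Cm)$ in the definition of a bounded graded norm is automatic (with $C=0$), and $\|\cdot\|_{x,\bullet}$ is bounded iff there exists $C\ge 0$ such that $-m^{-1}\log\|s\|_{x,m}\le C$ for all $m\ge 1$ and $s\in R_m\setminus\{0\}$.

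Applying Lemma~\ref{L404} with $P_1=\{m^{-1}\log|s|_0\mid m\ge 1,\, s\in R_m\setminus\{0\}\}$ and using $\sup_X\f_s=0$, we get
\begin{equation*}
T(x)=\sup_{m,s}\bigl(\sup_X\f_s-\f_s(x)\bigr)=\sup_{m\ge 1,\, s\in R_m\setminus\{0\}}\bigl(-m^{-1}\log\|s\|_{x,m}\bigr).
\end{equation*}
Combined with the previous paragraph, this shows that $\|\cdot\|_{x,\bullet}$ is bounded iff $T(x)<\infty$, which by Proposition~\ref{P406} is equivalent to $x$ being nonpluripolar, proving the first assertion.

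For the second assertion, the right-hand side of the displayed formula is exactly the invariant $T(x)$ introduced in~\cite{BlumJonsson}: under the identification $|s|_0(x)=\exp(-v_x(s))$, where $v_x$ is the valuation on $k(X)$ attached to $x$ (extended to sections via a local trivialization at the generic point), we have $-m^{-1}\log\|s\|_{x,m}=m^{-1}v_x(s)$, and the supremum is the pseudo-effective threshold of $x$ with respect to $L$ used in~\loccit. The main (and essentially only) technical point is the normalization $\sup_X\f_s=0$; once this is in hand, the argument is a direct application of Lemma~\ref{L404} and Proposition~\ref{P406}, with no substantial obstacle.
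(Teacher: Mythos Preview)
Your proof is correct and follows essentially the same route as the paper's: both use Lemma~\ref{L404} to identify $T(x)$ with $\sup_{m,s}\bigl(-m^{-1}\log|s|_0(x)\bigr)$, then read off both the boundedness criterion for $\|\cdot\|_{x,\bullet}$ and the agreement with the maximum-order-of-vanishing invariant of~\cite{BlumJonsson}. The paper phrases the same computation via the successive minima $\la_{m,j}$ of $\|\cdot\|_{x,m}$ (so that your $\sup_s(-\log\|s\|_{x,m})$ is their $\la_{m,N_m}$), but the content is identical, and you are simply more explicit about the normalization $\sup_X\f_s=0$.
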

The latter invariant was already introduced (but denoted differently) in~\cite{BKMS}.
It is called the \emph{maximum order of vanishing} in~\cite{BlumJonsson}.
We will see below that the Monge--Amp\`ere energy $S(x)$ coincides with 
the \emph{expected order of vanishing} as studied in~\cite{BKMS,BlumJonsson}.
\begin{proof}
  For $m\ge 1$, consider the successive minima
  $0=\la_{m,1}\le\la_{m,2}\le\dots\le\la_{m,N_m}$
  of the norm $\|\cdot\|_{x,m}$ on $R_m$. 
  The maximum order of vanishing in~\cite{BKMS,BlumJonsson} is
  then given by $\lim_{m\to\infty}m^{-1}\la_{m,N_m}=\sup_mm^{-1}\la_{m,N_m}$,
  and hence equals $T(x)$ in view of Lemma~\ref{L404}.
\end{proof}
% 
% 
%%%%%%%%%%%%%%%%%%%%%%%%%%%%%%%%%%%%%%%%%%%%%%%%%%%%%%%%%%%%%%%%%%%
%
%
\subsection{Metrics associated to points}
Given any point $x\in X$, define 
\begin{equation*}
  \f_x:=\sup\{\f\in\PSH(L)\mid \f(x)\le 0\}.
\end{equation*}
This is a function on $X$ with values in $[0,+\infty]$. We shall see that the behavior
of $\f_x$ is vastly different, depending upon whether $x$ is pluripolar or not.
\begin{prop}
  If $x$ is pluripolar, then the usc envelope of $\f_x$ is equal to $+\infty$ on $X$.
\end{prop}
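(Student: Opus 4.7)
The plan is to show that $\f_x$ itself already equals $+\infty$ at every quasimonomial point, and then to conclude using density of $\Xqm$ in $X$. There is no serious obstacle: every ingredient is either a definition or a fact already recorded in the excerpt, so the argument is essentially assembly.

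First, using pluripolarity of $x$, I would fix some $\psi\in\PSH(L)$ with $\psi(x)=-\infty$. Because $\PSH(L)$ is stable under the addition of real constants, each translate $\psi+C$ again lies in $\PSH(L)$ and satisfies $(\psi+C)(x)=-\infty\le0$. Therefore $\psi+C$ is admissible in the supremum defining $\f_x$, yielding
\begin{equation*}
  \f_x\ge\psi+C \quad \text{pointwise on } X, \quad \text{for every } C\in\R.
\end{equation*}

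Second, fix an arbitrary $y\in\Xqm$. Proposition~\ref{P410} guarantees that $y$ is not pluripolar, so $\psi(y)>-\infty$; combined with the previous inequality, this forces $\f_x(y)\ge\psi(y)+C$ for all $C\in\R$, i.e.\ $\f_x(y)=+\infty$. Hence $\f_x\equiv+\infty$ on the whole of $\Xqm$.

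Finally, I would invoke the homeomorphism $X\simto\varprojlim_{\cX\in\SNC(X)}\D_\cX$ recalled in~\S\ref{S102}: for any $y\in X$, the net $(\retr_\cX(y))_{\cX\in\SNC(X)}$ lies in $\bigcup_{\cX}\D_\cX\subset\Xqm$ and converges to $y$ in the Berkovich topology. Consequently $\Xqm$ is dense in $X$, so every non-empty open subset meets $\{\f_x=+\infty\}$, and therefore the usc envelope of $\f_x$ is identically $+\infty$ on $X$.
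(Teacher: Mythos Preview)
Your proof is correct and follows essentially the same strategy as the paper: show that $\f_x\equiv+\infty$ on $\Xqm$ and conclude by density. The only cosmetic difference is that where you invoke Proposition~\ref{P410} (quasimonomial points are nonpluripolar, hence $\psi(y)>-\infty$ for your fixed $\psi$), the paper appeals to Izumi's inequality directly, obtaining $\f(y)\ge\sup_X\f-C(y)$ for all $\f\in\PSH(L)$ and combining this with $T(x)=+\infty$.
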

\begin{proof}
  Since $\Xqm$ is dense in $X$, 
  it suffices to prove that $\f_x(y)=+\infty$ for every point $y\in\Xqm$.
  But by Izumi's inequality~\cite[Theorem~2.21]{trivval}, 
  there exists a constant $C=C(y)>0$
  such that $\f(y)\ge\sup_X\f-C$ for all $\f\in\PSH(L)$.
  This implies that $\f_x(y)=+\infty$.
\end{proof}
The following result is a more precise version of Theorem~D.
\begin{thm}\label{T410}
  Let $L$ be an ample $\Q$-line bundle.
  For any nonpluripolar point $x\in X$, $\f_x$ is a continuous psh metric on $L$,
  and satisfies $\MA(\f_x)=\d_x$, $\f_x(x)=0$. Further,
  \begin{equation}\label{e406}
    E(\f_x)=S(x),
    \quad
    I(\f_x)=T(x),
    \quad\text{and}\quad
    J(\f_x)=T(x)-S(x).
  \end{equation}
  When $L$ is a line bundle,  $\f_x$ is the asymptotic Fubini--Study
  metric of the graded norm $\|\cdot\|_{x,\bullet}$ defined by $x$,
  and we have $S(x)=\vol(\|\cdot\|_{x,\bullet})$.
\end{thm}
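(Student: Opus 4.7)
The approach is to identify $\f_x$ with the asymptotic Fubini--Study metric $\tilde\f:=\FS(\|\cdot\|_{x,\bullet})$ of the graded supremum norm defined by $x$, and then read off the continuity, Monge--Amp\`ere equation, and energy formulas from this identification combined with the Calabi--Yau theorem. The $\Q$-line bundle case reduces to the line-bundle case by passing to a sufficiently divisible multiple of $L$, so I may assume $L$ is a line bundle throughout.

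To establish $\f_x=\tilde\f$, I argue by a two-sided comparison. Nonpluripolarity forces $x\in\Xval$, so $|s|_0(x)>0$ for every nonzero $s\in R_m$, making $\|\cdot\|_{x,m}$ a genuine norm. Choosing an orthogonal basis $\{s_j\}$ for $\|\cdot\|_{x,m}$, a direct computation gives $\f_m(x)=0$ where $\f_m:=m^{-1}\FS(\|\cdot\|_{x,m})$; hence each $\f_m$ lies in the family defining $\f_x$, and $\tilde\f=\usc(\sup_m\f_m)\le\f_x$. For the reverse inequality, any $\f\in\cH(L)_\R$ with $\f(x)\le 0$, written as $m^{-1}\max_j(\log|s_j|_0+\lambda_j)$, must satisfy $\lambda_j\le-\log\|s_j\|_{x,m}$, so $\f\le m^{-1}\FS(\|\cdot\|_{x,m})\le\tilde\f$. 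An $\e$-shift argument (approximating $\f\in\PSH(L)$ by a decreasing net in $\cH(L)$ and shifting by the evaluation at $x$) extends this bound to all $\f\in\PSH(L)$ with $\f(x)\le 0$; the same argument shows that $\f_x$ equals the supremum over $\cH(L)_\R$-metrics with $\f(x)\le 0$, hence $\f_x$ is lsc. Combined with the usc property of $\tilde\f$ inherited from its definition, $\f_x=\tilde\f$ is continuous; boundedness ($0\le\f_x\le T(x)<\infty$) and $\f_x(x)=0$ are then immediate.

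For the Monge--Amp\`ere equation, nonpluripolarity gives $S(x)=E^*(\d_x)\le T(x)<\infty$, so $\d_x\in\cM^1(X)$, and Calabi--Yau yields a unique $\psi\in\cE^1(L)/\R$ with $\MA(\psi)=\d_x$; I normalize $\psi(x)=0$, which is finite by nonpluripolarity. Since $\psi(x)\le 0$ places $\psi$ in the family defining $\f_x$, we have $\psi\le\f_x$; monotonicity of $E$ gives $E(\psi)\le E(\f_x)$, while the definition of $E^*$ combined with $\f_x(x)=0$ gives $E(\f_x)=E(\f_x)-\f_x(x)\le E^*(\d_x)=E(\psi)-\psi(x)=E(\psi)$. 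Thus $E(\psi)=E(\f_x)$, and Lemma~\ref{L405} (since $\f_x\ge\psi$ and $E(\f_x,\psi)=E(\f_x)-E(\psi)=0$ by the cocycle property) forces $\f_x=\psi$; hence $\MA(\f_x)=\d_x$ and $E(\f_x)=S(x)$.

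The remaining identities are direct. Since $\MA(0)$ is a Dirac at the generic point of $X$, where every $\f\in\PSH(L)$ attains its supremum, $\sup_X\f_x=\sup\{\sup_X\f\colon\f(x)\le 0\}=T(x)$. Hence $I(\f_x)=\int\f_x(\MA(0)-\MA(\f_x))=\sup_X\f_x-\f_x(x)=T(x)$ and $J(\f_x)=\sup_X\f_x-E(\f_x)=T(x)-S(x)$. Finally, $S(x)=E(\f_x)=\vol(\|\cdot\|_{x,\bullet})$ follows from Theorem~\ref{T407} applied to $\|\cdot\|_{x,\bullet}$ against the trivial graded norm. The main obstacle I anticipate lies in the identification $\f_x=\tilde\f$, especially the verification that $\f_x(x)=0$ and the simultaneous lsc/usc sandwiching that delivers continuity; both hinge on the orthogonal-basis computation enabled by $x\in\Xval$, which is the concrete manifestation of the nonpluripolar hypothesis.
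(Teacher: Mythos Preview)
Your overall strategy is attractive and genuinely different from the paper's, but there is a real gap in the identification step. From $\f_m(x)=0$ you correctly get $\f_m\le\f_x$ for every $m$, hence $\sup_m\f_m\le\f_x$; however, the inference $\tilde\f=\usc(\sup_m\f_m)\le\f_x$ does \emph{not} follow, because you have not shown that $\f_x$ is usc. In fact your own competitor argument shows $\f_x=\sup\{\f\in\cH(L)_\R:\f(x)\le0\}=\sup_m\f_m$, so a priori you only have $\f_x\le\tilde\f$, not equality. This gap then contaminates the energy argument: you invoke $E(\f_x)$ and apply Lemma~\ref{L405} to the pair $(\f_x,\psi)$, but $E(\f_x)$ is only defined once you know $\f_x\in\PSH(L)$, which is precisely what is at stake. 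Likewise, the lsc/usc sandwich gives continuity only after equality $\f_x=\tilde\f$ is established.

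The fix is to run the energy comparison between $\tilde\f$ and the Calabi--Yau solution $\psi$ rather than between $\f_x$ and $\psi$. You already have $\psi\le\f_x\le\tilde\f$. Since $(\f_m)$ increases to $\tilde\f$ and $E$ is continuous along increasing nets, $E(\tilde\f)=\lim_mE(\f_m)$; but $\f_m(x)=0$ gives $E(\f_m)=E(\f_m)-\f_m(x)\le E^*(\d_x)=E(\psi)$, whence $E(\tilde\f)\le E(\psi)$. Combined with $E(\psi)\le E(\tilde\f)$ (from $\psi\le\tilde\f$) and Lemma~\ref{L405}, this yields $\psi=\tilde\f$, so $\tilde\f(x)=0$; now $\tilde\f$ is itself a competitor, forcing $\tilde\f\le\f_x$ and hence $\f_x=\tilde\f$. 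Continuity and all the energy identities then follow exactly as you wrote. With this correction your route is more streamlined than the paper's, which first treats quasimonomial $x$ via a barrier on a dual complex and the orthogonality property, and only then passes to general nonpluripolar $x$ by retraction and uniform approximation; the cost is that your argument leans on the Calabi--Yau theorem from the outset, whereas the paper's quasimonomial step is more elementary.
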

Before starting the proof, we make a few remarks.
First, the continuity of $\f_x$ can be interpreted as the singleton $\{x\}$ 
being a \emph{regular} compact set in the sense of pluripotential theory.
Second, the last equality shows that $S(x)$ agrees with the 
expected order of vanishing in~\cite{BKMS,BlumJonsson},
see also~\cite{MR15}.
Third, combining~\eqref{e406} and~\eqref{e410} gives
\begin{equation}\label{e409}
  \frac1{n+1}T(x)\le S(x)\le\frac{n}{n+1}T(x).
\end{equation}
for any nonpluripolar point $x\in X$. In fact, this also holds
when $x$ is pluripolar, since then $S(x)=T(x)=\infty$.
The inequality~\eqref{e409} is reminiscent of the Alexander-Taylor 
inequality~\cite[Theorem~2.1]{AT84}.
It was proved in~\cite{FujitaPlt} for a divisorial point.
The proof here is quite different; however, both proofs ultimately reduce to the 
Hodge Index Theorem.
\begin{proof}[Proof of Theorem~\ref{T410}]
  During the proof we denote the generic point of $X$ by $x_g$.

  First assume $x$ is quasimonomial. To prove that $\f_x$ is continuous and that 
  $\MA(\f_x)=\d_x$, we can use ground field extension to reduce to~\cite[\S8.4]{nama},
  which treats the case of a discretely valued ground field, but let us give a direct proof,
  based on the argument in~\loccit

  Given any open neighborhood $U$ of $x$ in $X$, we may find an snc test configuration 
  $\cX$ for $X$, with dual complex $\D=\D_\cX$, such that $\retr_\cX^{-1}(\{x\})\subset U$.
  This follows from the homeomorphism $X\simto\varprojlim_\cX\D_\cX$.
  Let $\D'$ be a (possibly irrational) 
  simplicial subdivision of $\D$ such that $x$ is a vertex of $\D'$. (Thus $\D'$ and 
  and $\D$ have the same underlying space.)
  Fix a constant $C\gg0$ and let $f_x$ be
  the unique function on $\D'$ that is affine on each face, $f_x(x)=0$, and 
  $f_x(y)=C$ for every vertex $y$ of $\D'$ with $y\ne x$. Extend $f_x$ to
  all of $X$ by demanding $f_x=f_x\circ\retr_\cX$. Then $f_x$ is continuous.

  We claim that $\f_x=P(f_x)$, where $P(f_x)=\sup\{\f\in\PSH(L)\mid \f\le f_x\}$
  is the psh envelope of $f_x$, see~\S\ref{S402}. 
  To see this, note that $P(f_x)\le\f_x$, so  it suffices to 
  prove that if $\f\in\PSH(L)$ and $\f(x)\le 0$, then $\f\le f_x$. 
  Now, if $C\gg0$, it follows from the Izumi estimate in~\cite[Theorem~2.21]{trivval} 
  that $\f(y)<C=f_x(y)$ for all vertices $y\ne x$ of $\D'$.
  Further, $\f$ is convex on each face of $\D$
  (see~\cite[Theorem~5.29]{trivval}), and hence also on each face of $\D'$,
  so we must have $\f\le f_x$ on $\D$, with equality only at $x$. 
  Since $\f\le\f\circ\retr_\cX$ by
  \textit{loc.\,cit.}, this gives $\f\le \f_x$ on $X$, with strict inequality outside 
  $\retr_\cX^{-1}\{x\}\subset U$.

  Now $f_x$ is continuous, so $\f_x=P(f_x)$ is also continuous by~\cite[Corollary~5.28]{trivval}.
  By the orthogonality property (see~\cite[Theorem~6.30]{trivval}), the support of $\MA(\f_x)$
  is contained in the locus where $\f_x=f_x$. But this locus is contained in $U$, which was
  an arbitrary neighborhood of $x$, so we must have $\MA(\f_x)=\d_x$, and also $\f_x(x)=0$.

  We further have $\f_x(x_g)=\sup_X\f_x=T(x)$. Indeed, 
  $T(x)\ge\f_x(x_g)-\f_x(x)=\f_x(x_g)$, and if $\f\in\cH(L)$, then $\f-\f(x)\le\f_x$,
  so $\f(x_g)-\f(x)\le\f_x(x_g)$, and hence $T(x)\le\f_x(x_g)$, so that 
  $T(x)=\f_x(x_g)$. 
  Using $\MA(\f_x)=\d_x$, this implies $I(\f_x)=\f_x(x_g)-\f_x(x)=T(x)$.
  We also have $(I-J)(\f_x)=E^*(\d_x)=S(x)$. As a consequence, 
  $J(\f_x)=T(x)-S(x)$, and hence $E(\f_x)=J(\f_x)-\f_x(x_g)=S(x)$. 

  \smallskip
  Now consider a general nonpluripolar point $x\in X$.  
  Thus $S(x),T(x)<\infty$. To simplify notation, denote by $J:=\SNC(X)$ 
  the directed set of snc test configurations for $X$. For $j\in J$,
  we have a retraction $r_j\colon X\to X$ onto the corresponding
  dual complex. Set $x_j:=r_j(x)$. Then $\lim_j x_j=x$, and 
  by Lemma~\ref{L409}, $S(x_j)$ and $T(x_j)$
  increase to $S(x)$ and $T(x)$, respectively.

  The net $(\f_{x_j})_j$ is increasing and bounded from above by $T(x)$;
  hence it converges in $\PSH(L)$ to the solution
  $\f'_x$ of $\MA(\f'_x)=\d_x$, normalized by $\sup_X\f'_x=T(x)$.
  Thus $E(\f_{x_j})\to E(\f'_x)$.  
  We claim that $\f'_x=\f_x$. 
  To see this, note that since $\MA(\f'_x)=\d_x$, we have
  \begin{equation*}
    S(x)
    =E^*(\d_x)
    =E(\f'_x)-\int\f'_x\d_x
    =E(\f'_x)-\f'_x(x)
  \end{equation*}
  and similarly $S(x_j)=E(\f_{x_j})-\f_{x_j}(x_j)=E(\f_{x_j})$.
  Since $\lim_jS(x_j)=S(x)$ and $\lim_jE(\f_j)=E(\f'_x)$, we 
  conclude that $\f'_x(x)=0$. 
  Now consider any metric $\f\in\PSH(L)$ with $\f(x)\le0$.
  Set $\p:=\max\{\f,\f'_x\}$. Then $\p\ge\f'_x$ so $E(\p)\ge E(\f'_x)$.
  Further, $\p(x)=0$, so 
  \begin{equation*}
    E(\p)-\int\p\d_x\ge E(\f'_x)=E^*(\d_x).
  \end{equation*}
  Thus $\p$ maximizes the  functional $\f\mapsto E(\f)-\int\f\d_x$, 
  whose supremum equals $E^*(\d_x)$ and is also maximized by $\f'_x$.
  Since the maximizer is unique up to an additive constant, and since
  $\p(x)=\f'_x(x)=0$, we must have $\p=\f'_x$.
  This amounts to $\f\le\f'_x$, and taking the supremum over
  all $\f$ gives $\f_x\le\f'_x$. On the other hand, $\f'_x$ is a competitor in the definition
  of $\f_x$, so $\f'_x\le\f_x$, and we conclude that $\f'_x=\f_x$, as claimed.

  It remains to prove that $\f_x$ is continuous. 
  Since $\f_x$ is psh, $\e_j:=\f_x(x_j)$ decreases to $\f_x(x)=0$.
  Now $\f_{x_j}\le\f_x\le\f_{x_j}+\e_j$ on $X$,
  so we see that $\f_{x_j}$ converges uniformly to $\f_x$.
  Since $\f_{x_j}$ is continuous for all $j$, $\f_x$ is continuous.

  \smallskip
  Finally suppose $L$ is a line bundle. We must prove 
  $\f_x=\FS(\|\cdot\|_{x,\bullet})=:\p_x$.
  In the definition of $\f_x$, it suffices to take the supremum over positive metrics. 
  Recall that $\p_x=\usc\sup_m\f_m$, where 
  $\f_m=m^{-1}\max\{\log\frac{|s|_0}{|s(x)|}\ s\in R_m\setminus\{0\}\}$.
  Since $\f_m(x)=0$, this gives $\f_m\le\f_x$ and hence $\p_x\le\f_x$
  since $\f_x$ is continuous. On the other hand, if $\f\in\cH(L)$ and $\f(x)\le 0$,
  then $\f\le\f_m$ for some $m$, and hence $\f\le\p_x$. Thus $\f_x=\p_x$.
  The equality $E(\f_x)=\vol(\|\cdot\|_{x,\bullet})$ now follows from Theorem~\ref{T407}
  (with $\|\cdot\|'_\bullet$ the trivial graded norm).
\end{proof}
\begin{rmk}
  Suppose $(\cX,\cL)$ is a normal test configuration for $(X,L)$ with irreducible central
  fiber $\cX_0$. This includes the case of ``special'' test configurations of~\cite{LX14}.
  Then the associated metric $\f=\f_{\cX,\cL}$ on $L$ satisfies
  $\MA(\f)=\delta_x$, where $x\in\Xdiv$ is the divisorial point corresponding to $\cX_0$.
  Therefore $\f_x=\f+c$ is a positive metric in this case. 
  More generally, if $x$ is a ``dreamy'' divisorial valuation in the sense of~\cite{FujitaValcrit},
  then $\f_x\in\cH(L)$. 
  However, given~\cite{Kur03}, we do not expect $\f_x\in\cH(L)$
  for a general divisorial point $x\in\Xdiv$.
\end{rmk}
Note that $x\mapsto\f_x$ is equivariant for the $\R_+^\times$-actions on $X$ and $\PSH(L)$:
we have $\f_{x^t}(y^t)=t\f_x(y)$ for $x,y\in X$, $t\in\R_+^\times$.
% 
%
%%%%%%%%%%%%%%%%%%%%%%%%%%%%%%%%%%%%%%%%%%%%%%%%%%%%%%%%%%%%%%%%%%%
%
%
\subsection{Thresholds}
Recall (see \eg~\cite{CS08}) that the \emph{log canonical threshold} $\a(L)$
of $L$ is the infimum of the log canonical threshold $\lct(\cD)$, where
$\cD$ ranges over effective $\Q$-divisors $\Q$-linearly equivalent to $L$. 
We have $\a(L)>0$, see for instance~\cite[Theorem~9.14]{BHJ1}. In~\cite[Theorem~E]{BlumJonsson}
the following valuative formula was given.
\begin{thm}\label{T406}
  We have $\a(L)=\inf_x\frac{A(x)}{T(x)}$, 
  where $x$ may range over either divisorial points or
  nonpluripolar points. 
  Here $A$ is the log discrepancy and $T$ the Alexander-Taylor capacity.
\end{thm}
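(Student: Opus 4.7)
The plan is to reduce the formula for $\a(L)$ to Lemma~\ref{L404} by exchanging infima in the classical valuative description of log canonical thresholds, then transfer the resulting divisorial formula to nonpluripolar points via approximation on dual complexes.

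First I would treat the divisorial case. Recall the standard formula $\lct(\cD) = \inf_E A_E/\ord_E(\cD)$, where $\cD$ is an effective $\Q$-divisor on $X$ and $E$ ranges over prime divisors on smooth birational models $\cY \to \Xsch$. Every effective $\Q$-divisor $\sim_\Q L$ has the form $\cD_s := m^{-1}\div(s)$ for $s \in H^0(X, mL)\setminus\{0\}$ and some $m\ge 1$, and the associated psh metric $\f_s := m^{-1}\log|s|_0$ satisfies $\sup_X\f_s = 0$ (attained at the generic point). For a divisorial point $x = \exp(-c\ord_E)$ with $c>0$, we have $A(x) = cA_E$ by~\eqref{e407} and $\f_s(x) = -c\,\ord_E(\cD_s)$, so the ratio $A(x)/(-\f_s(x))$ is independent of $c$ and equals $A_E/\ord_E(\cD_s)$. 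Taking a double infimum and swapping the order,
\begin{equation*}
\a(L) = \inf_{s,m} \lct(\cD_s) = \inf_{x\in\Xdiv} \frac{A(x)}{\sup_{s,m}(-\f_s(x))},
\end{equation*}
and Lemma~\ref{L404} identifies the inner supremum with $T(x)$ (via the class $P_1$), yielding $\a(L) = \inf_{x \in \Xdiv} A(x)/T(x)$.

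Next I would show this infimum is unchanged when enlarged to all nonpluripolar points. Since divisorial points are quasimonomial and hence nonpluripolar (Proposition~\ref{P410}), the inequality $\inf_{\text{nonpluripolar}} A/T \le \inf_{\Xdiv} A/T$ is immediate. For the reverse, fix a nonpluripolar $x \in X$ with $A(x) < \infty$ and $0 < T(x) < \infty$; other nonpluripolar points contribute only $+\infty$ to the infimum, or, in the case of the generic point, produce an undefined ratio which we exclude. Setting $x_\cX := \retr_\cX(x) \in \D_\cX$ for $\cX \in \SNC(X)$, Theorem~\ref{T403}(iv) gives $A(x_\cX) \nearrow A(x)$ and Lemma~\ref{L409} gives $T(x_\cX) \nearrow T(x)$, so $A(x_\cX)/T(x_\cX) \to A(x)/T(x)$. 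Finally, since divisorial points are dense in each $\D_\cX$ and both $A$ and $T$ are continuous there (Theorem~\ref{T403}(iii)(a) and Proposition~\ref{P410}), each $x_\cX$ can be further approximated by divisorial points $y$ with $A(y)/T(y)$ arbitrarily close to $A(x_\cX)/T(x_\cX)$, completing the reduction.

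The main obstacle is largely bookkeeping once the two key inputs are in hand: the classical valuative expression of $\lct(\cD)$ and Lemma~\ref{L404}. The only genuinely subtle point is the continuity of $T$ on dual complexes, but this is already packaged into Proposition~\ref{P410} and ultimately rests on the equicontinuity of $\{\f-\sup_X\f : \f\in\cH(L)\}$ from~\cite{trivval}. Pleasantly, this approach bypasses the finer machinery of asymptotic Fubini--Study operators for divisorial norms used in~\cite{BlumJonsson}; the formula falls out directly from the pluripotential-theoretic description of $T(x)$.
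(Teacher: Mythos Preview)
Your proof is correct, and it takes a genuinely different route from the paper. The paper does not prove Theorem~\ref{T406} directly: it simply cites \cite[Theorem~E]{BlumJonsson} together with the identification of $T(x)$ with the maximum order of vanishing established just before. Your argument, by contrast, is self-contained within the framework of the present paper. For the divisorial formula you combine the classical valuative expression $\lct(\cD)=\inf_E A_E/\ord_E(\cD)$ with a Fubini-for-infima swap and Lemma~\ref{L404}; for the extension to nonpluripolar points you use the approximation package on dual complexes (Theorem~\ref{T403}(iv), Lemma~\ref{L409}, Proposition~\ref{P410}). This buys independence from the external reference and makes the result a direct consequence of the pluripotential description of $T$. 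The paper's approach is shorter but outsources the work; yours shows that nothing beyond the tools already assembled here is needed. One small point worth making explicit in your write-up: for the ratio $A(x_\cX)/T(x_\cX)$ to be defined you need $T(x_\cX)>0$, which holds for $\cX$ large enough since $T(x_\cX)\nearrow T(x)>0$; you implicitly use this but it deserves a word.
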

Indeed, this follows from~\cite[Theorem~E]{BlumJonsson} since $T(x)$ is equal
to the maximum order of vanishing of $x$.

In~\S\ref{sec:adjoint} we defined the adjoint stability threshold $\d(L)$
using measures of finite energy, see~\eqref{e411}. 
We now show that this definition coincides with the 
one in~\cite{BlumJonsson}.
\begin{thm}\label{T409}
  We have $\d(L)=\inf_x\frac{A(x)}{S(x)}$,
  where $x$ may range over either divisorial points or
  nonpluripolar points. 
  Here $A$ is the log discrepancy and $S$ the Monge--Amp\`ere energy.
\end{thm}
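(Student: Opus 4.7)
The plan is to prove the two equalities $\delta(L) = \delta_{\mathrm{np}} = \delta_{\mathrm{div}}$, where
\[
\delta_{\mathrm{np}} := \inf_{x\text{ nonpluripolar}} \frac{A(x)}{S(x)}, \qquad
\delta_{\mathrm{div}} := \inf_{x \in \Xdiv} \frac{A(x)}{S(x)}
\]
(with the convention that the generic point, where $A = S = 0$, is omitted from the infimum). The trivial inequality $\delta(L) \le \delta_{\mathrm{div}} \le \delta_{\mathrm{np}}$ follows by testing Dirac masses: for any nonpluripolar $x$ (in particular any divisorial point, which is quasimonomial hence nonpluripolar by Proposition~\ref{P410}), the Dirac mass $\d_x$ lies in $\cM^1(X)$ since $E^*(\d_x) = S(x) < \infty$, and $\Ent(\d_x) = A(x)$ by definition of entropy, so that $\delta(L) \le A(x)/S(x)$.

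For the reverse bound $\delta(L) \ge \delta_{\mathrm{np}}$, take any $\mu \in \cM^1(X)$ with $\Ent(\mu) < \infty$ (otherwise there is nothing to show). Proposition~\ref{P411} gives $E^*(\mu) \le \int S\,d\mu$. Since $A \ge \delta_{\mathrm{np}} \cdot S$ pointwise on the nonpluripolar locus by definition of $\delta_{\mathrm{np}}$, integrating this inequality against $\mu$ will yield $\Ent(\mu) \ge \delta_{\mathrm{np}} \int S\,d\mu \ge \delta_{\mathrm{np}} E^*(\mu)$, provided $\mu$ does not charge pluripolar points. This last property is the principal technical obstacle in the argument, and is the non-Archimedean analogue of the classical fact that measures of finite energy do not charge pluripolar sets: if $\mu(\{x\}) > 0$ for some pluripolar $x$, picking $\f \in \PSH(L)$ with $\sup\f = 0$ and $\f(x) = -\infty$, truncating to $\f_n := \max(\f,-n) \in \cE^1(L)$, and estimating $E(\f_n) - \int \f_n\,d\mu \ge E(\f_n) + n\mu(\{x\})$ should force $E^*(\mu) = +\infty$, contradicting $\mu \in \cM^1(X)$; making this quantitative requires controlling the decay of $E(\f_n)$, which is the delicate point.

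Finally, $\delta_{\mathrm{div}} \le \delta_{\mathrm{np}}$ is established by a retraction-and-density approximation. For a nonpluripolar non-generic $x \in X$, setting $x_\cX := \retr_\cX(x)$ for $\cX \in \SNC(X)$, Theorem~\ref{T403}(iv) gives $A(x_\cX) \uparrow A(x)$ and Lemma~\ref{L409} gives $S(x_\cX) \uparrow S(x)$. Since $x$ is not the generic point, $S(x) > 0$ (by Proposition~\ref{P406} and~\eqref{e409}), so the ratios $A(x_\cX)/S(x_\cX)$ converge to $A(x)/S(x)$. Each $x_\cX$ lies in the dual complex $\D_\cX$, on which $A$ is continuous by Theorem~\ref{T403}(iii)(a) and $S$ is continuous by Proposition~\ref{P410}. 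Since $\Xdiv \cap \D_\cX$ is dense in $\D_\cX$, we can replace $x_\cX$ by a divisorial point $y \in \D_\cX$ with $A(y)/S(y)$ as close as desired to $A(x_\cX)/S(x_\cX)$, yielding $\delta_{\mathrm{div}} \le A(x)/S(x) + \varepsilon$ for arbitrary $\varepsilon > 0$. Taking the infimum over nonpluripolar $x$ closes the circle.
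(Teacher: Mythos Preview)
Your argument has a genuine gap in the central step, and also a minor slip in the bookkeeping.

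\textbf{Minor slip.} The ``trivial'' chain is stated backwards: since divisorial points form a subset of the nonpluripolar locus, the infimum over the smaller set is larger, i.e.\ $\delta_{\mathrm{np}}\le\delta_{\mathrm{div}}$. Testing Dirac masses gives $\delta(L)\le\delta_{\mathrm{np}}\le\delta_{\mathrm{div}}$, not what you wrote.

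\textbf{The gap.} Your proof of $\delta(L)\ge\delta_{\mathrm{np}}$ rests on the assertion that a measure $\mu\in\cM^1(X)$ cannot charge pluripolar points. You do not prove this, and your sketched estimate does not work: with $d=\dim X$ and $\f_n:=\max(\f,-n)$, the bound~\eqref{e429} only yields $E(\f_n)\ge -\tfrac{d}{d+1}n$, so $E(\f_n)+n\mu(\{x\})\to+\infty$ would require $\mu(\{x\})>\tfrac{d}{d+1}$, which is far from what you need. More to the point, in the paper the non-charging property is obtained only \emph{after} Theorem~\ref{T409}, as a consequence of $\delta(L)>0$ (via Corollary~\ref{C404}); invoking it here would be circular.

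\textbf{How the paper avoids this, and how to repair your proof.} The paper never needs to know whether $\mu$ charges pluripolar sets. Instead, once $\delta_{\mathrm{div}}=\delta_{\mathrm{np}}=:\delta'$ is established (the paper cites~\cite{BlumJonsson} for this), the pointwise inequality $A(x)\ge\delta' S(x)$, valid by definition on divisorial points, is extended to \emph{all} $x\in X$ using exactly the monotone approximations you invoke in your step~3: continuity of $A$ and $S$ on each $\Delta_\cX$ (Theorem~\ref{T403}(iii)(a), Proposition~\ref{P410}) plus density of divisorial points gives it on $\Xqm$, and then Theorem~\ref{T403}(iv) and Lemma~\ref{L409} push it to every $x$ as an increasing limit. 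Integrating the global inequality against $\mu$ and applying Proposition~\ref{P411} yields $\Ent(\mu)\ge\delta' E^*(\mu)$ with no pluripolar caveat.

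Your step~3 is correct and in fact gives a self-contained proof of $\delta_{\mathrm{div}}=\delta_{\mathrm{np}}$, where the paper simply quotes~\cite{BlumJonsson}. You already have all the ingredients; reorganize so that step~3 comes first, use it to deduce $A\ge\delta' S$ everywhere on $X$, and then integrate. The detour through ``$\mu$ does not charge pluripolar points'' is both unproven and unnecessary.
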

\begin{proof}
  We have seen that a point $x\in\Xval$ is nonpluripolar iff $S(x)<\infty$
  iff $T(x)<\infty$ iff the valuation $x$ has linear growth in the sense of~\cite{BKMS}.
  Further, for such $x$, the Monge--Amp\`ere energy $S(x)=E^*(\d_x)$ coincides
  with the expected order of vanishing as in~\cite{BKMS}.
  It therefore follows from~\cite[Theorem~E]{BlumJonsson}
  that the infima of $A(x)/S(x)$ over the set of divisorial points and the set of
  nonpluripolar points agree.
  Denote the common infimum by $\d'(L)$. It remains to show that $\d'(L)=\d(L)$.

  By considering Dirac masses $\mu=\d_x$, we get $\d(L)\le\d'(L)$. By definition of $\d'(L)$, we have $A(x)\ge\d'(L) S(x)$ for all divisorial points $x$, and hence for all $x\in X$ by (iv) in Theorem~\ref{T403} and Lemma~\ref{L409}. 
  On the other hand, for any measure $\mu\in\cM^1(X)$, we have 
  $\Ent(\mu)=\int_X A(x)\,d\mu(x)=\int_X\Ent(\d_x)\,d\mu(x)$
  and $E^*(\mu)\le\int_X E^*(\d_x)\,d\mu(x)$, see Proposition~\ref{P411}. This implies $\d'(L)\le\d(L)$ and completes the proof.
\end{proof}

The Alexander-Taylor inequality~\eqref{e409} together with Theorems~\ref{T406}
and~\eqref{T409}  immediately shows that 
\begin{equation}\label{e408}
  \frac{n+1}{n}\a(L)\le\d(L)\le(n+1)\a(L);
\end{equation}
in particular, $\d(L)>0$.
% 
% 
%%%%%%%%%%%%%%%%%%%%%%%%%%%%%%%%%%%%%%%%%%%%%%%%%%%%%%%%%%%%%%%%%%%
%
%
\subsection{A valuative criterion of adjoint K-stability}
In this section we prove Theorems~E and~E'. Recall that these say that to
test K-stability, it suffices to consider metrics in $\cE^1(L)$ of the form 
$\f=\f_x$, where $x$ is a nonpluripolar (or even divisorial) point.

First we note the following consequence of Theorem~\ref{T410}.
\begin{cor}\label{C401}
  For any nonpluripolar point $x\in X$, 
  \begin{equation*}
    H(\f_x)=A(x)
    \quad\text{and}\quad
    \Mad(\f_x)=A(x)-S(x).
  \end{equation*}
\end{cor}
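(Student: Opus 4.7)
The plan is to derive both identities as direct consequences of Theorem~\ref{T410}. By that theorem, for a nonpluripolar point $x \in X$ the metric $\f_x$ lies in $\cE^1(L)$ (in fact it is continuous psh), and it satisfies
\[
\MA(\f_x)=\d_x,\qquad E(\f_x)=S(x),\qquad I(\f_x)=T(x),\qquad J(\f_x)=T(x)-S(x).
\]
So the corollary is essentially a bookkeeping statement: plug these identities into the definitions of $H$ and $\Mad$.

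First I would verify $H(\f_x)=A(x)$. By definition, $H(\f)=\Ent(\MA(\f))=\int_X A\,\MA(\f)$. Since $\MA(\f_x)=\d_x$ is a Dirac mass, the integral collapses to the evaluation of the log discrepancy at $x$:
\[
H(\f_x)=\int_X A\,d\d_x=A(x).
\]
Note this identity makes sense even when $A(x)=+\infty$ (possible outside $\Xqm$), since then both sides are $+\infty$, consistent with $\Ent$ taking values in $[0,+\infty]$.

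Next I would compute $\Mad(\f_x)$ using the identity $\Mad=H-(I-J)$ from \S\ref{sec:adjoint}. From Theorem~\ref{T410},
\[
(I-J)(\f_x)=T(x)-\bigl(T(x)-S(x)\bigr)=S(x),
\]
which by the Calabi--Yau theorem equals $E^*(\MA(\f_x))=E^*(\d_x)=S(x)$, consistent with the general identity $E^*(\MA(\f))=(I-J)(\f)$. Combining with the first step,
\[
\Mad(\f_x)=H(\f_x)-(I-J)(\f_x)=A(x)-S(x).
\]

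There is no substantive obstacle here: both statements follow by unwinding definitions once Theorem~\ref{T410} is in hand. The only minor point worth mentioning is that for nonpluripolar $x$ we have $S(x)<\infty$, so the subtraction $A(x)-S(x)$ is unambiguous in $[0,+\infty]\cup\R$; in particular the corollary recovers the valuative expression $\Mad(\f_x)=A(x)-S(x)$ that is used in the formulation of Theorems~E and~E'.
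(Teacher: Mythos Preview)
Your proof is correct and follows the same approach as the paper: use $\MA(\f_x)=\d_x$ from Theorem~\ref{T410} to get $H(\f_x)=\Ent(\d_x)=A(x)$ and $(I-J)(\f_x)=E^*(\d_x)=S(x)$, then apply $\Mad=H-(I-J)$.
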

Indeed, $\MA(\f_x)=\d_x$ implies 
$H(\f_x)=\Ent(\d_x)=A(x)$, $(I-J)(x)=E^*(\d_x)=S(x)$,
and hence $\Mad(\f_x)=A(x)-S(x)$.

To prove Theorem~E we use Corollary~\ref{C407}, which says that $L$ is 
K-semistable in the adjoint sense iff $\d(L)\ge 1$. 
By the valuative formula for $\d(L)$ in Theorem~\ref{T409}, this is equivalent to
$A(x)\le S(x)$ for all divisorial points $x$, or, equivalently, for all
nonpluripolar points $x$. Since $\Mad(\f_x)=S(x)-A(x)$, this proves Theorem~E.

The proof of Theorem~E' is similar. By Theorem~\ref{T405}, $L$ is uniformly
K-stable in the adjoint sense iff $\d(L)>1$, and this is equivalent to the existence
to $\e>0$ such that $(1+\e)S(x)\le A(x)$ for all divisorial (resp.\ nonpluripolar)
points $x\in X$. This is in turn equivalent to $\Mad(\f_x)\ge\e(I-J)(\f_x)$, 
and the proof is complete since the functionals $I-J$ and $J$ are comparable,
see~\eqref{e410}.
\begin{rmk}
  For the convenience of the reader, we compare our notation with the one in 
  Fujita~\cite{FujitaValcrit,FujitaPlt}. Assume $X$ is Fano, and $L=-K_X$.
  Let $F$ be a prime divisor over $X$, and write $x=\exp(-\ord_F)\in\Xdiv$.
  Then 
  $\tau(F)=T(x)$,
  $j(F)=V(T(x)-S(x))$,
  $\beta(F)=V(A(x)-S(x))$,
  and $\hat\beta(F)=1-S(x)/A(x)$.
\end{rmk}
% 
%
%%%%%%%%%%%%%%%%%%%%%%%%%%%%%%%%%%%%%%%%%%%%%%%%%%%%%%%%%%%%%%%%%%%
%
%
\subsection{Adjoint K-stability and uniform K-stability}
Suppose that $k=\C$ (or $k$ algebraically closed and uncountable).
By~\cite[Theorem~E]{BlumJonsson}, there exists a nonpluripolar
point $x\in\Xval$ such that $A(x)=\d(L)S(x)>0$. 
This implies Theorem~F in the introduction. Indeed, if $L$
is K-stable in the adjoint sense, it is K-semistable in the adjoint
sense, so $\d(L)\ge1$ by Corollary~\ref{C407}. In view of Theorem~\ref{T405}
we must show that $\d(L)>1$. But if $\d(L)=1$, pick $x\in\Xval$ as above.
Then $\f_x$ is a nonconstant metric
in $\cE^1(L)$, and $\Mad(\f_x)=0$, contradicting $L$ being K-stable in
the adjoint sense.

If $L$ is not K-stable in the adjoint sense, \ie $\d(L)<1$, and $x\in\Xval$ is a
nonpluripolar point with $A(x)=\d(L)S(x)>0$, then $\f_x\in\cE^1(L)$
can be viewed as a ``maximally destabilizing metric''. 
In general, we don't know whether $\f_x\in\cH(L)$.
% 
%
%%%%%%%%%%%%%%%%%%%%%%%%%%%%%%%%%%%%%%%%%%%%%%%%%%%%%%%%%%%%%%%%%%%
%
%
\subsection{The Tian--Odaka--Sano criterion}
In~\cite{Tian97}, Tian gave an analytic sufficient condition for the existence of
K\"ahler--Einstein metrics on Fano manifolds. 
An algebraic version of this was proved by Odaka and Sano~\cite{OS12};
see also~\cite{Der16, BHJ1}. Our next result gives a generalization to the general adjoint setting.
\begin{cor}
  Let $L$ be an ample $\Q$-line bundle. 
  If $\a(L)\ge\frac{n}{n+1}$ (resp.\ $\a(L)>\frac{n}{n+1}$),
  then $L$ is K-semistable (resp.\ uniformly K-stable) in the adjoint sense.
\end{cor}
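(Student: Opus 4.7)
The plan is to derive the corollary directly from the chain of inequalities \eqref{e408}, which links $\alpha(L)$ and $\delta(L)$ via the Alexander--Taylor inequality \eqref{e409} together with the valuative formulas in Theorems \ref{T406} and \ref{T409}.

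Specifically, I would first recall that by Corollary \ref{C407}, adjoint K-semistability of $L$ is equivalent to $\delta(L) \geq 1$, while by Theorem \ref{T405}, uniform adjoint K-stability of $L$ is equivalent to $\delta(L) > 1$. Hence the statement reduces to comparing $\delta(L)$ and $\alpha(L)$.

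Next, I would invoke the left-hand inequality in \eqref{e408}, namely $\delta(L) \geq \tfrac{n+1}{n}\alpha(L)$, which itself follows immediately by combining the valuative formulas $\alpha(L) = \inf_x A(x)/T(x)$ (Theorem \ref{T406}) and $\delta(L) = \inf_x A(x)/S(x)$ (Theorem \ref{T409}) with the Alexander--Taylor bound $S(x) \leq \tfrac{n}{n+1}T(x)$ from \eqref{e409}. Applying this to the hypothesis $\alpha(L) \geq \tfrac{n}{n+1}$ yields $\delta(L) \geq 1$, proving adjoint K-semistability; applying it to $\alpha(L) > \tfrac{n}{n+1}$ yields $\delta(L) > 1$, proving uniform adjoint K-stability.

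There is essentially no obstacle here: the whole machinery (the valuative criterion, the equivalence with the stability threshold $\delta(L)$, and the Alexander--Taylor type inequality \eqref{e409} obtained from Theorem~\ref{T410}) has already been set up, and the corollary is only a numerical consequence. The only point worth making explicit is that \eqref{e408} is stated at the level of infima, so one may alternatively phrase the proof by first proving the inequality $A(x) \geq \tfrac{n+1}{n}\alpha(L)\, S(x)$ for every nonpluripolar $x \in X$ and then invoking Theorem~E (resp.\ Theorem~E') directly, rather than passing through $\delta(L)$.
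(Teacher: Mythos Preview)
Your proof is correct and follows exactly the paper's own argument: the paper states the corollary as an ``immediate consequence of~\eqref{e408}, Corollary~\ref{C407} and Theorem~\ref{T405},'' which is precisely the route you take, only with more detail spelled out on how~\eqref{e408} arises from the valuative formulas and the Alexander--Taylor inequality.
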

\begin{proof}
  Immediate consequence of~\eqref{e408}, 
  Corollary~\ref{C407} and Theorem~\ref{T405}.
\end{proof}
The same proof also gives a necessary condition for adjoint K-stability.
\begin{cor}
  Let $L$ be an ample $\Q$-line bundle. 
  If $\a(L)<\frac{1}{n+1}$ (resp.\ $\a(L)\le\frac1{n+1}$),
  then $L$ cannot be K-semistable (resp.\ uniformly K-stable) in the adjoint sense.
\end{cor}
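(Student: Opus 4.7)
The plan is to mirror exactly the argument given for the preceding (sufficient-condition) corollary, but to invoke the \emph{upper} bound in the Alexander--Taylor-type sandwich~\eqref{e408} rather than the lower bound.

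Recall~\eqref{e408} asserts
\[
  \frac{n+1}{n}\a(L)\le\d(L)\le(n+1)\a(L).
\]
First I would use the right-hand inequality: if $\a(L)<\frac{1}{n+1}$, then immediately $\d(L)\le(n+1)\a(L)<1$, while if $\a(L)\le\frac{1}{n+1}$, then $\d(L)\le 1$. Next I would translate these bounds on $\d(L)$ into failure of (uniform) adjoint K-stability by invoking the characterizations already established: Corollary~\ref{C407} says $L$ is K-semistable in the adjoint sense iff $\d(L)\ge 1$, so $\d(L)<1$ rules out adjoint K-semistability; Theorem~\ref{T405} says $L$ is uniformly K-stable in the adjoint sense iff $\d(L)>1$, so $\d(L)\le 1$ rules out uniform adjoint K-stability.

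There is essentially no obstacle here, since all the nontrivial content has been packaged into~\eqref{e408}, Corollary~\ref{C407} and Theorem~\ref{T405}. The only point worth noting is that this argument is strictly formal given the two-sided bound~\eqref{e408}; the actual work was done in Theorems~\ref{T406} and~\ref{T409} (identifying $\a(L)$ and $\d(L)$ as valuative infima of $A/T$ and $A/S$ respectively) together with the Alexander--Taylor inequality~\eqref{e409} $\frac{1}{n+1}T(x)\le S(x)\le\frac{n}{n+1}T(x)$ that relates the two denominators. Thus the proof reduces to a one-line chain of inequalities in each of the two cases, exactly dual to the previous corollary.
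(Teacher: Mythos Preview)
Your proposal is correct and follows exactly the paper's approach: the paper simply remarks that ``the same proof'' as the preceding corollary applies, i.e.\ one uses the upper bound $\d(L)\le(n+1)\a(L)$ from~\eqref{e408} together with Corollary~\ref{C407} and Theorem~\ref{T405}.
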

In the case of K-semistability, this generalizes~\cite[Theorem~3.5]{FO16}.
%
% 
%%%%%%%%%%%%%%%%%%%%%%%%%%%%%%%%%%%%%%%%%%%%%%%%%%%%%%%%%%%%%%%%%%%
%
%
\subsection{A criterion of finite energy}
Using the positivity of the stability threshold, we now give a criterion for a measure to 
have finite energy.
\begin{cor}
  Any Radon probability measure of finite entropy has finite energy.
\end{cor}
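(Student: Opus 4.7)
The plan is to combine three ingredients already established in the paper: the positivity of the stability threshold $\d(L)$ via \eqref{e408}, the valuative inequality $A(x)\ge\d(L)\,S(x)$ that comes out of the proof of Theorem~\ref{T409}, and the integral estimate $E^*(\mu)\le\int S\,d\mu$ from Proposition~\ref{P411}. Chaining these should give $E^*(\mu)\le\d(L)^{-1}\Ent(\mu)$ directly, which yields the corollary since $\d(L)>0$.

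More concretely, the first step is to record that the pointwise inequality
\[
A(x)\ge\d(L)\,S(x)\quad\text{for all }x\in X
\]
holds. For divisorial $x$ this is the definition of $\d(L)$ by Theorem~\ref{T409}, and the extension to all of $X$ is exactly the argument in the proof of that theorem: $A=\sup_\cX A\circ\retr_\cX$ by Theorem~\ref{T403}(iv), while $S(x)=\lim_\cX S(\retr_\cX(x))$ by Lemma~\ref{L409}, so passing to the limit over $\cX\in\SNC(X)$ preserves the inequality (with the convention that both sides are $+\infty$ at pluripolar points, since $S(x)=\infty$ forces $A(x)=\infty$ through this limit procedure).

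Now assume $\Ent(\mu)<\infty$. Then $A<\infty$ $\mu$-a.e., so in particular the set of pluripolar points (which is contained in $\{A=\infty\}$) has $\mu$-measure zero, and $S(x)\le\d(L)^{-1}A(x)$ holds $\mu$-a.e. Integrating against $\mu$ and applying Proposition~\ref{P411},
\[
E^*(\mu)\le\int_X S(x)\,d\mu(x)\le\d(L)^{-1}\int_X A(x)\,d\mu(x)=\d(L)^{-1}\Ent(\mu)<\infty,
\]
which is exactly the finite-energy condition defining $\cM^1(X)$.

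There is essentially no obstacle here; the only subtlety worth flagging is making sure the inequality $A(x)\ge\d(L)S(x)$ really extends from $\Xdiv$ to all of $X$ in a way compatible with both sides possibly being infinite, but this was already carried out inside the proof of Theorem~\ref{T409} via the monotone approximation by retractions, so we may just invoke it.
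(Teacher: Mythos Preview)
Your proof is correct and arrives at the same inequality $E^*(\mu)\le\d(L)^{-1}\Ent(\mu)$ as the paper, but you take a longer route. The paper's proof is a single line: by the very definition~\eqref{e411} of $\d(L)$ as an infimum over all Radon probability measures, one has $\d(L)\le\Ent(\mu)/E^*(\mu)$ for every $\mu$, and since $\d(L)>0$ by~\eqref{e408} this forces $E^*(\mu)<\infty$ whenever $\Ent(\mu)<\infty$. You instead unpack this by re-deriving the inequality $\Ent(\mu)\ge\d(L)E^*(\mu)$ from the pointwise bound $A\ge\d(L)S$ and Proposition~\ref{P411}, which is exactly how $\d(L)=\d'(L)$ was established inside the proof of Theorem~\ref{T409}. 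So your argument is essentially the proof of Theorem~\ref{T409} specialized to this consequence, whereas the paper just invokes the definition. Note also that your digression about pluripolar points having $\mu$-measure zero is unnecessary: the inequality $S(x)\le\d(L)^{-1}A(x)$ holds at every point of $X$ (with the convention $\infty\le\infty$), and integrating nonnegative measurable functions needs no a.e.\ qualifier.
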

\begin{proof}
  Indeed, if $\Ent(\mu)<\infty$, then $E^*(\mu)\le\d(L)^{-1}\Ent(\mu)<\infty$.
\end{proof}
\begin{cor}\label{C404}
  If $\mu$ is a Radon probability measure on $X$ such that the log discrepancy $A$ is 
  bounded above on the support of $\mu$, then $E^*(\mu)<\infty$.
\end{cor}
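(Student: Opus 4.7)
The plan is to reduce the statement to the previous corollary, which says that finite entropy implies finite energy via $E^*(\mu)\le\d(L)^{-1}\Ent(\mu)$ together with $\d(L)>0$. So all that is needed is to check that the hypothesis $A\le C$ on $\supp\mu$ forces $\Ent(\mu)<\infty$.

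More precisely, I would argue as follows. Recall that $\Ent(\mu)=\int_X A\,d\mu$, and that $A\colon X\to[0,+\infty]$ is lsc, hence Borel measurable. Since $\mu$ is supported on the closed set $S:=\supp\mu$, the integral is unchanged when restricted to $S$:
\begin{equation*}
  \Ent(\mu)=\int_S A(x)\,d\mu(x).
\end{equation*}
By hypothesis there exists $C\ge 0$ with $A\le C$ on $S$. Consequently
\begin{equation*}
  \Ent(\mu)\le C\,\mu(S)=C<\infty.
\end{equation*}
Then the previous corollary applies and yields $E^*(\mu)<\infty$.

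There is essentially no obstacle: the only ingredients are the definition of entropy as the integral of the log discrepancy, the fact that $\mu$ places all of its mass on $\supp\mu$, and the positivity $\d(L)>0$ established via the Alexander--Taylor inequality \eqref{e408} (or equivalently via Theorem~\ref{T409}). The statement is really a direct corollary of the inequality $E^*\le\d(L)^{-1}\Ent$ combined with the trivial bound $\int A\,d\mu\le \sup_{\supp\mu} A$.
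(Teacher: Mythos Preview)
Your proof is correct and follows exactly the approach the paper intends: bound $\Ent(\mu)=\int A\,d\mu\le C$ using the hypothesis, then invoke the previous corollary (finite entropy implies finite energy via $E^*\le\d(L)^{-1}\Ent$ and $\d(L)>0$). The paper leaves this corollary without an explicit proof, but your argument is precisely what is implicit.
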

Applying this result to a Dirac mass $\mu=\d_x$ and using Theorem~\ref{T410}, we get
\begin{cor}
  If $x\in\Xval$ and $A(x)<\infty$, then $x$ is nonpluripolar.
\end{cor}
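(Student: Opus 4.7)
The statement is essentially a direct combination of the preceding Corollary~\ref{C404} with the identification $S(x)=E^*(\delta_x)$ and the characterization of pluripolar points in terms of the Monge--Amp\`ere energy. The plan is as follows.

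First, I would take the Radon probability measure $\mu:=\delta_x$. Its support is the singleton $\{x\}$, on which the log discrepancy takes the single finite value $A(x)<\infty$. Thus $A$ is (trivially) bounded from above on $\supp(\mu)$, and Corollary~\ref{C404} applies to give $E^*(\delta_x)<\infty$.

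Second, by the very definition of the Monge--Amp\`ere energy of a point, $S(x)=E^*(\delta_x)$, so we conclude $S(x)<\infty$. I would then invoke the lemma stated just after the definition of $S$ (a consequence of Proposition~\ref{P406} together with the inequalities $(n+1)^{-1}T(x)\le S(x)\le T(x)$), which asserts that $S(x)<\infty$ is equivalent to $x$ being nonpluripolar. This yields the conclusion; Theorem~\ref{T410} enters only through the fact that it provides the content of the equivalence $S(x)<\infty\iff x$ nonpluripolar (via $S(x)=E(\f_x)<\infty$ when $x$ is nonpluripolar, together with the uniform comparison with $T(x)$).

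There is essentially no obstacle here: the entire argument is a two-line deduction from already-established results. The only thing worth flagging is that the hypothesis $x\in\Xval$ is not strictly needed for the logical chain above (any $x\in X$ with $A(x)<\infty$ must lie in $\Xval$, by part (i) of Theorem~\ref{T403}), so the statement could equivalently be phrased for all points $x\in X$ with $A(x)<\infty$.
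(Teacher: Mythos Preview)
Your proposal is correct and essentially identical to the paper's own proof: the paper states the corollary immediately after Corollary~\ref{C404} with the one-line justification ``Applying this result to a Dirac mass $\mu=\delta_x$ and using Theorem~\ref{T410}, we get\ldots'', which is exactly the chain you describe. Your additional observation that the hypothesis $x\in\Xval$ is redundant (since $A(x)<\infty$ forces $x\in\Xval$ by Theorem~\ref{T403}(i)) is correct and worth noting.
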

Remark~2.19 in~\cite{BKMS} contains an example of a point $x\in\Xval$
whose associated graded norm is not bounded; hence $x$ is pluripolar.
In this example, $A(x)=\infty$. On the other hand, it is also possible that 
$x$ is nonpluripolar even though $A(x)=\infty$. 
\begin{exam}\label{Ex401}
  Suppose $\dim X=2$ and that $\xi=\redu(x)$ is a closed point of $\Xsch$. 
  Thus $v:=-\log|\cdot|_x$ is a valuation (in the additive sense) on the local 
  ring $\cO_{\Xsch,\xi}$ centered at $\xi$, and after scaling 
  we may assume $\min_{f\in\fm}v(f)=1$, where $\fm$ is the maximal ideal.
  This means $v$ can be viewed as a point in the \emph{valuative tree} $\cV=\cV_\xi$
  at $\xi$, see~\cite{valtree} and also~\cite{dynberko}. The log
  discrepancy $A(x)$ equals the thinness $A(v)$ considered in~\cite{valtree}.
  By~\cite[Remark~A.4]{valtree} we can find $v$ of infinite thinness, 
  $A(v)=\infty$ but finite skewness,
  $\a(v)<\infty$. The latter condition means that there exists $C>0$ such that 
  $v\le C\ord_\xi$ on $\cO_{\Xsch,\xi}$, and implies that the associated graded norm 
  is bounded, see~\cite[Theorem~2.16]{BKMS}, and hence $T(x)<\infty$.
\end{exam}
\begin{cor}
  If $\mu$ is a Radon probability measure on $X$ whose support is contained in the dual cone
  complex $\D(\cY,\cD)$ of some log smooth pair $(\cY,\cD)$ over $X$, then $E^*(\mu)<\infty$.
\end{cor}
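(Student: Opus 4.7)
The plan is to invoke Corollary~\ref{C404}, which guarantees $E^*(\mu)<\infty$ as soon as the log discrepancy $A$ is bounded above on $\supp\mu$; the entire task thus reduces to proving this boundedness. Since $X$ is projective (hence compact as a Berkovich space) and $\supp\mu$ is closed in $X$, $\supp\mu$ is compact in $X$, and by hypothesis it lies inside the dual cone complex $\D(\cY,\cD)$. Therefore $\supp\mu$ is also a compact subset of $\D(\cY,\cD)$ in the subspace topology induced from $X$.

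Next, I would exploit the cone complex structure. The space $\D(\cY,\cD)$ is a finite union of closed polyhedral cones $\sigma_Z$, one per stratum $Z=\bigcap_{i\in I}D_i$ of $\cD$, each parametrized by $\R_{\geq 0}^I$ via the monomial weights on the irreducible components of $\cD$ meeting $Z$. Combining formula~\eqref{e407} for $A$ at divisorial points, the homogeneity $A(x^t)=tA(x)$ of Theorem~\ref{T403}(ii), and the affineness of $A$ on each simplex of Theorem~\ref{T403}(iii)(a), one reads off that $A|_{\sigma_Z}$ is the linear function of the weights, in particular continuous on each $\sigma_Z$ and, by gluing along shared faces, continuous on all of $\D(\cY,\cD)$. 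Any compact subset of a finite union of closed cones $\R_{\geq 0}^I$ is bounded in the cone coordinates, so the weights remain bounded on $\supp\mu\cap\sigma_Z$ for each $Z$, and linearity then forces $A$ to be bounded above on $\supp\mu$. Corollary~\ref{C404} now delivers $E^*(\mu)<\infty$.

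The only non-formal point, and the one I expect to require a brief unraveling of definitions, is the identification of the intrinsic cone complex topology on $\D(\cY,\cD)$ with the subspace topology induced from $X$: concretely, escaping to infinity along a ray in some $\sigma_Z$ must correspond to leaving $\D(\cY,\cD)$ inside $X$, so that the inclusion $\supp\mu\subset\D(\cY,\cD)$ rules out any such escape. This parallels the standard parametrization of dual complexes $\D_\cX$ of snc test configurations reviewed in~\S\ref{S102}, and I do not anticipate any serious new difficulty here.
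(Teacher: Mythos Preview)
Your approach is correct and is exactly the paper's: the support of $\mu$ is a closed subset of the compact space $X$, hence compact, and since $A$ is continuous on $\D(\cY,\cD)$ it is bounded there, so Corollary~\ref{C404} applies. Two small cleanups: Theorem~\ref{T403}(iii)(a) concerns dual complexes $\D_\cX$ of snc test configurations, not dual cone complexes $\D(\cY,\cD)$ of log smooth pairs, so for the continuity of $A$ on $\D(\cY,\cD)$ you should instead cite its construction in the appendix (piecewise linear extension from the rays via~\eqref{e407}, following~\cite{jonmus}); and the topology worry you flag is legitimate but standard---the monomial parametrization makes the embedding $\D(\cY,\cD)\hookrightarrow\Xval$ a homeomorphism onto its image, which the paper uses without comment.
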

\begin{proof}
  Indeed, the support of $\mu$ must then be a compact subset of $\D(\cY,\cD)$, so since
  the log discrepancy is continuous on $\D(\cY,\cD)$, it must be bounded on the support.
\end{proof}
%
% 
%
%%%%%%%%%%%%%%%%%%%%%%%%%%%%%%%%%%%%%%%%%%%%%%%%%%%%%%%%%%%%%%%%%%%
%
%
%
%\newpage
\appendix
\section{Properties of the log discrepancy}\label{a:logdisc}
%
% 
%%%%%%%%%%%%%%%%%%%%%%%%%%%%%%%%%%%%%%%%%%%%%%%%%%%%%%%%%%%%%%%%%%%
%
%
In this section we prove Theorem~\ref{T403}.
We first \emph{define} the log discrepancy function $A_X$. 
On $X\setminus\Xval$, we declare $A_X\equiv\infty$, and 
on $\Xval$ we define $A_X$ as in~\cite[\S5]{jonmus}.
Let us recall how this is done.
Consider a log smooth pair $(\cY,\cD)$ over $\Xsch$.
The function $A_X$ is defined on the dual cone complex $\D(\cY,\cD)$
by~\eqref{e407} on the one-dimensional cones, and by linearity on the 
higher-dimensional cones. It is shown in~\cite{jonmus} that this 
gives a well-defined function on $\Xqm$. To extend $A_X$ to $\Xval$, we use 
the fact that the set of log smooth pairs is directed, and that we have a homeomorphism 
\begin{equation*}
  \Xval\simto\varprojlim_{(\cY,\cD)}\D(\cY,\cD),
\end{equation*}
where $\retr_{\cY,\cD}\colon\Xval\to\D(\cY,\cD)$ is a natural retraction defined by 
evaluation.
One can then show that the net $A_X\circ\retr_{\cY,\cD}$ is increasing and set
\begin{equation*}
  A_X(x)
  =\lim_{\cY,\cD}A_X(\retr_{\cY,\cD}(x))
  =\sup_{\cY,\cD}A_X(\retr_{\cY,\cD}(x))
\end{equation*}
for $x\in\Xval$.
Finally we set $A_X=\infty$ on $X\setminus\Xval$.

We claim that $A_X$ is lsc on $X$. Since $A_X\circ\retr_{\cY,\cD}$ is continuous 
for every $(\cY,\cD)$, the restriction of $A_X$ to $\Xval$
is lsc. Now $A_X=\infty$ on $X\setminus\Xval$, so this implies that 
$A_X$ is lsc at every point in $\Xval$.
It remains to prove that $A_X$ is lsc at any point $x\in X\setminus\Xval$.
There exists an ample line bundle $L$ on $X$ and a nonzero global section 
$s$ of $L$ such that $|s(x)|=0$. Now there exists $C=C(L)>0$ such that 
$\ord_\xi(s)\le C$ for all $\xi\in\Xsch$, cf~\cite[p.830]{BHJ1}.
By the Izumi inequality, see \eg~\cite[Proposition~5.10]{jonmus}, this gives
\begin{equation}\label{e202}
  |s(x')|
  \ge\exp(-A_X(x')\ord_\xi(x'))
  \ge\exp(-CA_X(x'))
\end{equation}
for every $x'\in\Xval$, where $\xi=\redu x'\in\Xsch$.
The same inequality trivially holds also when $x'\in X\setminus\Xval$,
since $A_X(x')=\infty$ in this case

Now, given any $B>0$, we have $|s|<\exp(-CB)$ on 
any sufficiently small neighborhood of $x$ in $X$, and hence
$A_X\ge B$ on the same neighborhood. 
This proves that $\lim_{x'\to x}A_X(x')=A_X(x)=\infty$.
In particular, $A_X$ is lsc at $x$, and hence lsc everywhere on $X$.

\smallskip
Next we prove that $A_X$ is the largest lsc extension of $A_X\colon\Xdiv\to\R$.
(This of course implies the uniqueness statement in Theorem~\ref{T403}.)
Let $A':X\to[0,\infty]$ be any other lsc extension. Since $A_X$ is continuous on
any dual cone complex $\D=\D(\cY,\cD)$, and $\Xdiv$ is dense in $\D$,
we have $A_X\ge A'$ on $\D$; hence $A_X\ge A'$ on $\Xqm$. 
Now suppose $x\in X\setminus\Xqm$. By construction, there exists a 
a net $(x_j)_j$ in $\Xqm$ such that $\lim A_X(x_j)=A_X(x)$.
Then $A'(x_j)\le A_X(x_j)$ for all $j$, so $A'(x)\le A_X(x)$ since $A'$
is lsc.

\smallskip
It remains to prove that $A$ satisfies~(iii) and~(iv) in Theorem~\ref{T403}.
For this, we use the Gauss extension $\sigma\colon X\to X\times\P^1$.
Its image consists of all $\bbG_m$-invariant points satisfying $\log|\unipar|=-1$,
and $\sigma(\Xval)\subset(X\times\P^1)^{\mathrm{val}}$. If $x\in\Xval$,
then $\sigma(x)$ is divisorial iff $x$ is either divisorial or the generic point of $x$.

Consider $\cX\in\SNC(X)$. We view $(\cX,\cX_0)$ as a log smooth pair
over $\Xsch\times\bbP^1$. The dual cone complex $\D(\cX,\cX_0)$ embeds in 
$(X\times\P^1)^{\mathrm{val}}\subset X\times\P^1$, $\sigma$ 
maps $\D_\cX$ homeomorphically onto the subset of $\D(\cX,\cX_0)$ cut out by 
the equation $\log|\unipar|=-1$, and $\sigma^{-1}(\D(\cX,\cX_0))=\D_\cX$.
The embedding $\sigma\colon\D_\cX\hookrightarrow\D(\cX,\cX_0)$ is affine 
in the sense that if $f\in C^0(\D(\cX,\cX_0))$ is linear on each cone, then 
$f\circ\sigma$ is affine on each simplex of $\D_\cX$. 
The retractions $\retr_\cX\colon X\to\D_\cX$ 
and $\retr_{\cX,\cX_0}\colon(X\times\P^1)^{\mathrm{val}}\to\D(\cX,\cX_0)$ satisfy
\begin{equation}\label{e428}
  \retr_{\cX,\cX_0}\circ\sigma=\sigma\circ\retr_\cX
  \quad\text{on $\Xval$}.
\end{equation}
\begin{lem}\label{L406}
  For any $x\in X$, we have $A_{X\times\P^1}(\sigma(x))=A_X(x)+1$.
\end{lem}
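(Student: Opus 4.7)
Let $f(x) := A_{X\times\P^1}(\sigma(x))$. The plan is to verify $f = A_X+1$ in three stages: first by a direct computation on the divisorial locus, then extending to quasimonomial points by affine interpolation on dual complexes, and finally to all of $X$ by combining the maximality of $A_X$ with the retraction identity~\eqref{e428}. Throughout, recall that $\sigma$ is continuous, so $f$ is lsc on $X$.

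For the divisorial case, take $x=\exp(-c\,\ord_E)$ with $E$ a prime divisor on a smooth model $\pi\colon\cY\to\Xsch$. Realize $\sigma(x)$ on a smooth model of $X\times\P^1$ by blowing up $\cY\times\P^1$ along $E\times\{0\}$. In local coordinates $(x_1,\ldots,x_n)$ on $\cY$ with $E=\{x_1=0\}$, this blowup $\tilde\cY\to\cY\times\P^1$ produces three transversal divisors on $\tilde\cY$: the exceptional divisor $D$, the strict transform $F'$ of $\cY\times\{0\}$, and the strict transform $E'$ of $E\times\P^1$, with $D\cap F'$ and $D\cap E'$ being the only codimension-two strata. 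Using $K_{\tilde\cY/\cY\times\P^1}=D$, $\pi^*(\cY\times\{0\})=D+F'$, and $\pi^*(E\times\P^1)=D+E'$, I can compute $A_{X\times\P^1}(\ord_D)=2+\ord_E(K_{\cY/\Xsch})$, $A_{X\times\P^1}(\ord_{F'})=1$, $A_{X\times\P^1}(\ord_{E'})=1+\ord_E(K_{\cY/\Xsch})$. The valuation $\sigma(x)$ is monomial with weights $(c,1-c)$ on $(D,F')$ if $c<1$, equal to $\ord_D$ if $c=1$, and monomial with weights $(1,c-1)$ on $(D,E')$ if $c>1$. In each case, linearity of $A_{X\times\P^1}$ on the relevant cone yields
\begin{equation*}
f(x) = 1 + c(1+\ord_E(K_{\cY/\Xsch})) = A_X(x) + 1.
\end{equation*}

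For $x\in\Xqm$, choose $\cX\in\SNC(X)$ with $x\in\D_\cX$. Since $A_{X\times\P^1}$ is linear on each cone of $\D(\cX,\cX_0)$ and the embedding $\sigma\colon\D_\cX\hookrightarrow\D(\cX,\cX_0)$ is affine on simplices (as noted before~\eqref{e428}), the function $f|_{\D_\cX}$ is affine on each simplex of $\D_\cX$; by Theorem~\ref{T403}(iii)(a), so is $A_X|_{\D_\cX}$. They agree at the (divisorial) vertices by the previous step, hence throughout $\D_\cX$, giving $f=A_X+1$ on $\Xqm$. To pass from $\Xqm$ to $\Xval$, use that $A_{X\times\P^1}$ is non-decreasing under the retractions $\retr_{\cX,\cX_0}$ (an analogue of Theorem~\ref{T403}(iii)(b) for the log smooth pair $(\cX,\cX_0)$ over $\Xsch\times\bbP^1$), so that~\eqref{e428} gives, for every $\cX\in\SNC(X)$ and every $x\in\Xval$,
\begin{equation*}
f(x) \ge A_{X\times\P^1}(\retr_{\cX,\cX_0}(\sigma(x))) = A_{X\times\P^1}(\sigma(\retr_\cX(x))) = A_X(\retr_\cX(x))+1.
\end{equation*}
Taking $\sup_\cX$ and applying Theorem~\ref{T403}(iv) yields $f\ge A_X+1$ on $\Xval$. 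For the reverse inequality, $f$ is lsc on $X$ with $f|_{\Xdiv}=(A_X+1)|_{\Xdiv}$, and $A_X+1$ is by Theorem~\ref{T403} the \emph{maximal} lsc extension of $(A_X+1)|_{\Xdiv}$, so $f\le A_X+1$. Finally, if $x\in X\setminus\Xval$, then $\ker(x)$ lies in a proper subvariety $Y\subsetneq\Xsch$, so $\ker(\sigma(x))$ is the generic point of $Y\times\bbP^1\subsetneq\Xsch\times\bbP^1$ and $\sigma(x)\notin(X\times\P^1)^{\mathrm{val}}$; by Theorem~\ref{T403}(i) both sides equal $+\infty$.

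The main obstacle is the bookkeeping in the divisorial blowup computation, in particular identifying $\sigma(x)$ as a monomial valuation in the correct chart of $\tilde\cY$ (the three cases $c<1$, $c=1$, $c>1$ corresponding to different codimension-two strata) and correctly tracking the contributions of $K_{\cY/\Xsch}\times\P^1$ and the exceptional factor to each of $\ord_D,\ord_{F'},\ord_{E'}$ of the relative canonical class. Once that is done, the rest of the argument is a formal interpolation-plus-maximality argument.
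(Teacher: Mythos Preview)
Your argument has a circularity problem. You invoke Theorem~\ref{T403}(iii)(a) (to say $A_X$ is affine on each simplex of $\D_\cX$) and Theorem~\ref{T403}(iv) (to compute $\sup_\cX A_X(\retr_\cX(x))$), but in the paper these two properties are \emph{deduced from} Lemma~\ref{L406}: see the paragraph immediately following the statement of the lemma. At the point where Lemma~\ref{L406} is being proved, only the definition of $A_X$ via log smooth pairs over $\Xsch$, its lower semicontinuity, and its maximality among lsc extensions of $A_X|_{\Xdiv}$ are available. The dual complexes $\D_\cX$ of snc test configurations are not a priori pieces of dual cone complexes $\D(\cY,\cD)$ of log smooth pairs over $\Xsch$; relating the two is precisely the content of the lemma.

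Both circular steps are easily repaired, and the fixes bring you back to the paper's proof. For the quasimonomial step, drop the affinity of $A_X$ on $\D_\cX$: you already know $f$ is \emph{continuous} on $\D_\cX$ (since $A_{X\times\P^1}$ is continuous on cones of $\D(\cX,\cX_0)$ and $\sigma$ is continuous), that $f=A_X+1$ on the dense subset $\Xdiv\cap\D_\cX$, and that $A_X$ is lsc; together with the maximality inequality $f-1\le A_X$ this forces $f=A_X+1$ on $\D_\cX$. For the passage to $\Xval$, replace the appeal to (iv) by lower semicontinuity: since $\retr_\cX(x)\to x$, your inequality $f(x)\ge A_X(\retr_\cX(x))+1$ and lsc of $A_X$ give $f(x)\ge A_X(x)+1$ directly. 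One further small point: the vertices of $\D_\cX$ include the generic point of $X$, which is not divisorial, so your phrase ``agree at the (divisorial) vertices'' needs the generic point checked separately (there $\sigma(x_g)=\ord_{\cX_0^{\mathrm{triv}}}$ gives $f(x_g)=1=A_X(x_g)+1$).

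Your explicit blowup computation for the divisorial case is correct and is a nice self-contained substitute for the paper's citation of \cite[Proposition~4.11]{BHJ1}; the three regimes $c\lessgtr 1$ are identified correctly and the log discrepancies of $D$, $E'$, $F'$ are right.
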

Granted this lemma, property~(iii) follows from~\eqref{e428}.
Indeed, $A_{X\times\P^1}$ is affine on each cone of 
$\D(\cX,\cX_0)$, so since $\sigma$ is affine, $A_X=A_{X\times\P^1}\circ\sigma-1$
is affine on each simplex of $\D_\cX$, proving~(a).
Similarly, by~\cite{jonmus} we have $A_{X\times\P^1}(y)\ge A_{X\times\P^1}(\retr_{\cX,\cX_0}(y))$
for $y\in(X\times\P^1)^{\mathrm{val}}$, with equality iff $y\in\D(\cX,\cX_0)$.
Using Lemma~\ref{L406}, this translates into~(iii)~(b).
Property~(iv) now follows formally. Indeed, if $x\in X$, then $x=\lim_\cX\retr_\cX(x)$, so 
$A_X(x)\le\varliminf_\cX A_X(\retr_\cX(x))$ since $A_X$ is lsc.
But $A_X(x)\le A_X(\retr_\cX(x))$ for all $\cX$, so~(iv) follows.
\begin{proof}[Proof of Lemma~\ref{L406}]
  For $x$ divisorial or the generic point of $X$, the equality is a special case 
  of~\cite[Proposition~4.11]{BHJ1}.
  Set $A':=A_{X\times\P^1}\circ\sigma-1$. Then $A'$ is lsc and $A'=A_X$ on $\Xdiv$,
  so $A'\le A_X$ by what precedes.

  Now consider $\cX\in\SNC(X)$, let $S$ be a simplex in $\D_\cX$, and $\hat{S}$
  the corresponding cone in $\D(\cX,\cX_0)$. 
  Then $A_{X\times\P^1}$ is continuous on $\hat{S}$, so $A'$ is continuous
  on $S$. Further, $A'=A_X$ on a dense subset of $S$, so $A_X\le A$ on $S$, since
  $A_X$ is lsc. Thus $A'=A_X$ on $\Xqm$. 

  Finally consider an arbitrary point $x\in X$. 
  If $x$ has nontrivial kernel, so does $\sigma(x)$ and 
  $A'(x)=A_X(x)=\infty$.  Now suppose $x\in\Xval$. The net 
  $(\retr_\cX(x))_\cX$ converges to $x$, so $\retr_{\cX,\cX_0}(x)=\sigma(\retr_\cX(x))$ 
  converges to $\sigma(x)$, by the continuity of $\sigma$. Now
  $A_{X\times\P^1}(\retr_{\cX,\cX_0}(\sigma(x)))\le A_{X\times\P^1}(\sigma(x))$ for
  every $x$, so $\lim_\cX A_{X\times\P^1}(\retr_{\cX,\cX_0}(\sigma(x))=A_{X\times\P^1}(\sigma(x))$
  by lower semicontinuity. 
  Thus $A'(x)=\lim_\cX A'(\retr_\cX(x))=\lim_\cX A_X(\retr_\cX(x))\ge A_X(x)$,
  where the last inequality follows since $A_X$ is lsc. 
  Since $A'\le A_X$, this completes the proof.
\end{proof}
% 
% 
%%%%%%%%%%%%%%%%%%%%%%%%%%%%%%%%%%%%%%%%%%%%%%%%%%%%%%%%%%%%%%%%%%%
%
%

%\newpage

\end{document}